\numberwithin{equation}{section}
\renewenvironment{itemize}{\begin{list}{$\bullet$}{\leftmargin=0.5cm}\parindent=0pt}{\end{list}}
\theoremstyle{plain}
\newtheorem{theorem}{Theorem}[section]
\newtheorem{teo}[theorem]{Theorem}
\newtheorem{prop}[theorem]{Proposition}
\newtheorem{lem}[theorem]{Lemma}   
\newtheorem{cor}[theorem]{Corollary}
\newtheorem{definprop}[theorem]{Definition-proposition}
\newtheorem{definlem}[theorem]{Definition-lemma}
\theoremstyle{definition}
\newtheorem{defin}[theorem]{Definition}
\newtheorem{rem}[theorem]{Remark}
\def \tA {\widetilde{A}} \def \tB {\widetilde{B}} \def \tC {\widetilde{C}}
\def \tD {\widetilde{D}}  
  \def \tU {\widetilde{U}}
\def \tV {\widetilde{V}} \def \tW {\widetilde{W}} \def \tZ {\widetilde{Z}}
     \def \tf {\tilde{f}}     \def \tg {\tilde{g}}
\def \th {\tilde{h}}     \def \tk {\tilde{k}}     
     \def \ts {\tilde{s}}     \def \tx {\tilde{x}}
\def \ty {\tilde{y}}          
\def \AND {\textrm{\quad and\quad}}
\def \groupR {R\rightrightarrows^{\hspace{-0.25 cm}^{s}}_{\hspace{-0.25 cm}_{t}}U}
\def \groupRR {R'\rightrightarrows^{\hspace{-0.28 cm}^{s'}}_{\hspace{-0.28 cm}_{t'}}U'}
\def \groupRRR {R''\rightrightarrows^{\hspace{-0.30 cm}^{s''}}_{\hspace{-0.28 cm}_{t''}}U''}
\newcommand {\CAT} [1] {(\textbf{#1})}
\newcommand {\ca} [1] {\mathscr{#1}}
\newcommand {\LA} [1] {\lambda_{#1}}
\newcommand {\fibra} [3] {#1\,\times_{#2}\, #3}
\newcommand {\fibre} [4] {{#1}\,_{#2}\times_{#3}\, {#4}}
\newcommand {\unif} [1] {(\tU_{#1},G_{#1},\pi_{#1})}
\newcommand {\uniff} [1] {(\tV_{#1},H_{#1},\phi_{#1})}
\newcommand {\unifff} [2] {(\tV_{#1}^{#2},H_{#1}^{#2},\phi_{#1}^{#2})}
\newcommand {\fre} [1] {\stackrel{#1}{\rightarrow}}
\newcommand {\sinistra} [1] {\stackrel{#1}{\leftarrow}}
\newcommand {\group} [1] {R^{#1}\rightrightarrows^{\hspace{-0.30 cm}^{s^{#1}}}_{\hspace{-0.28 cm}_{t^{#1}}}U^{#1}}
\begin{document}

\title{Orbifolds and groupoids\\
Preliminary version - comments welcome}

\author{Matteo Tommasini}

\address{Mathematical Physics Sector\\
SISSA - International School for Advanced Studies\\
Via Bonomea 256, 31014 Trieste, Italy}

\email{matteo.tommasini2@gmail.com}

\date{August 2010}

\subjclass[2000]{Primary: 57R18; Secondary: 58H99, 18D05, 22A22, 20L05, 16D90, 14A20}

\keywords{reduced complex orbifolds, proper \'etale effective groupoid objects over complex manifolds,
2-categories and 2-functors, Morita equivalences}

\thanks{I would like to acknowledge my advisors Barbara Fantechi and Emilia Mezzetti, for suggesting to me the
topic and for all the help they gave to me during this work.
I would also like to acknowledge Dorette Pronk for her help during the work for the thesis and for this article.\\
This work was supported by the International School for Advanced Studies (SISSA-ISAS) in Trieste.}

\begin{abstract}
We define a 2-category structure $\CAT{Pre-Orb}$ on the category of reduced complex orbifold atlases. We construct a
2-functor $F$ from $\CAT{Pre-Orb}$ to the 2-category $\CAT{Grp}$ of proper \'etale effective groupoid objects over the
complex manifolds. Both on $\CAT{Pre-Orb}$ and $\CAT{Grp}$ there are natural equivalence relations on objects: (a natural
extension of) equivalence of orbifold atlases on $\CAT{Pre-Orb}$ and Morita equivalence in $\CAT{Grp}$. We prove that $F$
induces a bijection between the equivalence classes of its source and target.
\end{abstract}

\maketitle

\tableofcontents

\section*{Introduction}

A well known issue in mathematics is that of modeling geometric objects where points have non-trivial groups of
automorphisms (for example, the set of all Riemann surfaces of a given genus $g\geq 2$,
together with their isomorphisms).
In topology and differential geometry the standard approach to these objects (when the groups associated to every point
are finite) is through orbifolds. This concept was
formalized for the first time by I. Satake in 1956 in \cite{Sa} with some different hypothesis than the current
ones, although the informal idea dates back at least to H. Poincar\'e (for example, see \cite{Poi}, 1882). The name
``orbifold'' was introduced for the first time by W. Thurston (see \cite{Thu}, 1979), and comes from the
contraction of the words ``orbit'' and ``manifold''. Indeed the first known examples of these objects where
obtained as quotients of manifolds via the action of finite groups of automorphisms; analogous to the definition
of manifold, a complex orbifold atlas is locally modeled on open subsets of $\mathbb{C}^n$ modulo finite groups of 
biholomorphisms acting
on it, such that a suitable condition of compatibility is satisfied in the intersection of any pair of ``charts''.
There is a well defined notion of ``map'' between orbifolds (for example one can adapt the definition of the appendix
of \cite{CR} from the real to the complex case) and composition of them, so orbifolds form a category.\\

On the other hand, in algebraic complex geometry, objects that have non-trivial group of automorphisms arise frequently
from moduli problems and are usually studied as (Deligne-Munford) algebraic stacks, that form in a natural way
 a non-trivial 2-category. A third approach, intermediate between the previous two, is the one that
uses smooth groupoid objects, which also form a 2-category; for an introduction to these objects, see for example 
\cite{M}. There exist strong relations between groupoid objects over the category of schemes and algebraic
stacks, as shown in the appendix of \cite{Vis} by A. Vistoli (1989); analogous strong relations were found by
D. Pronk in \cite{Pr} and \cite{Pr2} for the topological and differentiable case.\\

Moreover, there is a very good reason to
think of orbifolds as groupoid objects (at least in the smooth case) because of a construction due to D. Pronk
(\cite{Pr}) that allows to associate to every smooth reduced orbifold atlas a proper \'etale and effective
groupoid object over smooth manifolds. Hence it seems natural to try to give a richer structure to complex
orbifolds, i.e. to make them into a non trivial 2-category and to find if there exists any relation between
morphisms and 2-morphisms for orbifolds and the corresponding ones for groupoid objects over complex manifolds.\\

For simplicity in this work we will always restrict
our attention to the complex case and to effective actions, i.e. all the orbifold atlases will be reduced and all
the groupoid objects will be effective (see remark \ref{reduced-orb} and definition \ref{effective-groupoid}).
This article is divided in 4 section as follows:

\begin{enumerate}[(1)]
\item we review the basic facts about 2 categories and objects and morphisms in the category of reduced complex
orbifold atlases; then we prove that this category can be embedded into a non-trivial 2-category, called
$\CAT{Pre-Orb}$ by defining suitable 2-morphisms (that will be called natural transformations), 2-identities and
vertical and horizontal compositions of them. In addition, we review the definition of equivalence of orbifold atlases,
analogous to the notion of compatibility between manifold atlases;

\item a straightforward calculation proves that there exists a natural 2-category $\CAT{Grp}$ where the objects are
proper \'etale and effective groupoid objects over complex manifolds and morphisms and 2-morphisms are the usual
morphisms and 2-morphisms between any pair of groupoid objects;

\item the construction due to D.Pronk can be adapted from the smooth to the complex case in order to associate to
every object of $\CAT{Pre-Orb}$ an object of $\CAT{Grp}$. Moreover, we prove that this construction can be extended to
morphisms and 2-morphisms in order to get a 2-functor $F:\CAT{Pre-Orb}\rightarrow\CAT{Grp}$;

\item in this section we review briefly the notion of Morita equivalence on groupoid objects. Then we
prove that $F$ induces a bijection between classes of orbifold atlases (as described in \S 1) and classes of
Morita equivalent groupoid objects. Moreover, it is implicitly proved that if one considers
the 2-functor $U$ (described in \cite{Pr2}) of localization (up to Morita equivalences) of $\CAT{Grp}$, we get that
$U\circ F$ is essentially surjective.
\end{enumerate}

Note that we don't call the first 2-category $\CAT{Orb}$ or $\CAT{Orbifolds}$ because its objects would have been
orbifolds (i.e. equivalence classes of orbifold atlases, see \S \ref{section-equivalences}). Indeed, there remains
the following 2 open problems:

\begin{enumerate}[(a)]
 \item What is the natural extension of the equivalence in \S \ref{section-equivalences} to morphisms and
2-morphisms of orbifolds in order to describe a 2-category (or a bicategory) $\CAT{Orb}$? Is it possible to use the
calculus of fractions of D. Pronk in order to formally invert the (functorial) refinements of atlases?

\item Is this extension compatible with $F$? In other words, consider the bicategory $\CAT{Grp}[W^{-1}]$ obtained
by inverting the class $W$ of Morita equivalences using the calculus of fractions. Then suppose that
(a) is solved; is it possible to use the results of \S 4 in order to induce a 2-functor $\tilde{F}$ from
$\CAT{Orb}$ to $\CAT{Grp}[W^{-1}]$?
\end{enumerate}

\section{The 2-category of complex reduced orbifolds}

\subsection{2-categories and 2-functors}
We assume the standard notions of categories, (covariant) functors, fiber products in a fixed category and
natural transformations (see, for example \cite{B}). In this work we will also use the notions
of 2-categories and 2 functors, that we recall briefly:

\begin{defin}\label{2-cat}
(\cite{B}, def. 7.1.1) A \emph{$2$-category} $\ca{A}$ consists of the following data:

\begin{enumerate}[(1)]\parindent=0pt
\item a \emph{class} $\ca{A}_0$, whose elements are called objects;

\item for every pair of objects $A,B$, a \emph{small category} $\ca{A}(A,B)$ (i.e. its objects form a set and not
a class); the objects of this category are called morphisms or 1-morphisms and will be denoted by $f:A\rightarrow
B$. The morphisms of this category between any pair of 1-morphisms $f$ and $g$ are called 2-morphisms and are
denoted by $\alpha:f\Rightarrow g$. The composition of 2 composable morphisms
 $\alpha,\beta$ in the category $\ca{A}(A,B)$ will be called \emph{vertical} composition and
denoted with $\beta\odot\alpha$;

\item for each triple $A,B,C$ of objects of $\ca{A}$, a functor:

$$c_{ABC}:\ca{A}(A,B)\times\ca{A}(B,C)\rightarrow\ca{A}(A,C);$$

the composition  $c_{ABC}(f,g)$ of two objects $f:A\rightarrow B$ and $g:B\rightarrow C$ will be denoted by $g
\circ f$. The composition $c_{A,B,C}(\alpha,\beta)$ of two morphisms $\alpha:f\Rightarrow f'$ in $\ca{A}(A,B)$ and
$\beta:g\Rightarrow g'$ in $\ca{A}(B,C)$ will be called \emph{horizontal} composition and denoted by $\beta\ast
\alpha$;

\item for each object $A$ of $\ca{A}$, a morphism $1_A:A\rightarrow A$ and a 2-morphism $i_A:1_A\Rightarrow 1_A$.
\end{enumerate}

We require that these data satify the following axioms (which are not the original axioms of \cite{B}, but are
equivalent to them):

\begin{enumerate}[(a)]\parindent=0pt
\item for every triple of 1-morphisms of the form $A\stackrel{f}{\longrightarrow}B\stackrel{g}{\longrightarrow}C
\stackrel{h}{\longrightarrow}D$ we have $(h\circ g)\circ f=h\circ(g\circ f);$

\item for every diagram:

\[\begin{tikzpicture}[scale=0.8]
    \def\x{1.5}
    \def\y{-1.2}
    \node (A0_0) at (0*\x, 0*\y) {$A$};
    \node (A0_1) at (1.1*\x, 0*\y) {$\Downarrow\alpha$};
    \node (A0_2) at (2*\x, 0*\y) {$B$}; 
    \node (A0_3) at (3.1*\x, 0*\y) {$\Downarrow\beta$};
    \node (A0_4) at (4*\x, 0*\y) {$C$};
    \node (A0_5) at (5.1*\x, 0*\y) {$\Downarrow\gamma$};
    \node (A0_6) at (6*\x, 0*\y) {$D$};
    \path (A0_0) edge [->,bend left=25] node [auto] {$\scriptstyle{f}$} (A0_2);
    \path (A0_0) edge [->,bend right=25] node [auto,swap] {$\scriptstyle{f'}$} (A0_2);
    \path (A0_2) edge [->,bend left=25] node [auto] {$\scriptstyle{g}$} (A0_4);
    \path (A0_2) edge [->,bend right=25] node [auto,swap] {$\scriptstyle{g'}$} (A0_4);
    \path (A0_4) edge [->,bend left=25] node [auto] {$\scriptstyle{h}$} (A0_6);
    \path (A0_4) edge [->,bend right=25] node [auto,swap] {$\scriptstyle{h'}$} (A0_6);
\end{tikzpicture}\]

we have $(\gamma\ast\beta)\ast\alpha=\gamma\ast(\beta\ast\alpha);$

\item for each 1-morphism $A\fre{f}B$, we have $f\circ 1_A=f=1_B\circ f;$

\item for each 2-morphism $\alpha:(f:A\rightarrow B)\Rightarrow (g:A\rightarrow B)$ we require that $\alpha\ast
i_A=\alpha=i_B\ast\alpha.$
\end{enumerate}
\end{defin}

\begin{rem}\label{interchage}
Let us consider the following diagram in a 2-category $\ca{A}$:

\[\begin{tikzpicture}[scale=0.8]
    \def\x{2.0}
    \def\y{-1.2}
    \node (A0_0) at (0*\x, 0*\y) {$A$};
    \node (A0_1) at (1.1*\x, 0.6) {$\Downarrow\alpha$};
    \node (A0_1) at (1.1*\x, -0.8) {$\Downarrow\beta$};
    \node (A0_2) at (2*\x, 0*\y) {$B$};
    \node (A0_3) at (3.1*\x, 0.6) {$\Downarrow\gamma$};
    \node (A0_3) at (3.1*\x, -0.8) {$\Downarrow\delta$};
    \node (A0_4) at (4*\x, 0*\y) {$C;$};

    \path (A0_0) edge [->,bend left=50] node [auto] {$\scriptstyle{f}$} (A0_2);
    \path (A0_0) edge [->] node [auto,swap] {$\scriptstyle{f'}$} (A0_2);
    \path (A0_0) edge [->,bend right=50] node [auto,swap] {$\scriptstyle{f''}$} (A0_2);
    \path (A0_2) edge [->,bend left=50] node [auto] {$\scriptstyle{g}$} (A0_4);
    \path (A0_2) edge [->] node [auto,swap] {$\scriptstyle{g'}$} (A0_4);
    \path (A0_2) edge [->,bend right=50] node [auto,swap] {$\scriptstyle{g''}$} (A0_4);
\end{tikzpicture}\]

then we get that:

$$(\delta\ast\beta)\odot(\gamma\ast\alpha)=c_{ABC}(\beta,\delta)\odot c_{ABC}(\alpha,\gamma)=$$
$$=c_{ABC}((\beta,\delta)\odot(\alpha,\gamma))=c_{ABC}(\beta\odot\alpha,\delta\odot\gamma)=(\beta\odot\alpha)\ast
(\delta\odot\gamma).$$

This formula is known as \emph{interchagenge law}(see \cite{B}, proposition 1.3.5).
\end{rem}

For some basic examples of 2-categories, see \cite{B}, example 7.1.4 (the most simple example is the 2-category of
small categories, functors and natural transformations between them).

\begin{defin}\label{2-func}(equivalent to \cite{B}, def. 7.2.1)
Given two 2-categories $\ca{A}$ and $\ca{B}$, a \emph{(covariant) 2-functor} 
$F:\ca{A}\rightarrow\ca{B}$ consists of the following data:

\begin{enumerate}[(1)]\parindent=0pt
\item for each object $A$ in $\ca{A}$, an object $F(A)$ in $\ca{B}$;

\item for each pair of objects $A,A'$, a functor $F_{A,A'}:\ca{A}(A,A')\rightarrow\ca{B}(F(A),F(A'))$; with a
little abuse of notation, sometimes we will denote this functor only with $F$. These data must satisfy the
following axioms:
\end{enumerate}

\begin{enumerate}[(a)]\parindent=0pt
\item for every pair of morphisms $f:A\rightarrow A'$ and $g:A'\rightarrow A''$ we have 
$F(g\circ f)=F(g)\circ F(f)$;

\item for every diagram in $\ca{A}$ of the form:

\[\begin{tikzpicture}[scale=0.8]
    \def\x{1.5}
    \def\y{-1.2}
    \node (A0_0) at (0*\x, 0*\y) {$A$};
    \node (A0_1) at (1*\x, 0*\y) {$\Downarrow\alpha$};
    \node (A0_2) at (2*\x, 0*\y) {$A'$};
    \node (A0_3) at (3*\x, 0*\y) {$\Downarrow\beta$};
    \node (A0_4) at (4*\x, 0*\y) {$A''$};
    \path (A0_2) edge [->,bend left=25] node [auto] {$\scriptstyle{g}$} (A0_4);
    \path (A0_2) edge [->,bend right=25] node [auto,swap] {$\scriptstyle{g'}$} (A0_4);
    \path (A0_0) edge [->,bend left=25] node [auto] {$\scriptstyle{f}$} (A0_2);
    \path (A0_0) edge [->,bend right=25] node [auto,swap] {$\scriptstyle{f'}$} (A0_2);
\end{tikzpicture}\]

we have that $F(\beta\ast\alpha)=F(\beta)\ast F(\alpha)$;
 
\item for every object $A$ of $\ca{A}$ we require that $F(1_A)=1_{F(A)}$ and $F(i_A)=i_{F(A)}$.
\end{enumerate}
\end{defin}

\subsection{Uniformizing systems, embeddings and atlases}
Let us review some basic definitions about complex orbifolds. All the definitions of this section are just
translations to the complex case of the corresponding definitions for the smooth case (see, for example, the
appendix of \cite{CR}). Since we will work only in the holomorphic case, in general we will use the word
``orbifold'' instead of ``complex orbifold''.

\begin{defin}\label{unif-sys}
Let $X$ be a paracompact second countable Hausdorff topological space and let $U\subseteq X$ be open and
non-empty. Then a \emph{(complex) uniformizing system} (also known as \emph{orbifold chart}) \emph{of dimension}
$n$ for $U$ is the datum of:

\begin{itemize}
\item a \emph{connected} and non-empty open set $\tU\subseteq\mathbb{C}^n$;

\item a \emph{finite} group $G$ of holomorphic automorphisms of $\tU$;

\item a continuous, surjective and $G$-invariant map $\pi:\tU\rightarrow U$, which induces an homeomorphism
between $\tU/G$ and $U$, where we give to $\tU/G$ the quotient topology.
\end{itemize}
\end{defin}

\begin{rem}\label{reduced-orb}
In this work we will always assume that $G$ \emph{is a set of maps, which is also a group}; the
orbifolds which have this property are usually called \emph{reduced} or \emph{effective} (the precise definitions
of orbifold and orbifold atlas will be given in the following pages). Some authors don't use this restriction: in
this case $G$ is a priori a group together with a representation $\psi: G\rightarrow 
\textrm{Aut}(\tU)$ which is not necessarily \emph{faithful} (i.e. injective).
\end{rem}

\begin{rem}\label{unif-sys-alernat-1}
Some articles (see, for example, \cite{LU}, def.  2.1.1) don't require that $\tU$ is a connected \emph{open set} of
$\mathbb{C}^n$, but only that it is a connected complex \emph{manifold}, while all the other properties are exactly
the same. This is very useful in order to define orbifold atlases for global quotients, but this definition is
not equivalent to the previous one; however, it is not so difficult to prove that the equivalence classes of
orbifold atlases (as described in \S \ref{section-equivalences}) will be the same with this alternative definition.
We prefer to use the previous definition just because it is more common in literature.
\end{rem}

\begin{defin}
Let $(\tU,G,\pi)$ be a uniformizing system and let $\tx\in\tU$. Then we define the \emph{isotropy subgroup} (also
known as \emph{stabilizer group}) at $\tx$ as the subgroup of $G$:

$$G_{\tx}:=\{g\in G\textrm{ s.t. } g(\tx)=\tx\}.$$
\end{defin}

\begin{lem}\label{radius-lemma}
Let $(\tU,G,\pi)$ be a uniformizing system, let $\tx\in\tU$ and $g\in G\smallsetminus G_{\tx}$. Then there exists a
positive radius $r=r(\tx,g)$ such that if we call $B_r$ the open ball with radius $r$ and centered in $\tx$, we
have:

$$g(B_r)\cap B_r=\varnothing.$$

\end{lem}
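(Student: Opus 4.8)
The plan is to use nothing more than the continuity of $g$ together with the single consequence of the hypothesis $g\in G\smallsetminus G_{\tx}$, namely that $g(\tx)\neq\tx$; neither the holomorphicity of $g$ nor the finiteness of $G$ enters here, so the argument is really a separation statement in $\mathbb{C}^n$. A preliminary point worth recording first is that, since $\tU$ is open, there is some $r_0>0$ for which the open ball of radius $r_0$ centered at $\tx$ is contained in $\tU$; this ensures that $g(B_r)$ is even defined for every $r\leq r_0$, which is implicitly needed for the statement to make sense.

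Next I would set $d:=\|g(\tx)-\tx\|$, which is strictly positive by hypothesis. By continuity of $g$ at the point $\tx$, there exists $\delta>0$ such that $\|\ty-\tx\|<\delta$ forces $\|g(\ty)-g(\tx)\|<d/2$. I would then define the radius $r:=\min\{r_0,\delta,d/2\}$, which is positive and depends only on $\tx$ and $g$, exactly as the notation $r=r(\tx,g)$ suggests.

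The verification of $g(B_r)\cap B_r=\varnothing$ I would carry out by contradiction. Suppose some $z$ lies in the intersection. From $z\in B_r$ we get $\|z-\tx\|<r\leq d/2$. From $z\in g(B_r)$ we may write $z=g(\ty)$ with $\ty\in B_r$, so $\|\ty-\tx\|<r\leq\delta$ and hence $\|z-g(\tx)\|=\|g(\ty)-g(\tx)\|<d/2$ by the choice of $\delta$. The triangle inequality then yields $d=\|g(\tx)-\tx\|\leq\|g(\tx)-z\|+\|z-\tx\|<d/2+d/2=d$, which is absurd.

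I do not expect any genuine obstacle: this is a routine Hausdorff-type separation argument, and the only points demanding a little care are (i) shrinking $r$ so that $B_r\subseteq\tU$ and $g(B_r)$ is defined, and (ii) combining the continuity estimate with the triangle inequality to kill the intersection. An equivalent and slightly more conceptual route would suppress the explicit radius and invoke directly that $\mathbb{C}^n$ is Hausdorff: choose disjoint open sets $V\ni\tx$ and $W\ni g(\tx)$, and take any ball $B_r\subseteq V\cap g^{-1}(W)$ with $B_r\subseteq\tU$; then $B_r\subseteq V$ while $g(B_r)\subseteq W$, and the disjointness of $V$ and $W$ immediately gives $g(B_r)\cap B_r=\varnothing$.
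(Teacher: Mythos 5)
Your proof is correct. Note that the paper states lemma \ref{radius-lemma} without any proof at all, treating it as a standard fact to be invoked later (in the sketch of lemma \ref{unif-sys-induced}); your continuity-plus-triangle-inequality argument, with the preliminary shrinking of $r$ so that $B_r\subseteq\tU$, is exactly the routine separation argument being implicitly assumed, and your observation that only continuity of $g$ and $g(\tx)\neq\tx$ are needed (not holomorphicity or finiteness of $G$) is accurate.
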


\begin{defin}\label{complex-embedding}
Let us fix two uniformizing systems $(\tU,G,\pi)$ and $(\tV,H,\phi)$ for open sets $U,V$ in $X$ with $U\subseteq
V$. Then a \emph{(complex) embedding} $\lambda$ from the first uniformizing system to the second one is given by
an holomorphic embedding $\lambda:\tU\rightarrow\tV$ such that $\phi\circ\lambda=\pi$.
\end{defin}

\begin{lem}\label{unif-sys-induced}
Let us fix a uniformizing system $(\tU,G,\pi)$ and any point $\tx$ in $\tU$, together with an open neighborhood
$\tA$ of it in $\tU$. Then there exist a uniformizing system of the form $(\tU',G',\pi')$ and an embedding
$\lambda$ of it into the previous one, such that:

\begin{itemize}
\item $\tU'$ is an open connected neighborhood of $\tx$, completely contained in $\tA$;

\item $\lambda$ is just the inclusion map of $\tU'$ in $\tU$;

\item $G'$ is the set of elements of the isotropy subgroup $G_{\tx}$, restricted to $\tU'$.

\item up to a biholomorphic change of coordinates all the elements of $G'$ act linearily on $\tU'$.
\end{itemize}
\end{lem}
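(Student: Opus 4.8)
The plan is to build $(\tU',G',\pi')$ as a suitably shrunk and symmetrized neighborhood of $\tx$ and to take for $\lambda$ the inclusion. The whole construction rests on two ingredients: Lemma \ref{radius-lemma}, which separates $\tx$ from the part of $G$ that moves it, and an averaging (linearization) argument that produces the coordinates required by the last bullet.

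First I would dispose of the last property, which I expect to be the main obstacle. After translating $\tx$ to the origin, set $K:=G_{\tx}$, a finite group of biholomorphisms of $\tU$ fixing $0$, and let $L_g:=d_0g\in GL(n,\mathbb{C})$ be the differential at $0$. Choosing a small $K$-invariant neighborhood $W$ (for instance $W=\bigcap_{g\in K}g(B_\rho)$ for $\rho$ small, which is $K$-invariant since left translation permutes $K$), I would define the averaging map $\phi(z)=\frac{1}{|K|}\sum_{g\in K}L_g^{-1}\,g(z)$ on $W$. A direct computation gives $d_0\phi=\mathrm{id}$, so by the holomorphic inverse function theorem $\phi$ is a biholomorphism on a smaller $K$-invariant neighborhood; a second computation, using the invariance of the sum under right translation in $K$, yields $\phi\circ g=L_g\circ\phi$ for every $g\in K$. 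Thus in the coordinates given by $\phi$ the action of $K$ is exactly the linear action $g\mapsto L_g$. Averaging a Hermitian inner product over $K$ further makes every $L_g$ unitary, so every ball $B(0,\varepsilon)$ centered at the origin is $K$-invariant; these invariant balls are what make the remaining steps clean.

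Next I would fix the size of $\tU'$. For each $g\in G\smallsetminus K$, Lemma \ref{radius-lemma} gives a radius $r(\tx,g)$ with $g(B_{r(\tx,g)})\cap B_{r(\tx,g)}=\varnothing$; since $G$ is finite I set $r_1$ to be the minimum of these finitely many radii, so that $g(B_r)\cap B_r=\varnothing$ for every $r\le r_1$ and every $g\notin K$. I then choose $\varepsilon>0$ so small that, with $B:=B(0,\varepsilon)$ in the invariant metric and $\tU':=\phi^{-1}(B)$, one has $\tU'\subseteq B_{r_1}\cap\tA$ and $B$ lies in the image of the biholomorphic restriction of $\phi$. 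Now $\tU'$ is open, contains $\tx$, is contained in $\tA$, and is connected because it is the homeomorphic image of a ball; moreover $\tU'$ is $K$-invariant, since $g(\tU')=\phi^{-1}(L_gB)=\phi^{-1}(B)=\tU'$, and it has the separation property $g(\tU')\cap\tU'=\varnothing$ for $g\notin K$ because $\tU'\subseteq B_{r_1}$. This yields the first and last bullets at once, with $\phi$ as the change of coordinates.

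Finally I would assemble the uniformizing system and the embedding. Put $\pi':=\pi|_{\tU'}$, $U':=\pi(\tU')$, and $G':=\{g|_{\tU'}:g\in K\}$. Since $\tU$ is connected and the elements of $K$ are holomorphic, the identity theorem shows that distinct elements of $K$ restrict to distinct maps of $\tU'$, so $g\mapsto g|_{\tU'}$ is an isomorphism $K\xrightarrow{\sim}G'$ and $G'$ is a finite group of holomorphic automorphisms of $\tU'$; this is the third bullet. The separation property forces the fibers of $\pi'$ to be exactly the $G'$-orbits: if $\pi(\ty_1)=\pi(\ty_2)$ with $\ty_1,\ty_2\in\tU'$, then $\ty_2=g\ty_1$ for some $g\in G$, and $g\notin K$ is impossible because $g\ty_1\in g(\tU')$ is disjoint from $\tU'$. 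Together with the openness of $\pi$ (the quotient map $\tU\to\tU/G$ is open) and the finiteness of $G'$, this shows that $\pi'$ induces a homeomorphism $\tU'/G'\cong U'$, so $(\tU',G',\pi')$ is a uniformizing system in the sense of Definition \ref{unif-sys}. The inclusion $\lambda\colon\tU'\hookrightarrow\tU$ is a holomorphic open embedding satisfying $\pi\circ\lambda=\pi'$, hence an embedding of uniformizing systems by Definition \ref{complex-embedding}; this is the second bullet. The substantive step is the linearization; everything after it is bookkeeping built on Lemma \ref{radius-lemma}.
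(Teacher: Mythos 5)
Your proof is correct and follows essentially the same route as the paper's (which is only sketched there): Lemma \ref{radius-lemma} applied to each of the finitely many $g\in G\smallsetminus G_{\tx}$ to get the separation property, Cartan's linearization lemma to obtain the linearizing coordinates, and the openness of $\pi$ together with the quotient topology to get the homeomorphism $\tU'/G'\cong\pi(\tU')$. The only difference is that where the paper simply cites Cartan's lemma, you reprove it via the standard averaging map $\phi=\frac{1}{|K|}\sum_{g\in K}L_g^{-1}\,g$ and the unitarian trick for invariant balls, which makes the argument self-contained but is not a genuinely different approach.
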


The proof of this lemma uses lemma \ref{radius-lemma} for every $g$ in the finite set $G\smallsetminus G_{\tx}$, 
together with Cartan's linearization lemma (see \cite{Ca}, lemma 1) in order to find the open neighborhood $\tU'$ of 
$\tx$ in $\tU$; then it suffices to use the definition of quotient topology for the open set $U$ homeomorphic to 
$\tU/G$.

\begin{defin}\label{trivial-stabilizer}
Let $(\tU,G,\pi)$ be a uniformizing system for an open set $U$ of $X$; then for every $g\in G\smallsetminus
\{1_{\tU}\}$ we define the sets $\tU_g:=\{\tx\in\tU\textrm{ s.t. } g(\tx)\neq \tx \}$ and $\tU_G:=\bigcap_{g\in G
\smallsetminus\{1_{\tU}\}}\tU_g$ i.e. the set of points of $\tU$ with trivial stabilizer.
\end{defin}

Now  if we fix any $g\in G\smallsetminus\{1_{\tU}\}$, we get that the map $g-1_{\tU}:\tU\rightarrow
\mathbb{C}^n$ is continuous, and the set $\tU_g$ is the preimage via this function of the open set $\mathbb{C}^n
\smallsetminus\{0\}$, hence $\tU_g$ is open in $\tU$. Moreover, it is dense in $\tU$, indeed if it was not dense, this
would imply that there exists an open subset where $g=id_{\tU}$; since $g$ is holomorphic, this implies that $g$ is
the identity on all $\tU$, contradiction. Now every locally compact Hausdorff space is a Baire space, hence $\tU$
is a Baire space, so every countable intersection of open dense subsets of it is still dense. In particular,
$\tU_G$ is a \emph{finite} intersection of open dense sets, so we get:

\begin{lem}\label{density}
The set $\tU_G$ is open and dense in $\tU$.
\end{lem}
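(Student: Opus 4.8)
The plan is to prove the two properties separately, reducing everything to a statement about a single non-identity element $g\in G\smallsetminus\{1_{\tU}\}$ and then recombining the finitely many pieces. Concretely, I would first show that each individual set $\tU_g$ is both open and dense in $\tU$; openness of $\tU_G$ will then follow immediately from finiteness of $G$ (a finite intersection of open sets is open), whereas density of $\tU_G$ will require a Baire-category argument, since $\tU_G$ is only a finite intersection of dense sets and density is not in general preserved under intersection.

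For openness of $\tU_g$, I would consider the map $\tU\rightarrow\mathbb{C}^n$ sending $\tx\mapsto g(\tx)-\tx$. Since $g$ is a holomorphic automorphism of $\tU$, this map is continuous, and by definition $\tU_g$ is exactly its preimage of the open set $\mathbb{C}^n\smallsetminus\{0\}$; hence $\tU_g$ is open. Because $G$ is a finite group, $G\smallsetminus\{1_{\tU}\}$ is a finite set, so $\tU_G=\bigcap_{g}\tU_g$ is a finite intersection of open sets and is therefore open.

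The heart of the argument is the density of each $\tU_g$, and this is the step I expect to be the main obstacle. I would argue by contradiction: if $\tU_g$ were not dense, its complement — the fixed-point set $\{\tx\in\tU\textrm{ s.t. }g(\tx)=\tx\}$ — would have non-empty interior, so $g$ and the identity would agree on some non-empty open subset of $\tU$. Here I would invoke the fact that $\tU$ is \emph{connected} (part of the definition of a uniformizing system) together with the identity principle for holomorphic maps: two holomorphic maps coinciding on a non-empty open subset of a connected domain coincide everywhere. This forces $g=1_{\tU}$ on all of $\tU$, contradicting $g\neq 1_{\tU}$; thus each $\tU_g$ is dense.

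Finally, to pass from density of the individual $\tU_g$ to density of their intersection $\tU_G$, I would use that $\tU$, being an open subset of $\mathbb{C}^n$, is locally compact and Hausdorff, hence a Baire space. In a Baire space every countable intersection of open dense sets is dense, so in particular the finite intersection $\tU_G$ is dense. Combining this with the openness established above yields that $\tU_G$ is open and dense, as claimed.
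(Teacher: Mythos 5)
Your proposal is correct and follows essentially the same route as the paper: openness of each $\tU_g$ as the preimage of $\mathbb{C}^n\smallsetminus\{0\}$ under the continuous map $g-1_{\tU}$, density of each $\tU_g$ via the identity principle for holomorphic maps on the connected set $\tU$, and density of the finite intersection $\tU_G$ via the Baire property of the locally compact Hausdorff space $\tU$. (As a minor aside, for a \emph{finite} intersection of open dense sets the Baire argument is not actually needed, but both you and the paper invoke it, so this is not a point of difference.)
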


The following is a very useful technical result. It was proved for the first time by I. Satake (\cite{Sa2}) with an
extra assumption, and by I. Moerdijk and D. Pronk (\cite{MP}, appendix, proposition A.1) in the general case for
smooth orbifolds. The following is an analogous result proved with the same technique in the case
of complex orbifolds.

\begin{lem}\label{lemma-moerdijk}
Let $\lambda$ and $\mu$ be two embeddings: $(\tU,G,\pi)\rightarrow (\tV,H,\phi)$ between uniformizing systems of
the same dimension $n$. Then there exists a unique $h\in H$ such that $\mu=h\circ\lambda$.
\end{lem}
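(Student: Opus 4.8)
The plan is to exploit the holomorphicity of the two embeddings together with the identity principle, reducing everything to a purely local statement that is extracted by a Baire category argument.

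First I would unwind the definition of embedding (Definition \ref{complex-embedding}), which gives $\phi\circ\lambda=\pi=\phi\circ\mu$. Hence for every $\tx\in\tU$ the points $\lambda(\tx)$ and $\mu(\tx)$ have the same image under $\phi$, so --- since $\phi$ induces a homeomorphism between $\tV/H$ and $V$ --- they lie in the same $H$-orbit. Introducing for each $h\in H$ the set
$$W_h:=\{\tx\in\tU \ :\ \mu(\tx)=h(\lambda(\tx))\},$$
I therefore get $\tU=\bigcup_{h\in H}W_h$. Each $W_h$ is the locus where the two continuous maps $\mu$ and $h\circ\lambda$ into the Hausdorff space $\tV$ coincide, so it is closed in $\tU$.

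Now comes the main step. Since $H$ is finite, $\tU$ is a \emph{finite} union of the closed sets $W_h$; because $\tU$ is a Baire space (as recalled in the discussion preceding Lemma \ref{density}), it cannot be covered by finitely many closed sets all having empty interior, so at least one $W_h$ contains a nonempty open set $\Omega$. On $\Omega$ the holomorphic maps $\mu$ and $h\circ\lambda$ agree; since $\tU\subseteq\mathbb{C}^n$ is connected, the identity principle for holomorphic maps (applied componentwise to $\mu-h\circ\lambda$) forces $\mu=h\circ\lambda$ on all of $\tU$. This proves existence.

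For uniqueness, suppose $\mu=h\circ\lambda=h'\circ\lambda$. Because $\lambda$ is a holomorphic embedding between the equidimensional sets $\tU,\tV\subseteq\mathbb{C}^n$, it is an open map, so $\lambda(\tU)$ is a nonempty open subset of $\tV$ on which $h$ and $h'$ coincide; as $\tV$ is connected, the identity principle gives $h=h'$ as holomorphic automorphisms of $\tV$, and since in the reduced setting $H$ is literally a group of maps (Remark \ref{reduced-orb}) this means $h=h'$ in $H$. I expect the delicate point to be precisely the passage from the pointwise covering $\tU=\bigcup_h W_h$ to a global identity on a single $W_h$: the Baire argument only yields a nonempty interior, and it is holomorphicity (via the identity principle on the connected set $\tU$) that upgrades local agreement to agreement everywhere. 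Note that the equality of dimensions is used twice --- to guarantee that $\lambda$ is open (so $\lambda(\tU)$ has nonempty interior) and to make the identity principle available in the uniqueness argument.
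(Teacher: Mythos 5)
Your argument is correct: the fibres of $\phi$ are exactly the $H$-orbits, so the closed coincidence sets $W_h$ really do cover $\tU$; a finite union of closed sets with empty interior cannot equal $\tU$ (Baire suffices, and for finitely many sets this even holds in any topological space); the identity principle then upgrades agreement of $\mu$ and $h\circ\lambda$ on the open set $\Omega$ to agreement on all of the connected set $\tU$; and uniqueness follows from openness of $\lambda(\tU)$, the identity principle on $\tV$, and the standing hypothesis (remark \ref{reduced-orb}) that $H$ is literally a group of maps rather than an abstract group acting through a possibly non-faithful representation.

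However, this is a genuinely different route from the one the paper takes. The paper does not write out a proof at all: it appeals to the technique of Moerdijk and Pronk (\cite{MP}, appendix, proposition A.1), which was designed for the \emph{smooth} category, where the identity principle is unavailable. That argument works on the open dense locus of points with trivial stabilizer --- this is precisely why lemma \ref{density} is placed immediately before lemma \ref{lemma-moerdijk} --- where the element $h$ with $\mu(\tx)=h(\lambda(\tx))$ is pointwise unique; one then shows that $\tx\mapsto h$ is locally constant there and propagates the identity by connectedness and density. Your proof bypasses the free locus entirely, trading that machinery for two complex-analytic facts (Baire on all of $\tU$ plus the identity theorem). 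What you gain is a shorter, self-contained proof in the holomorphic setting; what you lose is generality: for smooth maps, agreement on a nonempty open subset implies nothing globally, so your Baire step would only ever give $\mu=h\circ\lambda$ on some open piece, and the proof collapses, whereas the Moerdijk--Pronk technique handles the smooth and holomorphic cases uniformly. Two smaller points in your favour: your existence step never uses effectiveness (it survives for non-reduced charts, with $h$ replaced by its image in $\mathrm{Aut}(\tV)$) while your uniqueness step does, which is exactly the dichotomy the paper records in the remark following corollary \ref{induced-map}; and your observation that equidimensionality enters twice (openness of $\lambda(\tU)$, availability of the identity principle) is accurate.
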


As a consequence of lemma \ref{lemma-moerdijk} we have the following corollary, whose proof is analogous to the one
given in the smooth case in \cite{ALR}, \S 1.1.

\begin{cor}\label{induced-map}
Any embedding $\lambda :(\tU,G,\pi)\rightarrow(\tV,H,\phi)$ induces an injective group homomorphism $\Lambda:G
\rightarrow H$ such that $\lambda\circ g=\Lambda(g)\circ \lambda$ for all $g\in G$.
\end{cor}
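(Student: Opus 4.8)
The plan is to use Lemma \ref{lemma-moerdijk} as the engine for the whole construction. First I would observe that for every fixed $g \in G$ the composite $\lambda \circ g$ is again an embedding of $(\tU,G,\pi)$ into $(\tV,H,\phi)$: indeed $g$ is a holomorphic automorphism of $\tU$, so $\lambda \circ g$ is still a holomorphic embedding, and since $\pi$ is $G$-invariant we have $\phi \circ (\lambda \circ g) = (\phi \circ \lambda) \circ g = \pi \circ g = \pi$, which is exactly the defining condition of Definition \ref{complex-embedding}. Both uniformizing systems have the same dimension $n$, as required for Lemma \ref{lemma-moerdijk}, so that lemma applies to the pair $\lambda$, $\lambda \circ g$: there exists a \emph{unique} $h \in H$ with $\lambda \circ g = h \circ \lambda$, and I would set $\Lambda(g) := h$. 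With this definition the required relation $\lambda \circ g = \Lambda(g) \circ \lambda$ holds by construction for every $g \in G$.

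Next I would check that $\Lambda$ is a group homomorphism, and here the uniqueness clause of Lemma \ref{lemma-moerdijk} does all the work. Writing the group law on $G$ as composition of maps, for $g_1, g_2 \in G$ I would compute $\lambda \circ (g_1 \circ g_2) = (\lambda \circ g_1) \circ g_2 = \Lambda(g_1) \circ (\lambda \circ g_2) = \Lambda(g_1) \circ \Lambda(g_2) \circ \lambda$. Since $\Lambda(g_1) \circ \Lambda(g_2) \in H$ realizes $\lambda \circ (g_1 \circ g_2)$ as a left translate of $\lambda$ by an element of $H$, uniqueness forces $\Lambda(g_1 \circ g_2) = \Lambda(g_1) \circ \Lambda(g_2)$. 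The same uniqueness applied to $\lambda \circ 1_{\tU} = \lambda = 1_{\tV} \circ \lambda$ shows $\Lambda(1_{\tU}) = 1_{\tV}$, so $\Lambda$ is indeed a homomorphism.

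Finally, injectivity is immediate from the defining relation together with the fact that $\lambda$ is an embedding, hence injective. If $\Lambda(g) = 1_{\tV}$, then $\lambda \circ g = 1_{\tV} \circ \lambda = \lambda$, that is $\lambda(g(\tx)) = \lambda(\tx)$ for every $\tx \in \tU$; since $\lambda$ is injective this yields $g(\tx) = \tx$ for all $\tx$, hence $g = 1_{\tU}$, so $\ker \Lambda$ is trivial. The only genuine difficulty of the statement is concentrated in Lemma \ref{lemma-moerdijk} (the existence and uniqueness of the translating element of $H$), which I am allowed to assume; once that is in hand, the corollary reduces to these three short formal verifications, the only point requiring minor care being the direction of composition in the group law of $G$.
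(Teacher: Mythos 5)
Your proof is correct and follows exactly the route the paper intends: it deduces the corollary from Lemma \ref{lemma-moerdijk} applied to the pair of embeddings $\lambda$ and $\lambda\circ g$ (which is the argument of \cite{ALR}, \S 1.1, that the paper cites), using the uniqueness clause for the homomorphism property and the injectivity of $\lambda$ for the triviality of the kernel. Nothing is missing; the verification that $\lambda\circ g$ is again an embedding via the $G$-invariance of $\pi$ is precisely the point that makes the lemma applicable.
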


\begin{rem}
When we don't assume that the orbifolds we use are reduced (see, for example, \cite{LU}, def. 2.1.2)
one has to define an embedding from $(\tU,G.\pi)$ to $(\tV,H,\phi)$ as a pair $(\lambda,\Lambda)$ where:

\begin{itemize}
\item $\lambda:\tU\rightarrow\tV$ is an holomorphic embedding such that $\phi\circ\lambda=\pi$;

\item $\Lambda:G\rightarrow H$ is an injective group homomorphism, such that for all $g\in G$ we have $\lambda
\circ g=\Lambda(g)\circ\lambda$.
\end{itemize}

The first condition is just definition \ref{complex-embedding} and the second one is just the previous corollary,
so is not necessary in the reduced case. However, \emph{this is true only if we use reduced orbifolds because
lemma} \ref{lemma-moerdijk}\emph{ only applies in this case}. Indeed, the proof of the lemma consists in defining
a \emph{unique} element $h$ in $H$ such that $\mu=h\circ\lambda$ holds, but this is an identity between holomorphic
functions; so if the map $\psi$ defined in remark \ref{reduced-orb} is not injective, the existence part of the
lemma is still true, but in general we can't prove uniqueness, so also the corollary is no more true. In this work
we will only use reduced orbifolds, so we don't bother about this problem.
\end{rem}

\begin{lem}\label{lemma-intersection}
Let $\lambda:(\tU,G,\pi)\rightarrow(\tV,H,\phi)$ be an embedding and let $h\in H$. If $h(\lambda(\tU))\cap\lambda
(\tU)\neq\varnothing$, then $h(\lambda(\tU))=\lambda(\tU)$ and $h$ belongs to the image of the induced injective
group homomorphism $\Lambda:G\rightarrow H$.
\end{lem}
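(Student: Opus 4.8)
The plan is to reduce everything to the identity theorem for holomorphic maps on the connected manifolds $\tU$ and $\tV$. First I would record two elementary facts: since $\lambda$ is a holomorphic embedding between manifolds of the same dimension $n$, it is a biholomorphism onto the \emph{open} set $A:=\lambda(\tU)\subseteq\tV$, so that $\lambda^{-1}$ is defined and holomorphic on $A$; and since $\phi$ is $H$-invariant, $\phi\circ(h\circ\lambda)=\phi\circ\lambda=\pi$. Then I set $D:=\{\tx\in\tU\text{ s.t. }h(\lambda(\tx))\in A\}$. This set is open (it is the preimage of the open set $A$ under the continuous map $h\circ\lambda$) and nonempty: by hypothesis $A\cap h(\lambda(\tU))\neq\varnothing$, and any point of this intersection has the form $h(\lambda(\tx_0))$ with $h(\lambda(\tx_0))\in A$, i.e. $\tx_0\in D$. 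On $D$ I may therefore define the holomorphic injection $\sigma:=\lambda^{-1}\circ h\circ\lambda:D\to\tU$.

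Second, I would show that $\sigma$ preserves the projection. Using $\pi=\phi\circ\lambda$, the definition of $\sigma$, and the $H$-invariance of $\phi$, one computes $\pi\circ\sigma=\phi\circ(h\circ\lambda)=\phi\circ\lambda=\pi$ on $D$. Because $\pi$ induces a homeomorphism $\tU/G\cong U$, the equality $\pi(\sigma(\tx))=\pi(\tx)$ means exactly that $\sigma(\tx)$ and $\tx$ lie in the same $G$-orbit; hence for every $\tx\in D$ there is some $g\in G$ with $\sigma(\tx)=g(\tx)$.

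The heart of the argument, and the step I expect to be the main obstacle, is to promote this pointwise statement to a \emph{single} $g_0\in G$ valid on an open set. Using Lemma \ref{density} I would choose a base point $\tz\in D\cap\tU_G$, which is nonempty since $\tU_G$ is dense and $D$ is open. At such a $\tz$ the points $\{g(\tz):g\in G\}$ are pairwise distinct (trivial stabilizer), so the $g_0$ with $\sigma(\tz)=g_0(\tz)$ is unique. Since $G$ is finite and the closed sets $\{\tx\in D\text{ s.t. }\sigma(\tx)=g(\tx)\}$ cover $D$, a small neighbourhood $N\subseteq D$ of $\tz$ meets only the one corresponding to $g_0$, so $\sigma=g_0$ on $N$. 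Rewriting this via $\lambda\circ g_0=\Lambda(g_0)\circ\lambda$ (Corollary \ref{induced-map}) gives $h\circ\lambda=\Lambda(g_0)\circ\lambda$ on $N$; both sides being holomorphic on all of the connected manifold $\tU$, the identity theorem forces $h\circ\lambda=\Lambda(g_0)\circ\lambda$ on all of $\tU$. The delicate points here are precisely that $g_0$ must not vary (finiteness of $G$, density of $\tU_G$, and connectedness of $\tU$ are what make this work) and that passage from the local identity on $N$ to the global one is legitimate.

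Finally I would conclude. The global identity $h\circ\lambda=\Lambda(g_0)\circ\lambda$ says that $h$ and $\Lambda(g_0)$ agree on the nonempty open set $A=\lambda(\tU)$; as both are holomorphic automorphisms of the connected manifold $\tV$, a second application of the identity theorem yields $h=\Lambda(g_0)$, so $h$ belongs to the image of $\Lambda$. Consequently $h(\lambda(\tU))=\Lambda(g_0)(\lambda(\tU))=\lambda(g_0(\tU))=\lambda(\tU)$, since $g_0$ is an automorphism of $\tU$, which is exactly the assertion of the lemma.
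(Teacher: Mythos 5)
Your proof is correct, but there is nothing in the paper to compare it against line by line: the paper explicitly omits the proof of lemma \ref{lemma-intersection}, deferring to \cite{MP}, appendix, lemma A.2 (smooth case) with the remark that the argument adapts to the holomorphic setting. What you wrote is a complete, self-contained substitute, and it exploits precisely the simplification the holomorphic category affords. Your route --- restrict to the open set $D$ where $h\circ\lambda$ lands back in $A=\lambda(\tU)$, form $\sigma=\lambda^{-1}\circ h\circ\lambda$, use the $G$-invariance of $\pi$ and the homeomorphism $\tU/G\cong U$ to see that $\sigma$ moves points within their $G$-orbits, then combine finiteness of $G$ with lemma \ref{density} to pin down a single $g_0$ near a point of trivial stabilizer, and finally apply the identity theorem twice --- replaces the more laborious germ-patching/open-closed continuation that Moerdijk and Pronk must carry out in the smooth case, where no identity theorem is available. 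The two delicate points you flagged are indeed the crux, and both are handled correctly: the sets $\{\tx\in D \textrm{ s.t. } \sigma(\tx)=g(\tx)\}$ are finitely many and closed in $D$, so triviality of the stabilizer at $\tz$ forces $\sigma=g_0$ on a whole neighborhood $N$; and each application of the identity theorem is to a pair of maps globally defined on a connected manifold ($\tU$, then $\tV$; connectedness is part of definition \ref{unif-sys}) agreeing on a nonempty open set. Two small presentational caveats: your opening claim that $\lambda$ is a biholomorphism onto an open subset tacitly uses that the two uniformizing systems have the same dimension $n$ (the paper's standing convention, stated explicitly in lemma \ref{lemma-moerdijk}, on which corollary \ref{induced-map} already depends), and the injectivity of $\sigma$ is never actually needed.
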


This is proved for the smooth case, in \cite{MP}, appendix, lemma A.2, but the proof works also in the holomorphic
case, with some small changes, so we omit it.

\begin{defin}\label{orb-atlas}
Let $X$ be a paracompact and second countable Hausdorff topological space; a (\emph{complex})\emph{ reduced orbifold
atlas of dimension }$n$ on $X$ is a family $\mathcal{U}=\{(\tU_i,G_i,\pi_i)\}_{i\in I}$ of reduced uniformizing
systems of dimension $n$, such that:

\begin{enumerate}[(i)]
\item the family $\{\pi_i(\tU_i)\}_{i\in I}$ is an open cover of $X$;

\item if $\unif{i}$, $\unif{j}\in\mathcal{U}$ are uniformizing systems for $U_i$ and $U_j$ respectively, then for
every point $x\in U_i\cap U_j$ there exists an open neighborhood $U_k \subseteq U_i\cap U_j$ of $x$ in $X$, a
uniformizing system $\unif{k}\in\mathcal{U}$ for $U_k$ and embeddings:

\begin{equation}\label{eq-2}
\unif{i}\sinistra{\LA{ki}}\unif{k}\fre{\LA{kj}}\unif{j}. 
\end{equation}

\end{enumerate}
\end{defin}

\begin{rem}\label{orb-atlas-2}
To be more precise, an orbifold atlas is the datum of a family $\mathcal{U}$ of uniformizing systems that satisfy
(i) and (ii), \emph{together with the family of all possible embeddings between charts of }$\mathcal{U}$, so the
set of embeddings is uniquely determined by the set of uniformizing systems. Hence with a little abuse of notation
we will always write $\mathcal{U}=\{\unif{i}\}_{i\in I}$ to denote both the family of uniformizing systems and the
family of embeddings between them. So every atlas can be considered as a category, with objects given by its 
uniformizing systems and morphisms given by embeddings between them.
\end{rem}

\begin{rem}\label{useful-remark}
Let us fix any pair of uniformizing systems $\unif{i},\unif{j}\in\mathcal{U}$ and a pair of points
$\tx_i\in\tU_i$ and $\tx_j\in\tU_j$ such that $\pi_i(\tx_i)=\pi_j(\tx_j)$. Then by using (\ref{eq-2}) and
eventually by composing $\LA{ki}$ and $\LA{kj}$ with elements of $G_i$ and $G_j$ respectively, without loss
of generality we can assume that we have fixed a point $\tx_k\in\tU_k$ such that the following diagram of sets and
marked points is commutative:

\[\begin{tikzpicture}[scale=0.8]
    \def\x{1.5}
    \def\y{-1.2}
    \node (A0_0) at (0*\x, 1*\y) {$\tx_i$};
    \node (A0_2) at (2*\x, 1*\y) {$\tx_k$};
    \node (A0_4) at (4*\x, 1*\y) {$\tx_j$};
    \node (A1_0) [rotate=270] at (0*\x, 1.5*\y) {$\in$};
    \node (A1_2) [rotate=270] at (2*\x, 1.5*\y) {$\in$};
    \node (A1_4) [rotate=270] at (4*\x, 1.5*\y) {$\in$};
    \node (A2_0) at (0*\x, 2*\y) {$\tU_i$};
    \node (A2_2) at (2*\x, 2*\y) {$\tU_k$};
    \node (A2_4) at (4*\x, 2*\y) {$\tU_j$};
    \node (A3_1) at (1*\x, 3*\y) {$\curvearrowright$};
    \node (A3_3) at (3*\x, 3*\y) {$\curvearrowright$};
    \node (A4_0) at (0*\x, 4*\y) {$U_i$};
    \node (A4_2) at (2*\x, 4*\y) {$U_k$};
    \node (A4_4) at (4*\x, 4*\y) {$U_j.$};
    \node (A5_2) [rotate=90] at (2*\x, 4.5*\y) {$\in$};
    \node (A6_2) at (2*\x, 5*\y) {$x$};
    \path (A4_2) edge [right hook->] node [auto] {$\scriptstyle{}$} (A4_4);
    \path (A2_2) edge [->] node [auto,swap] {$\scriptstyle{\LA{ki}}$} (A2_0);
    \path (A2_2) edge [->] node [auto] {$\scriptstyle{\LA{kj}}$} (A2_4);
    \path (A2_0) edge [->] node [auto,swap] {$\scriptstyle{\pi_i}$} (A4_0);
    \path (A4_2) edge [right hook->] node [auto,swap] {$\scriptstyle{}$} (A4_0);
    \path (A2_4) edge [->] node [auto] {$\scriptstyle{\pi_j}$} (A4_4);
    \path (A2_2) edge [->] node [auto] {$\scriptstyle{\pi_k}$} (A4_2);
\end{tikzpicture}\]
\end{rem}

\begin{rem}\label{local-group}
Let us fix any point $x\in X$, let us choose $\unif{i}$ and $\unif{j}$ together with $\tx_i\in\tU_i$, $\tx_j\in
\tU_j$ such that $\pi_i(\tx_i)=x=\pi_j(\tx_j)$. Then using remark \ref{useful-remark} and corollary \ref{induced-map}
we get injective group homomorphisms $\Lambda_{ki}:G_k\rightarrow G_i$ and $\Lambda_{kj}:G_k\rightarrow G_j$. It is
easy to see that for every $g\in (G_k)_{\tx_k}$ we have $\Lambda_{ki}(g)\in (G_i)_{\tx_i}$, moreover, using lemma
\ref{lemma-intersection} we get that for every $g\in(G_i)_{\tx}$ there exists a unique $g'\in (G_k)_{\tx_k}$ such
that $\Lambda_{ki}(g')=g$. Hence we have proved that $\Lambda_{ki}$ restricts to a group isomorphism from 
$(G_k)_{\tx_k}$ to $(G_i)_{\tx_i}$; analogously we get a group isomorphism between $(G_k)_{\tx_k}$ and 
$(G_j)_{\tx_j}$, so $(G_i)_{\tx_i}\simeq(G_j)_{\tx_j}$. Hence one can give a notion of local group at a point
of $X$, \emph{well defined up to isomorphisms}.
\end{rem}

\begin{rem}
Some articles (see, for example, \cite{LU}, \S2) use a slight different definition of orbifold atlas that can
be restated as follows: an atlas is a family that satisfies condition (i) of our definition, together with
condition (ii) \emph{without} the assumption that $\unif{k}$ belongs to the atlas $\mathcal{U}$.
We preferred to use definition \ref{orb-atlas} instead of this one because it is more common in literature
and because in this way we can construct more easily a groupoid object associated to every orbifold atlas,
as described in \S 3.1. Clearly this alternative definition is weaker than the previous one, but it is
easy to see that any orbifold atlas
with respect to this alternative definition is equivalent (using \S \ref{section-equivalences}) to an orbifold
atlas with respect to our definition.
\end{rem}

\subsection{Local liftings and compatible systems}
Now our aim is to make orbifold atlases into a category, i.e. we want to define what a morphism between orbifolds atlases is. In
order to do that, we have first of all to define a continuous map between the underlying topological spaces, but
differently from the case of morphisms between manifolds, this will not be sufficent in general. The idea to keep
in mind in the following definitions is that a morphism between orbifolds is essentially a continuous function
which can be \emph{locally lifted to a holomorphic} function between uniformizing systems in source and target.

\begin{rem}
\emph{The following definition of morphism (compatible system) between orbifold atlases is} almost \emph{the usual
one of good/strong map between orbifolds (see, for example, \cite{CR}, \S 4.1) with one important difference.
Indeed it is quite evident from the following definitions that after passing to equivalence classes of atlases
(see \S \ref{section-equivalences}) our definition coincides with the definition of strong/good map. We are
forced to use this definition, instead of the usual one, since the objects we are dealing with are orbifold
atlases and not} classes of equivalence of \emph{orbifold atlases}.
\end{rem}

\begin{defin}
Let $\mathcal{U}$ and $\mathcal{V}$ be atlases for $X$ and $Y$ respectively and let $U\subseteq X$ and $V\subseteq
Y$ be open sets with uniformizing systems $(\tU,G,\pi)\in\mathcal{U}$ and $(\tV,H,\phi)\in\mathcal{V}$
respectively. Let $f:U\rightarrow V$ be a continuous function; then a \emph{lifting of }$f$ from $(\tU,G,\pi)$
\emph{to} $(\tV,H,\phi)$ is an holomorphic function $\tf_{\tU,\tV}:\tU\rightarrow \tV$ such that:

\begin{equation}\label{eq-3}\phi\circ\tf_{\tU,\tV} = f\circ \pi.\end{equation}  
\end{defin}

\begin{defin}\label{compatible-system}
Let $\mathcal{U}=\{\unif{i}\}_{i\in I}$ and $\mathcal{V}=\{\uniff{j}\}_{j\in J}$ be atlases (not necessarily of the
same dimension) for $X$ and $Y$ respectively and let $f:X\rightarrow Y$ be a continuous map. Then a
\emph{compatible system} for $f$ is the datum of:

\begin{enumerate}[(1)]\parindent=0pt
\item a \emph{functor} $\tf:\mathcal{U}\rightarrow\mathcal{V}$ between the associated categories (see remark 
\ref{orb-atlas-2}) such that if we call
$\uniff{i}\in\mathcal{V}$ the image of any element $\unif{i}\in\mathcal{U}$ via $\tf$, we have $f(\pi_i(\tU_i))
\subseteq\phi_i(\tV_i)$;

\item a collection $\{\tf_{\tU_i,\tV_i}\}_{\unif{i}\in\mathcal{U}}$ where for every $\unif{i}\in\mathcal{U}$ we 
have that $\tf_{\tU_i,\tV_i}$ is a lifting for the continuous function $f|_{U_i}:U_i\rightarrow f(U_i)\subseteq V_i$
from $\unif{i}$ to $\uniff{i}$;
\end{enumerate}

such that for every embedding $\LA{ij}$ from $\unif{i}$ to $\unif{j}$ in $\mathcal{U}$ we have:

\begin{equation}\label{eq-4}
\tf_{\tU_j,\tV_j}\circ \LA{ij}=\tf(\LA{ij})\circ\tf_{\tU_i,\tV_i}
\end{equation}
 
i.e. we are in the following situation:

\[\begin{tikzpicture}[scale=0.8]
    \def\x{1.9}
    \def\y{-1.4}
    \node (A0_0) at (0*\x, 0*\y) {$\tU_i$};
    \node (A0_2) at (2*\x, 0*\y) {$\tV_i$};
    \node (A1_1) at (1*\x, 1*\y) {$\tU_j$};
    \node (A1_3) at (3*\x, 1*\y) {$\tV_j$};
    \node (A2_0) at (0*\x, 2*\y) {$U_i$};
    \node (A2_2) at (2*\x, 2*\y) {$V_i$};
    \node (A3_1) at (1*\x, 3*\y) {$U_j$};
    \node (A3_3) at (3*\x, 3*\y) {$V_j$};
    \node (A2_1) at (1*\x, 2*\y) {$ $};
    \node (A2_3) at (3*\x, 2*\y) {$ $};
    \node (A1_2) at (2*\x, 1*\y) {$ $};

    \path (A0_0) edge [->] node [auto,swap] {$\scriptstyle{\LA{ij}}$} (A1_1);
    \path (A0_0) edge [->] node [auto] {$\scriptstyle{\tf_{\tU_i,\tV_i}}$} (A0_2);
    \path (A2_0) edge [right hook->] node [auto] {$\scriptstyle{}$} (A3_1);
    \path (A0_2) edge [->] node [auto] {$\scriptstyle{}$} (A2_2);
    \path (A1_2) edge [-] node [auto] {$\scriptstyle{\phi_i}$} (A2_2);
    \path (A3_1) edge [->] node [auto,swap] {$\scriptstyle{f_{|U_j}}$} (A3_3);
    \path (A2_0) edge [->] node [auto,swap] {$\scriptscriptstyle{ }$} (A2_2);
    \path (A2_1) edge [-] node [auto,swap] {$\scriptstyle{f_{|U_i}}$} (A2_2);
    \path (A1_1) edge [->] node [auto,swap] {$\scriptstyle{}$} (A3_1);
    \path (A2_2) edge [right hook->] node [auto] {$\scriptstyle{}$} (A3_3);
    \path (A0_2) edge [->] node [auto] {$\scriptstyle{\tf(\LA{ij})}$} (A1_3);
    \path (A1_1) edge [->] node [auto] {$\scriptstyle{}$} (A1_3);
    \path (A1_1) edge [-] node [auto] {$\scriptstyle{\tf_{\tU_j,\tV_j}}$} (A1_2);
    \path (A0_0) edge [->] node [auto,swap] {$\scriptstyle{\pi_i}$} (A2_0);
    \path (A1_3) edge [->] node [auto] {$\scriptstyle{\phi_j}$} (A3_3);
    \path (A1_1) edge [-] node [auto,swap] {$\scriptstyle{\pi_j}$} (A2_1);
\end{tikzpicture}\]

where \emph{all the faces of the cube are commutative}; indeed:

\begin{itemize}
\item the lower face is commutative because we are just restricting the function $f$ from $U_j$ to $U_i$;

\item the left and right sides are commutative by definition of embedding in $\mathcal{U}$ and $\mathcal{V}$ 
respectively;

\item the front and back sides are both commutatives because of (\ref{eq-3});

\item the top side is commutative because of (\ref{eq-4}).
\end{itemize}
\end{defin}

With a little abuse of notation, we will always write $\tf:\mathcal{U}\rightarrow\mathcal{V}$ to denote a 
compatible system for $f$, i.e. with $\tf$ we will usually mean not only the functor which satisfies (1), but also
the collection of local liftings described in (2).

\begin{rem}
Note that in (1) \emph{it is sufficient to require that $\tilde{f}$ preserves compositions} (and actually, this
is the standard definition in most articles). Indeed, suppose that we have fixed any $i\in I$ and let us call
$h:=\tilde{f}(1_{\tU_i}) \in H_i$; then we have:

$$h=\tilde{f}(1_{\tU_i})=\tilde{f}(1_{\tU_i}^2)=\tilde{f}(1_{\tU_i})^2=h^2;$$

since $H_i$ is a group by hypothesis, this implies that $h=1_{\tV_i}$, i.e. $\tilde{f}$ preserves
all the identities.
\end{rem}

\begin{defin}\label{composition-compatible-sys} 
Now let us consider 3 fixed orbifolds atlases $\mathcal{U},\mathcal{V},\mathcal{W}$  for $X$, $Y$ and $Z$
respectively, together with 2 continuous functions $f:X\rightarrow Y$, $g:Y\rightarrow Z$ and compatible
systems $\tf:\mathcal{U}\rightarrow\mathcal{V}$ and $\tg:\mathcal{V}\rightarrow\mathcal{W}$. For every
uniformizing system $\unif{i}\in\mathcal{U}$, let us call:

$$\uniff{i}:=\tf\unif{i}\AND(\tW_i,K_i,\xi_i):=\tg\uniff{i}.$$

Then we define the compatible system $\tg\circ\tf$ for $g\circ f$ as the functor
$\tg\circ\tf:\mathcal{U}\rightarrow\mathcal{W}$ together with the collection of liftings:

$$\left\{(\tg\circ\tf)_{\tU_i,\tW_i}:=\tg_{\tV_i,\tW_i}\circ\tf_{\tU_i,\tV_i}\right\}_{\unif{i}\in\mathcal{U}}.$$
\end{defin}

\subsection{Natural transformations between compatible systems}
With the previous definitions we get a category, but we recall that we wanted to make orbifol atlases 
into a 2-category, so we give the following definition (a slight change of def. 1.3.6 in \cite{Pe}, which 
I think was too much restrictive for our purposes).

\begin{defin}\label{nat-tran-orb}
Let us fix atlases $\mathcal{U}$ and $\mathcal{V}$ for $X$ and $Y$ respectively and let $\tf_1,\tf_2:\mathcal{U}
\rightarrow\mathcal{V}$ be compatible systems for \emph{the same} continuous map $f: X\rightarrow Y$. For
simplicity, for every uniformizing system $\unif{i}\in\mathcal{U}$ and for every embedding $\LA{ij}$, let us call:

$$\unifff{i}{m}:=\tf_m\unif{i}\AND\lambda_{ij}^m:=\tf_m(\LA{ij})\textrm{\quad for\quad} m=1,2.$$

Then a \emph{natural transformation of compatible systems from} $\tf_1$ \emph{to} $\tf_2$ is a family:

$$\left\{\delta_{\tU_i}=\delta_{\unif{i}}:\unifff{i}{1}\rightarrow\unifff{i}{2}\right\}_{\unif{i}\in\mathcal{U}}$$

of embeddings in $\mathcal{V}$, such that:

\begin{enumerate}[(i)]\parindent=0pt
\item for every $\unif{i}\in\mathcal{U}$ we have $(\tf_2)_{\tU_i,\tV_i^2}=\delta_{\tU_i}\circ
(\tf_1)_{\tU_i,\tV_i^1}$;

\item for every embedding $\LA{ij}$ in $\mathcal{U}$ we have a commutative diagram in $\mathcal{V}$:

\begin{equation}\label{eq-6}
   \begin{tikzpicture}[scale=0.8]
    \def\x{2.0}
    \def\y{-1.2}
    \node (A0_0) at (0*\x, 0*\y) {$\unifff{i}{1}$};
    \node (A0_2) at (2*\x, 0*\y) {$\unifff{i}{2}$};
    \node (A2_2) at (2*\x, 2*\y) {$\unifff{j}{2}.$};
    \node (A2_0) at (0*\x, 2*\y) {$\unifff{j}{1}$};
    \node (A1_1) at (1*\x, 1*\y) {$\curvearrowright$};

    \path (A0_0) edge [->] node [auto] {$\scriptstyle{\delta_{\tU_i}}$} (A0_2);
    \path (A0_2) edge [->] node [auto] {$\scriptstyle{\lambda_{ij}^2}$} (A2_2);
    \path (A2_0) edge [->] node [auto,swap] {$\scriptstyle{\delta_{\tU_j}}$} (A2_2);
    \path (A0_0) edge [->] node [auto,swap] {$\scriptstyle{\lambda_{ij}^1}$} (A2_0);
   \end{tikzpicture}
\end{equation}
\end{enumerate}

Whenever we have a natural transformation as before, we will denote it as $\delta:\tf_1\Rightarrow\tf_2.$
Note that if we ignore the additional properties of the compatible systems $\tf_1$ and $\tf_2$ and we consider
them just as functors, we get that condition (ii) is just the description of a natural transformation from
the functor $\tf_1$ to the functor $\tf_2$ (so the following horizontal and vertical compositions of natural
transformations will be modeled on the corresponding constructions described, for example, in \cite{B}, \S 1.3).
\end{defin}

\begin{rem}
\emph{Condition} (ii) \emph{is not overabundant}; indeed let us consider the following diagram:

\[\begin{tikzpicture}[scale=0.8]
    \def\x{2.0}
    \def\y{-1.7}
    \node (A0_2) at (2*\x, 0.6*\y) {$\curvearrowright$};
    \node (A1_0) at (0*\x, 1*\y) {$(\tV_i^1,H_i^1,\phi_i^1)$};
    \node (A1_2) at (2*\x, 1*\y) {$(\tU_i,G_i,\pi_i)$};
    \node (A1_4) at (4*\x, 1*\y) {$(\tV_i^2,H_i^2,\phi_i^2)$};
    \node (A2_1) at (1*\x, 2*\y) {$\curvearrowright$};
    \node (A2_3) at (3*\x, 2*\y) {$\curvearrowright$};
    \node (A3_0) at (0*\x, 3*\y) {$(\tV_j^1,H_j^1,\phi_j^1)$};
    \node (A3_2) at (2*\x, 3*\y) {$(\tU_j,G_j,\pi_j)$};
    \node (A3_4) at (4*\x, 3*\y) {$(\tV_j^2,H_j^2,\phi_j^2)$};
    \node (A4_2) at (2*\x, 3.4*\y) {$\curvearrowright$};
    \path (A1_4) edge [->] node [auto] {$\scriptstyle{\lambda_{ij}^2}$} (A3_4);
    \path (A1_0) edge [->,bend left=30] node [auto] {$\scriptstyle{\delta_{\tU_i}}$} (A1_4);
    \path (A3_2) edge [->] node [auto] {$\scriptstyle{(\tf_2)_{\tU_j,\tV_j^2}}$} (A3_4);
    \path (A3_2) edge [->] node [auto,swap] {$\scriptstyle{(\tf_1)_{\tU_j,\tV_j^1}}$} (A3_0);
    \path (A3_0) edge [->,bend right=30] node [auto,swap] {$\scriptstyle{\delta_{\tU_j}}$} (A3_4);
    \path (A1_0) edge [->] node [auto,swap] {$\scriptstyle{\lambda_{ij}^1}$} (A3_0);
    \path (A1_2) edge [->] node [auto,swap] {$\scriptstyle{(\tf_2)_{\tU_i,\tV_i^2}}$} (A1_4);
    \path (A1_2) edge [->] node [auto] {$\scriptstyle{\lambda_{ij}}$} (A3_2);
    \path (A1_2) edge [->] node [auto] {$\scriptstyle{(\tf_1)_{\tU_i,\tV_i^1}}$} (A1_0);
\end{tikzpicture}\]

where the two squares are commutative because of (\ref{eq-4}) applied to $\tf_1$ and $\tf_2$ 
respectively, and the upper and lower parts are commutative because of part (i) of the previuos definition
(applied to $\tU_i$ and $\tU_j$ respectively). Then a diagram chase only proves that
$\lambda_{ij}^2\circ\delta_{\tU_i}=\delta_{\tU_j}\circ\lambda_{ij}^1$ on the set 
$(\tf_1)_{\tU_i,\tV_i^1}(\tU_i)$, which in general is neither open nor dense in the whole  $\tV_i$.
\end{rem}

\begin{defin}
Let us fix orbifold atlases $\mathcal{U}$ for $X$ and $\mathcal{V}$ for $Y$, a continuous map $f:X\rightarrow Y$, 
3 compatible systems $\tf_m:\mathcal{U}\rightarrow\mathcal{V}$ for $m=1,2,3$ and natural transformations 
$\delta:\tf_1\Rightarrow\tf_2$ and $\sigma:\tf_2\Rightarrow\tf_3$. Then we define the \emph{vertical composition} 
$\sigma\odot\delta:\tf_1\Rightarrow\tf_3$ as follows: for any $\unif{i}\in\mathcal{U}$ we set:

$$(\sigma\odot\delta)_{\tU_i}:=\sigma_{\tU_i}\circ\delta_{\tU_i}$$

which is clearly an embedding in $\mathcal{V}$ between the images of $\unif{i}$ via $\tf_1$ and $\tf_3$ 
respectively. Moreover, a direct check proves that properties (i) and (ii) of definition \ref{nat-tran-orb} are
satisfied, hence $\sigma\odot\delta$ is actually a natural transformation from $\tf_1$ to $\tf_3$.
\end{defin}

\begin{defin}
For every uniformizing system $\tf:\mathcal{U}\rightarrow\mathcal{V}$, we define 
the natural transformation $i_{\tf}$ as follows: for any uniformizing system $\unif{i}\in\mathcal{U}$ we set as
usual $(\tV_i,H_i,\phi_i):=\tf(\tU_i,G_i,\pi_i)$ and we define $(i_{\tf})_{\tU_i}:=1_{\tV_i}$. Clearly $i_{\tf}$ is
a natural transformation from $\tf$ to itself; moreover, for any  $\alpha:\tf \Rightarrow\tg$ and for any $\beta:
\th\Rightarrow\tf$ we have:

\begin{equation}\label{eq-7}
\alpha\odot i_{\tf}=\alpha\AND i_{\tf}\odot\beta=\beta. 
\end{equation}
\end{defin}

\begin{defin}
Let $\mathcal{U},\mathcal{V},\mathcal{W}$ be ordifold atlases for $X,Y$ and $Z$ respectively; let $\tf_m$ and 
$\tg_m$  be compatible systems for $f:X\rightarrow Y$ and $g:Y\rightarrow Z$ respectively, for $m=1,2$. Moreover,
assume that we have natural transformations $\delta:\tf_1\Rightarrow\tf_2$ and $\eta:\tg_1\Rightarrow\tg_2$. Then
we define a \emph{horizontal composition} $\eta\ast\delta:(\tg_1\circ\tf_1)\Rightarrow(\tg_2\circ\tf_2)$
as follows: for any $\unif{i}\in\mathcal{U}$ we set:

$$(\eta\ast\delta)_{\tU_i}:=\eta_{\tV_i^2}\circ\tg_1(\delta_{\tU_i}).$$

Every map of this form is actually an embedding between uniformizing systems because composition of embeddings:
indeed $\eta_{\tV_i^2}$ is so
by definition and $\tg_1(\delta_{\tU_i})$ is an embedding because the functor $\tg_1$ maps embeddings to
embeddings. Again a very simple check proves that properties (i) and (ii) of definition \ref{nat-tran-orb} are
satisfied.
\end{defin}

\subsection{The 2-category \CAT{Pre-Orb}}

\begin{prop}\label{2-cat-orb}
The definitions of orbifold atlases, compatible systems, natural transformations and compositions $\circ,\odot,
\ast$ give rise to a 2-category, that we will denote with $\CAT{Pre-Orb}$.
\end{prop}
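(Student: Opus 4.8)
The plan is to verify the four axioms (a)--(d) of a 2-category (Definition \ref{2-cat}) for the data we have just defined. All of the objects are in place: the objects of $\CAT{Pre-Orb}$ are reduced orbifold atlases; for atlases $\mathcal{U},\mathcal{V}$ the small category $\CAT{Pre-Orb}(\mathcal{U},\mathcal{V})$ has as objects the compatible systems $\tf:\mathcal{U}\rightarrow\mathcal{V}$ and as morphisms the natural transformations $\delta:\tf_1\Rightarrow\tf_2$, with vertical composition $\odot$; the functor $c$ is given on objects by $\circ$ (Definition \ref{composition-compatible-sys}) and on morphisms by $\ast$; and the units are $1_{\mathcal{U}}$ (the identity compatible system) together with $i_{\tf}$. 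So the proof is a matter of checking that these assemble correctly.

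First I would check that each $\CAT{Pre-Orb}(\mathcal{U},\mathcal{V})$ really is a (small) category: that $\odot$ is associative and that $i_{\tf}$ is a two-sided unit for it. Associativity of $\odot$ is immediate because $(\sigma\odot\delta)_{\tU_i}=\sigma_{\tU_i}\circ\delta_{\tU_i}$ is defined chart-by-chart via honest composition of embeddings in $\mathcal{V}$, which is associative; the unit laws are exactly (\ref{eq-7}). (One should keep in mind that the morphism-sets are genuinely \emph{sets}, since a natural transformation is a family indexed by the set of charts of $\mathcal{U}$.) Axioms (a) and (c) then concern only the 1-morphisms: associativity of $\circ$ follows because the functor part composes associatively and the lifting part is $(\tg\circ\tf)_{\tU_i,\tW_i}=\tg_{\tV_i,\tW_i}\circ\tf_{\tU_i,\tV_i}$, again associative by associativity of composition of holomorphic maps; and $\tf\circ 1_{\mathcal{U}}=\tf=1_{\mathcal{V}}\circ\tf$ is immediate once one unwinds that $1_{\mathcal{U}}$ is the identity functor with identity liftings.

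The substantive content is that $c_{\mathcal{U}\mathcal{V}\mathcal{W}}$ is a \emph{functor}, which is exactly the interchange law noted in Remark \ref{interchage}: for vertically composable $\delta,\delta'$ and $\eta,\eta'$ one must show $(\eta'\odot\eta)\ast(\delta'\odot\delta)=(\eta'\ast\delta')\odot(\eta\ast\delta)$, and that $c$ sends identities to identities. Here I would compute both sides chart-by-chart using $(\eta\ast\delta)_{\tU_i}=\eta_{\tV_i^2}\circ\tg_1(\delta_{\tU_i})$; the key algebraic input is naturality of $\eta$ with respect to the embedding $\delta_{\tU_i}$ (property (ii) of Definition \ref{nat-tran-orb}), which lets one slide $\eta$ past $\tg_1(\delta)$, together with functoriality of $\tg_1$. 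For axiom (b), associativity of $\ast$, a similar chart-wise unwinding reduces it to associativity of composition in the target atlas once the functors are composed. Finally axiom (d) is the verification $\alpha\ast i_{1_{\mathcal{U}}}=\alpha=i_{1_{\mathcal{V}}}\ast\alpha$, which comes down to the definition of $i_{\tf}$ as the family of identity embeddings.

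The main obstacle I expect is not any single axiom in isolation but the bookkeeping inside the interchange-law computation, and in particular making sure each step is a legitimate identity of embeddings in $\mathcal{V}$ or $\mathcal{W}$: every time two liftings are asserted equal one is really asserting an identity of holomorphic maps, and the uniqueness that makes these manipulations well behaved rests on Lemma \ref{lemma-moerdijk} (valid precisely because we work with reduced atlases). So the one point that deserves genuine care is checking that each composite I write down really does satisfy conditions (i) and (ii) of Definition \ref{nat-tran-orb}, i.e. that the outputs of $\ast$ and $\odot$ are again natural transformations and not merely chart-indexed families of embeddings; once that is in hand, every axiom reduces to associativity and unitality of composition of embeddings, which hold because composition of holomorphic maps is associative and unital. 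I would therefore present the proof as: (1) each hom-category is a category; (2) $c$ is a functor (interchange plus units); (3) the three strict-associativity/unit axioms, each a one-line chart-wise check.
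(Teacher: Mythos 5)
Your proposal is correct and follows essentially the same route as the paper's proof: the same decomposition into data (1)--(4) and axioms (a)--(d), with the only substantive computation being the chart-wise verification of the interchange law, whose key step is exactly what you identify, namely property (ii) of Definition \ref{nat-tran-orb} (i.e. diagram (\ref{eq-6})) for the right-hand 2-morphism applied to the embedding supplied by the left-hand one, together with functoriality of $\tg_1$. The only cosmetic difference is your appeal to Lemma \ref{lemma-moerdijk} as a safeguard; the paper's proof never needs it, since every asserted equality is a direct identity of composites of embeddings, and the closure of $\odot$ and $\ast$ under conditions (i) and (ii) of Definition \ref{nat-tran-orb} is already settled in the definitions themselves.
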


Note that we cannot call this 2-category $\CAT{Orb}$ because we will see in \S \ref{section-equivalences} that
orbifolds are equivalence classes of orbifold atlases.

\begin{proof}
In order to construct a 2-category, we have to define data (1)-(4) and to verify axioms (a)-(d) of definition
\ref{2-cat}.

\begin{enumerate}[(1)]\parindent=0pt
\item First of all, the class of objects is just the set of all orbifold atlases for every
topological space $X$ (if any).

\item If $\mathcal{U}$ and $\mathcal{V}$ are atlases over $X$ and $Y$, we
define a (small) category $\CAT{Pre-Orb}(\mathcal{U},\mathcal{V})$ as follows: the space of objects
is the set of all compatible systems $\tf:\mathcal{U}\rightarrow
\mathcal{V}$ (if any) for all continuous maps $f:X\rightarrow Y$; for any pair of compatible systems $\tf$ and $\tg$
for $f$ and $g$ respectively, we define:

$$\CAT{Pre-Orb}(\mathcal{U},\mathcal{V})(\tf,\tg):=
\left\{\begin{array}{c c}\textrm{natural transformations }
  \tf\Rightarrow\tg & \textrm{if } f=g\\ 
  \varnothing       & \textrm{else.} 
\end{array}\right.$$

The vertical composition $\odot$ is clearly associative; moreover using (\ref{eq-7}) we get that the identity over
any object $\tf$ is just $i_{\tf}$.

\item For every triple $\mathcal{U},\mathcal{V},\mathcal{W}$ of objects, we define the functor ``composition'':

$$c_{\mathcal{U},\mathcal{V},\mathcal{W}}:\CAT{Pre-Orb}(\mathcal{U},\mathcal{V})\times\CAT{Pre-Orb}(\mathcal{V},
\mathcal{W})\rightarrow\CAT{Pre-Orb}(\mathcal{U},\mathcal{W}):$$

\begin{itemize}
\item for every $\tf:\mathcal{U}\rightarrow\mathcal{V}$ and $\tg:\mathcal{V}\rightarrow\mathcal{V}$ we set
$c_{\mathcal{U},\mathcal{V},\mathcal{W}}(\tf,\tg):=\tg\circ \tf$;

\item for every $\delta:\tf_1\Rightarrow\tf_2$ in $\CAT{Pre-Orb}(\mathcal{U},\mathcal{V})$ and for every $\eta:
\tg_1\Rightarrow\tg_2$ in $\CAT{Pre-Orb}(\mathcal{V},\mathcal{W})$ we set:

\begin{equation}\label{eq-8}
c_{\mathcal{U},\mathcal{V},\mathcal{W}}(\delta,\eta):=\eta\ast\delta:(\tg_1\circ\tf_1)\Rightarrow(\tg_2\circ
\tf_2). \end{equation}

\end{itemize}

We want to prove that $c_{\mathcal{U},\mathcal{V},\mathcal{W}}$ is a functor. It is easy to see that it
preserves identities, so let us only prove that it preserves compositions. For every diagram of the form:

\[\begin{tikzpicture}[scale=0.8]
    \def\x{2.0}
    \def\y{-1.2}
    \node (A0_0) at (0*\x, 0*\y) {$\mathcal{U}$};
    \node (A0_1) at (1.1*\x, 0.6) {$\Downarrow\delta$};
    \node (A0_1) at (1.1*\x, -0.8) {$\Downarrow\sigma$};
    \node (A0_2) at (2*\x, 0*\y) {$\mathcal{V}$};
    \node (A0_3) at (3.1*\x, 0.6) {$\Downarrow\eta$};
    \node (A0_3) at (3.1*\x, -0.8) {$\Downarrow\mu$};
    \node (A0_4) at (4*\x, 0*\y) {$\mathcal{W}$};

    \path (A0_0) edge [->,bend left=50] node [auto] {$\scriptstyle{\tf_1}$} (A0_2);
    \path (A0_0) edge [->] node [auto,swap] {$\scriptstyle{\tf_2}$} (A0_2);
    \path (A0_0) edge [->,bend right=50] node [auto,swap] {$\scriptstyle{\tf_3}$} (A0_2);
    \path (A0_2) edge [->,bend left=50] node [auto] {$\scriptstyle{\tg_1}$} (A0_4);
    \path (A0_2) edge [->] node [auto,swap] {$\scriptstyle{\tg_2}$} (A0_4);
    \path (A0_2) edge [->,bend right=50] node [auto,swap] {$\scriptstyle{\tg_3}$} (A0_4);
\end{tikzpicture}\]

we want to prove that:

\begin{equation}\label{eq-9}
c_{\mathcal{U},\mathcal{V},\mathcal{W}}\Big((\sigma\odot\delta),(\mu\odot\eta)\Big)\stackrel{?}{=}
c_{\mathcal{U},\mathcal{V},\mathcal{W}}(\sigma,\mu)\odot c_{\mathcal{U},\mathcal{V},\mathcal{W}}(\delta,\eta). 
\end{equation}

Using (\ref{eq-8}) we get that to prove (\ref{eq-9}) is equivalent to prove that $(\mu\odot\eta)\ast(\sigma\odot\delta)
\stackrel{?}{=}(\mu\ast\sigma)\odot(\eta\ast\delta)$.
In other words, we have to prove that the \emph{interchagenge law} (see remark \ref{interchage}) is satisfied. So
let us verify that this last identity is true: for any uniformizing system $\unif{i}\in\mathcal{U}$ we have:

\begin{eqnarray*}
& \Big((\mu\odot\eta)\ast(\sigma\odot\delta)\Big)_{\tU_i}=(\mu\odot\eta)_{\tV_i^3}\circ\tg_1\Big((\sigma\odot
  \delta)_{\tU_i}\Big)= &\\
& =\mu_{\tV_i^3}\circ\eta_{\tV_i^3}\circ\tg_1(\sigma_{\tU_i})\circ\tg_1(\delta_{\tU_i})\stackrel{*}{=}
  \mu_{\tV_i^3}\circ\tg_2(\sigma_{\tU_i})\circ\eta_{\tV_i^2}\circ\tg_1(\delta_{\tU_i})= &\\
& =(\mu\ast\sigma)_{\tU_i}\odot(\eta\ast\delta)_{\tU_i}=\Big((\mu\ast\sigma)\odot(\eta\ast\delta)\Big)_{\tU_i} &
\end{eqnarray*}

where the passage denoted with $\stackrel{*}{=}$ is just (\ref{eq-6}) for the natural transformation $\eta:\tg_1
\Rightarrow\tg_2$ and for the the embedding $\lambda:=\sigma_{\tU_i}$. Hence (\ref{eq-9}) is proved, so 
$c_{\mathcal{U},\mathcal{V},\mathcal{W}}$ preserves compositions.

\item It remains to define the ``identities'' of $\CAT{Pre-Orb}$, so for every atlas $\mathcal{U}$ we define 
$1_{\mathcal{U}}:\mathcal{U}\rightarrow\mathcal{U}$ to be a compatible system over the identity on $X$, described
as the identity functor from the category associated to $\mathcal{U}$ to itself, together with the collection of
liftings for the identity map on $X$:

$$\left\{(1_{\mathcal{U}})_{\tU_i,\tU_i}:=1_{\tU_i}\right\}_{\unif{i}\in\mathcal{U}};$$

moreover, we define $i_{\mathcal{U}}$ as the natural transformation $i_{1_{\mathcal{U}}}$. So \emph{we have
defined all the data of a 2-category}; now we have only to verify axioms (a)-(d).
\end{enumerate}

\begin{enumerate}[(a)]\parindent=0pt
\item For every triple of compatible systems: $\mathcal{U}\stackrel{\tf}{\longrightarrow}\mathcal{V}\stackrel{\tg}
{\longrightarrow}\mathcal{W}\stackrel{\th}{\longrightarrow}\mathcal{Z}$ we have that $(\th\circ\tg)\circ\tf=\th
\circ(\tg\circ\tf)$ as functors; moreover, for every uniformizing system $\unif{i}\in\mathcal{U}$, if we call
$(\tV_i,H_i,\phi_i)$, $(\tW_i,K_i,\xi_i)$ and $(\tZ_i,L_i,\psi_i)$ the images of $(\tU_i,G_i,\pi_i)$ via $\tf$,
$\tg\circ\tf$ and $\th\circ\tg\circ\tf$ respectively, we get:

\begin{eqnarray*}
& \Big((\th\circ\tg)\circ\tf\Big)_{\tU_i,\tZ_i}=(\th\circ\tg)_{\tV_i,\tZ_i}\circ\tf_{\tU_i,\tV_i}= &\\
& =\th_{\tW_i,\tZ_i}\circ\tg_{\tV_i,\tW_i}\circ\tf_{\tU_i,\tV_i}=\Big(\th\circ(\tg\circ\tf)\Big)_{\tU_i,
  \tZ_i} &
\end{eqnarray*}

hence $(\th\circ\tg)\circ\tf=\th\circ(\tg\circ\tf)$ as compatible systems.

\item Let us fix any diagram of compatible systems and natural transformations of the form:

\[\begin{tikzpicture}
    \def\x{1.5}
    \def\y{-1.2}
    \node (A0_0) at (0*\x, 0*\y) {$\mathcal{U}$};
    \node (A0_1) at (1.1*\x, 0*\y) {$\Downarrow\delta$};
    \node (A0_2) at (2*\x, 0*\y) {$\mathcal{V}$}; 
    \node (A0_3) at (3.1*\x, 0*\y) {$\Downarrow\eta$};
    \node (A0_4) at (4*\x, 0*\y) {$\mathcal{W}$};
    \node (A0_5) at (5.1*\x, 0*\y) {$\Downarrow\omega$};
    \node (A0_6) at (6*\x, 0*\y) {$\mathcal{Z}$};

    \path (A0_0) edge [->,bend left=25] node [auto] {$\scriptstyle{\tf_1}$} (A0_2);
    \path (A0_0) edge [->,bend right=25] node [auto,swap] {$\scriptstyle{\tf_2}$} (A0_2);
    \path (A0_2) edge [->,bend left=25] node [auto] {$\scriptstyle{\tg_1}$} (A0_4);
    \path (A0_2) edge [->,bend right=25] node [auto,swap] {$\scriptstyle{\tg_2}$} (A0_4);
    \path (A0_4) edge [->,bend left=25] node [auto] {$\scriptstyle{\th_1}$} (A0_6);
    \path (A0_4) edge [->,bend right=25] node [auto,swap] {$\scriptstyle{\th_2}$} (A0_6);
\end{tikzpicture}\]

and any uniformizing system $(\tU_i,G_i,\pi_i)\in\mathcal{U}$; then let us define:

$$(\tW_i^{mn},K_i^{mn},\xi_i^{mn}):=\tg_n\circ\tf_m\unif{i}\textrm{\quad for\quad}m,n=1,2.$$

If we use also the notations introduced in definition \ref{nat-tran-orb} we have:

\begin{eqnarray*}
& \Big((\omega\ast\eta)\ast\delta\Big)_{\tU_i}=(\omega\ast\eta)_{\tV_i^2}\circ\Big((\th_1\circ\tg_1)
  (\delta_{\tU_i})\Big)= &\\
& =\omega_{\tW_i^{22}}\circ\th_1(\eta_{\tV_i^2})\circ\Big(\th_1\circ\tg_1(\delta_{\tU_i})\Big)=
  \omega_{\tW_i^{22}}\circ\th_1\Big(\eta_{\tV_i^2}\circ\tg_1(\delta_{\tU_i})\Big)= &\\
& =\omega_{\tW_i^{22}}\circ\th_1\Big((\eta\ast\delta)_{\tU_i}\Big)=\Big(\omega\ast(\eta\ast\delta)
  \Big)_{\tU_i}. &
\end{eqnarray*}

Hence we have proved that $(\omega\ast\eta)\ast\delta=\omega\ast(\eta\ast\delta)$.

\item For every compatible system $\tf:\mathcal{U}\rightarrow\mathcal{V}$ we have $\tf\circ 1_{\mathcal{U}}=\tf$
as functors. Moreover, for every $\unif{i}\in\mathcal{U}$ we have $(\tf\circ 1_{\mathcal{U}})_{\tU_i,\tV_i}=
\tf_{\tU_i,\tV_i}\circ 1_{\tU_i}=\tf_{\tU_i,\tV_i}$. Hence $\tf\circ 1_{\mathcal{U}}=\tf$ in the sense of
compatible systems. In the same way one can check that $1_{\mathcal{V}}\circ\tf=\tf$.

\item A direct check proves also that for every natural transformation $\delta:\tf_1\Rightarrow\tf_2$ in
$\CAT{Pre-Orb}(\mathcal{U},\mathcal{V})$ we have that $\delta\ast i_{\mathcal{U}}=\delta=i_{\mathcal{V}}\ast
\delta$. This concludes the proof that $\CAT{Pre-Orb}$ is a 2-category.
\end{enumerate}
\end{proof}

\subsection{Equivalent orbifold atlases}\label{section-equivalences}

It is well known that a manifold is an equivalence class of compatible manifold atlases; in literature there is
an analogous notion in the framework of orbifolds. 

\begin{defin}\label{equivalence-CR} (equivalent to \cite{LU}, \S 2.1 and \cite{CR}, def. 4.1.2)
Two atlases $\mathcal{U}=\{\unif{i}\}_{i\in I}$ and $\mathcal{V}=\{\uniff{j}\}_{j\in J}$ on the same space $X$
are \emph{equivalent at a point} $x\in X$ iff there exists a uniformizing system $(\tW,K,\xi)$ around $x$,
together with 2 embeddings:

$$\unif{i}\sinistra{\lambda}(\tW,K,\xi)\fre{\lambda'}\uniff{j}$$

for some uniformizing systems $\unif{i}\in\mathcal{U}$ and $\uniff{j}\in\mathcal{V}$.
Note that we don't require that $(\tW,K,\xi)$ belongs to $\mathcal{U}$ and/or $\mathcal{V}$.
Two atlases on a space $X$ are \emph{equivalent} iff they are equivalent at every point of $X$.
\end{defin}

\begin{lem}\label{equivalence-CR-proof}
This is an equivalence relation (see the appendix for the proof).
\end{lem}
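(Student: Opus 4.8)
The plan is to verify the three defining properties of an equivalence relation, only transitivity being substantial. Reflexivity is immediate: for $x\in X$, condition (i) of Definition \ref{orb-atlas} gives a chart $\unif{i}\in\mathcal{U}$ with $x\in\pi_i(\tU_i)$, and one takes $(\tW,K,\xi):=\unif{i}$ with both embeddings the identity. Symmetry holds because Definition \ref{equivalence-CR} is visibly symmetric in $\mathcal{U}$ and $\mathcal{V}$: the same system $(\tW,K,\xi)$ and the same two embeddings witness equivalence in either order; one simply reads the span the other way.

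For transitivity, suppose $\mathcal{U}\sim\mathcal{V}$ and $\mathcal{V}\sim\mathcal{W}$ on $X$, and fix $x\in X$. Equivalence at $x$ supplies $(\tW_1,K_1,\xi_1)$ with embeddings $\unif{i}\sinistra{\mu_1}(\tW_1,K_1,\xi_1)\fre{\nu_1}\uniff{j}$, and $(\tW_2,K_2,\xi_2)$ with $\uniff{j'}\sinistra{\nu_2}(\tW_2,K_2,\xi_2)\fre{\mu_2}(\tZ,L,\psi)$, where $\unif{i}\in\mathcal{U}$, $(\tZ,L,\psi)\in\mathcal{W}$ and $\uniff{j},\uniff{j'}\in\mathcal{V}$. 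Since $\phi_j\circ\nu_1=\xi_1$ and $\phi_{j'}\circ\nu_2=\xi_2$ both have $x$ in their image, both $\mathcal{V}$-charts cover $x$, so condition (ii) of Definition \ref{orb-atlas} applied to $\mathcal{V}$ yields a chart $(\tV_k,H_k,\phi_k)\in\mathcal{V}$ around $x$ with embeddings $\uniff{j}\sinistra{a}(\tV_k,H_k,\phi_k)\fre{b}\uniff{j'}$. The task is now to contract the resulting zig-zag $\tU_i\leftarrow\tW_1\rightarrow\tV_j\leftarrow\tV_k\rightarrow\tV_{j'}\leftarrow\tW_2\rightarrow\tZ$ to a single span from a uniformizing system around $x$ into a chart of $\mathcal{U}$ and a chart of $\mathcal{W}$.

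First I would align the lifts of $x$. Choose $\tx_1\in\tW_1$ and $\tx_k\in\tV_k$ over $x$; then $\nu_1(\tx_1)$ and $a(\tx_k)$ lie over $x$ in $\tV_j$, hence differ by some $h\in H_j$. As $\phi_j$ is $H_j$-invariant, $h\circ a$ is again an embedding $\tV_k\to\tV_j$, so after replacing $a$ by $h\circ a$ I may assume $\nu_1(\tx_1)=a(\tx_k)$. Each embedding is a biholomorphism onto an open subset (all systems here have the same dimension $n$), so $\nu_1^{-1}(a(\tV_k))$ is an open neighbourhood of $\tx_1$; applying Lemma \ref{unif-sys-induced} inside it gives a uniformizing system $(\tP,K',\xi')$ around $x$ with inclusion embedding $\tP\hookrightarrow\tW_1$. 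Composing the inclusion with $\mu_1$ embeds $\tP$ into $\tU_i$; composing with $\nu_1$ and then with the holomorphic inverse $a^{-1}$ (defined on $a(\tV_k)\supseteq\nu_1(\tP)$) gives an injection $\tP\to\tV_k$, which is an embedding since $\phi_k\circ a^{-1}=\phi_j$ on $a(\tV_k)$ forces $\phi_k\circ(a^{-1}\circ\nu_1)=\phi_j\circ\nu_1=\xi'$; finally $b$ carries it into $\tV_{j'}$. Repeating the alignment trick with an element of $H_{j'}$ and shrinking $\tP$ once more by Lemma \ref{unif-sys-induced}, I obtain a uniformizing system $(\tQ,K'',\xi'')$ around $x$ whose image in $\tV_{j'}$ lies in $\nu_2(\tW_2)$; composing with $\nu_2^{-1}$ and $\mu_2$ embeds $\tQ$ into $\tZ$, while restricting the earlier map embeds $\tQ$ into $\tU_i$. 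Thus $\unif{i}\sinistra{}(\tQ,K'',\xi'')\fre{}(\tZ,L,\psi)$ realizes equivalence at $x$, and since $x$ was arbitrary, $\mathcal{U}\sim\mathcal{W}$.

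The main obstacle is precisely this contraction of a zig-zag whose arrows point alternately inwards and outwards into a genuine span: it forces me to invert the embeddings $a$ and $\nu_2$ on their open images, which is legitimate only after matching the chosen lifts of $x$ by composing with suitable group elements (using $H$-invariance of the maps $\phi_j,\phi_{j'}$ so that the images overlap). The second delicate point is that an arbitrary open neighbourhood of a lift need not be group-invariant and so need not carry a uniformizing structure; replacing it by an honest uniformizing system is exactly what Lemma \ref{unif-sys-induced} provides, and it is invoked twice to keep the construction within the class of charts.
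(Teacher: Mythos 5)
Your proof is correct and follows essentially the same route as the paper's: both bridge the two charts of the middle atlas by a third chart via axiom (ii) of Definition \ref{orb-atlas}, then contract the resulting zig-zag by aligning the lifts of $x$ with suitable group elements and shrinking via Lemma \ref{unif-sys-induced} so that embeddings can be inverted on their open images. The only difference is bookkeeping: you contract sequentially from left to right (two applications of the shrinking lemma), whereas the paper first contracts each half of the zig-zag and then joins the two resulting charts with a third shrinking; the underlying ideas are identical.
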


Actually, in literature one can also find these definitions:

\begin{defin}\label{refinements}(\cite{ALR}, \S 1) 
An orbifold atlas $\mathcal{U}$ on $X$ is said to \emph{refine} another orbifold atlas $\mathcal{V}$ on the 
same topological space iff
for every uniformizing system in $\mathcal{U}$ there exists an embedding of it into some uniformizing system of
$\mathcal{V}$. Equivalenty, $\mathcal{U}=\{(\tU_i,G_i,\pi_i)\}_{i\in I}$ is a refinement of $\mathcal{V}=\{(\tV_j,
H_j,\phi_j)\}_{j\in J}$ iff there exists a set map $\gamma:I\rightarrow J$ and embeddings $\lambda_i:(\tU_i,G_i,
\pi_i)\rightarrow (\tV_{\gamma(i)},H_{\gamma(i)},\phi_{\gamma(i)})$ for every $i\in I$.
In a some sense, we can consider this as a compatible system $\mathcal{U}\rightarrow\mathcal{V}$ for the identity
on $X$, except for the fact that in general this will not be a functor (actually, it is not even defined on
embeddings). However, we will use the same abuse of notation that we used for compatible system, i.e. we will
write $(\tV_i,H_i,\phi_i)$ instead of $(\tV_{\gamma(i)},H_{\gamma(i)},\phi_{\gamma(i)})$.
\end{defin}

\begin{defin}(\cite{ALR}, \S 1)\label{equivalence-ALR}
Two orbifold atlases on the same space $X$ are \emph{equivalent} if they have a common refinement.
\end{defin}

\begin{prop}
Definitions \ref{equivalence-CR} and \ref{equivalence-ALR} coincide.
\end{prop}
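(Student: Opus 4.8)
The plan is to prove the equivalence of Definition~\ref{equivalence-CR} and Definition~\ref{equivalence-ALR} by showing both implications separately, relating the ``pointwise'' condition of the former to the ``common refinement'' condition of the latter. Throughout I would keep in mind the crucial observation from Remark~\ref{unif-sys-alernat-1} and the statement of Lemma~\ref{unif-sys-induced}: small uniformizing systems can be shrunk to connected open neighborhoods of any chosen point, with isotropy-type groups, and embedded by inclusion into a given chart. This shrinking procedure is what will let me pass between local data (charts around individual points) and the global data of an atlas (a family covering all of $X$).

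For the implication \emph{Definition~\ref{equivalence-ALR} $\Rightarrow$ Definition~\ref{equivalence-CR}}, suppose $\mathcal{U}$ and $\mathcal{V}$ admit a common refinement $\mathcal{W}$. Fix a point $x\in X$. Since $\{\xi_k(\tW_k)\}_k$ covers $X$, there is a uniformizing system $(\tW,K,\xi)\in\mathcal{W}$ with $x\in\xi(\tW)$. By the definition of refinement there exist embeddings of $(\tW,K,\xi)$ into some $\unif{i}\in\mathcal{U}$ and into some $\uniff{j}\in\mathcal{V}$; these are exactly the two embeddings
$$\unif{i}\sinistra{\lambda}(\tW,K,\xi)\fre{\lambda'}\uniff{j}$$
required by Definition~\ref{equivalence-CR}, and $x\in\xi(\tW)$ so this chart is indeed ``around $x$''. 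As $x$ was arbitrary, the two atlases are equivalent at every point, hence equivalent.

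The reverse implication is the harder direction and I expect it to be the main obstacle. Assume $\mathcal{U}$ and $\mathcal{V}$ are equivalent at every point in the sense of Definition~\ref{equivalence-CR}; I must manufacture a single \emph{common refinement} $\mathcal{W}$. The natural idea is to collect, for every $x\in X$, a witnessing chart $(\tW^x,K^x,\xi^x)$ together with its two embeddings into charts of $\mathcal{U}$ and $\mathcal{V}$, and to take $\mathcal{W}:=\{(\tW^x,K^x,\xi^x)\}_{x\in X}$. Condition (i) of Definition~\ref{orb-atlas} for $\mathcal{W}$ holds since every $x$ lies in $\xi^x(\tW^x)$, so these images cover $X$. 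The delicate points are that $\mathcal{W}$ must itself be a \emph{bona fide orbifold atlas}, i.e. it must also satisfy the compatibility condition (ii) on overlaps of its own charts, and that each chart of $\mathcal{W}$ must embed into a chart of $\mathcal{U}$ (and of $\mathcal{V}$) to qualify as a refinement. The embeddings into $\mathcal{U}$ and $\mathcal{V}$ are given by construction; the real work is verifying condition (ii) for $\mathcal{W}$, which is where I would shrink the witnessing charts using Lemma~\ref{unif-sys-induced} so that on any overlap $\xi^x(\tW^x)\cap\xi^y(\tW^y)$ one can, after restricting to a small connected neighborhood of a common point, produce the required intermediate chart and pair of embeddings. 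Here Lemma~\ref{lemma-moerdijk} and Corollary~\ref{induced-map} are the key tools, since they guarantee that two embeddings of the same small chart into a fixed chart differ by a unique group element, which is exactly what forces the overlap embeddings to be compatible rather than merely pointwise-defined.

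One should also double-check that the \emph{finiteness and connectedness} hypotheses built into Definition~\ref{unif-sys} survive the shrinking: Lemma~\ref{unif-sys-induced} produces connected open sets and isotropy subgroups (which are finite as subgroups of the finite $G^x$), so the constructed charts are genuine uniformizing systems in the sense of Definition~\ref{unif-sys}. If one is uneasy about $\mathcal{W}$ being a set rather than a proper class, one can index the witnessing charts by a chosen cover (using paracompactness of $X$ to pass to a locally finite refinement) rather than by all points of $X$; this is a cosmetic adjustment and does not affect the argument. Assembling these verifications shows $\mathcal{W}$ is a common refinement of $\mathcal{U}$ and $\mathcal{V}$, completing the equivalence of the two definitions.
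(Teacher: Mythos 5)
Your first implication (a common refinement yields pointwise equivalence) is correct and is exactly the paper's argument. The gap is in the converse, and it is concrete: the family $\mathcal{W}=\{(\tW^x,K^x,\xi^x)\}_{x\in X}$ of pointwise witness charts is in general \emph{not an orbifold atlas in the sense of definition \ref{orb-atlas}}. Condition (ii) of that definition does not merely ask that on an overlap $\xi^x(\tW^x)\cap\xi^y(\tW^y)$ there exist \emph{some} intermediate uniformizing system embedding into $\tW^x$ and $\tW^y$; it requires that this intermediate chart \emph{belong to the atlas itself} (the paper stresses precisely this point in the remark following definition \ref{orb-atlas}, contrasting its definition with the weaker one of \cite{LU}). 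The shrunken charts you produce via lemma \ref{unif-sys-induced} are new uniformizing systems that are not among your witness charts, and the only chart of $\mathcal{W}$ attached to a point $z$ of the overlap is $\tW^z$, which was chosen independently of $\tW^x$ and $\tW^y$ and need not embed into either. So condition (ii) fails for $\mathcal{W}$ as you defined it, and enlarging $\mathcal{W}$ by the shrunken charts only pushes the problem to overlaps involving the new charts: you would have to close the family under this operation.

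That closure is exactly what the paper's construction builds in from the start: it takes $\mathcal{W}$ to be the family of \emph{all} uniformizing systems (for open sets of $X$) admitting an embedding into at least one chart of $\mathcal{U}_1$ and at least one chart of $\mathcal{U}_2$, together with all embeddings between them. This family is stable under shrinking, since any chart embedding into a member of $\mathcal{W}$ embeds, by composition, into charts of both atlases and hence lies in $\mathcal{W}$; consequently the intermediate charts needed for condition (ii) --- produced essentially by the argument you sketch, shrinking via lemma \ref{unif-sys-induced} and adjusting by group elements via lemmas \ref{lemma-moerdijk} and \ref{lemma-intersection} --- automatically belong to $\mathcal{W}$. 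With your construction replaced by this one, the rest of your outline goes through: the covering condition is clear, and every chart of $\mathcal{W}$ embeds into a chart of each $\mathcal{U}_i$ by definition, so $\mathcal{W}$ is a common refinement.
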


\begin{proof}
Let us fix a space $X$ and two orbifold atlases $\mathcal{U}_1$ and $\mathcal{U}_2$ which are equivalent w.r.t.
definition \ref{equivalence-CR}; then we define a family $\mathcal{W}$ whose elements are \emph{all} the
uniformizing systems (for some open set of $X$) that have an embedding in at least one chart of $\mathcal{U}_1$ and
one chart of $\mathcal{U}_2$. Then we consider $\mathcal{W}$ as an orbifold atlas by adding all the possible
embedddings between its charts and it is easy to see that actually $\mathcal{W}$ is an orbifold atlas on $X$ and it
refines both $\mathcal{U}_1$ and $\mathcal{U}_2$.

Conversely, if there exists a common refinement $\mathcal{W}$ of two atlases $\mathcal{U}_1$ and $\mathcal{U}_2$, then 
we get that they are equivalent with respect to definition \ref{equivalence-CR} by a direct application of definition
\ref{refinements}.
\end{proof}

\begin{cor}
The notion of having a common refinement is a relation of equivalence.
\end{cor}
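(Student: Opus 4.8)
The plan is to deduce this corollary directly from the two results that immediately precede it, rather than to re-verify the three axioms of an equivalence relation from scratch. The key observation is that the relation ``having a common refinement'' is exactly the relation of Definition \ref{equivalence-ALR}, while the Proposition just proved shows that this relation coincides, as a relation on the set of orbifold atlases over a fixed space $X$, with the relation of Definition \ref{equivalence-CR} (being equivalent at every point of $X$). Since Lemma \ref{equivalence-CR-proof} already asserts that the latter is an equivalence relation, the former must be one as well: reflexivity, symmetry and transitivity are properties of the graph of a relation, hence they transfer verbatim between two relations that have exactly the same graph.

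Concretely, I would argue as follows. Fix a space $X$ and let $\sim_{\mathrm{CR}}$ and $\sim_{\mathrm{ref}}$ denote the relations of Definitions \ref{equivalence-CR} and \ref{equivalence-ALR} on the atlases over $X$. The Proposition gives $\mathcal{U}\sim_{\mathrm{ref}}\mathcal{V}$ if and only if $\mathcal{U}\sim_{\mathrm{CR}}\mathcal{V}$, so the two relations are literally the same subset of pairs of atlases; Lemma \ref{equivalence-CR-proof} then supplies reflexivity, symmetry and transitivity of $\sim_{\mathrm{CR}}$, and these pass immediately to $\sim_{\mathrm{ref}}$. This is the whole argument, and there is no genuine obstacle to overcome once the Proposition is available.

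For completeness it is worth recording what a \emph{direct} proof would require, since that is where the only real content lies. Reflexivity is immediate: the identity embeddings exhibit $\mathcal{U}$ as a refinement of itself (see Definition \ref{refinements}), so $\mathcal{U}$ is a common refinement of $\mathcal{U}$ with itself. Symmetry is built into the definition, since the notion of ``common refinement'' is symmetric in its two arguments. The only step with nontrivial content is transitivity: given a common refinement $\mathcal{W}$ of $\mathcal{U}_1,\mathcal{U}_2$ and a common refinement $\mathcal{W}'$ of $\mathcal{U}_2,\mathcal{U}_3$, one would have to manufacture a single atlas refining both $\mathcal{U}_1$ and $\mathcal{U}_3$. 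The natural candidate would mimic the construction used in the proof of the Proposition, collecting all uniformizing systems that embed simultaneously into a chart of $\mathcal{W}$ and a chart of $\mathcal{W}'$ over overlapping opens, and then checking, via Lemma \ref{lemma-moerdijk} and Lemma \ref{lemma-intersection}, that the resulting family, after adjoining all embeddings between its charts, is again an orbifold atlas refining the two outer atlases. Routing the proof through the Proposition avoids exactly this bookkeeping, which is why I would present it in the short form described above.
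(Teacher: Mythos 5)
Your proof is correct and is precisely the argument the paper intends: the corollary follows immediately because the relation of having a common refinement coincides (by the preceding Proposition) with the relation of Definition \ref{equivalence-CR}, which Lemma \ref{equivalence-CR-proof} shows to be an equivalence relation. The paper leaves this deduction implicit, and your short form via equality of the two relations' graphs is exactly that reasoning.
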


So it makes sense to give the following definition:

\begin{defin}(\cite{ALR}, def. 1.2)
A \emph{complex orbifold structure} on a second countable paracompact Hausdorff topological space $X$ is an
equivalence class of orbifold atlases on $X$. We will denote such an object by $\mathcal{X}$ or $[X]$. We will
call \emph{orbifold} the pair $(X,\mathcal{X})$, or, by abuse of notation, just the orbifold \mbox{structure
$\mathcal{X}$.} We say that $\mathcal{X}$ has \emph{dimension} $n$ if there is an atlas $\mathcal{U}$ 
of dimension $n$ in the class $\mathcal{X}$. This is equivalent to say that \emph{every} atlas of the class
has the same dimension $n$.
\end{defin}

\begin{defin}\label{maximal-atlas}
For every orbifold $\mathcal{X}$ on $X$ we define the \emph{maximal atlas} associated to it as the family
of all the uniformizing systems of all the atlases of the class $\mathcal{X}$. If one consider also
all the possible embeddings between the charts of this family, one can easily prove that it is actually an orbifold
atlas for $X$, that it belongs to the class $\mathcal{X}$ and that it is refined by every atlas of $\mathcal{X}$.
\end{defin}

At the end of this paper we will also need the following new equivalence relation on the set of orbifold atlases.

\begin{defin}\label{new-equivalence}
Suppose that we have an orbifold atlas $\mathcal{U}=\{\unif{i}\}_{i\in I}$ on $X$ and an homeomorphism
$\varphi:X\rightarrow X'$; then we can define the following family:

$$\varphi_{\ast}(\mathcal{U}):=\{(\tU_i,G_i,\varphi\circ\pi_i)\}_{i\in I}$$

which is an orbifold atlas on $X'$. Now suppose that we have two orbifold atlases $\mathcal{U}$ on $X$ and
$\mathcal{U}'$ on $X'$; we say that they are \emph{equivalent} if and only if the following 2 conditions hold:

\begin{enumerate}[(a)]\parindent=0pt
\item there exists an homeomorphism $\varphi:X\rightarrow X'$;

\item the orbifold atlases $\varphi_{\ast}(\mathcal{U})$ and $\mathcal{U}'$ on $X'$ are equivalent with
respect to the previous definition.
\end{enumerate}

This relation is obviously reflexive and simmetric; moreover, one can easily prove transitivity
using the transitivity of the equivalence relation on a fixed topological space.
Hence \emph{it is an equivalence relation on the set of all orbifold atlases 
over any topological space, i.e. over the objects of} $\CAT{Pre-Orb}$.
\end{defin}

\section{Internal groupoids in a category \texorpdfstring{$\ca{C}$}{C}}
We will use this section in order to recall the basic notions of internal groupoids in 
any category $\ca{C}$ and then we will specialize to
$\ca{C}=\CAT{Manifolds}$.

\subsection{Groupoid objects in a fixed category}

Let us fix a category $\ca{C}$ and two objects $R$ and $U$ in it. In the case where both $R$ and $U$ are sets (or
sets with additional properties), we would like to think of $R$ as a set of ``arrows'' or ``identifications''
between points of $U$. Hence
we have to define two morphisms ``source'' and ``target'' from $R$ to $U$ which associate to every point in $R$
a pair of points in $U$ (their source and target). Moreover, we would like to think to the pair $(R,U)$ as a 
``groupoid'' in the sense of a category where all the arrows are invertible. Hence we have to define a binary
operation of ``multiplication'' between composable arrows of $R$ and an operation of ``inversion'' from $R$ to
itself. Moreover, in order to have a category we have to associate to every point $x$ of $U$ an arrow ``identity''
in $R$ with source and target coinciding with $x$. Furthermore, we would like that all these five maps (source,
target, multiplication, inverse and identity) are morphisms in the category $\ca{C}$ we have fixed. Finally,
these morphisms will have to satisfy some compatibility axioms, so we get the following definition:

\begin{defin}\label{groupoid-object}
(\cite{BCE+}, \S 3.1) A \emph{groupoid object} or \emph{internal groupoid} in a category $\ca{C}$ is the datum
of two objects $R,U$ and five morphisms of $\ca{C}$:

\begin{itemize}
\item $s,t: R \rightrightarrows U$ such that the fiber product $\fibre{R}{t}{s}{R}$ \emph{exists} in $\ca{C}$; 
these two maps are usually called \emph{source} and \emph{target} of the groupoid object;

\item $m: \fibre{R}{t}{s}{R}\rightarrow R$, called \emph{multiplication};

\item $i: R \rightarrow R$, known as \emph{inverse} of the groupoid object;

\item $e: U \rightarrow R$, called \emph{identity};
\end{itemize}

which satisfy the following axioms:

\begin{enumerate}[(i)]\parindent=0pt
\item $s\circ e=1_U=t\circ e$;

\item if we call $pr_1$ and $pr_2$ the two projections from the fibered product $\fibre{R}{t}{s}{R}$ to $R$, then 
we have $s\circ m=s\circ pr_1$ and $t\circ m=t\circ pr_2$, i.e:

\[\begin{tikzpicture}[scale=0.8]
    \def\x{1.5}
    \def\y{-1.2}
    \node (A0_0) at (0*\x, 0*\y) {$\fibre{R}{t}{s}{R}$};
    \node (A0_2) at (2*\x, 0*\y) {$R$};
    \node (A0_5) at (4*\x, 0*\y) {$\fibre{R}{t}{s}{R}$};
    \node (A0_7) at (6*\x, 0*\y) {$R$};
    \node (A1_1) at (1*\x, 1*\y) {$\curvearrowright$};
    \node (A1_6) at (5*\x, 1*\y) {$\curvearrowright$};
    \node (A2_0) at (0*\x, 2*\y) {$R$};
    \node (A2_2) at (2*\x, 2*\y) {$U$};
    \node (A2_5) at (4*\x, 2*\y) {$R$};
    \node (A2_7) at (6*\x, 2*\y) {$U;$};
    \path (A0_5) edge [->] node [auto,swap] {$\scriptstyle{pr_2}$} (A2_5);
    \path (A2_5) edge [->] node [auto,swap] {$\scriptstyle{t}$} (A2_7);
    \path (A2_0) edge [->] node [auto,swap] {$\scriptstyle{s}$} (A2_2);
    \path (A0_2) edge [->] node [auto] {$\scriptstyle{s}$} (A2_2);
    \path (A0_7) edge [->] node [auto] {$\scriptstyle{t}$} (A2_7);
    \path (A0_0) edge [->] node [auto] {$\scriptstyle{m}$} (A0_2);
    \path (A0_0) edge [->] node [auto,swap] {$\scriptstyle{pr_1}$} (A2_0);
    \path (A0_5) edge [->] node [auto] {$\scriptstyle{m}$} (A0_7);
\end{tikzpicture}\]

\item (associativity) the two morphisms $m\circ (1_R\times m)$ and $m\circ(m\times 1_R)$ are equal:

\[\begin{tikzpicture}[scale=0.8]
    \def\x{2.0}
    \def\y{-1.2}
    \node (A0_0) at (0*\x, 0*\y) {$\fibre{R}{t}{s}{R\,_t\times_s R}$};
    \node (A0_2) at (2*\x, 0*\y) {$\fibre{R}{t}{s}{R}$};
    \node (A1_1) at (1*\x, 1*\y) {$\curvearrowright$};
    \node (A2_0) at (0*\x, 2*\y) {$\fibre{R}{t}{s}{R}$};
    \node (A2_2) at (2*\x, 2*\y) {$R;$};
    \path (A0_0) edge [->] node [auto,swap] {$\scriptstyle{m\times 1_R}$} (A2_0);
    \path (A0_0) edge [->] node [auto] {$\scriptstyle{1_R\times m}$} (A0_2);
    \path (A0_2) edge [->] node [auto] {$\scriptstyle{m}$} (A2_2);
    \path (A2_0) edge [->] node [auto,swap] {$\scriptstyle{m}$} (A2_2);
\end{tikzpicture}\]

\item (unit) the two morphisms $m\circ(e\circ s,1_R)$ and $m\circ(1_R,e\circ t)$ from $R$ to $R$ are both equal to 
the identity of $R$:

\[\begin{tikzpicture}[scale=0.8]
    \def\x{1.5}
    \def\y{-1.2}
    \node (A0_0) at (0*\x, 0*\y) {$R$};
    \node (A0_2) at (2*\x, 0*\y) {$\fibre{R}{t}{s}{R}$};
    \node (A0_4) at (4*\x, 0*\y) {$R$};
    \node (A0_6) at (6*\x, 0*\y) {$\fibre{R}{t}{s}{R}$};
    \node (A1_1) at (1.4*\x, 0.8*\y) {$\curvearrowright$};
    \node (A1_5) at (5.4*\x, 0.8*\y) {$\curvearrowright$};
    \node (A2_2) at (2*\x, 2*\y) {$R$};
    \node (A2_6) at (6*\x, 2*\y) {$R;$};
    \path (A0_6) edge [->] node [auto] {$\scriptstyle{m}$} (A2_6);
    \path (A0_4) edge [->] node [auto] {$\scriptstyle{(1_R,e\circ t)}$} (A0_6);
    \path (A0_0) edge [->] node [auto,swap] {$\scriptstyle{1_R}$} (A2_2);
    \path (A0_4) edge [->] node [auto,swap] {$\scriptstyle{1_R}$} (A2_6);
    \path (A0_2) edge [->] node [auto] {$\scriptstyle{m}$} (A2_2);
    \path (A0_0) edge [->] node [auto] {$\scriptstyle{(e\circ s,1_R)}$} (A0_2);
\end{tikzpicture}\]

\item (inverse) $i\circ i=1_R$, $s\circ i=t$ (and therefore $t\circ i=s$). Moreover, we require that $m\circ (1_R,i)
=e\circ s$ and $m\circ (i,1_R)=e\circ t$:
\end{enumerate}

\end{defin}

Using this list of axioms one can deduce the following useful property:

\begin{lem}\label{groupoid-property-1}(\cite{BCE+}, exercise 3.1)
$m\circ(i\circ pr_2,i\circ pr_1)=i\circ m$.
\end{lem}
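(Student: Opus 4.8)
The plan is to read the identity as the internal analogue of the familiar groupoid formula $(g_2\circ g_1)^{-1}=g_1^{-1}\circ g_2^{-1}$, and to prove it by exhibiting $m\circ(i\circ pr_2,i\circ pr_1)$ as \emph{an} inverse of $m$ and then invoking uniqueness of inverses in an internal groupoid. Write $P:=\fibre{R}{t}{s}{R}$ for the domain of $m$. Since $\ca{C}$ is an arbitrary category, I would make all the ``elementwise'' manipulations rigorous by passing to generalized elements: it suffices to check each required equality of morphisms $P\to R$ after precomposing with an arbitrary $T\xrightarrow{(a,b)}P$, i.e. to argue with a composable pair $(a,b)$ (so $t(a)=s(b)$) as if it were a genuine pair of arrows, the Yoneda embedding turning the five structure maps into an honest groupoid structure on $T$-points and naturality transporting the conclusion back to morphisms. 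Throughout I would keep track of sources and targets using axioms (i), (ii) and (v) of Definition \ref{groupoid-object}, so that every composite written below is legitimately composable.

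First I would record the \emph{uniqueness of inverses}. Suppose $h$ is a right inverse of an arrow $g$, meaning $(g,h)$ is composable and $m(g,h)=e(s(g))$; I claim $h=i(g)$. Indeed, using the unit axiom (iv), then the left-inverse half of axiom (v) in the form $m(i(g),g)=e(t(g))$, then associativity (iii), then the hypothesis, and finally the unit axiom once more, one gets the chain
\[
h=m(e(t(g)),h)=m\bigl(m(i(g),g),h\bigr)=m\bigl(i(g),m(g,h)\bigr)=m\bigl(i(g),e(s(g))\bigr)=i(g).
\]
Each equality is a direct application of one of the axioms, and the source/target bookkeeping (in particular $t(i(g))=s(g)$) guarantees composability at every stage.

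Next I would verify that $\phi:=m\circ(i\circ pr_2,i\circ pr_1)$ is a right inverse of $m$. On a composable pair $(a,b)$ this means checking $m\bigl(m(a,b),\phi(a,b)\bigr)=e\bigl(s(m(a,b))\bigr)=e(s(a))$, where $\phi(a,b)=m(i(b),i(a))$. Regrouping the fourfold product $m\bigl(m(a,b),m(i(b),i(a))\bigr)$ by repeated use of associativity (iii), I would collapse the inner pair $b,i(b)$ using the right-inverse half of axiom (v) ($m(b,i(b))=e(s(b))=e(t(a))$), simplify by the unit axiom (iv) so that the middle reduces to $i(a)$, and finally collapse the remaining pair $a,i(a)$ again by axiom (v) to land on $e(s(a))$. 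By the uniqueness established above, the unique right inverse of $m(a,b)$ is $i(m(a,b))$, hence $\phi(a,b)=i(m(a,b))$; as $(a,b)$ was an arbitrary generalized element this is exactly the desired equality $m\circ(i\circ pr_2,i\circ pr_1)=i\circ m$. The only genuinely delicate point, and the one I would write out most carefully, is the associativity regrouping of the fourfold composite: internally this is a chain of instances of axiom (iii) applied to suitable maps into the iterated fiber products $\fibre{R}{t}{s}{R\,_t\times_s R}$, so the main work is organizing these regroupings and confirming both that the relevant triple fiber products exist and that all intermediate pairs remain composable.
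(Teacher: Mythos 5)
Your proof is correct, but note that the paper itself gives no argument for this lemma at all: it is stated with a citation to \cite{BCE+}, exercise 3.1, and never proved (the appendix proves other lemmas, not this one). So there is no ``paper proof'' to compare against; what you have supplied is a complete proof of a fact the paper takes on faith.

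As for the argument itself, it is the standard one and all the steps check out. The reduction to generalized elements is legitimate: since the hom-functor $\mathrm{Hom}(T,-)$ preserves the fiber products that are assumed to exist (both $\fibre{R}{t}{s}{R}$ and the triple fiber product implicit in axiom (iii)), the five structure maps make $(\mathrm{Hom}(T,R),\mathrm{Hom}(T,U))$ an ordinary groupoid in $\CAT{Sets}$, and taking $T=\fibre{R}{t}{s}{R}$ with the identity generalized element $(pr_1,pr_2)$ converts your elementwise conclusion back into the asserted equality of morphisms --- so no separate ``naturality'' argument is even needed. Your uniqueness-of-right-inverses chain $h=m(e(t(g)),h)=m(m(i(g),g),h)=m(i(g),m(g,h))=m(i(g),e(s(g)))=i(g)$ uses exactly axioms (iii), (iv), (v) with correct source/target bookkeeping, and the fourfold regrouping $m\bigl(m(a,b),m(i(b),i(a))\bigr)=m\bigl(m(m(a,b),i(b)),i(a)\bigr)=m\bigl(m(a,m(b,i(b))),i(a)\bigr)=m(a,i(a))=e(s(a))=e\bigl(s(m(a,b))\bigr)$ is valid: each intermediate pair is composable precisely because $t(a)=s(b)$, $t\circ i=s$ and $s\circ i=t$. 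One small economy worth noting: you prove uniqueness of right inverses and then identify $\phi(a,b)$ with $i(m(a,b))$; this is cleaner than trying to verify directly that $\phi$ satisfies both inverse identities, and it is the step that makes the whole proof short. The only cosmetic caveat is that your appeal to ``uniqueness of inverses'' needs only the one-sided version you actually proved, so the proof is self-contained as written.
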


If we assume that the category $\ca{C}$ is fixed, we will denote any groupoid object as before by $\groupR$.
In some articles one can also find the following notations:

\begin{itemize}
\item $(U,R,s,t,m,e,i)$;

\item $\fibre{R}{s}{t}{R}\fre{m}R\fre{i}\groupR\fre{e}R$;

\item $G=(\phantom{G}_1\hspace{-0.42 cm}G^{\phantom{,,,}^{d_0}}_{\phantom{,,,}_{d_1}}\hspace{-0.45 cm}
\rightrightarrows{G_0})$ (we prefer not to use this notation since otherwise a lot of the constructions of sections
3 and 4 will have too much indexes and will not be simply manageable).
\end{itemize}

\subsection{Morfisms and 2-morphisms between groupoid objects}

\begin{defin}\label{morphism-groupoid}(\cite{M}, \S 2.1)
Given two groupoid objects $\groupR$ and $\groupRR$ in a fixed category $\ca{C}$, a \emph{morphism} between them is
a pair $(\psi,\Psi)$, where $\psi:U\rightarrow U'$ and $\Psi:R\rightarrow R'$ are both \emph{morphisms in }$\ca{C}$,
which together commute with all structure morphisms of the two groupoid objects. In other words, we ask that the
following five identities are satisfied:

\begin{equation}\label{eq-9-bis}
s'\circ\Psi=\psi\circ s,\quad t'\circ\Psi=\psi\circ t,\quad \Psi\circ e=e'\circ\psi,
\end{equation}

\begin{equation}
\Psi\circ m=m'\circ (\Psi\times\Psi) \AND \Psi\circ i=i'\circ\Psi.
\end{equation}

\end{defin}

\begin{rem}\label{groupoid-property-2}
Note that if we fix the morphism $\Psi$, then $\psi$ is uniquely determined by these properties: indeed using axiom
(i) for the second groupoid object and (\ref{eq-9-bis}), we get that $\psi=s'\circ\Psi\circ e$.
\end{rem}

\begin{defin}Let us consider 3 groupoid objects in $\ca{C}$ and 2 morphisms:

$$(\groupR)\stackrel{(\psi,\Psi)}{\longrightarrow}(\groupRR)\stackrel{(\phi,\Phi)}{\longrightarrow}(\groupRRR).$$

It is easy to see that if we define the \emph{composition} $(\phi,\Phi)\circ(\psi,\Psi)$ as
$(\phi\circ\psi,\Phi\circ\Psi)$, then this is again a morphism of groupoid objects from $(\groupR)$
to $(\groupRR)$.
\end{defin}

Now we want to make groupoid objects into a 2-category, i.e. we want to define 2-morphisms, which will be called 
``natural transformations''.

\begin{defin}\label{nat-tran-group}(\cite{PS}, def. 2.3) Suppose we have fixed two morphisms of groupoid
objects in $\ca{C}$:

$$(\psi,\Psi),(\phi,\Phi):\,\,(\groupR)\rightarrow(\groupRR),$$

then a \emph{natural transformation} $\alpha:(\psi,\Psi)\Rightarrow(\phi,\Phi)$ is the datum of a \emph{morphism}
$\alpha:U\rightarrow R'$ in $\ca{C}$ such that the following 2 conditions hold:

\begin{enumerate}[(i)]\parindent=0pt
\item $s'\circ\alpha=\psi\AND t'\circ\alpha=\phi$;

\item $m'\circ(\alpha\circ s,\Phi)=m'\circ(\Psi,\alpha\circ t).$
\end{enumerate}

Note that using (i) together with the definition of morphism between groupoid objects, we get that:

$$t'\circ(\alpha \circ s)=\phi\circ s=s'\circ\Phi\AND t'\circ\Psi=\psi\circ t=s'\circ(\alpha\circ t);$$
  
hence we can consider both $(\alpha\circ s,\Phi)$ and $(\Psi,\alpha\circ t)$ as morphisms in the category $\ca{C}$
from $R$ to $\fibre{R'}{t'}{s'}{R'}$, so both the L.H.S. and the R.H.S. of (ii) are well defined.
\end{defin}

\begin{definlem}\label{vertical-composition-groupoid}
Let us consider a diagram as follows:

\[\begin{tikzpicture}[scale=0.8]
    \def\x{2.5}
    \def\y{-1.2}
    \node (A0_0) at (0*\x, 0*\y) {$(\groupR)$};
    \node (A0_1) at (1.1*\x, 0.6) {$\Downarrow\alpha$};
    \node (A0_1) at (1.1*\x, -0.8) {$\Downarrow\beta$};
    \node (A0_2) at (2*\x, 0*\y) {$(\groupRR);$};
    \path (A0_0) edge [->,bend left=50] node [auto] {$\scriptstyle{(\psi_1,\Psi_1)}$} (A0_2);
    \path (A0_0) edge [->] node [auto,swap] {$\scriptstyle{(\psi_2,\Psi_2)}$} (A0_2);
    \path (A0_0) edge [->,bend right=50] node [auto,swap] {$\scriptstyle{(\psi_3,\Psi_3)}$} (A0_2);
\end{tikzpicture}\]

using definition \ref{nat-tran-group} for $\alpha$ and $\beta$ we get that it makes sense to consider the morphism
$(\alpha,\beta):U\rightarrow\fibre{R'}{t'}{s'}{R'}$ and we can define:

$$\beta\odot\alpha:=m'\circ(\alpha,\beta):U\rightarrow R'.$$

In this way we have defined a natural transformation (for the proof, see the appendix), called \emph{vertical
composition} of $\alpha$ and $\beta$ and denoted with $\beta\odot\alpha:(\psi_1,\Psi_1)\Rightarrow(\psi_3,\Psi_3)$.
\end{definlem}

\begin{defin} For every morphism $(\psi,\Psi):(\groupR)\rightarrow(\groupRR)$ we define its identity as the natural
transformation:

$$i_{(\psi,\Psi)}:=e'\circ\psi=\Psi\circ e:\quad(\psi,\Psi)\Rightarrow(\psi,\Psi)$$

A direct check proves that this is actually a natural transformation and that for every $\alpha:(\psi,\Psi)
\Rightarrow(\phi,\Phi)$ and for every $\beta:(\theta,\Theta)\Rightarrow(\phi,\Phi)$ we have:

\begin{equation}\label{eq-12}
\alpha\odot i_{(\psi,\Psi)}=\alpha\AND i_{(\psi,\Psi)}\odot\beta=\beta. 
\end{equation}
\end{defin}

\begin{definlem}\label{horizontal-composition-groupoid}
Let us consider a diagram of the form:

\begin{equation}
\begin{tikzpicture}[scale=0.8]
    \def\x{2.0}
    \def\y{-1.2}
    \node (A0_0) at (0*\x, 0*\y) {$(\groupR)$};
    \node (A0_1) at (1*\x, 0*\y) {$\Downarrow\alpha$};
    \node (A0_2) at (2*\x, 0*\y) {$(\groupRR)$};
    \node (A0_3) at (3*\x, 0*\y) {$\Downarrow\beta$};
    \node (A0_4) at (4*\x, 0*\y) {$(\groupRRR).$};
    \path (A0_2) edge [->,bend left=25] node [auto] {$\scriptstyle{(\phi_1,\Phi_1)}$} (A0_4);
    \path (A0_2) edge [->,bend right=25] node [auto,swap] {$\scriptstyle{(\phi_2,\Phi_2)}$} (A0_4);
    \path (A0_0) edge [->,bend left=25] node [auto] {$\scriptstyle{(\psi_1,\Psi_1)}$} (A0_2);
    \path (A0_0) edge [->,bend right=25] node [auto,swap] {$\scriptstyle{(\psi_2,\Psi_2)}$} (A0_2);
\end{tikzpicture}
\end{equation}

In particular, we get that:

\begin{eqnarray}
\label{eq-14} & s'\circ\alpha=\psi_1\AND t'\circ\alpha=\psi_2;&\\
\label{eq-15} & s''\circ\beta=\phi_1\AND t''\circ\beta=\phi_2;&\\
& t''\circ(\Phi_1\circ\alpha)=\phi_1\circ t'\circ\alpha\stackrel{(\ref{eq-14})}{=}
  \phi_1\circ\psi_2\stackrel{(\ref{eq-15})}{=}s''\circ(\beta\circ\psi_2).&
\end{eqnarray}

Hence $(\Phi_1\circ\alpha,\beta\circ\psi_2):U\rightarrow\fibre{R''}{t''}{s''}{R''}$, so we can define:

$$\beta\ast\alpha:=m''\circ(\Phi_1\circ\alpha,\beta\circ\psi_2):U\rightarrow R''$$

and we get that $\beta\ast\alpha$ is a natural transformation from $(\phi_1,\Phi_1)\circ(\psi_1,\Psi_1)$ to 
$(\phi_2,\Phi_2)\circ(\psi_2,\Psi_2)$ (see the appendix for the proof), called 
\emph{horizontal composition} of $\alpha$ and $\beta$.
\end{definlem}

Using all the previous data it is not so difficult to prove the following result:

\begin{prop}\label{2-cat-groupoid}
Let us fix a category $\ca{C}$; the definitions of groupoid objects, morphisms of groupoid objects,
natural transformations and compositions
$\circ,\odot,\ast$ give rise to a 2-category, that we will denote with $(\ca{C}-\textbf{Groupoids})$.
\end{prop}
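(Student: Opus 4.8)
The plan is to follow exactly the scheme used in the proof of Proposition \ref{2-cat-orb}: to exhibit $(\ca{C}-\textbf{Groupoids})$ as a 2-category I must supply the four pieces of data (1)--(4) of Definition \ref{2-cat} and then verify the four axioms (a)--(d). Almost all of the data have already been introduced in the preceding definitions, so the bulk of the argument is bookkeeping together with a few computations involving the structure morphisms of the groupoid objects. For the data: (1) the class of objects consists of all groupoid objects $\groupR$ in $\ca{C}$. (2) For two such objects $\mathcal{G},\mathcal{G}'$ I would take $(\ca{C}-\textbf{Groupoids})(\mathcal{G},\mathcal{G}')$ to be the category whose objects are the morphisms $(\psi,\Psi)$ of Definition \ref{morphism-groupoid}, whose arrows are the natural transformations of Definition \ref{nat-tran-group}, and whose composition is the vertical composition $\odot$ of Definition-lemma \ref{vertical-composition-groupoid}. (3) For a triple the composition functor sends a pair of morphisms to their componentwise composite $\circ$ and a pair of natural transformations to their horizontal composite $\ast$ of Definition-lemma \ref{horizontal-composition-groupoid}. (4) The identity 1-morphism on $\mathcal{G}$ is $(1_U,1_R)$ (which is a morphism of groupoid objects because the five identities of Definition \ref{morphism-groupoid} are then trivially satisfied), and the identity 2-morphism $i_{\mathcal{G}}$ is $i_{(1_U,1_R)}=e$.

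First I would check that each $(\ca{C}-\textbf{Groupoids})(\mathcal{G},\mathcal{G}')$ really is a category. Associativity of $\odot$ follows directly from the associativity of $m'$ (axiom (iii) of Definition \ref{groupoid-object}): expanding $\gamma\odot(\beta\odot\alpha)$ and $(\gamma\odot\beta)\odot\alpha$ via $\beta\odot\alpha=m'\circ(\alpha,\beta)$ reduces the two sides to $m'\circ(m'\times 1)$ and $m'\circ(1\times m')$. The identity arrow on $(\psi,\Psi)$ is $i_{(\psi,\Psi)}$, and its unit property is exactly what was already recorded in (\ref{eq-12}).

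Next I would prove that each composition map $c$ is a functor and that axioms (a)--(d) hold. Axiom (a), associativity of $\circ$, is immediate since composition is componentwise and composition in $\ca{C}$ is associative. Functoriality of $c$ splits into preserving identities, which is straightforward from the defining formula $\beta\ast\alpha=m''\circ(\Phi_1\circ\alpha,\beta\circ\psi_2)$ together with the unit axiom for $m''$, and preserving vertical compositions, i.e. the \emph{interchange law}. I would verify the latter by expanding both sides through the defining formulas for $\odot$ and $\ast$ and then repeatedly invoking the associativity of the relevant multiplication, the compatibility of the morphisms with the structure maps (\ref{eq-9-bis}), and the naturality condition (ii) of Definition \ref{nat-tran-group} (just as the passage $\stackrel{*}{=}$ in the proof of Proposition \ref{2-cat-orb} used (\ref{eq-6})). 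Axiom (b), associativity of $\ast$, is handled by the same kind of manipulation, while the unit axioms (c) and (d) reduce to the unit and inverse axioms (iv)--(v) of Definition \ref{groupoid-object} applied to $e$, $e'$ and $m'$.

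The main obstacle I expect is the interchange law, and to a lesser degree the associativity of $\ast$: these are the only places where the groupoid axioms enter in an essential, non-formal way, since one must insert the unit/inverse relations and the associativity of the multiplication precisely at the point where the two orders of composition are reconciled, all the while keeping careful track of which copy of $R'$ or $R''$ each factor lives in so that every fiber product appearing is well defined. Once the sources and targets are laid out as in Definition-lemma \ref{horizontal-composition-groupoid}, however, the verification is a mechanical, if somewhat lengthy, diagram chase, entirely parallel to the interchange computation already carried out for $\CAT{Pre-Orb}$; this is why the result is, as stated, \emph{not so difficult} once all the data are in place.
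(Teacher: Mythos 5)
Your proposal is correct and takes exactly the approach the paper intends: the paper omits this proof as ``just a direct check of all the axioms of definition \ref{2-cat}'', and your sketch carries out that check, assembling the data from Definition \ref{morphism-groupoid} and Definition-lemmas \ref{vertical-composition-groupoid} and \ref{horizontal-composition-groupoid}, and correctly reducing the interchange law to associativity of the multiplication together with condition (ii) of Definition \ref{nat-tran-group} (precomposed with the second vertical factor). Only a cosmetic remark: the inverse axiom (v) of Definition \ref{groupoid-object} is never actually needed, and besides (\ref{eq-9-bis}) your computations also use the compatibility $\Phi_1\circ m'=m''\circ(\Phi_1\times\Phi_1)$ from Definition \ref{morphism-groupoid}.
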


We omit this proof, since this is just a direct check of all the axioms of definition \ref{2-cat}. In any case,
we will just be interested in a special case of this result, that will be recalled below.

\subsection{The 2-category \CAT{Grp}}

Given any pair of morphisms $s,t:R\rightarrow U$ in a fixed category $\ca{C}$, in order to define a groupoid
object from this data, we must be sure that the fibered product $\fibre{R}{t}{s}{R}$ exists.
This is always ensured e.g. when we work in the categories $\CAT{Sets}$, $\CAT{Groups}$ or $\CAT{Schemes}$,
but in general it is no more true in the category $\CAT{Manifolds}$ (complex manifolds and holomorphic
maps between them). In this last category (which will be used in this section) it is known from category
theory that \emph{if} the fibered product of any pair of morphisms $f:X\rightarrow Y$ and $g:Z\rightarrow Y$ exists,
 it is obtained adding a natural structure of manifold on the set-theoretical fibered product.
The problem is that such a structure of manifold exists only if we add some additional hypothesis
on $f$ and/or $g$. One of the most useful condition is about submersions:

\begin{prop}\label{fiber-products-1}
Let us fix any pair of holomorphic maps between complex manifolds $f:X\rightarrow Y$ and $g:Z\rightarrow Y$.
If $f$ is a submersion, then the set-theoretical fiber product:

$$\fibra{X}{Y}{Z}=\{(x,z)\in X\times Z\textrm{ s.t. }f(x)=g(z)\}$$

is a complex submanifold of $X\times Z$, with complex dimension equal to $dim(X)+dim(Z)-dim(Y)$. Moreover,
the map $pr_2:\fibra{X}{Y}{Z}\rightarrow Z$ is again a submersion and if $f$ is \'etale (i.e. a local
biholomorphism), so is $pr_2$.
\end{prop}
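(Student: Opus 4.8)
The plan is to reduce everything to the local normal form for holomorphic submersions, since being a complex submanifold and being a submersion are local properties. First I would fix an arbitrary point $(x_0,z_0)$ of $P:=\fibra{X}{Y}{Z}$, set $y_0:=f(x_0)=g(z_0)$, and write $a:=\dim X$, $b:=\dim Y$, $c:=\dim Z$ (so $a\geq b$, since $f$ is a submersion). The holomorphic implicit function theorem yields the local submersion form: there are holomorphic charts $(u,v)\in\mathbb{C}^b\times\mathbb{C}^{a-b}$ centred at $x_0$ and $w\in\mathbb{C}^b$ centred at $y_0$ in which $f$ is simply the projection $f(u,v)=u$.

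In these coordinates, writing the chart expression of $g$ near $z_0$ as $w=g(z)$, the defining condition $f(x)=g(z)$ of $P$ becomes $u=g(z)$. Hence locally $P=\{(g(z),v,z)\}$, which is exactly the graph of the holomorphic map $(v,z)\mapsto g(z)$ defined on an open subset of $\mathbb{C}^{a-b}\times Z$. The graph of a holomorphic map is a complex submanifold, biholomorphic to its domain through the projection $(u,v,z)\mapsto(v,z)$; its dimension is $(a-b)+c=\dim X+\dim Z-\dim Y$. Since this description holds near every point of $P$ with constant dimension, $P$ is a complex submanifold of $X\times Z$ of the asserted dimension.

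For the last two assertions I would read off $pr_2$ in the same chart. Under the biholomorphism $P\cong\mathbb{C}^{a-b}\times Z$ just constructed, $pr_2$ becomes the projection $(v,z)\mapsto z$, whose differential is everywhere surjective, so $pr_2$ is a submersion. If moreover $f$ is \'etale then $a=b$, the factor $\mathbb{C}^{a-b}$ collapses, and $f$ is a local biholomorphism; locally the condition $f(x)=g(z)$ forces $x=f^{-1}(g(z))$, so $P$ is the graph over $Z$ of the holomorphic map $z\mapsto f^{-1}(g(z))$, and $pr_2$ is the inverse of $z\mapsto(f^{-1}(g(z)),z)$, i.e.\ a local biholomorphism, hence \'etale.

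The computation is entirely routine once the normal form is in place; the only point needing care is that the classical local submersion and implicit function theorems are used in their holomorphic versions (which hold verbatim, the inverse of a nowhere-degenerate holomorphic map being again holomorphic). There is no genuine obstacle beyond setting up these coordinates and recognising the fiber product as a graph. An alternative for the submanifold and dimension statements would be to realise $P$ as $(f\times g)^{-1}(\Delta_Y)\subseteq X\times Z$ and to check that $f\times g$ is transverse to the diagonal $\Delta_Y$ (surjectivity of $df$ suffices), giving codimension $\dim Y$; but the chart approach is preferable here, since it delivers the statements about $pr_2$ at the same time.
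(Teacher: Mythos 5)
Your proof is correct. There is actually nothing in the paper to compare it against: the paper does not prove this proposition at all, but only points to \cite{FG} (chapter IV.1, exercise 6) and, for the smooth case, to \cite{D} (chapter XVI.8, exercise 10). Your argument is the standard one those references intend: use the holomorphic implicit function theorem to put $f$ in the normal form $(u,v)\mapsto u$, recognise the fiber product locally as the graph of the holomorphic map $(v,z)\mapsto g(z)$, and read off in the same chart the submanifold structure, the dimension count $\dim X+\dim Z-\dim Y$, and the two assertions about $pr_2$ (projection $(v,z)\mapsto z$ in general, hence a submersion; inverse of a local graph parametrisation when $f$ is \'etale, hence a local biholomorphism). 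All steps are sound, including the holomorphic versions of the inverse and implicit function theorems you invoke. Your preference for the chart argument over the transversality formulation $(f\times g)^{-1}(\Delta_Y)$ is also well judged: the chart computation yields precisely the statements about $pr_2$ that the paper uses repeatedly afterwards (remark \ref{fiber-products-2} and the Morita-equivalence lemmas), which the transversality route would only give after extra work.
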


For a hint of the proof, see \cite{FG}, chapter IV.1, exercise 6 and (for the smooth case) \cite{D},
chapter XVI.8, exercise 10.

\begin{defin}
(Adapted from \cite{Ler}, def. 2.11) A groupoid object in $\CAT{Manifolds}$ is called \emph{Lie groupoid}
if both its source and target maps are holomorphic \emph{submersions}.
\end{defin}

\begin{rem}\label{fiber-products-2}
Using the previous proposition, we get that the fiber product used in the first point of the definition of groupoid
objects exists. Moreover, the resulting maps $pr_1$ and $pr_2$ are again both submersions. Hence also the fiber
product $\fibre{R}{t}{s}{R\,_t\times_s R}$ exists in $\CAT{Manifolds}$. Indeed:

\begin{eqnarray*}
& \fibre{R}{t}{s}{R\,_t\times_s R}=\{(r,r',r'')\in R\times R\times R \textrm{ s.t. }s(r')=t(r)\,
   \textrm{and}\,s(r'')=t(r')\}= &\\
& =(\fibre{R}{t}{s}{R)_{\hat{t}}\times_s R}&
\end{eqnarray*}

where $\hat{t}:=t\circ pr_2$. Here the fiber product behind parenthesis exists, $\hat{t}$ is a submersion (because
$t$ is so by hypothesis and $pr_2$ is so using the previous proposition) and also $s$ is a submersion, hence the
whole fiber product exists in $\CAT{Manifolds}$ and again the projection maps are submersions, so by induction we can
prove that there exists fiber products of the form $\fibre{R}{t}{s}{\cdots\phantom{R}\,_t\times_s R}$ for finitely
many terms.
\end{rem}

\begin{defin} (\cite{M}, \S 1.5)
A groupoid object in $\CAT{Manifolds}$ is \emph{proper} if the map $(s,t):R\rightarrow U\times U$ (called
\emph{relative diagonal}) is proper, i.e. if the pre-image of any compact set in $U\times U$ is compact in $R$.
\end{defin}

\begin{defin}
(\cite{M}, \S 1.2) An \emph{\'etale groupoid} is a groupoid object $\groupR$ in $\CAT{Manifolds}$ such that the maps
$s$ and $t$ are both \'etale (i.e. local biholomorphisms). Using remark \ref{fiber-products-2}, the fiber products
$\fibre{R}{t}{s}{R}$ and $\fibre{R}{t}{s}{R\,_t\times_s R}$ exist, so all definition \ref{groupoid-object} still
makes sense (clearly every \'etale groupoid is also a Lie groupoid).
\end{defin}

\begin{defin}\label{effective-groupoid}
(\cite{M}, example 1.5) Let $\groupR$ be an \'etale proper groupoid, let us fix any point $\tx\in U$
and let us define $R_{\tx}:=(s,t)^{-1}\{(\tx,\tx)\}$, called the \emph{isotropy subgroup of} $\tx$. This set is
naturally a group (using the multiplication $m$) and it is compact because $(s,t)$ is proper; moreover its points are
all isolated because $s$ is \'etale, hence locally invertible; hence $R_{\tx}$ is a \emph{finite} set of points of $R$.
Since both $s$ and $t$ are \'etale, for every point $g$ in this set, we can find a sufficiently small open
neighborhood $W_g$ of $g$ where both $s$ and $t$ are invertible. Then to every point $g$ we can associate the set map:

$$\tilde{g}:=t\circ(s|_{W_g})^{-1}:\,s(W_g)\rightarrow t(W_g)$$

which is a biholomorphism between two open neighborhoods of $\tx$. Then we can define the set map:

\begin{equation}\label{eq-27}
f_{\tx}:R_{\tx}\rightarrow Diff_{\tx}(U) 
\end{equation}

(where $Diff_{\tx}(U)$ is the group of germs of holomorphic maps defined on an open neighborhood of $\tx$ in $U$
and which fix $\tx$), that to every $g\in R_{\tx}$ associates the germ of the function $\tilde{g}$ at the point
$\tx$. Then we say that the groupoid $\groupR$ is \emph{effective} (or
\emph{reduced}) if $f_{\tx}$ is injective for every $\tx$ in $U$. This notion will correspond to the notion of
reduced orbifolds via the 2-functor $F$ that we'll define in section 3.
\end{defin}

We have this useful result:

\begin{lem}\label{lemma-a}
The following identities hold on any open connected neighborhood of $s(g)$ where both the L.H.S. and the R.H.S.
are defined:
\begin{enumerate}[(a)]
\item for every pair of points $(g,h)\in\fibre{R}{t}{s}{R}$ if we call $k:=m(g,h)$, we have that $\th\circ\tg=\tk$;
\item for every $g\in R$ we have that $\widetilde{i(g)}=\tg^{-1}$.
\end{enumerate}
\end{lem}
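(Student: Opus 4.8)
The plan is to prove part (a) directly by a lifting argument, and then to deduce part (b) from (a) together with a short computation showing that $\widetilde{e(u)}$ is the identity germ. Throughout, for an arbitrary $g\in R$ I choose (as in Definition \ref{effective-groupoid}, which is legitimate for any $g$ since the groupoid is \'etale) an open neighbourhood $W_g$ on which both $s$ and $t$ restrict to biholomorphisms, and set $\tg=t\circ(s|_{W_g})^{-1}$.

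For (a), I would first unwind the definitions. From $(g,h)\in\fibre{R}{t}{s}{R}$ I have $t(g)=s(h)$, so $\tg$ sends $s(g)$ to $t(g)=s(h)$, and $\th$ sends $s(h)$ to $t(h)$; axiom (ii) of Definition \ref{groupoid-object} gives $s(k)=s(g)$ and $t(k)=t(h)$, so $\tk$ sends $s(g)$ to $t(h)$. Shrinking the neighbourhoods if necessary, using continuity of $\tg$ and the fact that $\tg(s(g))=s(h)$, I may assume $\tg$ maps into the domain of $\th$, so that $\th\circ\tg$ is defined near $s(g)$. Then I fix a point $x$ in a small neighbourhood of $s(g)$ and track its lifts: set $g':=(s|_{W_g})^{-1}(x)$, so $s(g')=x$ and $\tg(x)=t(g')$; then set $h':=(s|_{W_h})^{-1}(\tg(x))$, so $s(h')=t(g')$ and $\th(\tg(x))=t(h')$. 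By construction $(g',h')\in\fibre{R}{t}{s}{R}$, hence $k':=m(g',h')$ is defined, and axiom (ii) yields $s(k')=s(g')=x$ and $t(k')=t(h')=\th(\tg(x))$.

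The heart of the argument, and the step I expect to be the main obstacle, is to identify $k'$ with the lift of $x$ used in the definition of $\tk$. Since $m$ and the two local inverses are continuous, the assignment $x\mapsto k'$ is continuous and takes the value $m(g,h)=k$ at $x=s(g)$; as $W_k$ is an open neighbourhood of $k$, I may therefore shrink the neighbourhood of $s(g)$ so that $k'\in W_k$ for all such $x$. Because $s|_{W_k}$ is injective and $s(k')=x$, this forces $k'=(s|_{W_k})^{-1}(x)$, whence $\tk(x)=t(k')=\th(\tg(x))$. Letting $x$ range over this neighbourhood gives $\th\circ\tg=\tk$, which proves (a).

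For (b), I would first record the auxiliary fact that for every $u\in U$ the germ $\widetilde{e(u)}$ is the identity at $u$: indeed $s(e(u))=u=t(e(u))$ by axiom (i), and for $x$ near $u$ the element $e(x)$ lies in $W_{e(u)}$ by continuity of $e$ and satisfies $s(e(x))=x$, so injectivity of $s|_{W_{e(u)}}$ gives $(s|_{W_{e(u)}})^{-1}(x)=e(x)$ and hence $\widetilde{e(u)}(x)=t(e(x))=x$. Now I apply (a) to the pairs $(g,i(g))$ and $(i(g),g)$, which lie in $\fibre{R}{t}{s}{R}$ since $s\circ i=t$ and $t\circ i=s$ (axiom (v)); the inverse axioms $m\circ(1_R,i)=e\circ s$ and $m\circ(i,1_R)=e\circ t$ give $m(g,i(g))=e(s(g))$ and $m(i(g),g)=e(t(g))$. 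Combining these with (a) and the auxiliary fact yields $\widetilde{i(g)}\circ\tg=\widetilde{e(s(g))}=\mathrm{id}$ near $s(g)$ and $\tg\circ\widetilde{i(g)}=\widetilde{e(t(g))}=\mathrm{id}$ near $t(g)$, so $\tg$ and $\widetilde{i(g)}$ are mutually inverse biholomorphisms and $\widetilde{i(g)}=\tg^{-1}$, proving (b).
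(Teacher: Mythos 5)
Your proof is correct and follows essentially the same route as the paper: in (a) you construct the local ``product of lifts'' map (the paper's $f_{g,h}$, just parametrized by $x$ near $s(g)$ instead of by $\ty$ near $t(g)$), identify it with the $s$-lift through $W_k$ by continuity and injectivity of $s|_{W_k}$, and in (b) you combine (a) with the unit and inverse axioms, your explicit check that $\widetilde{e(u)}$ is the identity germ being a detail the paper merely asserts. The one step you omit is the final clause of the statement: having $\th\circ\tg=\tk$ only on a (possibly shrunk) neighbourhood of $s(g)$, you should note that both sides are holomorphic (compositions of $t$ with local inverses of the \'etale map $s$) and invoke the identity theorem to conclude the equality on \emph{any} open connected neighbourhood of $s(g)$ where both sides are defined, which is exactly how the paper closes the argument.
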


\begin{proof}
(a) Let us consider the points $g,h$ and $k:=m(g,h)$; since $s$ and $t$ are \'etale, there exist open neighborhoods
$W_g,W_h,W_k$ of them where both $s$ and $t$ are invertible. Now let us consider the open neighborhood $t(W_h)\cap
s(W_g)$ of $t(h)=s(g)=:\tx$; then for every point $\ty$ in it we can define:

$$f_{g,h}(\ty):=m\Big((t|_{W_g})^{-1}(\ty),(s|_{W_h})^{-1}(\ty)\Big);$$

this map is well defined and holomorphic because composition of holomorphic maps. Moreover, using the properties
of groupoid objects we get that:

$$t\circ f_{g,h}=\th\AND s\circ f_{g,h}=\tg^{-1};$$ 

now $f_{g,h}(\tx)=m(g,h)=k$, hence we can apply $(s|_{W_k})^{-1}$ to the second identity and we get that:

$$f_{g,h}=(s|_{W_k})^{-1}\circ \tg^{-1}.$$

Clearly this happens
only for points $\ty$ sufficiently near $\tx$, but there is no loss of generality in restricting $W_g$ or $W_k$ in
order to get that this is true for every point $\ty$ of the open neighborhood of $\tx$. So we get:

$$\th=t\circ f_{g,h}=t\circ (s|_{W_k})^{-1}\circ\tg^{-1}=\tk\circ\tg^{-1}$$

or, equivalently,

$$\th\circ\tg=\tk.$$

We have proved this identity only for points sufficiently near $s(h)$ in $U$; since we are working with
holomorphic functions, we get that this identity is still true for every open \emph{connected} neighborhood of
$s(h)$ where both the left and the right hand sides are defined.\\

(b) Using axiom (v) of definition \ref{groupoid-object} and part (a) we have that $\tg\circ\widetilde{i(g)}=
\widetilde{m(i(g),g)}=\widetilde{e(t(g))}=id$ and analogously $\widetilde{i(g)}\circ\tg=id$ on sufficiently small
open neighborhoods of $t(g)$ and $s(g)$ respectively. Hence $\widetilde{i(g)}=\tg^{-1}$.
\end{proof}

\begin{cor}
The set map of (\ref{eq-27}) is a group homomorphism.
\end{cor}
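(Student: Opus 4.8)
The plan is to read off the multiplicativity of $f_{\tx}$ directly from Lemma \ref{lemma-a}(a), so the whole content is essentially already packaged in that lemma. First I would record that $R_{\tx}$ is closed under the groupoid multiplication: if $g,h\in R_{\tx}$ then $t(g)=\tx=s(h)$, so $(g,h)\in\fibre{R}{t}{s}{R}$ and $m(g,h)$ is defined; moreover, by axiom (ii) of definition \ref{groupoid-object} one has $s(m(g,h))=s(g)=\tx$ and $t(m(g,h))=t(h)=\tx$, so $m(g,h)\in R_{\tx}$. Thus the group operation on $R_{\tx}$ is the restriction of $m$, and $f_{\tx}$ is genuinely a map from the group $R_{\tx}$ to the group $Diff_{\tx}(U)$ of germs at $\tx$; note also that each $\tg$ with $g\in R_{\tx}$ fixes $\tx$, since $\tg(\tx)=t((s|_{W_g})^{-1}(\tx))=t(g)=\tx$, so it really defines an element of $Diff_{\tx}(U)$.

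The core step is then immediate. Fixing $g,h\in R_{\tx}$ and setting $k:=m(g,h)$, Lemma \ref{lemma-a}(a) gives $\tk=\th\circ\tg$ on a connected open neighborhood of $\tx$. Passing to germs at $\tx$ this reads
$$f_{\tx}(m(g,h))=[\tk]=[\th]\circ[\tg]=f_{\tx}(h)\circ f_{\tx}(g),$$
which is exactly the statement that $f_{\tx}$ respects the two group operations, once the composition order in $Diff_{\tx}(U)$ is matched with the multiplication on $R_{\tx}$. For completeness I would also note that $f_{\tx}$ sends units to units --- for $g=e(\tx)$ one has $\tg=\mathrm{id}$ near $\tx$, because $e$ is the unique local section of $s$ through $e(\tx)$ and $t\circ e=1_U$ --- and, using Lemma \ref{lemma-a}(b), that $f_{\tx}(i(g))=\tg^{-1}=f_{\tx}(g)^{-1}$; but both of these follow formally from the displayed multiplicativity and the group axioms.

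I do not expect any serious obstacle. The one point that requires care is the bookkeeping of composition order. With the convention $\fibre{R}{t}{s}{R}$ (so that $t(g)=s(h)$ for composable pairs) and the definition $\tg=t\circ(s|_{W_g})^{-1}$, Lemma \ref{lemma-a}(a) forces $\widetilde{m(g,h)}=\th\circ\tg$ rather than $\tg\circ\th$, so strictly speaking $f_{\tx}$ is a homomorphism precisely when $Diff_{\tx}(U)$ carries the composition law $\phi\cdot\psi:=\psi\circ\phi$ (equivalently, when $R_{\tx}$ is read with the opposite multiplication). Since the notion of effectiveness only invokes the \emph{injectivity} of $f_{\tx}$, which is insensitive to this choice, I would simply fix the convention that makes the equality above literally the homomorphism identity and reduce the whole verification to that single displayed computation.
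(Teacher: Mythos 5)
Your proposal is correct and follows exactly the route the paper intends: the corollary is stated without proof precisely because it is the immediate consequence of Lemma \ref{lemma-a}(a) (multiplicativity via $\tk=\th\circ\tg$) and \ref{lemma-a}(b) (inverses), which is what you use. Your extra bookkeeping --- closure of $R_{\tx}$ under $m$ via axiom (ii), the fact that each $\tg$ fixes $\tx$, and the remark that the order of composition forces one to read $Diff_{\tx}(U)$ (or $R_{\tx}$) with the opposite multiplication, a point the paper glosses over and which is harmless since effectiveness only uses injectivity of $f_{\tx}$ --- is accurate and only makes the implicit argument explicit.
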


Now let us recall the following result (stated in the smooth case, but also true in the holomorphic case).

\begin{prop}(\cite{PS}, \S 2.1)
The data of Lie groupoids, morphisms and natural transformations between them form a 2-category, known as
$\CAT{LieGpd}$.
\end{prop}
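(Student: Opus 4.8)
The plan is to realize $\CAT{LieGpd}$ as a full sub-2-category of the 2-category $(\CAT{Manifolds}-\textbf{Groupoids})$ furnished by Proposition \ref{2-cat-groupoid}, so that essentially all of the work reduces to that general statement. The only genuinely new point is that $\CAT{Manifolds}$ does not admit all fiber products, so before invoking Proposition \ref{2-cat-groupoid} I would first check that every fiber product appearing in the definitions of groupoid objects, their morphisms, and the two compositions of natural transformations actually exists once the source and target maps are submersions.

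First I would record that a Lie groupoid is by definition a groupoid object $\groupR$ in $\CAT{Manifolds}$ whose source and target are holomorphic submersions. By Proposition \ref{fiber-products-1} the fiber product $\fibre{R}{t}{s}{R}$ then exists and its projections are again submersions, and Remark \ref{fiber-products-2} shows that the iterated fiber products $\fibre{R}{t}{s}{R\,_t\times_s R}$ used in the associativity and unit axioms also exist. Hence Definition \ref{groupoid-object} is fully meaningful for Lie groupoids and all five structure maps live in $\CAT{Manifolds}$.

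Next I would observe that morphisms (Definition \ref{morphism-groupoid}) and natural transformations (Definition \ref{nat-tran-group}) between two Lie groupoids are phrased purely in terms of holomorphic maps satisfying commutation identities, so they make sense between any pair of Lie groupoids; moreover the componentwise composition of morphisms of Lie groupoids is again such a morphism, since the objects are unchanged, so we really do obtain a \emph{full} sub-2-category. For the compositions of 2-morphisms I would note that the vertical composition $\beta\odot\alpha=m'\circ(\alpha,\beta)$ of Definition-lemma \ref{vertical-composition-groupoid} and the horizontal composition $\beta\ast\alpha=m''\circ(\Phi_1\circ\alpha,\beta\circ\psi_2)$ of Definition-lemma \ref{horizontal-composition-groupoid} only require the fiber products $\fibre{R'}{t'}{s'}{R'}$ and $\fibre{R''}{t''}{s''}{R''}$, which exist precisely because the source and target of a Lie groupoid are submersions. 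Thus every datum (1)-(4) of Definition \ref{2-cat} is well defined and stays inside the class of Lie groupoids.

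Finally, with all operations well defined and internal to $\CAT{Manifolds}$, the verification of axioms (a)-(d) of Definition \ref{2-cat} --- associativity of $\circ$, associativity of $\ast$, and the unit laws for the $1$-identities and for the $2$-identities $i_{(\psi,\Psi)}$ (recall the relations \eqref{eq-12}) --- is word for word the direct computation used for the general Proposition \ref{2-cat-groupoid}: each identity is a consequence of the groupoid object axioms of Definition \ref{groupoid-object} and never leaves $\CAT{Manifolds}$. I expect the main (and only) obstacle to be the existence of the fiber products, which is handled by the submersion hypothesis via Proposition \ref{fiber-products-1} and Remark \ref{fiber-products-2}; once that is secured there is no new difficulty beyond the routine axiom check, so I would simply point to the proof of Proposition \ref{2-cat-groupoid} rather than repeat it.
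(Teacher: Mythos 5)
Your proposal is correct, but it follows a different route from the paper: the paper does not prove this proposition at all, treating it as a known result imported from the literature (\cite{PS}, \S 2.1), just as it omits the proof of the general Proposition \ref{2-cat-groupoid} as ``a direct check'' and relegates the two nontrivial verifications (that $\odot$ and $\ast$ of 2-morphisms are again natural transformations) to the appendix. You instead derive $\CAT{LieGpd}$ internally, as the full sub-2-category of $(\CAT{Manifolds}-\textbf{Groupoids})$ spanned by the Lie groupoids; this is exactly the mechanism the paper itself uses one level down, when it observes that $\CAT{Grp}$ is a full sub-2-category of $\CAT{LieGpd}$ and therefore a 2-category ``for free''. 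Your reduction is sound: morphisms and 2-morphisms between two fixed groupoid objects, and all the compositions $\circ,\odot,\ast$, never change the objects involved, so fullness guarantees closure under every operation of Definition \ref{2-cat}. Two small remarks. First, by the paper's definition a Lie groupoid is \emph{already} a groupoid object in $\CAT{Manifolds}$ (so the existence of $\fibre{R}{t}{s}{R}$ is built into the definition); where the submersion hypothesis genuinely earns its keep is in Remark \ref{fiber-products-2}, guaranteeing the \emph{iterated} fiber products needed to even state the associativity and unit axioms, and the fiber products over the \emph{target} groupoid used in Definition-lemmas \ref{vertical-composition-groupoid} and \ref{horizontal-composition-groupoid} --- you correctly flag both. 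Second, your argument is only as self-contained as Proposition \ref{2-cat-groupoid}, whose proof the paper also omits; so what your route buys is independence from the citation to \cite{PS}, at the cost of resting on an omitted (though routine, and partially carried out in the appendix) axiom check.
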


\begin{defin}
Let us define the following 2-category, that we will denote with $\CAT{Grp}$:

\begin{itemize}
\item the objects are all the \emph{proper, \'etale and effective groupoid objects} in the category 
$\CAT{Manifolds}$ of complex manifolds and holomorphic maps between them;

\item the morphisms between these objects are \emph{all} the morphisms between them as groupoid objects in
$\CAT{Manifolds}$;

\item the 2-morphisms are \emph{all} the natural transformations between the morphisms of the previous point.
\end{itemize}
\end{defin}

Since every \'etale groupoid object is in particular a Lie groupoid, we are just considering a full
sub-2-category of $\CAT{LieGpd}$, hence we have for free that:

\begin{prop}
$\CAT{Grp}$ \emph{is actually a 2-category}.
\end{prop}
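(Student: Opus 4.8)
The plan is to recognize $\CAT{Grp}$ as a \emph{full sub-2-category} of $\CAT{LieGpd}$ and to verify that the 2-category structure of the latter restricts to the former; no genuinely new construction is required. First I would record the inclusion on objects: every object of $\CAT{Grp}$ is a proper, \'etale and effective groupoid object in $\CAT{Manifolds}$, and since an \'etale map (a local biholomorphism) is in particular a holomorphic submersion, both its source and target are submersions, so it is a Lie groupoid. Thus the class of objects of $\CAT{Grp}$ is a subclass of the objects of $\CAT{LieGpd}$. Next I would observe that the hom-categories agree: by definition, for any pair of objects $A,B$ of $\CAT{Grp}$ the 1-morphisms are \emph{all} morphisms of groupoid objects $A\rightarrow B$ and the 2-morphisms are \emph{all} natural transformations between them, which are exactly the objects and arrows of $\CAT{LieGpd}(A,B)$. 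Hence $\CAT{Grp}(A,B)=\CAT{LieGpd}(A,B)$ as categories, so the vertical composition $\odot$, the 2-identities $i_{(\psi,\Psi)}$, and the 1-identities $1_A$ are simply inherited from $\CAT{LieGpd}$.

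The only point needing comment is \emph{closure} of the composition functors. For each triple $A,B,C$ of objects of $\CAT{Grp}$, the composition functor $c_{ABC}$ of $\CAT{LieGpd}$ sends $\CAT{LieGpd}(A,B)\times\CAT{LieGpd}(B,C)$ into $\CAT{LieGpd}(A,C)$; I must check that composing morphisms (resp. 2-morphisms) between objects of $\CAT{Grp}$ again yields a morphism (resp. 2-morphism) between objects of $\CAT{Grp}$. This is immediate, because the composition of 1-morphisms is $(\phi,\Phi)\circ(\psi,\Psi)=(\phi\circ\psi,\Phi\circ\Psi)$, and the horizontal and vertical compositions of 2-morphisms never alter the endpoint objects, which remain fixed elements of $\CAT{Grp}$ throughout. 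Since the three hom-categories involved coincide with those of $\CAT{LieGpd}$, the functor $c_{ABC}$ automatically restricts to a functor valued in $\CAT{Grp}(A,C)$.

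Finally, axioms (a)--(d) of Definition \ref{2-cat} hold for the restricted data precisely because they hold in $\CAT{LieGpd}$: associativity of $\circ$, associativity of the horizontal composition $\ast$, and the unit laws for $1_A$ and $i_A$ are identities already valid in $\CAT{LieGpd}$, and restricting them to the chosen subclass of objects cannot disturb them. I expect no real obstacle in this argument; the entire content of the statement is the bookkeeping fact that a full sub-2-category of a 2-category is again a 2-category, the only substantive input being the observation made above that every \'etale groupoid object is a Lie groupoid.
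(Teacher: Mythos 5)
Your proposal is correct and follows exactly the paper's own route: the paper also observes that every \'etale groupoid object is in particular a Lie groupoid, so that $\CAT{Grp}$ is a full sub-2-category of $\CAT{LieGpd}$ (whose 2-category structure is quoted from Pronk--Scull), and concludes the result ``for free''. You have merely spelled out the bookkeeping (agreement of hom-categories, restriction of the composition functors, inheritance of the axioms) that the paper leaves implicit.
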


\section{From orbifolds to groupoids}

The main aim of this work is to describe a 2-functor $F$ from $\CAT{Pre-Orb}$ to $\CAT{Grp}$. This will be done on the
level of objects by adapting to the complex case a construction due to D. Pronk (\cite{Pr}, \S 4.4) in the smooth case:
first of all
for every orbifold atlas $\mathcal{U}$ we define two \emph{sets} $U$ and $R$ and five \emph{set maps} $s,t,m,i,e$
that make the pair $(R,U)$ into a groupoid object in $\CAT{Sets}$. Then we describe the topology on $R$ and $U$ and
we prove that actually they are both complex manifolds such that the five structure maps are holomorphic. Finally,
we prove that the groupoid object is proper, \'etale and effective, hence we obtain an object in $\CAT{Grp}$.\\

The original part of this section consists in the extension of this construction to the level of morphisms
and 2-morphisms. The last part of this section will be devoted to prove that actually this gives
rise to a 2-functor $F$ from $\CAT{Pre-Orb}$ to $\CAT{Grp}$.

\subsection{Objects}

Let us fix any orbifold atlas $\mathcal{U}=\{(\tU_i,G_i,\pi_i)\}_{i\in I}$ of dimension $n$ on a paracompact and
second countable Hausdorff topological space $X$; we want to associate to it a groupoid object $\groupR$
which ``encodes'' the informations about the underlying topological space $X$ and the atlas $\mathcal{U}$.
First of all, we define:

$$U:=\coprod_{i\in I}\tU_i$$

with the topology of the disjoint union. Since all the $\tU_i$'s are open subsets of $\mathbb{C}^n$ and $U$ is
their disjoint union, then $U$ is a Hausdorff paracompact complex manifold of dimension $n$. The points of
this manifold will be always denoted as $(\tx_i,\tU_i)$ if $\tx_i\in\tU_i\subseteq U$. In the following
constructions we will tacictly assume that if we take a generic point $\tx_i$, then this point belongs to
$\tU_i$.\\

Now the idea is that whenever we have $U$ defined in this way, we would like to recover both the underlying
topological space $X$ (up to homemorphisms, see proposition \ref{quasi-injectivity} below) and the atlas 
$\mathcal{U}$; how to do this must be encoded in $R$, that we are going to define. If we want to recover $X$
\emph{set-theoretically}, we have to identify on $U$ any two pair of points $(\tx_i,\tU_i)$ and $(\tx_j,\tU_j)$ such that
$\pi_i(\tx_i)=\pi_j(\tx_i)=:x$ on $X$. Now let us suppose that we have fixed such a pair of points and let us apply
remark \ref{useful-remark}; so we get that there exists an open set $U_k\subseteq U_i\cap U_j$ on $X$ which
contains $x$, a uniformizing system $\unif{k}\in\mathcal{U}$ for it, a point $\tx_k\in\tU_k$ and embeddings:

\begin{equation}\label{eq-28}
\unif{i}\sinistra{\lambda_{ki}}\unif{k}\fre{\lambda_{kj}}\unif{j};
\end{equation}

such that:

\begin{equation}\label{eq-29}
\LA{ki}(\tx_k)=\tx_i\AND\LA{kj}(\tx_k)=\tx_j.
\end{equation}

Conversely, if there exists a pair of embeddings (\ref{eq-28}) such that (\ref{eq-29}) holds, we get immediately
that $(\tx_i,\tU_i)$ and $(\tx_j,\tU_j)$ will be identified in $X$. So the first idea could be to define a set of
``relations'' $\hat{R}$ whose elements are of the form $(\LA{ki},\tx_k,\LA{kj})$, as follows:

$$\hat{R}:=\coprod\tU_k^{ij}$$

where $\tU_k^{ij}:=\tU_k^{\LA{ki},\LA{kj}}$ denotes a copy of $\tU_k$ indexed by a pair of embeddings 
$\LA{ki},\LA{kj}$, and where the disjoint
union is taken \emph{over all triples of uniformizing systems of the form }(\ref{eq-28}). In
other words, the disjoint union is taken over all the uniformizing systems $\unif{k}\in\mathcal{U}$ and over all
the possible embeddings of them into any pair of uniformizing systems as in (\ref{eq-28}); hence 
$(\LA{ki},\tx_k,\LA{kj})$ just means the point $\tx_k\in\tU_k$ considered as in the set $\tU_k^{ij}
\subseteq\hat{R}$. Now we can easily define 4 of the 5 morphisms of a groupoid as follows:

\begin{eqnarray*}
& \hat{s}(\LA{ki},\tx_k,\LA{kj}):=(\LA{ki}(\tx_k),\tU_i),\quad\quad
   \hat{t}(\LA{ki},\tx_k,\LA{kj}):=(\LA{kj}(\tx_k),\tU_j), &\\
& \hat{e}(\tx_i,\tU_i):=(1_{\tU_i},\tx_i,1_{\tU_i}),\quad\quad
   \hat{i}(\LA{ki},\tx_k,\LA{kj}):=(\LA{kj},\tx_k,\LA{ki}).&
\end{eqnarray*}

$\hat{R}$ is \emph{almost} the manifold we want to use; the only problem is that we can't have a well defined
notion of multiplication $m$ on it, so we have to define the set of relations $R$ as a quotient of $\hat{R}$ by
a locally trivial relation of equivalence as follows. The necessity of such a relation will be clear in
the proof of lemma \ref{m-well-defined}.

\begin{defin}\label{equivalence}(\cite{Pr}, \S 4.4)
Two points $(\LA{ki},\tx_k,\LA{kj})$ and $(\LA{li},\tx_l,\LA{lj})$ in $\hat{R}$ are said to be \emph{equivalent}
and we write:

\begin{equation}\label{eq-30}(\LA{ki},\tx_k,\LA{kj})\sim(\LA{li},\tx_l,\LA{lj})\end{equation}

iff there exist a uniformizing system $\unif{m}\in\mathcal{U}$, a point $\tx_m\in\tU_m$ and two embeddings:

$$\unif{k}\sinistra{\LA{mk}}\unif{m}\fre{\LA{ml}}\unif{l}$$

such that the following diagrams are commutative:

\begin{equation}\label{eq-31}
\begin{tikzpicture}[scale=0.8]
    \def\x{1.5}
    \def\y{-1.2}
    \node (A2_0) at (2*\x, 0*\y) {$\tU_k$};
    \node (A0_2) at (0*\x, 2*\y) {$\tU_i$};
    \node (A2_2) at (2*\x, 2*\y) {$\tU_m$};
    \node (A4_2) at (4*\x, 2*\y) {$\tU_j$};
    \node (A2_4) at (2*\x, 4*\y) {$\tU_l$};
    \node (A1_2) at (1*\x, 2*\y) {$\curvearrowright$};
    \node (A3_2) at (3*\x, 2*\y) {$\curvearrowright$};
    \path (A2_0) edge [->] node [auto,swap] {$\scriptstyle{\LA{ki}}$} (A0_2);
    \path (A2_0) edge [->] node [auto] {$\scriptstyle{\LA{kj}}$} (A4_2);
    \path (A2_4) edge [->] node [auto] {$\scriptstyle{\LA{li}}$} (A0_2);
    \path (A2_4) edge [->] node [auto,swap] {$\scriptstyle{\LA{lj}}$} (A4_2);
    \path (A2_2) edge [->] node [auto,swap] {$\scriptstyle{\LA{mk}}$} (A2_0);
    \path (A2_2) edge [->] node [auto] {$\scriptstyle{\LA{ml}}$} (A2_4);

    \def\z{7}
    \node (B2_0) at (2*\x+\z, 0*\y) {$\tx_k$};
    \node (B0_2) at (0*\x+\z, 2*\y) {$\tx_i$};
    \node (B2_2) at (2*\x+\z, 2*\y) {$\tx_m$};
    \node (B4_2) at (4*\x+\z, 2*\y) {$\tx_j.$};
    \node (B2_4) at (2*\x+\z, 4*\y) {$\tx_l$};
    \node (B1_2) at (1*\x+\z, 2*\y) {$\curvearrowright$};
    \node (B3_2) at (3*\x+\z, 2*\y) {$\curvearrowright$};
    \path (B2_0) edge [->] node [auto,swap] {$\scriptstyle{\LA{ki}}$} (B0_2);
    \path (B2_0) edge [->] node [auto] {$\scriptstyle{\LA{kj}}$} (B4_2);
    \path (B2_4) edge [->] node [auto] {$\scriptstyle{\LA{li}}$} (B0_2);
    \path (B2_4) edge [->] node [auto,swap] {$\scriptstyle{\LA{lj}}$} (B4_2);
    \path (B2_2) edge [->] node [auto,swap] {$\scriptstyle{\LA{mk}}$} (B2_0);
    \path (B2_2) edge [->] node [auto] {$\scriptstyle{\LA{ml}}$} (B2_4);
\end{tikzpicture}\end{equation}
\end{defin}

\begin{rem}
In order to simplify the notations, \emph{here and from now on we omit the groups }$G_i$\emph{'s and the maps}
$\pi_i$\emph{'s} in every diagram; in other words from now on every map $\LA{ij}$ will be an embedding between
uniformizing systems even if we write only its source and target as open sets of $\mathbb{C}^n$ and not as
uniformizing systems.
\end{rem}

\emph{Now our aim is to prove that} (\ref{eq-30}) \emph{is an equivalence relation on} $R'$. In order to do that,
let us state and prove the following lemma:

\begin{lem}\label{useful-lemma}
Let us fix an atlas $\mathcal{U}$, a pair of embeddings $\LA{nl}:\unif{n}\rightarrow\unif{l}$, $\LA{pl}:\unif{p}
\rightarrow\unif{l}$ and a pair of points $\tx_n\in\tU_n$, $\tx_p\in\tU_p$ such that $\LA{nl}(\tx_n)=\LA{pl}
(\tx_p)=:\tx_l$. Then there exist a uniformizing system $\unif{q}\in\mathcal{U}$, a pair of embeddings $\LA{qn},
\LA{qp}$ and a point $\tx_q\in\tU_q$ such that the following diagrams are commutative:

\begin{equation}\label{eq-32}
\begin{tikzpicture}[scale=0.8]
    \def\x{1.5}
    \def\y{-1.5}
    \node (A0_1) at (1*\x, 0*\y) {$\tU_q$};
    \node (A1_0) at (0*\x, 1*\y) {$\tU_n$};
    \node (A1_1) at (1*\x, 1*\y) {$\curvearrowright$};
    \node (A1_2) at (2*\x, 1*\y) {$\tU_p$};
    \node (A2_1) at (1*\x, 2*\y) {$\tU_l$};
    \path (A1_2) edge [->] node [auto] {$\scriptstyle{\LA{pl}}$} (A2_1);
    \path (A1_0) edge [->] node [auto,swap] {$\scriptstyle{\LA{nl}}$} (A2_1);
    \path (A0_1) edge [->,dashed] node [auto] {$\scriptstyle{\LA{qp}}$} (A1_2);
    \path (A0_1) edge [->,dashed] node [auto,swap] {$\scriptstyle{\LA{qn}}$} (A1_0);

    \def\z{4.8}
    \node (B0_1) at (1*\x+\z, 0*\y) {$\tx_q$};
    \node (B1_0) at (0*\x+\z, 1*\y) {$\tx_n$};
    \node (B1_1) at (1*\x+\z, 1*\y) {$\curvearrowright$};
    \node (B1_2) at (2*\x+\z, 1*\y) {$\tx_p$};
    \node (B2_1) at (1*\x+\z, 2*\y) {$\tx_l$};
    \path (B1_2) edge [->] node [auto] {$\scriptstyle{\LA{pl}}$} (B2_1);
    \path (B1_0) edge [->] node [auto,swap] {$\scriptstyle{\LA{nl}}$} (B2_1);
    \path (B0_1) edge [->,dashed] node [auto] {$\scriptstyle{\LA{qp}}$} (B1_2);
    \path (B0_1) edge [->,dashed] node [auto,swap] {$\scriptstyle{\LA{qn}}$} (B1_0);
\end{tikzpicture}
\end{equation}
\end{lem}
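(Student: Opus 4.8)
The plan is to descend to the underlying space $X$, invoke axiom (ii) of the definition of orbifold atlas to produce the chart $\unif{q}$ together with its two embeddings, and then correct those embeddings by suitable group elements so that both the marked-point diagram and the triangle of embeddings commute. First I would observe that, since $\LA{nl}$ and $\LA{pl}$ are embeddings, we have $\pi_l\circ\LA{nl}=\pi_n$ and $\pi_l\circ\LA{pl}=\pi_p$; evaluating at $\tx_n$ and $\tx_p$ and writing $x:=\pi_l(\tx_l)$ gives $\pi_n(\tx_n)=x=\pi_p(\tx_p)$, so $x\in U_n\cap U_p$. Applying condition (ii) of definition \ref{orb-atlas} to $\unif{n}$ and $\unif{p}$ at the point $x$ then yields a uniformizing system $\unif{q}\in\mathcal{U}$ for some open $U_q\subseteq U_n\cap U_p$ containing $x$, together with embeddings $\unif{n}\sinistra{\LA{qn}}\unif{q}\fre{\LA{qp}}\unif{p}$; these are the candidate arrows of (\ref{eq-32}).

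Next I would fix the marked points exactly in the spirit of remark \ref{useful-remark}. Choose any $\tx_q\in\tU_q$ with $\pi_q(\tx_q)=x$ (possible since $\pi_q$ is onto $U_q$). Then $\pi_n(\LA{qn}(\tx_q))=\pi_q(\tx_q)=x=\pi_n(\tx_n)$, so $\LA{qn}(\tx_q)$ and $\tx_n$ lie in the same $G_n$-orbit; replacing $\LA{qn}$ by $g_n\circ\LA{qn}$ for a suitable $g_n\in G_n$ (still an embedding, since $\pi_n$ is $G_n$-invariant) arranges $\LA{qn}(\tx_q)=\tx_n$. Treating $\LA{qp}$ in the same way on the $\unif{p}$ side gives $\LA{qp}(\tx_q)=\tx_p$; as the two corrections are independent, the right-hand diagram of (\ref{eq-32}) now commutes.

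It remains to make the left-hand triangle commute, and this is the step I expect to be the main obstacle. Both $\LA{nl}\circ\LA{qn}$ and $\LA{pl}\circ\LA{qp}$ are embeddings $\unif{q}\rightarrow\unif{l}$, so by lemma \ref{lemma-moerdijk} there is a unique $h\in G_l$ with $\LA{pl}\circ\LA{qp}=h\circ\LA{nl}\circ\LA{qn}$. Evaluating at $\tx_q$ gives $\tx_l=\LA{pl}(\tx_p)=h(\LA{nl}(\tx_n))=h(\tx_l)$, so $h$ fixes $\tx_l$, i.e.\ $h\in(G_l)_{\tx_l}$; this is all the bare diagram chase delivers, and it is not yet the identity. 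To remove $h$ I would absorb it into an isotropy element on the $\unif{p}$ side: by remark \ref{local-group} the homomorphism $\Lambda_{pl}$ induced by $\LA{pl}$ restricts to an isomorphism of $(G_p)_{\tx_p}$ onto $(G_l)_{\tx_l}$, so there is a (unique) $g\in(G_p)_{\tx_p}$ with $\Lambda_{pl}(g)=h^{-1}$. Replacing $\LA{qp}$ by $g\circ\LA{qp}$ preserves the marked point, since $g(\tx_p)=\tx_p$, and by corollary \ref{induced-map} it transforms the left-hand side into $\LA{pl}\circ g\circ\LA{qp}=\Lambda_{pl}(g)\circ\LA{pl}\circ\LA{qp}=h^{-1}\circ h\circ\LA{nl}\circ\LA{qn}=\LA{nl}\circ\LA{qn}$. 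With this final choice of $\unif{q}$, $\tx_q$, $\LA{qn}$ and $\LA{qp}$ both diagrams of (\ref{eq-32}) commute, which completes the argument.
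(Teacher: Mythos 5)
Your proof is correct and follows essentially the same route as the paper's: both first produce $\unif{q}$ with matching marked points via the atlas axiom (remark \ref{useful-remark}), then measure the failure of the left-hand triangle by the unique element of $G_l$ supplied by lemma \ref{lemma-moerdijk}, and finally absorb that element into one of the embeddings through the induced isomorphism of stabilizers. The only immaterial difference is where the correction lands: you postcompose $\LA{qp}$ with an element of $(G_p)_{\tx_p}$ obtained via remark \ref{local-group}, while the paper precomposes $\tilde{\lambda}_{qn}$ with the element of $(G_q)_{\tx_q}$ obtained from lemma \ref{lemma-intersection}.
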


\begin{proof}(\cite{Pr}, lemma 4.4.1 with some changes)
By hypothesis $\LA{nl}(\tx_n)=\LA{pl}(\tx_p)$, hence $\pi_n(\tx_n)=\pi_p(\tx_p)$, so using remark
\ref{useful-remark} we get that there exists a uniformizing system $\unif{q}\in\mathcal{U}$, embeddings
$\tilde{\lambda}_{qn}$ and $\tilde{\lambda}_{qp}$ of it into $\unif{n}$ and $\unif{p}$ respectively and a point
$\tx_q\in\tU_q$ such that $\tilde{\lambda}_{qn}(\tx_q)=\tx_n$ and $\tilde{\lambda}_{qp}(\tx_q)=\tx_p$. Now let us
consider the embeddings:

$$\alpha:=\LA{nl}\circ\tilde{\lambda}_{qn}\AND\beta:=\LA{pl}\circ\tilde{\lambda}_{qp}$$

both defined from $\unif{q}$ to $\unif{l}$. Using lemma \ref{lemma-moerdijk} we get that there exists a unique
$g\in G_l$ such that $g\circ\alpha=\beta$. Now $\alpha(\tU_q)\cap g\circ\alpha(\tU_q)\neq\varnothing$ since it
contains $\tx_l$, hence we can apply lemma  \ref{lemma-intersection}, so there exists a unique $h\in G_q$
(actually, it belongs to the stabilizer of $\tx_q$) such that $\Lambda_{nl}\circ\tilde{\Lambda}_{qn}(h)=g$. If we
define $\LA{qn}:=\tilde{\lambda}_{qn}\circ h$ and $\LA{qp}:=\tilde{\lambda}_{qp}$, then a direct check proves that
these embeddings make (\ref{eq-32}) commute.
\end{proof}

\begin{lem}\label{equivalence-proof}
$\sim$ is an equivalence relation on $\hat{R}$.
\end{lem}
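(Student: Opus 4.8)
The plan is to check the three defining properties of an equivalence relation directly from Definition \ref{equivalence}, where reflexivity and symmetry will be immediate and transitivity will be the only substantial point, reduced to Lemma \ref{useful-lemma}. Throughout I use that a composition of embeddings is again an embedding (if $\lambda$ and $\mu$ are embeddings with matching source/target, then $\mu\circ\lambda$ is holomorphic, injective and satisfies the compatibility with the projections), which lets me build new embeddings by composing the given ones.

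For \emph{reflexivity} I would take, given $(\LA{ki},\tx_k,\LA{kj})$, the witnessing datum $\unif{m}:=\unif{k}$, $\tx_m:=\tx_k$ and $\LA{mk}:=\LA{ml}:=1_{\tU_k}$; both squares in (\ref{eq-31}) then commute trivially, so $(\LA{ki},\tx_k,\LA{kj})\sim(\LA{ki},\tx_k,\LA{kj})$. For \emph{symmetry}, note that Definition \ref{equivalence} is stated symmetrically in the two points: if $(\LA{ki},\tx_k,\LA{kj})\sim(\LA{li},\tx_l,\LA{lj})$ is witnessed by $\unif{m},\tx_m,\LA{mk},\LA{ml}$, then exactly the same chart and marked point, with the roles of $\LA{mk}$ and $\LA{ml}$ interchanged, witness $(\LA{li},\tx_l,\LA{lj})\sim(\LA{ki},\tx_k,\LA{kj})$.

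The heart of the proof is \emph{transitivity}. Suppose $(\LA{ki},\tx_k,\LA{kj})\sim(\LA{li},\tx_l,\LA{lj})$ via a chart $\unif{m}$, a point $\tx_m$ and embeddings $\LA{mk},\LA{ml}$, and $(\LA{li},\tx_l,\LA{lj})\sim(\LA{ni},\tx_n,\LA{nj})$ via a chart $\unif{p}$, a point $\tx_p$ and embeddings $\LA{pl},\LA{pn}$ (all three points necessarily share the target indices $i,j$, otherwise the relation could not hold). In particular $\LA{ml}(\tx_m)=\tx_l=\LA{pl}(\tx_p)$, so I would apply Lemma \ref{useful-lemma} to the two embeddings $\LA{ml}\colon\unif{m}\rightarrow\unif{l}$ and $\LA{pl}\colon\unif{p}\rightarrow\unif{l}$. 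This yields a chart $\unif{q}$, a point $\tx_q\in\tU_q$ and embeddings $\LA{qm}\colon\unif{q}\rightarrow\unif{m}$, $\LA{qp}\colon\unif{q}\rightarrow\unif{p}$ with $\LA{ml}\circ\LA{qm}=\LA{pl}\circ\LA{qp}$, $\LA{qm}(\tx_q)=\tx_m$ and $\LA{qp}(\tx_q)=\tx_p$. I then set $\LA{qk}:=\LA{mk}\circ\LA{qm}$ and $\LA{qn}:=\LA{pn}\circ\LA{qp}$ and claim $\unif{q},\tx_q,\LA{qk},\LA{qn}$ witness $(\LA{ki},\tx_k,\LA{kj})\sim(\LA{ni},\tx_n,\LA{nj})$.

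The marked-point conditions are immediate: $\LA{qk}(\tx_q)=\LA{mk}(\tx_m)=\tx_k$ and $\LA{qn}(\tx_q)=\LA{pn}(\tx_p)=\tx_n$. The commutativity of the left square of (\ref{eq-31}) follows by chaining the two given diagrams through Lemma \ref{useful-lemma}: $\LA{ki}\circ\LA{qk}=\LA{ki}\circ\LA{mk}\circ\LA{qm}=\LA{li}\circ\LA{ml}\circ\LA{qm}=\LA{li}\circ\LA{pl}\circ\LA{qp}=\LA{ni}\circ\LA{pn}\circ\LA{qp}=\LA{ni}\circ\LA{qn}$, and the identical computation with $j$ in place of $i$ gives $\LA{kj}\circ\LA{qk}=\LA{nj}\circ\LA{qn}$. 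Hence $\sim$ is transitive, and combined with the previous two paragraphs it is an equivalence relation. The only real obstacle is producing a single chart over which the embeddings \emph{and} the marked points can be reconciled at once; this is exactly what Lemma \ref{useful-lemma} supplies, after which everything reduces to composing embeddings and chasing the two hypothesised diagrams.
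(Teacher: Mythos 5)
Your proof is correct and follows essentially the same route as the paper's: reflexivity and symmetry are immediate from the definition, and transitivity is obtained by applying Lemma \ref{useful-lemma} to the two embeddings into the common middle chart and then composing the resulting embeddings with the given witnesses, exactly as the paper does (you merely spell out the diagram chase that the paper labels ``a direct computation''). No gaps.
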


\begin{proof}
(\cite{Pr}, lemma 4.4.2) The relation is clearly reflexive and symmetric. In order to prove transitivity, let us
suppose we have:

$$(\LA{ki},\tx_k,\LA{kj})\sim(\LA{li},\tx_l,\LA{lj})\sim(\LA{mi},\tx_m,\LA{mj}).$$

In other words, using definition \ref{equivalence}, there exist uniformizing systems $\unif{n}$ and $\unif{p}$ in
$\mathcal{U}$, two points $\tx_n\in\tU_n$, $\tx_p\in\tU_p$ and embeddings $\LA{nk},\LA{nl}$, $\LA{pl},\LA{pm}$
making the following diagrams commute:

\begin{equation}\label{eq-33}
\begin{tikzpicture}[scale=0.8]
    \def\x{1.5}
    \def\y{-1.7}
    \node (A0_2) at (2*\x, 0*\y) {$\tU_k$};
    \node (A1_1) at (1.3*\x, 1.3*\y) {$\curvearrowright$};
    \node (A1_2) at (2*\x, 1*\y) {$\tU_n$};
    \node (A1_3) at (2.7*\x, 1.3*\y) {$\curvearrowright$};
    \node (A2_0) at (0*\x, 2*\y) {$\tU_i$};
    \node (A2_2) at (2*\x, 2*\y) {$\tU_l$};
    \node (A2_4) at (4*\x, 2*\y) {$\tU_j$};
    \node (A3_1) at (1.3*\x, 2.7*\y) {$\curvearrowright$};
    \node (A3_2) at (2*\x, 3*\y) {$\tU_p$};
    \node (A3_3) at (2.7*\x, 2.7*\y) {$\curvearrowright$};
    \node (A4_2) at (2*\x, 4*\y) {$\tU_m$};

    \node (C_1) at (2.3*\x, 0.7*\y) {$\scriptstyle{\LA{nk}}$};
    \node (C_2) at (2.3*\x, 3.3*\y) {$\scriptstyle{\LA{pm}}$};

    \path (A0_2) edge [->] node [auto,swap] {$\scriptstyle{\LA{ki}}$} (A2_0);
    \path (A0_2) edge [->] node [auto] {$\scriptstyle{\LA{kj}}$} (A2_4);
    \path (A4_2) edge [->] node [auto,swap] {$\scriptstyle{\LA{mj}}$} (A2_4);
    \path (A2_2) edge [->] node [auto] {$\scriptstyle{\LA{lj}}$} (A2_4);
    \path (A2_2) edge [->] node [auto,swap] {$\scriptstyle{\LA{li}}$} (A2_0);
    \path (A3_2) edge [->] node [auto,swap] {$\scriptstyle{\LA{pl}}$} (A2_2);
    \path (A3_2) edge [->] node [auto] {} (A4_2);
    \path (A1_2) edge [->] node [auto,swap] {} (A0_2);
    \path (A1_2) edge [->] node [auto] {$\scriptstyle{\LA{nl}}$} (A2_2);
    \path (A4_2) edge [->] node [auto] {$\scriptstyle{\LA{mi}}$} (A2_0);

    \def\z{7}
    \node (B0_2) at (2*\x+\z, 0*\y) {$\tx_k$};
    \node (B1_1) at (1.3*\x+\z, 1.3*\y) {$\curvearrowright$};
    \node (B1_2) at (2*\x+\z, 1*\y) {$\tx_n$};
    \node (B1_3) at (2.7*\x+\z, 1.3*\y) {$\curvearrowright$};
    \node (B2_0) at (0*\x+\z, 2*\y) {$\tx_i$};
    \node (B2_2) at (2*\x+\z, 2*\y) {$\tx_l$};
    \node (B2_4) at (4*\x+\z, 2*\y) {$\tx_j.$};
    \node (B3_1) at (1.3*\x+\z, 2.7*\y) {$\curvearrowright$};
    \node (B3_2) at (2*\x+\z, 3*\y) {$\tx_p$};
    \node (B3_3) at (2.7*\x+\z, 2.7*\y) {$\curvearrowright$};
    \node (B4_2) at (2*\x+\z, 4*\y) {$\tx_m$};

    \node (D_1) at (2.3*\x+\z, 0.7*\y) {$\scriptstyle{\LA{nk}}$};
    \node (D_2) at (2.3*\x+\z, 3.3*\y) {$\scriptstyle{\LA{pm}}$};

    \path (B0_2) edge [->] node [auto,swap] {$\scriptstyle{\LA{ki}}$} (B2_0);
    \path (B0_2) edge [->] node [auto] {$\scriptstyle{\LA{kj}}$} (B2_4);
    \path (B4_2) edge [->] node [auto,swap] {$\scriptstyle{\LA{mj}}$} (B2_4);
    \path (B2_2) edge [->] node [auto] {$\scriptstyle{\LA{lj}}$} (B2_4);
    \path (B2_2) edge [->] node [auto,swap] {$\scriptstyle{\LA{li}}$} (B2_0);
    \path (B3_2) edge [->] node [auto,swap] {$\scriptstyle{\LA{pl}}$} (B2_2);
    \path (B3_2) edge [->] node [auto] {} (B4_2);
    \path (B1_2) edge [->] node [auto,swap] {} (B0_2);
    \path (B1_2) edge [->] node [auto] {$\scriptstyle{\LA{nl}}$} (B2_2);
    \path (B4_2) edge [->] node [auto] {$\scriptstyle{\LA{mi}}$} (B2_0);
\end{tikzpicture}
\end{equation}

In particular, $\LA{nl}(\tx_n)=\LA{pl}(\tx_p)$, so we can apply lemma \ref{useful-lemma} and we get that there
exist a uniformizing system $\unif{q}$, a point $\tx_q\in\tU_q$ and a pair of embeddings $\LA{qn},\LA{qp}$ as in
(\ref{eq-32}). Now using (\ref{eq-32}) and (\ref{eq-33}) a direct computation proves that we have
commutative diagrams of the form:

\begin{equation}
\begin{tikzpicture}[scale=0.8]
    \def\x{1.5}
    \def\y{-1.2}
    \def\z{7.5}
    \node (A2_0) at (2*\x, 0*\y) {$\tU_k$};
    \node (A0_2) at (0*\x, 2*\y) {$\tU_i$};
    \node (A2_2) at (2*\x, 2*\y) {$\tU_q$};
    \node (A4_2) at (4*\x, 2*\y) {$\tU_j$};
    \node (A2_4) at (2*\x, 4*\y) {$\tU_m$};
    \node (A1_2) at (1*\x, 2*\y) {$\curvearrowright$};
    \node (A3_2) at (3*\x, 2*\y) {$\curvearrowright$};
    \node (C_1) at (2.5*\x, 1.3*\y) {$\scriptstyle{\LA{nk}\circ\LA{qn}}$};
    \node (C_2) at (2.5*\x, 2.7*\y) {$\scriptstyle{\LA{pm}\circ\LA{qp}}$};

    \path (A2_0) edge [->] node [auto,swap] {$\scriptstyle{\LA{ki}}$} (A0_2);
    \path (A2_0) edge [->] node [auto] {$\scriptstyle{\LA{kj}}$} (A4_2);
    \path (A2_4) edge [->] node [auto] {$\scriptstyle{\LA{mi}}$} (A0_2);
    \path (A2_4) edge [->] node [auto,swap] {$\scriptstyle{\LA{mj}}$} (A4_2);
    \path (A2_2) edge [->] node [auto,swap] {} (A2_0);
    \path (A2_2) edge [->] node [auto] {} (A2_4);

    \node (B2_0) at (2*\x+\z, 0*\y) {$\tx_k$};
    \node (B0_2) at (0*\x+\z, 2*\y) {$\tx_i$};
    \node (B2_2) at (2*\x+\z, 2*\y) {$\tx_q$};
    \node (B4_2) at (4*\x+\z, 2*\y) {$\tx_j.$};
    \node (B2_4) at (2*\x+\z, 4*\y) {$\tx_m$};
    \node (B1_2) at (1*\x+\z, 2*\y) {$\curvearrowright$};
    \node (B3_2) at (3*\x+\z, 2*\y) {$\curvearrowright$};
    \node (C_3) at (2.5*\x+\z, 1.3*\y) {$\scriptstyle{\LA{nk}\circ\LA{qn}}$};
    \node (C_4) at (2.5*\x+\z, 2.7*\y) {$\scriptstyle{\LA{pm}\circ\LA{qp}}$};

    \path (B2_0) edge [->] node [auto,swap] {$\scriptstyle{\LA{ki}}$} (B0_2);
    \path (B2_0) edge [->] node [auto] {$\scriptstyle{\LA{kj}}$} (B4_2);
    \path (B2_4) edge [->] node [auto] {$\scriptstyle{\LA{mi}}$} (B0_2);
    \path (B2_4) edge [->] node [auto,swap] {$\scriptstyle{\LA{mj}}$} (B4_2);
    \path (B2_2) edge [->] node [auto,swap] {} (B2_0);
    \path (B2_2) edge [->] node [auto] {} (B2_4);
\end{tikzpicture}
\end{equation}

So we have proved that $(\LA{ki},\tx_k,\LA{kj})\sim(\LA{mi},\tx_m,\LA{mj})$,
hence $\sim$ is transitive.
\end{proof}

\begin{defin}
For every orbifold atlas $\mathcal{U}$ over $X$, we define the set $R:=\hat{R}/\sim$ and we call $q:\hat{R}\rightarrow
R$ the quotient map; we will denote the class of any point $(\LA{ki},\tx_k,\LA{kj})$ with $[\LA{ki},\tx_k,\LA{kj}]$.
\end{defin}

\begin{rem}\label{good-definitions-on-R}
The equivalence relation we have just described is the same as the relation $\sim$ defined in \cite{MP2} as the 
relation \emph{generated} by considering equivalent $(\LA{ki},\tx_k,\LA{kj})$ and $(\LA{li},\tx_l,\LA{lj})$
whenever there exists an embedding $\LA{lk}$ such that the
following diagrams are commutative:

\begin{equation}\label{eq-38}
\begin{tikzpicture}[scale=0.8]
    \def\x{1.5}
    \def\y{-1.2}
    \node (A0_2) at (2*\x, 0*\y) {$\tU_l$};
    \node (A1_1) at (1.4*\x, 1.2*\y) {$\curvearrowright$};
    \node (A1_3) at (2.6*\x, 1.2*\y) {$\curvearrowright$};
    \node (A2_0) at (0*\x, 2*\y) {$\tU_i$};
    \node (A2_2) at (2*\x, 2*\y) {$\tU_k$};
    \node (A2_4) at (4*\x, 2*\y) {$\tU_j$};
    \path (A0_2) edge [->] node [auto,swap] {$\scriptstyle{\LA{li}}$} (A2_0);
    \path (A2_2) edge [->] node [auto,swap] {$\scriptstyle{\LA{kj}}$} (A2_4);
    \path (A0_2) edge [->] node [auto] {$\scriptstyle{\LA{lk}}$} (A2_2);
    \path (A0_2) edge [->] node [auto] {$\scriptstyle{\LA{lj}}$} (A2_4);
    \path (A2_2) edge [->] node [auto] {$\scriptstyle{\LA{ki}}$} (A2_0);

    \def\z{7}
    \node (B0_2) at (2*\x+\z, 0*\y) {$\tx_l$};
    \node (B1_1) at (1.4*\x+\z, 1.2*\y) {$\curvearrowright$};
    \node (B1_3) at (2.6*\x+\z, 1.2*\y) {$\curvearrowright$};
    \node (B2_0) at (0*\x+\z, 2*\y) {$\tx_i$};
    \node (B2_2) at (2*\x+\z, 2*\y) {$\tx_k$};
    \node (B2_4) at (4*\x+\z, 2*\y) {$\tx_j.$};
    \path (B0_2) edge [->] node [auto,swap] {$\scriptstyle{\LA{li}}$} (B2_0);
    \path (B2_2) edge [->] node [auto,swap] {$\scriptstyle{\LA{kj}}$} (B2_4);
    \path (B0_2) edge [->] node [auto] {$\scriptstyle{\LA{lk}}$} (B2_2);
    \path (B0_2) edge [->] node [auto] {$\scriptstyle{\LA{lj}}$} (B2_4);
    \path (B2_2) edge [->] node [auto] {$\scriptstyle{\LA{ki}}$} (B2_0);
\end{tikzpicture}
\end{equation}

Hence from now on we will use without distinction the first and the second equivalence; in the following
pages we will often have to define set maps on $R$ using representatives of the
equivalence classes; in order to prove that they are well defined it will be sufficient to use different
representatives related by a diagram of the form (\ref{eq-38}).
\end{rem}

Now we recall that our first aim is to make the pair $(R,U)$ into a groupoid object in $(\textbf{Sets})$,
so for the moment we don't care about the topology on $R$ and we consider it just as a set.  The maps $s,t,i,e$
are just induced by the maps $\hat{s},\hat{t},\hat{i},\hat{e}$ respectively; the fact that the first 3 maps does
not depend on the representatives chosen is just a straightforward application of
remark \ref{good-definitions-on-R}. Now we want to define the ``multiplication'' on
$R$, so let us consider any pair of ``composable arrows'' $[\LA{ih},\tx_i,\LA{ij}]$ and $[\LA{kj},\tx_k,\LA{kl}]$
in the set-theoretical fiber product  $\fibre{R}{t}{s}{R}$. In other words, let us assume that $t([\LA{ih},\tx_i,
\LA{ij}])=s([\LA{kj},\tx_k,\LA{kl}])$ i.e. $\LA{ij}(\tx_i)=\LA{kj}(\tx_k)$; equivalently, we have a diagram of
uniformizing systems and marked points as follows:

\begin{equation}\label{eq-40}
\begin{tikzpicture}[scale=0.8]
    \def\x{2.5}
    \def\y{-0.55}
    \node (A0_1) at (1*\x, 0*\y) {$\tx_i$};
    \node (A0_2) at (2*\x, 0*\y) {$\LA{ij}(\tx_i)=\LA{kj}(\tx_k)$};
    \node (A0_3) at (3*\x, 0*\y) {$\tx_k$};
    \node[rotate=270] (A1_1) at (1*\x, 1*\y) {$\in$};
    \node[rotate=270] (A1_2) at (2*\x, 1*\y) {$\in$};
    \node[rotate=270] (A1_3) at (3*\x, 1*\y) {$\in$};
    \node (A2_0) at (0*\x, 2*\y) {$\tU_h$};
    \node (A2_1) at (1*\x, 2*\y) {$\tU_i$};
    \node (A2_2) at (2*\x, 2*\y) {$\tU_j$};
    \node (A2_3) at (3*\x, 2*\y) {$\tU_k$};
    \node (A2_4) at (4*\x, 2*\y) {$\tU_l.$};
    \path (A2_3) edge [->] node [auto] {$\scriptstyle{\LA{kl}}$} (A2_4);
    \path (A2_1) edge [->] node [auto,swap] {$\scriptstyle{\LA{ih}}$} (A2_0);
    \path (A2_1) edge [->] node [auto] {$\scriptstyle{\LA{ij}}$} (A2_2);
    \path (A2_3) edge [->] node [auto,swap] {$\scriptstyle{\LA{kj}}$} (A2_2);
\end{tikzpicture}
\end{equation}

Since $\LA{ij}(\tx_i)=\LA{kj}(\tx_k)$, we can apply lemma \ref{useful-lemma}, so we get that there exist a
uniformizing system $\unif{f}$, a point $\tx_f\in\tU_f$ and embeddings $\LA{fi},\LA{fk}$ such that we can complete
(\ref{eq-40}) to a commutative diagram (also on the level of marked points):

\begin{equation}\label{eq-41}
\begin{tikzpicture}[scale=0.8]
    \def\x{3}
    \def\y{-0.55}
    \node (A0_1) at (1*\x, 0*\y) {$\tx_i$};
    \node (A0_2) at (2*\x, 0*\y) {$\LA{ij}(\tx_i)=\LA{kj}(\tx_k)$};
    \node (A0_3) at (3*\x, 0*\y) {$\tx_k$};
    \node[rotate=270] (A1_1) at (1*\x, 1*\y) {$\in$};
    \node[rotate=270] (A1_2) at (2*\x, 1*\y) {$\in$};
    \node[rotate=270] (A1_3) at (3*\x, 1*\y) {$\in$};
    \node (A2_0) at (0*\x, 2*\y) {$\tU_h$};
    \node (A2_1) at (1*\x, 2*\y) {$\tU_i$};
    \node (A2_2) at (2*\x, 2*\y) {$\tU_j$};
    \node (A2_3) at (3*\x, 2*\y) {$\tU_k$};
    \node (A2_4) at (4*\x, 2*\y) {$\tU_l.$};
    \node (B1_1) at (2*\x, 4*\y) {$\curvearrowright$};
    \node (A3_2) at (2*\x, 2*\y-2) {$\tU_f$};
    \node[rotate=90] (A4_2) at (2*\x, 3*\y-2) {$\in$};
    \node (A5_2) at (2*\x, 4*\y-2) {$\tx_f$};

    \path (A2_3) edge [->] node [auto] {$\scriptstyle{\LA{kl}}$} (A2_4);
    \path (A2_1) edge [->] node [auto,swap] {$\scriptstyle{\LA{ih}}$} (A2_0);
    \path (A2_1) edge [->] node [auto] {$\scriptstyle{\LA{ij}}$} (A2_2);
    \path (A2_3) edge [->] node [auto,swap] {$\scriptstyle{\LA{kj}}$} (A2_2);
    \path (A3_2) edge [->] node [auto] {$\scriptstyle{\LA{fi}}$} (A2_1);
    \path (A3_2) edge [->] node [auto,swap] {$\scriptstyle{\LA{fk}}$} (A2_3);
\end{tikzpicture}
\end{equation}

Hence we give the following definition:

$$m\Big([\LA{ih},\tx_i,\LA{ij}],[\LA{kj},\tx_k,\LA{kl}]\Big):=[\LA{ih}\circ\LA{fi},\tx_f,\LA{kl}\circ\LA{fk}].$$

\begin{lem}\label{m-well-defined}
The map $m$ is well defined (see the appendix for the proof).
\end{lem}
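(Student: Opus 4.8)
The plan is to check two independences separately: first, that the class $[\LA{ih}\circ\LA{fi},\tx_f,\LA{kl}\circ\LA{fk}]$ does not depend on the auxiliary completion $(\unif{f},\tx_f,\LA{fi},\LA{fk})$ furnished by lemma \ref{useful-lemma}; and second, that it does not depend on the representatives chosen for the two input classes $[\LA{ih},\tx_i,\LA{ij}]$ and $[\LA{kj},\tx_k,\LA{kl}]$. Throughout I would lean on remark \ref{good-definitions-on-R}: since $\sim$ is generated by the elementary moves (\ref{eq-38}), each independence need only be tested against a single such move. The two tools required are lemma \ref{useful-lemma}, to manufacture common refinements, and the injectivity of embeddings (definition \ref{complex-embedding}), which lets me cancel a map appearing on the left of an equality of embeddings.

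For independence of the completion, suppose $(\unif{f},\tx_f,\LA{fi},\LA{fk})$ and $(\unif{f'},\tx_{f'},\LA{f'i},\LA{f'k})$ both complete diagram (\ref{eq-40}). Since $\LA{fi}(\tx_f)=\tx_i=\LA{f'i}(\tx_{f'})$, lemma \ref{useful-lemma} gives $\unif{g}$, a point $\tx_g$ and embeddings $\LA{gf},\LA{gf'}$ with $\LA{fi}\circ\LA{gf}=\LA{f'i}\circ\LA{gf'}$ and the marked points matching. Post-composing this equality with $\LA{ij}$ and using $\LA{ij}\circ\LA{fi}=\LA{kj}\circ\LA{fk}$ (and the same for $f'$) yields $\LA{kj}\circ\LA{fk}\circ\LA{gf}=\LA{kj}\circ\LA{f'k}\circ\LA{gf'}$; cancelling the injective $\LA{kj}$ gives $\LA{fk}\circ\LA{gf}=\LA{f'k}\circ\LA{gf'}$. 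Hence both triples $(\LA{ih}\circ\LA{fi},\tx_f,\LA{kl}\circ\LA{fk})$ and $(\LA{ih}\circ\LA{f'i},\tx_{f'},\LA{kl}\circ\LA{f'k})$ refine, via moves (\ref{eq-38}) with connecting embeddings $\LA{gf}$ and $\LA{gf'}$ respectively, to the \emph{single} triple $(\LA{ih}\circ\LA{fi}\circ\LA{gf},\tx_g,\LA{kl}\circ\LA{fk}\circ\LA{gf})$, so they are $\sim$-equivalent.

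For independence of representatives it suffices, by remark \ref{good-definitions-on-R} and the freedom just gained in choosing the completion, to treat a single move (\ref{eq-38}) on the first factor, i.e.\ to replace $(\LA{ih},\tx_i,\LA{ij})$ by $(\LA{ih}\circ\LA{i'i},\tx_{i'},\LA{ij}\circ\LA{i'i})$ for an embedding $\LA{i'i}$ with $\LA{i'i}(\tx_{i'})=\tx_i$ (composability is preserved because $\LA{ij}\circ\LA{i'i}(\tx_{i'})=\LA{ij}(\tx_i)=\LA{kj}(\tx_k)$). Starting from the original completion $(\unif{f},\tx_f,\LA{fi},\LA{fk})$ and applying lemma \ref{useful-lemma} to $\LA{fi}$ and $\LA{i'i}$, I obtain $\unif{f'}$, $\tx_{f'}$ and embeddings $\LA{f'f},\LA{f'i'}$ with $\LA{fi}\circ\LA{f'f}=\LA{i'i}\circ\LA{f'i'}$; setting $\LA{f'k}:=\LA{fk}\circ\LA{f'f}$ produces a legitimate completion of the new product, since $\LA{ij}\circ\LA{i'i}\circ\LA{f'i'}=\LA{ij}\circ\LA{fi}\circ\LA{f'f}=\LA{kj}\circ\LA{f'k}$ and the marked points match. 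The new product is then $(\LA{ih}\circ\LA{fi}\circ\LA{f'f},\tx_{f'},\LA{kl}\circ\LA{fk}\circ\LA{f'f})$, which is exactly the original product's triple $(\LA{ih}\circ\LA{fi},\tx_f,\LA{kl}\circ\LA{fk})$ refined by a move (\ref{eq-38}) with connecting embedding $\LA{f'f}$; hence the value of $m$ is unchanged. An entirely analogous argument (applying lemma \ref{useful-lemma} to $\LA{fk}$ and the connecting map, then absorbing it into the $i$-side) handles a move on the second factor.

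I expect the principal difficulty to be organizational rather than conceptual: the bookkeeping of the commuting triangles and the repeated cancellation steps, each of which silently uses that an embedding is injective, is where errors are easiest to make. Once both independence statements are in place, remark \ref{good-definitions-on-R} lets one propagate them along chains of elementary moves, and $m$ is well defined.
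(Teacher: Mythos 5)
Your proof is correct and takes essentially the same approach as the paper's: the same two-part decomposition (first independence of the auxiliary completion, then of the representatives), the same use of lemma \ref{useful-lemma} to manufacture common refinements, and the same cancellation of the injective embedding $\LA{kj}$ to transfer compatibility to the other leg. The only cosmetic difference is that for representative-independence you build an explicit completion for the new pair and conclude by a single elementary move of type (\ref{eq-38}), whereas the paper re-invokes part (i) on a second completion of the original diagram; the substance is identical.
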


\begin{prop}
$(\groupR)$ is a groupoid object in $\CAT{Sets}$.
\end{prop}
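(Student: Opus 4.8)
The plan is to verify directly the five axioms (i)--(v) of Definition \ref{groupoid-object}. Working in $\CAT{Sets}$ the fibered products $\fibre{R}{t}{s}{R}$ and $\fibre{R}{t}{s}{R\,_t\times_s R}$ exist automatically, so the whole content is the compatibility of $s,t,m,i,e$. Throughout I would evaluate each map on representatives of the $\sim$-classes, appealing to Remark \ref{good-definitions-on-R} whenever well-definedness on classes must be rechecked (and to Lemma \ref{m-well-defined} for $m$). Axioms (i) and (ii) are then immediate: from $e(\tx_i,\tU_i)=[1_{\tU_i},\tx_i,1_{\tU_i}]$ one reads off $s\circ e=t\circ e=1_U$, and for a composable pair the identities $\LA{fi}(\tx_f)=\tx_i$ and $\LA{fk}(\tx_f)=\tx_k$ coming from the commutative diagram (\ref{eq-41}) give $s\circ m=s\circ pr_1$ and $t\circ m=t\circ pr_2$ directly.

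For axioms (iv) and (v) the strategy is to exploit the freedom in the choice of the pullback chart $\unif{f}$ produced by Lemma \ref{useful-lemma}. In each of these cases one of the two embeddings feeding the definition of $m$ is an identity or coincides with the other, so I may take $\unif{f}$ to be a chart already present and the connecting embeddings to be identities. Concretely, to check $m\circ(e\circ s,1_R)=1_R$ on $r=[\LA{ki},\tx_k,\LA{kj}]$ I would take $\tU_f=\tU_k$ with both pullback embeddings equal to $1_{\tU_k}$, so that $m$ returns $[\LA{ki},\tx_k,\LA{kj}]=r$, the symmetric identity being handled the same way. For the inverse axiom, $i\circ i=1_R$ and $s\circ i=t$ are immediate from the formula for $\hat{i}$, while for $m\circ(1_R,i)=e\circ s$ the two embeddings feeding $m$ are both $\LA{kj}$; injectivity of $\LA{kj}$ lets me take $\tU_f=\tU_k$ with the two pullbacks equal, yielding $[\LA{ki},\tx_k,\LA{ki}]$, which one sees is equivalent to $e\circ s(r)=[1_{\tU_i},\LA{ki}(\tx_k),1_{\tU_i}]$ by a single instance of (\ref{eq-38}) with connecting embedding $\LA{ki}$.

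The genuine difficulty, and the step I expect to be the main obstacle, is associativity (iii). Given three composable arrows $a,b,c$, the two bracketings $m(m(a,b),c)$ and $m(a,m(b,c))$ each require two successive applications of Lemma \ref{useful-lemma}, producing a priori different intermediate charts. The plan is to iterate Lemma \ref{useful-lemma} to produce a \emph{single} uniformizing system $\unif{f}$, a point $\tx_f\in\tU_f$, and embeddings of $\unif{f}$ into the three relevant source charts through which all three arrows factor compatibly; one then shows that both bracketings are represented by the evident composite triple built out of $\tx_f$ through $\unif{f}$. The reconciliation of the two intermediate pullbacks is exactly where the uniqueness clause of Lemma \ref{lemma-moerdijk} (equivalently Lemma \ref{lemma-intersection}) is needed: it forces the two resulting representatives to differ only by a change of representative of the form (\ref{eq-38}), hence to define the same class in $R$. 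This diagram chase, carried out carefully, completes the verification and shows that $(\groupR)$ is a groupoid object in $\CAT{Sets}$.
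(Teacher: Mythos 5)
Your proposal is correct and follows essentially the same route as the paper: the paper likewise dismisses axioms (i), (ii), (iv), (v) of definition \ref{groupoid-object} as immediate and devotes its proof entirely to associativity, which it establishes exactly as you describe --- iterating Lemma \ref{useful-lemma} (twice for the two adjacent pairs, then a third time to reconcile the two intermediate charts, producing diagram (\ref{eq-46})), so that both bracketings are computed through one common chart and give literally the same class, with well-definedness supplied by Lemma \ref{m-well-defined}. One cosmetic slip: in your unit-axiom check the two embeddings feeding $m$ are $1_{\tU_k}$ and $\LA{ki}$ (the latter having target $\tU_i$, the middle chart of $e\circ s(r)$), not both $1_{\tU_k}$; with that correction $m$ returns $[1_{\tU_i}\circ\LA{ki},\tx_k,\LA{kj}\circ 1_{\tU_k}]=[\LA{ki},\tx_k,\LA{kj}]=r$ as desired.
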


\begin{proof} We have to prove that all the axioms for a groupoid object given in definition \ref{groupoid-object}
are satisfied; the only non-trivial one is the third one, so let us fix any triple of composable arrows: 

$$\Big([\LA{ih},\tx_i,\LA{ij}],[\LA{kj},\tx_k,\LA{kl}],[\LA{ml},\tx_m,\LA{mn}]\Big)\in
R\,_t\times_s R\,_t\times_s R$$

i.e. $\LA{ij}(\tx_i)=\LA{kj}(\tx_k)$ and $\LA{kl}(\tx_k)=\LA{ml}(\tx_m)$. If we apply lemma \ref{useful-lemma}
twice, we get a commutative diagram (of uniformizing systems and marked points) as follows:

\[\begin{tikzpicture}[scale=0.8]
    \def\x{2.0}
    \def\y{-0.55}
    \def\z{-0.6}
    \def\w{-0.2}
    \def\o{-3.6}
    \def\a{-4.0}
    \node (A2_0) at (0*\x, 2*\y) {$\tU_h$};
    \node (A2_1) at (1*\x, 2*\y) {$\tU_i$};
    \node (A2_2) at (2*\x, 2*\y) {$\tU_j$};
    \node (A2_3) at (3*\x, 2*\y) {$\tU_k$};
    \node (A2_4) at (4*\x, 2*\y) {$\tU_l$};
    \node (A2_5) at (5*\x, 2*\y) {$\tU_m$};
    \node (A2_6) at (6*\x, 2*\y) {$\tU_n.$};
    \node (A3_2) at (2*\x, 2*\y-2) {$\tU_f$};
    \node (A3_4) at (4*\x, 2*\y-2) {$\tU_s$};
    \node (C1_1) at (2*\x, 2*\y-1) {$\curvearrowright$};
    \node (C1_2) at (4*\x, 2*\y-1) {$\curvearrowright$};
    \node[rotate=270] (E0_1) at (1*\x, \z) {$\in$};
    \node (E5_2) at (1*\x, \w) {$\tx_i$};
    \node[rotate=270] (E0_3) at (3*\x, \z) {$\in$};
    \node (E5_2) at (3*\x, \w) {$\tx_k$};
    \node[rotate=270] (E0_5) at (5*\x, \z) {$\in$};
    \node (E5_2) at (5*\x, \w) {$\tx_m$};
    \node[rotate=90] (F0_1) at (2*\x, \o) {$\in$};
    \node (F5_2) at (2*\x, \a) {$\tx_f$};
    \node[rotate=90] (F0_3) at (4*\x, \o) {$\in$};
    \node (F5_2) at (4*\x, \a) {$\tx_s$};
 
    \path (A2_3) edge [->] node [auto] {$\scriptstyle{\LA{kl}}$} (A2_4);
    \path (A2_1) edge [->] node [auto,swap] {$\scriptstyle{\LA{ih}}$} (A2_0);
    \path (A2_1) edge [->] node [auto] {$\scriptstyle{\LA{ij}}$} (A2_2);
    \path (A2_3) edge [->] node [auto,swap] {$\scriptstyle{\LA{kj}}$} (A2_2);
    \path (A3_2) edge [->] node [auto] {$\scriptstyle{\LA{fi}}$} (A2_1);
    \path (A3_2) edge [->] node [auto,swap] {$\scriptstyle{\LA{fk}}$} (A2_3);
    \path (A3_4) edge [->] node [auto] {$\scriptstyle{\LA{sk}}$} (A2_3);
    \path (A3_4) edge [->] node [auto,swap] {$\scriptstyle{\LA{sm}}$} (A2_5);
    \path (A2_5) edge [->] node [auto] {$\scriptstyle{\LA{mn}}$} (A2_6);
    \path (A2_5) edge [->] node [auto,swap] {$\scriptstyle{\LA{ml}}$} (A2_4);
\end{tikzpicture}\]

If we consider the central part of it, we can apply again lemma \ref{useful-lemma} in order to get a commutative
diagram of the form:

\begin{equation}\label{eq-46}
\begin{tikzpicture}[scale=0.8]
    \def\x{2.0}
    \def\y{-0.55}
    \node (A2_0) at (0*\x, 2*\y) {$\tU_h$};
    \node (A2_1) at (1*\x, 2*\y) {$\tU_i$};
    \node (A2_2) at (2*\x, 2*\y) {$\tU_j$};
    \node (A2_3) at (3*\x, 2*\y) {$\tU_k$};
    \node (A2_4) at (4*\x, 2*\y) {$\tU_l$};
    \node (A2_5) at (5*\x, 2*\y) {$\tU_m$};
    \node (A2_6) at (6*\x, 2*\y) {$\tU_n.$};
    \node (A3_2) at (2*\x, 2*\y-2) {$\tU_f$};
    \node (A3_4) at (4*\x, 2*\y-2) {$\tU_s$};
    \node (H1_1) at (3*\x, -4.9) {$\tU_r$};
    \node (C1_1) at (2*\x, 2*\y-1) {$\curvearrowright$};
    \node (C1_2) at (4*\x, 2*\y-1) {$\curvearrowright$};
    \node (G1_1) at (3*\x, -2.9) {$\curvearrowright$};

    \def\z{-0.6}
    \def\w{-0.2}
    \node[rotate=270] (E0_1) at (1*\x, \z) {$\in$};
    \node (E5_2) at (1*\x, \w) {$\tx_i$};
    \node[rotate=270] (E0_3) at (3*\x, \z) {$\in$};
    \node (E5_2) at (3*\x, \w) {$\tx_k$};
    \node[rotate=270] (E0_5) at (5*\x, \z) {$\in$};
    \node (E5_2) at (5*\x, \w) {$\tx_m$};

    \def\o{-3.8}
    \def\a{-3.45}
    \node[rotate=45] (F0_1) at (1.8*\x, \a) {$\in$};
    \node (F5_2) at (1.7*\x, \o) {$\tx_f$};
    \node[rotate=135] (F0_3) at (4.2*\x, \a) {$\in$};
    \node (F5_2) at (4.4*\x, \o) {$\tx_s$};

    \def\b{-5.4}
    \def\d{-5.8}
    \node[rotate=90] (M0_1) at (3*\x, \b) {$\in$};
    \node (M5_2) at (3*\x, \d) {$\tx_r$};
 
    \path (A2_3) edge [->] node [auto] {$\scriptstyle{\LA{kl}}$} (A2_4);
    \path (A2_1) edge [->] node [auto,swap] {$\scriptstyle{\LA{ih}}$} (A2_0);
    \path (A2_1) edge [->] node [auto] {$\scriptstyle{\LA{ij}}$} (A2_2);
    \path (A2_3) edge [->] node [auto,swap] {$\scriptstyle{\LA{kj}}$} (A2_2);
    \path (A3_2) edge [->] node [auto] {$\scriptstyle{\LA{fi}}$} (A2_1);
    \path (A3_2) edge [->] node [auto,swap] {$\scriptstyle{\LA{fk}}$} (A2_3);
    \path (A3_4) edge [->] node [auto] {$\scriptstyle{\LA{sk}}$} (A2_3);
    \path (A3_4) edge [->] node [auto,swap] {$\scriptstyle{\LA{sm}}$} (A2_5);
    \path (A2_5) edge [->] node [auto] {$\scriptstyle{\LA{mn}}$} (A2_6);
    \path (A2_5) edge [->] node [auto,swap] {$\scriptstyle{\LA{ml}}$} (A2_4);
    \path (H1_1) edge [->] node [auto] {$\scriptstyle{\LA{rf}}$} (A3_2);
    \path (H1_1) edge [->] node [auto,swap] {$\scriptstyle{\LA{rs}}$} (A3_4);
\end{tikzpicture}
\end{equation}

Now:

\begin{eqnarray*}
& m\Big([\LA{kj},\tx_k,\LA{kl}],[\LA{ml},\tx_m,\LA{mn}]\Big)=[\LA{kj}\circ\LA{sk}\circ\LA{rs},\tx_r,
   \LA{mn}\circ\LA{sm}\circ\LA{rs}]= &\\
& =[\LA{kj}\circ\LA{fk}\circ\LA{rf},\tx_r,\LA{mn}\circ\LA{sm}\circ\LA{rs}]; &
\end{eqnarray*}

hence using the commutative diagram:

\[\begin{tikzpicture}[scale=0.8]
    \def\x{3}
    \def\y{-0.55}
    \node (A0_1) at (1*\x, 0*\y) {$\tx_i$};
    \node (A0_2) at (2*\x, 0*\y) {$\LA{ij}(\tx_i)=\LA{kj}(\tx_k)$};
    \node (A0_3) at (3*\x, 0*\y) {$\tx_r$};
    \node[rotate=270] (A1_1) at (1*\x, 1*\y) {$\in$};
    \node[rotate=270] (A1_2) at (2*\x, 1*\y) {$\in$};
    \node[rotate=270] (A1_3) at (3*\x, 1*\y) {$\in$};
    \node (A2_0) at (0*\x, 2*\y) {$\tU_h$};
    \node (A2_1) at (1*\x, 2*\y) {$\tU_i$};
    \node (A2_2) at (2*\x, 2*\y) {$\tU_j$};
    \node (A2_3) at (3*\x, 2*\y) {$\tU_r$};
    \node (A2_4) at (4*\x, 2*\y) {$\tU_n.$};
    \node (B1_1) at (2*\x, 4*\y) {$\curvearrowright$};
    \def\z{-2}
    \node (A3_2) at (2*\x, 2*\y+\z) {$\tU_r$};
    \node[rotate=90] (A4_2) at (2*\x, 3*\y+\z) {$\in$};
    \node (A5_2) at (2*\x, 4*\y+\z) {$\tx_r$};

    \path (A2_3) edge [->] node [auto] {$\scriptstyle{\LA{mn}\circ\LA{sm}\circ\LA{rs}}$} (A2_4);
    \path (A2_1) edge [->] node [auto,swap] {$\scriptstyle{\LA{ih}}$} (A2_0);
    \path (A2_1) edge [->] node [auto] {$\scriptstyle{\LA{ij}}$} (A2_2);
    \path (A2_3) edge [->] node [auto,swap] {$\scriptstyle{\LA{kj}\circ\LA{fk}\circ\LA{rf}}$} (A2_2);
    \path (A3_2) edge [->] node [auto] {$\scriptstyle{\LA{fi}\circ\LA{rf}}$} (A2_1);
    \path (A3_2) edge [->] node [auto,swap] {$\scriptstyle{1_{\tU_r}}$} (A2_3);
\end{tikzpicture}\]

we can compute:

\begin{eqnarray*}
& m\Bigg([\LA{ih},\tx_i,\LA{ij}],m\Big([\LA{kj},\tx_k,\LA{kl}],[\LA{ml},\tx_m,\LA{mn}]\Big)\Bigg)= &\\
& =m\Big([\LA{ih},\tx_i,\LA{ij}],[\LA{kj}\circ\LA{fk}\circ\LA{rf},\tx_r,\LA{mn}\circ\LA{sm}\circ
   \LA{rs}]\Big)= &\\
& =[\LA{ih}\circ\LA{fi}\circ\LA{rf},\tx_r,\LA{mn}\circ\LA{sm}\circ\LA{rs}]. &
\end{eqnarray*}

Since diagram (\ref{eq-46}) is symmetric, we obtain the same result also if we first compute the multiplication of
the first two arrows, and then we multiply them with the third one. In other words, we have proved that
$m\circ(1_R\times m)=m\circ(m\times 1_R)$.
\end{proof}

Now we want to decribe a structure of complex manifold on $R$; in order to do that, we will use the following lemma:

\begin{lem}\label{local-triviality}
The equivalence relation $\sim$ is the trivial one whenever we restrict to any open set of $\hat{R}$ of the form
$\tU_k^{ij}$.
\end{lem}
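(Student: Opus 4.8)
The plan is to unwind Definition \ref{equivalence} in the special case where the two points being compared lie in a single copy $\tU_k^{ij}$, and to observe that the commutativity requirements together with the injectivity of embeddings force the two representatives to coincide. Concretely, a point of $\tU_k^{ij}$ has the shape $(\LA{ki},\tx_k,\LA{kj})$ where the two embeddings $\LA{ki}$ and $\LA{kj}$ are \emph{fixed} (they are exactly the data indexing the copy) and only the marked point $\tx_k\in\tU_k$ varies. Thus two points of $\tU_k^{ij}$ are of the form $(\LA{ki},\tx_k,\LA{kj})$ and $(\LA{ki},\ty_k,\LA{kj})$, and I want to show that they are equivalent if and only if $\tx_k=\ty_k$.

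First I would note that, since $\sim$ is already an equivalence relation by Lemma \ref{equivalence-proof}, it suffices to test a single instance of the defining condition, with no transitive closure needed. So suppose $(\LA{ki},\tx_k,\LA{kj})\sim(\LA{ki},\ty_k,\LA{kj})$. Applying Definition \ref{equivalence} with $l=k$, $\LA{li}=\LA{ki}$ and $\LA{lj}=\LA{kj}$, there exist a uniformizing system $\unif{m}\in\mathcal{U}$, a point $\tx_m\in\tU_m$ and two embeddings $\LA{mk},\LA{ml}:\unif{m}\rightarrow\unif{k}$ making the two diagrams (\ref{eq-31}) commute, with $\LA{mk}(\tx_m)=\tx_k$ and $\LA{ml}(\tx_m)=\ty_k$.

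The key step is the commutativity condition in (\ref{eq-31}) with target $\tU_i$, which in this situation reads $\LA{ki}\circ\LA{mk}=\LA{ki}\circ\LA{ml}$. Since $\LA{ki}$ is an embedding, and in particular an injective map, this yields $\LA{mk}=\LA{ml}$ as holomorphic maps $\tU_m\rightarrow\tU_k$. Evaluating at the marked point $\tx_m$ then gives $\tx_k=\LA{mk}(\tx_m)=\LA{ml}(\tx_m)=\ty_k$, so the two representatives are the same element of $\tU_k^{ij}$. Conversely the trivial equivalence always holds by reflexivity, so on $\tU_k^{ij}$ the relation $\sim$ reduces to equality.

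There is essentially no hard obstacle here: the only thing to be careful about is the bookkeeping of which embeddings are held fixed when passing to a single copy $\tU_k^{ij}$, so that the two compared points genuinely share the same $\LA{ki}$ and $\LA{kj}$; after that, injectivity of embeddings does all the work. The payoff is that this local triviality is exactly what is needed to transport the complex-manifold structure of $\tU_k$ onto the image $q(\tU_k^{ij})\subseteq R$, and hence to patch these charts into a manifold structure on $R$.
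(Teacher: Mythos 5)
Your proof is correct and follows exactly the route the paper intends: the paper's own justification is the one-line remark that the lemma is ``a direct consequence of the definition of $\sim$ restricted to any pair of points of the form $(\LA{ki},\tx_k,\LA{kj})$ and $(\LA{ki},\tx'_k,\LA{kj})$,'' and your argument is precisely the careful unwinding of that remark, using injectivity of $\LA{ki}$ to force $\LA{mk}=\LA{ml}$ and hence $\tx_k=\ty_k$. Your preliminary observation that no transitive closure is needed (since Lemma \ref{equivalence-proof} shows $\sim$ is already an equivalence relation as defined) is a legitimate and worthwhile point of care.
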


This is just a direct consequence of the definition of $\sim$ restricted to any pair of points of the form
$(\LA{ki},\tx_k,\LA{kj}) $ and $ (\LA{ki},\tx'_k,\LA{kj})$.

\begin{prop}\label{manifold-topology}
If we give to $R=\hat{R}/\sim$ the quotient topology, then we get a natural structure of complex manifold.
\end{prop}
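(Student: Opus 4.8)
The plan is to transport to $R$ the complex manifold structure of $\hat{R}$, which, being by construction the disjoint union $\coprod\tU_k^{ij}$ of copies of open subsets of $\mathbb{C}^n$, is trivially a complex manifold of dimension $n$. Everything rests on the quotient map $q:\hat{R}\to R$: I would first show that $q$ is \emph{open} and, by Lemma \ref{local-triviality}, \emph{injective on each piece} $\tU_k^{ij}$. The charts of $R$ are then the inverses of the maps $q|_{\tU_k^{ij}}$, and the transition functions turn out to be compositions of embeddings, hence holomorphic.

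First I would prove that $q$ is open. Since openness is local and $\hat{R}=\coprod\tU_k^{ij}$, it suffices to take $W\subseteq\tU_k^{ij}$ open and show that its saturation $q^{-1}(q(W))$ is open. Let $(\LA{li},\tx_l,\LA{lj})\sim(\LA{ki},\tx_k,\LA{kj})$ with the latter in $W$; by Definition \ref{equivalence} there are $\unif{m}$, a point $\tx_m$ and embeddings $\LA{mk},\LA{ml}$ realizing the equivalence, with $\LA{mk}(\tx_m)=\tx_k$ and $\LA{ml}(\tx_m)=\tx_l$. As embeddings between uniformizing systems of the same dimension $n$, both $\LA{mk}$ and $\LA{ml}$ are biholomorphisms onto open subsets; setting $N:=\LA{mk}^{-1}(W)$, an open neighbourhood of $\tx_m$, the open set $\LA{ml}(N)\subseteq\tU_l$ is a neighbourhood of $\tx_l$ all of whose points, read in $\tU_l^{ij}$, are equivalent to points of $W$. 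Thus $q^{-1}(q(W))$ contains a neighbourhood of each of its points and is open. Consequently $q|_{\tU_k^{ij}}$ is a continuous open injection (injective by Lemma \ref{local-triviality}), hence a homeomorphism onto the open set $q(\tU_k^{ij})$, and these sets cover $R$ because $q$ is surjective.

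Next I would read off the holomorphic atlas. Define $\psi_k^{ij}:=(q|_{\tU_k^{ij}})^{-1}:q(\tU_k^{ij})\to\tU_k\subseteq\mathbb{C}^n$. On an overlap $q(\tU_k^{ij})\cap q(\tU_l^{ij})$ every identification arises (Definition \ref{equivalence}, Remark \ref{good-definitions-on-R}) from embeddings $\LA{mk},\LA{ml}$ as above, so that $\tx_k=\LA{mk}\circ\LA{ml}^{-1}(\tx_l)$ locally; hence the transition $\psi_k^{ij}\circ(\psi_l^{ij})^{-1}$ equals $\LA{mk}\circ\LA{ml}^{-1}$ near each point, a biholomorphism onto an open set composed with the inverse of another, and is therefore holomorphic (with holomorphic inverse by symmetry). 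Since $q|_{\tU_k^{ij}}$ and $q|_{\tU_l^{ij}}$ are injective, the transition is determined by $q$ alone and is independent of the auxiliary choices. This equips $R$ with a holomorphic atlas of dimension $n$.

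The main obstacle is Hausdorffness (paracompactness is comparatively easy, since $R$ is locally an open subset of $\mathbb{C}^n$ and inherits the disjoint, paracompact structure of $U$). Because $q$ is an open quotient map, $R$ is Hausdorff iff the graph $\{(x,y):x\sim y\}$ is closed in $\hat{R}\times\hat{R}$, i.e. two non-equivalent points admit disjoint saturated neighbourhoods. The pathology to exclude is the ``line with two origins'': points $q(x)$, $q(y)$ with $x=(\LA{ki},\tx_k,\LA{kj})$, $y=(\LA{li},\tx_l,\LA{lj})$, $x\not\sim y$, but with sequences $\tx_k^{(n)}\to\tx_k$, $\tx_l^{(n)}\to\tx_l$ whose corresponding points are equivalent. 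The defining diagrams force $\LA{ki}(\tx_k^{(n)})=\LA{li}(\tx_l^{(n)})$ and $\LA{kj}(\tx_k^{(n)})=\LA{lj}(\tx_l^{(n)})$; applying Lemma \ref{useful-lemma} at the limit yields a single $\unif{q}$ with embeddings $\LA{qk},\LA{ql}$ matching the $\tU_i$-leg, and Lemma \ref{lemma-moerdijk} expresses the only possible discrepancy on the $\tU_j$-leg as a \emph{unique} $h\in G_j$ fixing the common image point. The delicate point I expect to be decisive is that the approximating equalities are identities of \emph{holomorphic} maps along $\LA{qk},\LA{ql}$, so that by the identity principle together with Lemma \ref{lemma-intersection} (forcing $h$ into the image of the induced isotropy isomorphism) one is forced to $h=1$, i.e. $x\sim y$, a contradiction. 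This separates distinct points and shows $R$ is a Hausdorff, paracompact complex manifold of dimension $n$.
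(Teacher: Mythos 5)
Your construction of the holomorphic atlas is the same as the paper's: you show the $\sim$-saturation of each open subset of a piece $\tU_k^{ij}$ is open (so $q$ is open and, by Lemma \ref{local-triviality}, restricts to a homeomorphism on each piece), take the inverses of the maps $q|_{\tU_k^{ij}}$ as charts, and identify each transition function locally with $\LA{ml}\circ(\LA{mk})^{-1}$ for embeddings supplied by Definition \ref{equivalence} and Remark \ref{good-definitions-on-R}, hence holomorphic. The genuine difference is your last paragraph: the paper's proof stops once the atlas is built and never verifies that $R$ is Hausdorff, although Hausdorffness is part of being a complex manifold and is exactly the property that can fail for quotients of this kind (non-Hausdorff \'etale groupoids, e.g.\ holonomy groupoids of foliations, are the standard example). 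Your outline of that verification is correct: openness of $q$ reduces Hausdorffness to closedness of the graph of $\sim$ in $\hat{R}\times\hat{R}$; for a convergent sequence of equivalences, Lemma \ref{useful-lemma} applied at the limit together with the uniqueness in Lemma \ref{lemma-moerdijk} produces a single $h\in G_j$ measuring the possible failure of $x\sim y$; comparing with the local identifications coming from the approximating equivalences forces $h\circ\LA{kj}=\LA{kj}$ on a nonempty open subset of $\tU_k$, whence $h=1_{\tU_j}$ and the two $\tU_j$-legs agree, so $x\sim y$. Two small corrections: the statement that kills $h$ is not Lemma \ref{lemma-intersection} but the identity principle for the effective action --- the very argument the paper uses just before Lemma \ref{density} (a holomorphic automorphism of the connected set $\tU_j$ that equals the identity on an open subset is the identity); and your paracompactness aside is too quick, since $R$ is not the disjoint union of its charts and a locally Euclidean space need not be paracompact --- though on both counts you are exactly as cavalier as the paper, which addresses neither. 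In sum, your proof follows the paper's route for the atlas and in addition repairs a real omission in the paper's argument.
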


\begin{proof}
Let us consider any open set of $\hat{R}$ of the form $A:=\tU_k^{ij}$ and let us denote with $A^{\textrm{sat}}$
the saturated of $A$ in $\hat{R}$ with respect to $\sim$. \emph{We claim that also }$A^{\textrm{sat}}$\emph{ is
open in} $\hat{R}$. Indeed, let us consider any point in $A^{\textrm{sat}}$, i.e. a point which is equivalent to a
point $(\LA{ki},\tx_k,\LA{kj})$ of $\tU_k^{ij}$. By definition of $\sim$, this must be necessarily of the form
$(\LA{li},\tx_l,\LA{lj})$; moreover, there must exist a uniformizing system $\unif{m}$, a point $\tx_m$ and
embeddings $\LA{mk},\LA{ml}$ as in (\ref{eq-31}).\\

Now let us consider the set $\tB:=\LA{ml}(\tU_m)\subseteq\tU_l$, which is an open neighborhood of $\tx_l$ (because
$\LA{ml}$ is an embedding between open sets of $\mathbb{C}^n$, where $n$ is the dimension of the orbifold atlas
$\mathcal{U}$). If we fix any other point $\tx'_m$ in $\tU_m$ we
get a diagram similar to the second one of (\ref{eq-31}), so the set $\tB$ (considered as an open subset of
$\tU_l^{ij}$, hence also
as an open set of $\hat{R}$) contains only points equivalent to points  of $A$, so is completely contained in
$A^{\textrm{sat}}$; hence $A^{\textrm{sat}}$ is open in $\hat{R}$.\\

So if $q:\hat{R}\rightarrow R$ is the quotient map, the set $q(A^{\textrm{sat}})$ is open in $R$ by definition
of quotient topology; moreover, by definition of
saturated, it coincides with $q(A)$. Since this holds for every choice of $A=\tU_k^{ij}$ we get that the family
$\{(\tW_k^{ij}:=q(\tU_k^{ij})=q(\tU_k^{ij\textrm{sat}})\}_{\tU_k^{ij}\subseteq \hat{R}}$ is an open covering of $R$
(in the quotient topology). Then our aim is to construct from it a complex manifold atlas on $R$. If we use the
previous lemma, we get that $\sim$ is the trivial equivalence relation on every set $\tU_k^{ij}$, so
$q(\tU_k^{ij})$ is homeomorphic to $\tU_k^{ij}$ via $q$ (which is invertible if we restrict to this set). Moreover,
we recall that by construction $\tU_k^{ij}$ is just a copy of $\tU_k$, so the set map $\phi_k^{ij}$ defined from
$\tW_k^{ij}$ to $\tU_k$ as:

$$\phi_k^{ij}([\LA{ki},\tx_k,\LA{kj}]):=\tx_k$$

is an homeomorphism (with codomain an open subset of $\mathbb{C}^n)$. So it makes sense to consider the family of
charts $\mathcal{F}:=\{(\tW_k^{ij},\phi_k^{ij})\}_{\tU_k^{ij}\subseteq \hat{R}}$. Since the domains of these charts
cover all $R$, it remains only to prove the compatibility condition on the intersection of any pair of charts; so
let us fix any pair of domains $\tW_k^{ij}$ and $\tW_{l}^{i'j'}$ such that $\tW_k^{ij}\cap\tW_{l}^{i'j'}$ is
non-empty and let us fix any point $P=[\LA{ki},\tx_k,\LA{kj}]=[\LA{li'},\tx_l,\LA{lj'}]$ in the intersection. By
definition of $\sim$, we get that necessarily $i'=i$ and $j'=j$; moreover, there exist a uniformizing system
$\unif{m}$, a point $\tx_m\in\tU_m$ and a pair of embeddings $\LA{mk},\LA{ml}$ as in (\ref{eq-31}). Now the images
of the point $P$ via the coordinate functions $\phi_k^{ij}$ and $\phi_l^{ij}$ are respectively:

$$\phi_k^{ij}([\LA{ki},\tx_k,\LA{kj}])=\tx_k=\LA{mk}(\tx_m)\AND
\phi_l^{ij}([\LA{li},\tx_l,\LA{lj}])=\tx_l=\LA{ml}(\tx_m).$$

So if we call $\phi$ the transition map:

$$\phi=\phi_l^{ij}\circ(\phi_k^{ij})^{-1}:\phi_k^{ij}(\tW_k^{ij}\cap\tW_l^{ij})
\rightarrow\phi_l^{ij}(\tW_k^{ij}\cap\tW_l^{ij}),$$

we get that $\phi(\tx_k)=\tx_l=\LA{ml}(\tx_m)=\LA{ml}\circ(\LA{mk}|_{\LA{mk}(\tU_m)})^{-1}(\tx_k)$. As before,
using diagram (\ref{eq-31}) we get that this is the expression of $\phi$ not only at the point $\tx_k$, but also
in an open neighborhood of it (not necessarily coinciding with all the domain of $\phi$). Hence we have proved that
the transition map $\phi$ locally coincides with an holomorphic function. So every transition function is holomorphic,
hence we have proved that the family $\mathcal{F}$ is a complex manifold atlas for $R$.
\end{proof}

\begin{lem}\label{etale-groupoid}
$\groupR$ is an \'etale groupoid object in $\CAT{Manifolds}$.
\end{lem}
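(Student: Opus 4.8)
The plan is to build on the complex manifold structures on $U=\coprod_i\tU_i$ and on $R=\hat{R}/\sim$ (Proposition \ref{manifold-topology}) together with the set-theoretic groupoid structure already established, and to verify the two remaining requirements of an \'etale groupoid object in $\CAT{Manifolds}$: that the five structure maps $s,t,m,i,e$ are holomorphic, and that $s$ and $t$ are \'etale. All the computations will be carried out in the charts $(\tW_k^{ij},\phi_k^{ij})$ of $R$ built in Proposition \ref{manifold-topology}, where $\phi_k^{ij}([\LA{ki},\tx_k,\LA{kj}])=\tx_k$, and in the obvious charts of $U$ given by the open pieces $\tU_i\subseteq U$.

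First I would treat $s$ and $t$. Reading $s$ in the chart $\phi_k^{ij}$ on the source and in the chart $\tU_i$ on the target, we have $\tx_k\mapsto\LA{ki}(\tx_k)$, i.e. $s$ coincides locally with the embedding $\LA{ki}$; likewise $t$ coincides locally with $\LA{kj}$. Since every embedding in the atlas is a holomorphic map between connected open subsets of $\mathbb{C}^n$ of the same dimension $n$, it is an injective holomorphic immersion between equidimensional manifolds, hence a local biholomorphism. Therefore $s$ and $t$ are holomorphic and \'etale. The maps $e$ and $i$ are immediate: in coordinates $e$ reads $\tx_i\mapsto\tx_i$ (from $\tU_i$ to $\tW_i^{ii}$) and $i$ reads $\tx_k\mapsto\tx_k$ (from $\tW_k^{ij}$ to $\tW_k^{ji}$), so both are holomorphic.

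The only delicate point is the holomorphy of the multiplication $m$. Because $s$ and $t$ are now known to be \'etale, Proposition \ref{fiber-products-1} guarantees that $\fibre{R}{t}{s}{R}$ is a complex manifold and that both projections are \'etale. I would then work near a fixed composable pair $([\LA{ih},\tx_i,\LA{ij}],[\LA{kj},\tx_k,\LA{kl}])$, using $pr_1$ as a local biholomorphism to take the coordinate $\tx_i$ as a local coordinate on the fiber product; the fiber condition $\LA{ij}(\tx_i)=\LA{kj}(\tx_k)$ then determines $\tx_k=\LA{kj}^{-1}\circ\LA{ij}(\tx_i)$ holomorphically on a neighborhood, since $\LA{kj}$ is a local biholomorphism. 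Recalling the definition $m([\LA{ih},\tx_i,\LA{ij}],[\LA{kj},\tx_k,\LA{kl}])=[\LA{ih}\circ\LA{fi},\tx_f,\LA{kl}\circ\LA{fk}]$, where $\LA{fi},\LA{fk}$ come from Lemma \ref{useful-lemma} and $\tx_f$ satisfies $\LA{fi}(\tx_f)=\tx_i$, the output coordinate in the chart $\tW_f^{hl}$ is $\tx_f=\LA{fi}^{-1}(\tx_i)$, again holomorphic in $\tx_i$. Hence $m$ is holomorphic in these coordinates.

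The main obstacle is precisely this last step: one must check that the choices made in Lemma \ref{useful-lemma} to define $m$ vary holomorphically, equivalently that the local expression $\tx_f=\LA{fi}^{-1}(\tx_i)$ is valid on a full neighborhood and is independent of the chosen representative via the identifications of Remark \ref{good-definitions-on-R}. The local-triviality Lemma \ref{local-triviality} and the fact that every embedding is a local biholomorphism are what make this work. Once $m$ is shown to be holomorphic, all five structure maps are holomorphic and $s,t$ are \'etale, so $\groupR$ is an \'etale groupoid object in $\CAT{Manifolds}$.
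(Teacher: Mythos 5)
Your proposal is correct and follows essentially the same route as the paper: prove that $s,t$ are \'etale by reading them in the charts $(\tW_k^{ij},\phi_k^{ij})$ as the embeddings $\LA{ki},\LA{kj}$, observe that $e$ and $i$ are identities in coordinates, and reduce holomorphy of $m$ to a single fixed completion diagram from Lemma \ref{useful-lemma}, which (by well-definedness of $m$) remains valid for all nearby composable pairs. The only cosmetic difference is that the paper parametrizes a neighborhood of a composable pair in $\fibre{R}{t}{s}{R}$ by $\tU_f$ via an auxiliary local biholomorphism $\delta$, whereas you use $pr_1$; the two coordinates differ by the biholomorphism $\LA{fi}$, so the computations coincide.
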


\begin{proof}
We have already proved that $\groupR$ is a groupoid object in $\CAT{Sets}$, and that both $R$ and $U$ are complex
manifolds. Hence we have only to prove the
additional properties about the five structure maps. In particular, we have to prove that $s$ and $t$ are both
\'etale (hence, in particular, holomorphic) and that $m,i$ and $e$ are holomorphic.\\

Let us prove that $s$ is \'etale (the proof for $t$ is analogous); since the property of being \'etale is a local
one, we can check it by restricting to the domains of suitable charts in source and target. So let us fix any point
$[\LA{ki},\tx_k,\LA{kj}]$ in $R$ and the chart $(\tW_k^{ij},\phi_k^{ij})$ around it. We recall that:

$$s([\LA{ki},\tx_k,\LA{kj}])=\LA{ki}(\tx_k)\in\tU_i\subseteq U$$

where $\tU_i$ means a copy of $\tU_i$ in the manifold $U$; so a chart around this point is just $(\tU_i,id)$. Hence
the map $s$ can be expressed in coordinates as:

$$\ts:=id\circ s\circ(\phi_k^{ij})^{-1}:\tU_k\rightarrow\tU_i$$

which concides with the holomorphic embedding $\LA{ki}$. So $s$ is a biholomorphism if restricted to $\tW_k^{ij}$
in domain and to $\LA{ki}(\tU_k)$ in codomain. Hence we have proved that $s$ is \'etale.\\

In order to prove that $m:\fibre{R}{t}{s}{R}\rightarrow R$ is holomorphic, let us fix any point:

$$(P,P'):=\Big([\LA{ih},\tx_i,\LA{ij}],[\LA{kj},\tx_k,\LA{kl}]\Big)\in\fibre{R}{t}{s}{R}$$

and a completion of the form (\ref{eq-41}). Then we can write:

$$(P,P')=\Big([\LA{fh},\tx_f,\LA{fj}],[\LA{fj},\tx_f,\LA{fl}]\Big)$$

(where we define $\LA{fh}:=\LA{ih}\circ\LA{fi}$ and analogously for the other 3 embeddings). Now let us define
a set map $\delta:\tU_f\rightarrow R\times R$ as:

$$\delta(\ty_f):=\Big([\LA{fh},\ty_f,\LA{fj}],[\LA{fj},\ty_f,\LA{fl}]\Big).$$

This map is clearly holomorphic because $R\times R$ has the product topology and by combining $\delta$ with the
first and second projection we get exactly inverses of holomorphic coordinates functions on $R$ (see the explicit
description in proposition \ref{manifold-topology}). Moreover, one see easily that $\delta$ has target in
$\fibre{R}{t}{s}{R}$, which is a complex submanifold of $R\times R$ (see proposition \ref{fiber-products-1}), hence
$\delta$ is holomorphic from $\tU_f$ to $\fibre{R}{t}{s}{R}$. In addition, there exists an obvious local inverse of
$\delta$, again holomorphic, hence $\delta$ is a biholomorphism if restricted in target to an open neighborhood of
$(P,P')$. Now using a diagram similar to (\ref{eq-41}) we get that $m\circ\delta(\ty_f)=[\LA{fh},\ty_f,\LA{fl}]$;
since a chart around $m(P,P')$ is just $(\tW_f^{hl},\phi_f^{hl})$, we get that in order to check whether $m$ is
holomorphic or not around $(P,P')$, it suffices to check if $\phi_f^{hl}\circ m\circ\delta$ is holomorphic near
$\tx_f$. Now this map just coincides with the identity on the whole $\tU_f$, hence $m$ is holomorphic around
$(P,P')$; since this holds for every point of $\fibre{R}{t}{s}{R}$, we are done.\\

Analogous arguments prove that both $i$ and $e$ are holomorphic maps.
\end{proof}

\begin{lem}\label{useful-lemma-2}
Suppose we have fixed 2 points $P=[\LA{ki},\tx_k,\LA{kj}]$ and $Q=[\LA{li},\tx_l,\LA{lj}]$ of $R$ with 
$s(P)=s(Q)$ and $t(P)=t(Q)$. Then there exists a unique $g\in G_j$ such that:

\begin{equation}\label{eq-80}
[\LA{li},\tx_l,\LA{lj}]=[\LA{ki},\tx_k,g\circ\LA{kj}]. 
\end{equation}

Moreover, such a $g$ belongs to the stabilizer of $\LA{kj}(\tx_k)$ in $\tU_j$, so by lemma \ref{lemma-intersection}
and lemma \ref{lemma-moerdijk} there exists a unique $g'\in (G_k)_{\tx_k}$ such that:

$$[\LA{li},\tx_l,\LA{lj}]=[\LA{ki},\tx_k,\LA{kj}\circ g'].$$
\end{lem}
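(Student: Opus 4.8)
The plan is to lift both $P$ and $Q$ to representatives sitting over a single uniformizing system, so that the equivalence relation reduces the problem to comparing two honest embeddings, and then to extract $g$ from Moerdijk's lemma.

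First I would unwind the hypotheses. From $s(P)=s(Q)$ and $t(P)=t(Q)$ (consistent with writing both points with the same indices $i,j$) one reads off $\LA{ki}(\tx_k)=\LA{li}(\tx_l)$ in $\tU_i$ and $\LA{kj}(\tx_k)=\LA{lj}(\tx_l)$ in $\tU_j$. The first equality is exactly the hypothesis needed to apply lemma \ref{useful-lemma} to the embeddings $\LA{ki}$ and $\LA{li}$ together with the points $\tx_k,\tx_l$: it produces a uniformizing system $\unif{q}$, a point $\tx_q\in\tU_q$ and embeddings $\LA{qk},\LA{ql}$ with $\LA{ki}\circ\LA{qk}=\LA{li}\circ\LA{ql}$, $\LA{qk}(\tx_q)=\tx_k$ and $\LA{ql}(\tx_q)=\tx_l$. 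Next I would produce $g$: the two maps $\LA{kj}\circ\LA{qk}$ and $\LA{lj}\circ\LA{ql}$ are both embeddings $\unif{q}\rightarrow\unif{j}$, so lemma \ref{lemma-moerdijk} yields a unique $g\in G_j$ with $\LA{lj}\circ\LA{ql}=g\circ\LA{kj}\circ\LA{qk}$.

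Now I pull both sides of \eqref{eq-80} up to level $q$ by a single application of the generating relation \eqref{eq-38}. Using $\LA{ql}$ one gets $Q=[\LA{li}\circ\LA{ql},\tx_q,\LA{lj}\circ\LA{ql}]$, and using $\LA{qk}$ one gets $[\LA{ki},\tx_k,g\circ\LA{kj}]=[\LA{ki}\circ\LA{qk},\tx_q,g\circ\LA{kj}\circ\LA{qk}]$. These two representatives lie in the same copy $\tU_q$ at the same point $\tx_q$, and their first and third embeddings coincide \emph{on the nose} by the two displayed identities above; hence they are literally the same element of $\hat R$, proving \eqref{eq-80}. Evaluating $g\circ\LA{kj}\circ\LA{qk}=\LA{lj}\circ\LA{ql}$ at $\tx_q$ and using $\LA{kj}(\tx_k)=\LA{lj}(\tx_l)$ shows $g$ fixes $\LA{kj}(\tx_k)$, so $g$ lies in the claimed stabilizer.

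For uniqueness of $g$ I would take $g_1,g_2\in G_j$ both satisfying \eqref{eq-80}; then $[\LA{ki},\tx_k,g_1\circ\LA{kj}]=[\LA{ki},\tx_k,g_2\circ\LA{kj}]$, and since $\sim$ is already transitive (lemma \ref{equivalence-proof}) a single instance of definition \ref{equivalence} supplies $\unif{m}$, $\tx_m$ and embeddings $\mu_1,\mu_2:\tU_m\rightarrow\tU_k$ with $\LA{ki}\circ\mu_1=\LA{ki}\circ\mu_2$ and $g_1\circ\LA{kj}\circ\mu_1=g_2\circ\LA{kj}\circ\mu_2$. Injectivity of $\LA{ki}$ forces $\mu_1=\mu_2$, so $g_1$ and $g_2$ agree on the open set $(\LA{kj}\circ\mu_1)(\tU_m)\subseteq\tU_j$; as $\tU_j$ is connected and the $g$'s are holomorphic, $g_1=g_2$. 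Finally, for the last assertion, since $g$ fixes $\LA{kj}(\tx_k)$ we have $g(\LA{kj}(\tU_k))\cap\LA{kj}(\tU_k)\neq\varnothing$, so lemma \ref{lemma-intersection} places $g$ in the image of the injective homomorphism $\Lambda_{kj}:G_k\rightarrow G_j$ of corollary \ref{induced-map}; the resulting unique $g'\in G_k$ satisfies $\LA{kj}\circ g'=g\circ\LA{kj}$, so the third embeddings agree and $[\LA{ki},\tx_k,g\circ\LA{kj}]=[\LA{ki},\tx_k,\LA{kj}\circ g']$, while evaluating at $\tx_k$ gives $g'\in(G_k)_{\tx_k}$. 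I expect the main obstacle to be purely bookkeeping: correctly matching indices when invoking lemma \ref{useful-lemma} and lemma \ref{lemma-moerdijk}, and verifying that the two lifted representatives at level $q$ coincide exactly rather than merely up to equivalence.
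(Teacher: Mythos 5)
Your proposal is correct and follows essentially the same route as the paper's proof: both unwind the hypotheses to $\LA{ki}(\tx_k)=\LA{li}(\tx_l)$ and $\LA{kj}(\tx_k)=\LA{lj}(\tx_l)$, apply lemma \ref{useful-lemma} to the source-side embeddings $\LA{ki},\LA{li}$ to get a common chart, then apply lemma \ref{lemma-moerdijk} to the two composite embeddings into $\unif{j}$ to extract the unique $g\in G_j$, and conclude by exhibiting exactly the data required by definition \ref{equivalence}. The only differences are cosmetic: you spell out the uniqueness of $g$ (via injectivity of $\LA{ki}$ and the identity theorem, where the paper simply cites lemma \ref{lemma-moerdijk} again) and the passage to $g'$ via lemma \ref{lemma-intersection}, details the paper leaves to the reader.
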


\begin{proof}
The hypothesis implies that $\LA{ki}(\tx_k)=\LA{li}(\tx_l)$ and $\LA{kj}(\tx_k)=\LA{lj}(\tx_l)$. Using the first
relation, we can apply lemma \ref{useful-lemma} to the pair of embeddings $\LA{ki},\LA{li}$, so we get a pair of diagrams
of the form (\ref{eq-31}), which a priori are both commutative only in the left part. Then one can apply lemma \ref{lemma-moerdijk}
to the right part, so we get a unique $g\in G_j$ such that we have commutative diagrams of the form:

\begin{equation}
\begin{tikzpicture}[scale=0.8]
    \def\x{1.5}
    \def\y{-1.2}
    \node (A2_0) at (2*\x, 0*\y) {$\tU_k$};
    \node (A0_2) at (0*\x, 2*\y) {$\tU_i$};
    \node (A2_2) at (2*\x, 2*\y) {$\tU_m$};
    \node (A4_2) at (4*\x, 2*\y) {$\tU_j$};
    \node (A2_4) at (2*\x, 4*\y) {$\tU_l$};
    \node (A1_2) at (1*\x, 2*\y) {$\curvearrowright$};
    \node (A3_2) at (3*\x, 2*\y) {$\curvearrowright$};
    \path (A2_0) edge [->] node [auto,swap] {$\scriptstyle{\LA{ki}}$} (A0_2);
    \path (A2_0) edge [->] node [auto] {$\scriptstyle{\LA{kj}}$} (A4_2);
    \path (A2_4) edge [->] node [auto] {$\scriptstyle{\LA{li}}$} (A0_2);
    \path (A2_4) edge [->] node [auto,swap] {$\scriptstyle{g\circ\LA{lj}}$} (A4_2);
    \path (A2_2) edge [->] node [auto,swap] {$\scriptstyle{\LA{mk}}$} (A2_0);
    \path (A2_2) edge [->] node [auto] {$\scriptstyle{\LA{ml}}$} (A2_4);

    \def\z{7}
    \node (B2_0) at (2*\x+\z, 0*\y) {$\tx_k$};
    \node (B0_2) at (0*\x+\z, 2*\y) {$\tx_i$};
    \node (B2_2) at (2*\x+\z, 2*\y) {$\tx_m$};
    \node (B4_2) at (4*\x+\z, 2*\y) {$\tx_j$};
    \node (B2_4) at (2*\x+\z, 4*\y) {$\tx_l$};
    \node (B1_2) at (1*\x+\z, 2*\y) {$\curvearrowright$};
    \node (B3_2) at (3*\x+\z, 2*\y) {$\curvearrowright$};
    \path (B2_0) edge [->] node [auto,swap] {$\scriptstyle{\LA{ki}}$} (B0_2);
    \path (B2_0) edge [->] node [auto] {$\scriptstyle{\LA{kj}}$} (B4_2);
    \path (B2_4) edge [->] node [auto] {$\scriptstyle{\LA{li}}$} (B0_2);
    \path (B2_4) edge [->] node [auto,swap] {$\scriptstyle{g\circ\LA{lj}}$} (B4_2);
    \path (B2_2) edge [->] node [auto,swap] {$\scriptstyle{\LA{mk}}$} (B2_0);
    \path (B2_2) edge [->] node [auto] {$\scriptstyle{\LA{ml}}$} (B2_4);
\end{tikzpicture}\end{equation}

Hence, by definition of $\sim$ on $\hat{R}$, we get that (\ref{eq-80}) is satisfied. Moreover, it is simple
to prove that such a $g$ is also unique using again lemma \ref{lemma-moerdijk}.
\end{proof}

\begin{lem}
The \'etale groupoid object $\groupR$ is effective.
\end{lem}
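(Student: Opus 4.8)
The plan is to reduce effectiveness to the triviality of the kernel of the group homomorphism $f_{\tx}$ of (\ref{eq-27}). Since the corollary following lemma \ref{lemma-a} tells us that $f_{\tx}$ is a group homomorphism, proving that it is injective for every $\tx\in U$ is the same as showing that $f_{\tx}(g)$ is a non-trivial germ whenever $g$ is not the identity of the isotropy group $R_{\tx}$. So first I would make $R_{\tx}$ explicit. Writing $\tx=(\tx_i,\tU_i)$, any $g\in R_{\tx}=(s,t)^{-1}\{(\tx,\tx)\}$ has a representative $g=[\LA{ki},\tx_k,\LA{kj}]$, and the requirement $s(g)=\tx=t(g)$ forces the target index to coincide with the source index (the target must land in the \emph{same} copy $\tU_i$ of the disjoint union $U$), so $j=i$ and $\LA{ki}(\tx_k)=\LA{kj}(\tx_k)=\tx_i$.

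Next I would compute the germ $f_{\tx}(g)$ using the local charts $(\tW_k^{ij},\phi_k^{ij})$ of proposition \ref{manifold-topology}. In the proof of lemma \ref{etale-groupoid} it is shown that, read in these coordinates, the source and target maps are exactly the embeddings $\LA{ki}$ and $\LA{kj}$. Consequently $\tg=t\circ(s|_{W_g})^{-1}$ is, as a germ at $\tx_i$, the composite $\LA{kj}\circ\LA{ki}^{-1}$, where $\LA{ki}^{-1}$ denotes the local holomorphic inverse of the embedding $\LA{ki}$.

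The core step is then to assume $g\in\ker f_{\tx}$, i.e. that $\LA{kj}\circ\LA{ki}^{-1}$ is the germ of the identity at $\tx_i$, and to conclude $g=e(\tx)$. The germ hypothesis means $\LA{kj}=\LA{ki}$ on an open neighborhood of $\tx_k$; since $\tU_k$ is connected (definition \ref{unif-sys}) and the two embeddings are holomorphic, the identity principle upgrades this to $\LA{kj}=\LA{ki}$ on all of $\tU_k$, whence $g=[\LA{ki},\tx_k,\LA{ki}]$. To finish I would identify this class with $e(\tx)=[1_{\tU_i},\tx_i,1_{\tU_i}]$ through the generating equivalence of remark \ref{good-definitions-on-R}: taking $\LA{ki}\colon\tU_k\to\tU_i$ itself as the connecting embedding in the diagrams (\ref{eq-38}), with the two identities $1_{\tU_i}$ as the embeddings out of $\tU_i$ and using $\LA{ki}(\tx_k)=\tx_i$ on marked points, shows $(1_{\tU_i},\tx_i,1_{\tU_i})\sim(\LA{ki},\tx_k,\LA{ki})$. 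Hence $g=e(\tx)$ is the identity of $R_{\tx}$, so $\ker f_{\tx}$ is trivial and the groupoid is effective.

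I expect the obstacles to be bookkeeping rather than conceptual: one must read off $s$ and $t$ in the charts carefully so that $\tg$ really comes out as $\LA{kj}\circ\LA{ki}^{-1}$ and not its inverse, and one must match the roles of the indices correctly when applying (\ref{eq-38}). The genuinely essential ingredient is the passage from the germ-level equality $\LA{kj}=\LA{ki}$ near $\tx_k$ to the global equality on $\tU_k$: this is exactly where connectedness of the uniformizing chart and the identity principle for holomorphic maps (rather than the weaker smooth hypothesis) enter.
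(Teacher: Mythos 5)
Your proof is correct, but it takes a genuinely different route from the paper's. The paper first applies lemma \ref{useful-lemma-2} (comparing an arbitrary isotropy arrow at $(\tx_k,\tU_k)$ with the unit $e(\tx_k,\tU_k)$, which has the same source and target) to normalize every element of $R_{(\tx_k,\tU_k)}$ to the form $[1_{\tU_k},\tx_k,g]$ with $g\in(G_k)_{\tx_k}$; this yields a bijection $R_{(\tx_k,\tU_k)}\simeq(G_k)_{\tx_k}$ under which the map $f_{(\tx_k,\tU_k)}$ of definition \ref{effective-groupoid} sends $g$ to the germ of $g$ itself at $\tx_k$, and injectivity is then read off from the reducedness of the atlas. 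You instead keep an arbitrary representative $[\LA{ki},\tx_k,\LA{kj}]$, compute the induced germ as $\LA{kj}\circ\LA{ki}^{-1}$ in the charts of proposition \ref{manifold-topology} (your reading of $s$ and $t$ in coordinates agrees with the computation in lemma \ref{etale-groupoid}, so the direction is right), and reduce injectivity to triviality of the kernel; note that this reduction genuinely requires the corollary to lemma \ref{lemma-a} (that $f_{\tx}$ is a group homomorphism), an ingredient the paper's direct argument does not use. Your identification $[\LA{ki},\tx_k,\LA{ki}]=[1_{\tU_i},\tx_i,1_{\tU_i}]$ via the generating relation (\ref{eq-38}), with $\LA{ki}$ as connecting embedding, is also correct and replaces the paper's normalization step. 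A merit of your version is that it makes explicit a point the paper leaves tacit: passing from equality of \emph{germs} to equality of the embeddings on all of $\tU_k$ uses the identity principle for holomorphic maps on the connected chart $\tU_k$; the paper's phrase ``reduced, hence injective'' silently relies on the same fact, since effectiveness of the $G_k$-action only guarantees that distinct elements are distinct \emph{maps}, not that they have distinct germs. What the paper's approach buys in exchange is the explicit description of the isotropy groups $R_{\tx}\simeq(G_k)_{\tx_k}$, which it immediately reuses in the subsequent proof that the relative diagonal $(s,t)$ is proper.
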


\begin{proof}
Let us fix any point $(\tx_k,\tU_k)\in U$; by applying the previous lemma we get that the set of points $P$
of $R$ such that $s(P)=t(P)=(\tx_k,\tU_k)$ is in natural bijection with the stabilizer $(G_k)_{\tx_k}$; in
particular, the bijection is given by $g\mapsto [1_{\tU_k},\tx_k,g]$. Now if we restrict to any open
set of the form $\tU_k\subseteq U$ we get that every point $P=[1_{\tU_k},\tx_k,g]$ induces the set map $t\circ
(s|_{\tU_k})^{-1}=g$. Since the orbifold atlas $\mathcal{U}$ is reduced by hypothesis, the group $G_k$ acts
effectively on $\tU_k$, hence the set map $f_{(\tx_k,\tU_k)}$ (see definition \ref{effective-groupoid}) is
injective. Since this holds for every point of $U$, we have proved that $\groupR$ is effective.
\end{proof}

\begin{lem}
The relative diagonal $(s,t):R\rightarrow U\times U$ is proper.
\end{lem}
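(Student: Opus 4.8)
The plan is to prove sequential compactness of $(s,t)^{-1}(K)$ for any compact $K\subseteq U\times U$; since $R$ is a (metrizable) complex manifold by proposition \ref{manifold-topology}, this is equivalent to compactness. First I would reduce to the case where $K$ lies in a single component of $U\times U$. Indeed $U\times U=\coprod_{(i,j)}(\tU_i\times\tU_j)$ is a disjoint union of clopen pieces, so a compact $K$ meets only finitely many of them and $(s,t)^{-1}(K)$ splits as a finite disjoint union of the sets $(s,t)^{-1}\big(K\cap(\tU_i\times\tU_j)\big)$; hence it suffices to treat a compact $K\subseteq\tU_i\times\tU_j$ for fixed $i,j$. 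Given a sequence $(P_n)$ in $(s,t)^{-1}(K)$, compactness of $K$ lets me pass to a subsequence with $\big(s(P_n),t(P_n)\big)\to(\tx_i,\tx_j)\in K$, and the goal becomes finding a further subsequence of $(P_n)$ converging in $R$ to a point of $(s,t)^{-1}(K)$.

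Next I would analyze the fiber over the limit point. Writing any $P_n=[\LA{an\,i},\tx_{a_n},\LA{an\,j}]$, the embedding identities give $\pi_i\big(s(P_n)\big)=\pi_{a_n}(\tx_{a_n})=\pi_j\big(t(P_n)\big)$, so in the limit $\pi_i(\tx_i)=\pi_j(\tx_j)=:x$. By remark \ref{useful-remark} there are $\unif{k}\in\mathcal{U}$, a point $\tx_k\in\tU_k$ and embeddings $\LA{ki},\LA{kj}$ with $\LA{ki}(\tx_k)=\tx_i$, $\LA{kj}(\tx_k)=\tx_j$. Lemma \ref{useful-lemma-2} then shows the fiber $(s,t)^{-1}(\tx_i,\tx_j)$ is exactly $\{\,[\LA{ki},\tx_k,\LA{kj}\circ g']\,:\,g'\in(G_k)_{\tx_k}\}$, a finite set $P_0^{(1)},\dots,P_0^{(N)}$ (finiteness of the stabilizer uses that $G_k$ is finite). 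Each $P_0^{(m)}$ lies in a chart of $R$ on which $s$ reads as the embedding $\LA{ki}$, hence $s$ is a local biholomorphism there (lemma \ref{etale-groupoid}); so I may choose pairwise disjoint open neighborhoods $\mathcal{O}^{(m)}\ni P_0^{(m)}$ on each of which $s$ is a biholomorphism onto a common neighborhood $W_i$ of $\tx_i$.

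The crux, and the step I expect to be the main obstacle, is a local finiteness statement: there exist neighborhoods $W_i\ni\tx_i$ and $W_j\ni\tx_j$ with $s^{-1}(W_i)\cap t^{-1}(W_j)\subseteq\bigcup_m\mathcal{O}^{(m)}$. The difficulty is that a priori infinitely many charts (indexed by arbitrary middle uniformizing systems and embeddings) could contribute points with source near $\tx_i$ and target near $\tx_j$, and I must rule out escape to "new sheets." I would argue by contradiction, taking points $P_r$ with $s(P_r)\to\tx_i$, $t(P_r)\to\tx_j$ but $P_r\notin\bigcup_m\mathcal{O}^{(m)}$. Pulling the source back through $\LA{ki}$, set $\tx_k^{(r)}:=\LA{ki}^{-1}\big(s(P_r)\big)\to\tx_k$ and compare $P_r$ with $[\LA{ki},\tx_k^{(r)},\LA{kj}]$: both have source $s(P_r)$, and their targets have equal $\pi_j$-image, hence differ by some $h_r\in G_j$. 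Since both targets converge to $\tx_j$, lemma \ref{radius-lemma} (applied to the finitely many elements of $G_j\smallsetminus(G_j)_{\tx_j}$) forces $h_r\in(G_j)_{\tx_j}$ for large $r$; passing to a subsequence makes $h_r=h_0$ constant. Now $P_r$ and $[\LA{ki},\tx_k^{(r)},h_0\circ\LA{kj}]$ share both source and target, so lemma \ref{useful-lemma-2} gives $P_r=[\LA{ki},\tx_k^{(r)},h_0\circ\LA{kj}\circ g_r']$ with $g_r'\in(G_k)_{\tx_k^{(r)}}$. Finiteness of $G_k$ lets me take $g_r'=g_0'$ constant, and $g_0'$ fixes $\tx_k^{(r)}\to\tx_k$, hence $g_0'\in(G_k)_{\tx_k}$. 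Thus the $P_r$ eventually all lie in the single chart with fixed embeddings $(\LA{ki},\,h_0\LA{kj}g_0')$ and middle coordinate $\tx_k^{(r)}\to\tx_k$, so $P_r\to[\LA{ki},\tx_k,h_0\LA{kj}g_0']$, whose source is $\tx_i$ and whose target is $h_0\LA{kj}(\tx_k)=h_0(\tx_j)=\tx_j$; this limit is therefore one of the $P_0^{(m)}$, contradicting $P_r\notin\mathcal{O}^{(m)}$.

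Finally I would assemble the pieces. By the local finiteness statement, for $n$ large $s(P_n)\in W_i$ and $t(P_n)\in W_j$, so $P_n\in\bigcup_m\mathcal{O}^{(m)}$; a subsequence stays in a single $\mathcal{O}^{(m_0)}$, on which $s$ is a biholomorphism onto $W_i$, whence $P_n=(s|_{\mathcal{O}^{(m_0)}})^{-1}\big(s(P_n)\big)\to(s|_{\mathcal{O}^{(m_0)}})^{-1}(\tx_i)=P_0^{(m_0)}$. Since $K$ is closed, $(s,t)\big(P_0^{(m_0)}\big)=(\tx_i,\tx_j)\in K$, so the limit lies in $(s,t)^{-1}(K)$. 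This exhibits a convergent subsequence with limit in the set, proving $(s,t)^{-1}(K)$ sequentially compact and hence compact, i.e. the relative diagonal is proper. Throughout I would use remark \ref{good-definitions-on-R} freely to pass between equivalent representatives of classes in $R$.
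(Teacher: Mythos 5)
Your proof is correct, and its overall skeleton coincides with the paper's: extract a subsequence so that $(s,t)(P_n)$ converges, show that the $(s,t)$-preimage of a small product neighborhood of the limit point is contained in finitely many chart-sheets on each of which $s$ is a biholomorphism, then pigeonhole a further subsequence into a single sheet and pull convergence back through $s$. The genuine difference is how the finite-sheet localization is established. The paper proves the inclusion $(s,t)^{-1}(\tW_i\times\tW_j)\subseteq\bigcup_{g\in G_j}\tW_k^{\LA{ki},g\circ\LA{kj}}$ of (\ref{eq-47}) \emph{directly}: for any $Q=[\LA{li},\ty_l,\LA{lj}]$ in the preimage it sets $\ty_k:=\LA{ki}^{-1}(\LA{li}(\ty_l))$, uses the quotient property of $\pi_j$ to produce $g_1\in G_j$ with $g_1\circ\LA{kj}(\ty_k)=\LA{lj}(\ty_l)$, and then applies lemma \ref{useful-lemma-2} to rewrite $Q=[\LA{ki},\ty_k,g_2\circ g_1\circ\LA{kj}]$; no limiting process and no contradiction is needed. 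You instead prove a sharper localization --- into the sheets through the actual fiber points over $(\tx_i,\tx_j)$, indexed by the stabilizer $(G_k)_{\tx_k}$ --- and pay for it with a contradiction argument requiring a second round of subsequence extractions and lemma \ref{radius-lemma} to force the comparison elements $h_r$ into $(G_j)_{\tx_j}$. Both routes are valid; the paper's is shorter and uses fewer tools, while yours makes the finite fiber explicit and lets you dispense with the paper's preliminary case distinction, since you observe that a limit of points in the image of $(s,t)$ automatically satisfies $\pi_i(\tx_i)=\pi_j(\tx_j)$, whereas the paper treats $\pi_i(\tx_i)\neq\pi_j(\tx_j)$ as a separate case (empty local preimage, via Hausdorffness of $X$). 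One caveat you share with the paper: concluding compactness of $(s,t)^{-1}(K)$ from sequential compactness deserves a word of justification; it is harmless here precisely because the localization confines $(s,t)^{-1}(K)$ to finitely many coordinate charts, so that set is second countable and metrizable.
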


\begin{proof}(adapted from \cite{Pr}, proposition 4.4.8 and corollary 4.4.9)
Let us fix any point $(P,P')=\Big((\tx_i,\tU_i),(\tx_j,\tU_j)\Big)\in U\times U$ and let us distinguish between
2 cases: if $\pi_i(\tx_i)\neq\pi_j(\tx_j)$, then we can use the fact that $X$ is Hausdorff (by definition of
orbifold) and we get that there exist two open disjoint neighborhoods $D_i$ and $D_j$ of $\pi_i(\tx_i)$ and
$\pi_j(\tx_j)$. If we call $\tD_i:=\pi_i^{-1}(D_i)\subseteq \tU_i$ and $\tD_j:=\pi_j^{-1}(D_j)\subseteq\tU_j$, we
get that $\tD_i\times\tD_j$ is an open neighborhood of $(P,P')$ and its preimage via $(s,t)$ is empty.\\

Now let us consider the case when $\pi_i(\tx_i)=\pi_j(\tx_j)$; in this case we can use property (ii) of orbifold
atlases and remark \ref{useful-remark} in order to find a uniformizing system $\unif{k}\in\mathcal{U}$, a point
$\tx_k\in\tU_k$ and embeddings $\LA{ki},\LA{kj}$ such that $\LA{ki}(\tx_k)=\tx_i$ and $\LA{kj}(\tx_k)=\tx_j$.
Then let us consider the open sets $\tW_i:=\LA{ki}(\tU_k)\subseteq\tU_i$, $\tW_j:=\LA{kj}(\tU_k)\subseteq\tU_j$ and
the set $\tW_j\times\tW_j$, which is an open neighborhood of $(P,P')$ in $U\times U$. Now let us fix any point:

$$Q=[\LA{li},\ty_l,\LA{lj}]\in(s,t)^{-1}(\tW_i\times\tW_j)$$

and let us define $\ty_k:=\LA{ki}^{-1}(\LA{li}(\ty_l))$ (well defined by construction of $\tW_j$); then we get that:

$$\pi_j(\LA{kj}(\ty_k))=\pi_k(\ty_k)=\pi_i(\LA{li}(\ty_l))=\pi_l(\ty_l)=\pi_j(\LA{lj}(\ty_l)).$$

So by definition of uniformizing system there exists $g_1\in G_j$ such that:

$$g_1\circ\LA{kj}(\ty_k)=\LA{lj}(\ty_l);$$

then if we define $P:=[\LA{ki},\ty_k,g_1\circ\LA{kj}]$, we get that $s(P)=s(Q)$ and $t(P)=t(Q)$, so we can apply
lemma \ref{useful-lemma-2} and we get that there exists $g_2\in G_j$ such that
$Q=[\LA{ki},\ty_k,g_2\circ g_1\circ\LA{kj}]$. So we conclude that every point in $(s,t)^{-1}(\tW_i\times
\tW_j)$ is of the form $[\LA{ki},\ty_k,g\circ\LA{kj}]$ for some $\ty_k\in\tU_k$ and some $g\in G_j$.
So we have proved that:

\begin{equation}\label{eq-47}
(s,t)^{-1}(\tW_i\times\tW_j)\subseteq\bigcup_{g\in G_j}\tW_k^{\LA{ki},g\circ\LA{kj}}
\end{equation}

(where we use the notations introduced in the proof of proposition \ref{manifold-topology}).\\

Now let us fix any compact set $K\subseteq(U\times U)$ and let us fix any sequence $\{Q_n\}_{n\in\mathbb{N}}
$ in $(s,t)^{-1}(K)$. If
necessary by extracting a subsequence, we can assume that $(s,t)(Q_n)$ $=:(P_n,P'_n)$ converges to a point $(P,P')\in
K$. Hence for every open neighborhood $A$ of $(P,P')$ in $U\times U$ we get that $(s,t)^{-1}(A)$ is not empty, so
we are necessarily in the second of the previous 2 cases, hence there exists an open neighborhood $\tW_i\times
\tW_j$ of $(P,P')$ such that (\ref{eq-47}) holds. Now $(P_n,P'_n)$ converges to $(P,P')$, so for $n$ big enough we
can assume that $Q_n\in(s,t)^{-1}(\tW_i\times\tW_j)$; moreover, the union of (\ref{eq-47}) is made over a finite
set, so by passing to a subsequence we can assume that there exists $g\in G_j$ such that $Q_n\in \tW_k^{\LA{ki},
g\circ\LA{kj}}$ for all $n$ big enough. We have proved in lemma \ref{etale-groupoid} that $s$ is a biholomorphism
(hence homeomorphism) if restricted to this set, so if we call $Q$ the unique point of this set such that $s(Q)=P$,
we get that $s(Q_n)$ converges to $Q$. Hence we have proved that $(s,t)^{-1}$ of every compact set is compact,
i.e. $(s,t)$ is proper.
\end{proof}

From all the previous lemmas we get:

\begin{prop}\label{orb-to-groupoid-1}
$\groupR$ is an object of $\CAT{Grp}$.
\end{prop}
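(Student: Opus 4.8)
The plan is to assemble Proposition \ref{orb-to-groupoid-1} directly from the chain of lemmas established in this subsection, since each required property has already been isolated and proved. The statement asserts that $\groupR$ (the groupoid built from an orbifold atlas $\mathcal{U}$) is an object of $\CAT{Grp}$; by the definition of $\CAT{Grp}$, this means it must be a \emph{proper}, \emph{\'etale} and \emph{effective} groupoid object in $\CAT{Manifolds}$. So the whole proposition is really a summary, and the natural proof is simply to cite the preceding results in the right order.

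First I would recall that $\groupR$ is a groupoid object in $\CAT{Sets}$ (this was established in the proposition preceding Lemma \ref{local-triviality}), and that both $U$ and $R$ carry structures of complex manifolds: $U$ is a disjoint union of open subsets of $\mathbb{C}^n$ by construction, and $R=\hat{R}/\sim$ receives its complex manifold atlas $\mathcal{F}$ via Proposition \ref{manifold-topology}. Next I would invoke Lemma \ref{etale-groupoid}, which upgrades the set-theoretic groupoid to an \emph{\'etale groupoid object in }$\CAT{Manifolds}$: the five structure maps $s,t,m,i,e$ are holomorphic and, moreover, $s$ and $t$ are \'etale. This single lemma already discharges the ``\'etale'' requirement and guarantees that all the fiber products in Definition \ref{groupoid-object} exist in $\CAT{Manifolds}$ (by Remark \ref{fiber-products-2}), so that $\groupR$ is genuinely an object of $\CAT{LieGpd}$.

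It then remains only to collect the two defining adjectives of $\CAT{Grp}$ that go beyond being an \'etale Lie groupoid. The lemma stating that $\groupR$ is effective supplies precisely the injectivity of the map $f_{\tx}$ of Definition \ref{effective-groupoid} at every point, and the lemma on the relative diagonal supplies that $(s,t):R\rightarrow U\times U$ is proper. Having all three properties in hand, I would conclude that $\groupR$ satisfies every clause in the definition of $\CAT{Grp}$, which is exactly the assertion of the proposition.

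Since every ingredient is already proved, there is no genuine obstacle here; the only thing to be careful about is the bookkeeping, namely making sure I cite the correct earlier result for each of the three properties (\'etale, effective, proper) and that I note explicitly that $U$ and $R$ are complex manifolds so that the phrase ``groupoid object in $\CAT{Manifolds}$'' is justified before the adjectives are applied. Concretely, the proof is a one-paragraph assembly: ``By Lemma \ref{etale-groupoid}, $\groupR$ is an \'etale groupoid object in $\CAT{Manifolds}$; by the effectiveness lemma it is effective; by the relative-diagonal lemma it is proper. Hence it is an object of $\CAT{Grp}$.'' I would keep it at essentially that level of brevity, since expanding any of these points would merely repeat the content of the lemmas.
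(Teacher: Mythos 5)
Your proposal is correct and matches the paper exactly: the paper itself states this proposition with no separate argument beyond the phrase ``From all the previous lemmas we get,'' i.e.\ it is precisely the assembly of Lemma \ref{etale-groupoid} (\'etale groupoid object in $\CAT{Manifolds}$), the effectiveness lemma, and the properness of the relative diagonal $(s,t)$. Your one-paragraph citation of these three results in order is the intended proof.
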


\subsection{Morphisms and 2-morphisms}
Now let us pass to morphisms: our aim is to associate to every compatible system (i.e. a morphism in $\CAT{Pre-Orb})$
a morphism in $\CAT{Grp}$.

\begin{definprop}\label{orb-to-groupoid-2}
Let $\mathcal{U}=\{\unif{i}\}_{i\in I}$ and $\mathcal{V}=\{\uniff{j}\}_{j\in J}$ be orbifold atlases for $X$ and
$Y$ respectively, let $\tf:\mathcal{U}\rightarrow\mathcal{V}$ be a compatible system (see definition
\ref{compatible-system}) for a continuous map $f:X\rightarrow Y$ and let $\groupR$ and $\groupRR$ be the groupoid
objects associated to $\mathcal{U}$ and $\mathcal{V}$ respectively. For simplicity, from now on for every point
$\tx_i\in\tU_i$ we will denote with $\ty_i$ its image in $\tV_i$ via the holomorphic function $\tf_{\tU_i,\tV_i}:
\tU_i\rightarrow\tV_i$. Now \emph{we define the set map }$\psi:U\rightarrow U'$ as:

$$\left.\psi\right|_{\tU_i}=\tf_{\tU_i,\tV_i}:\tU_i\rightarrow\tV_i\subseteq U'.$$

Let us define also a set map $\Psi:R\rightarrow R'$ as follows: let us fix any point $[\LA{ki},\tx_k,
\LA{kj}]\in R$ and a representative $(\LA{ki},\tx_k,\LA{kj})$ of it; then we set:

$$\Psi([\LA{ki},\tx_k,\LA{kj}]):=[\tf(\LA{ki}),\tf_{\tU_k,\tV_k}(\tx_k),\tf(\LA{kj})]=
[\tf(\LA{ki}),\ty_k,\tf(\LA{kj})].$$

Using remark \ref{good-definitions-on-R} and the fact that $\tf$ is a functor
(by definition \ref{compatible-system}), we get that $\Psi$ does not depend on the representative chosen for
$[\LA{ki},\tx_k,\LA{kj}]$. Then we claim that $(\psi,\Psi)$ is a morphism of groupoid objects in $\CAT{Grp}$
from $\groupR$ to $\groupRR$.
\end{definprop}

\begin{proof}
First of all, since the local liftings of $f$ are all holomorphic, so is $\psi$. Now we recall that in
proposition \ref{manifold-topology} we described a manifold atlas for $R$ where the charts are of the form
$(\tW_k^{ij},\phi_k^{ij})$; analogously, we can use similar charts of the form $(\tZ_k^{ij},\xi_k^{ij})$ on $R'$.
If we write $\Psi$ in coordinates with respect to these charts, we get that $\Psi$ coincides with $\tf_{\tU_k,
\tV_k}$, which is holomorphic by definition \ref{compatible-system}. Hence in order to prove the statement it
suffices to prove the axioms of definition \ref{morphism-groupoid}, which are easy to verify working
set-theoretically, so we omit the details.
\end{proof}

Now let us fix two atlases $\mathcal{U}$ and $\mathcal{V}$ for $X$ and $Y$ respectively, two compatible systems 
$\tf_1,\tf_2:\mathcal{U}\rightarrow\mathcal{V}$ for a continuous function $f:X\rightarrow Y$ and a natural
transformation $\delta:\tf_1\Rightarrow\tf_2$ as in definition \ref{nat-tran-orb}. Let us call $\groupR$ and 
$\groupRR$ the groupoid objects associated to the atlases $\mathcal{U}$ and $\mathcal{V}$ respectively; moreover, 
let us denote with $(\psi,\Psi)$ and $(\phi,\Phi)$ the morphisms of groupoid objects from $(\groupR)$ to
$(\groupRR)$ associated to $\tf_1$ and $\tf_2$ respectively by definition \ref{orb-to-groupoid-2}; then we give the
following:

\begin{definprop}\label{orb-to-groupoid-3}
The \emph{set map}: $\alpha:U=\coprod_{\unif{i}\in\mathcal{U}}\tU_i\rightarrow R'$ defined by:

$$\alpha(\tx_i,\tU_i):=[1_{\tV_{i}^{1}},(\tf_1)_{\tU_i,\tV_i^1}(\tx_i),\delta_{\tU_i}]$$

is a natural transformation from $(\psi,\Psi)$ to $(\phi,\Phi)$ in $\CAT{Grp}$.
\end{definprop}

\begin{proof}
First of all, \emph{we claim that} $\alpha$ \emph{is holomorphic}: if we restrict $\alpha$ to every open set 
$\tU_i$ in $U$ (with the natural chart $(\tU_i,id)$), we get that its range is contained in the open set $A:=q'
\left((\tV_i^1)^{1_{\tV_i^1},\delta_{\tU_i}}\right)$, which is biholomorphic to $\tV_i^1$ (see the proof of
proposition \ref{manifold-topology}). By composing with these biholomorphic changes of coordinates we get that
$\alpha$ coincides on $\tU_i$ with the holomorphic map $(\tf_1)_{\tU_i,\tV_i^1}$. Since the open sets of the form $\tU_i$
cover all $U$, we have proved that $\alpha$ is holomorphic on all $U$, i.e. it is a morphism in $\CAT{Manifolds}$.\\

So in order to prove that $\alpha$ is a natural transformation in $\CAT{Grp}$ it suffices to verify the axioms of 
definition \ref{nat-tran-group}. The first condition is
easy to verify, so let us consider only condition (ii); in order to prove it, let us fix any $[\LA{ki},\tx_k,
\LA{kj}]\in R$. For simplicity, let us call $\tx_i:=\LA{ki}(\tx_k)\in\tU_i$ and $\tx_j:=\LA{kj}(\tx_k)\in\tU_j$.
Moreover, we use the notations introduced in definition \ref{nat-tran-orb} and for every point $\tx_h\in\tU_h$ we
define:

$$\ty_h^m:=(\tf_m)_{\tU_h,\tV_h^m}(\tx_h)\in\tV_h^m\textrm{\quad for } m=1,2.$$

So we get that $\alpha\circ s([\LA{ki},\tx_k,\LA{kj}])=\alpha(\tx_i,\tU_i)=[1_{\tV_i^1},\ty_i^1,\delta_{\tU_i}]$
and $\Phi([\LA{ki},\tx_k,\LA{kj}])=[\lambda_{ki}^2,\ty_k^2,\lambda_{kj}^2]$. Moreover, using (\ref{eq-6}) we get
the following commutative diagram:

\[\begin{tikzpicture}[scale=0.8]
    \def\x{3}
    \def\y{-0.55}
    \def\z{-2}
    \node (A0_1) at (1*\x, 0*\y) {$\ty_i^1$};
    \node (A0_2) at (2*\x, 0*\y) {$\delta_{\tU_i}(\ty_i^1)=\ty_i^2=\lambda_{ki}^2(\ty_k)$};
    \node (A0_3) at (3*\x, 0*\y) {$\ty_k^2$};
    \node[rotate=270] (A1_1) at (1*\x, 1*\y) {$\in$};
    \node[rotate=270] (A1_2) at (2*\x, 1*\y) {$\in$};
    \node[rotate=270] (A1_3) at (3*\x, 1*\y) {$\in$};
    \node (A2_0) at (0*\x, 2*\y) {$\tV_i^1$};
    \node (A2_1) at (1*\x, 2*\y) {$\tV_i^1$};
    \node (A2_2) at (2*\x, 2*\y) {$\tV_i^2$};
    \node (A2_3) at (3*\x, 2*\y) {$\tV_k^2$};
    \node (A2_4) at (4*\x, 2*\y) {$\tV_j^2.$};
    \node (B0_0) at (2*\x, 4*\y) {$\curvearrowright$};
    \node (A3_2) at (2*\x, 2*\y+\z) {$\tV_k^1$};
    \node[rotate=90] (A4_2) at (2*\x, 3*\y+\z) {$\in$};
    \node (A5_2) at (2*\x, 4*\y+\z) {$\ty_k^1$};

    \path (A2_3) edge [->] node [auto] {$\scriptstyle{\lambda_{kj}^2}$} (A2_4);
    \path (A2_1) edge [->] node [auto,swap] {$\scriptstyle{1_{\tV_i^1}}$} (A2_0);
    \path (A2_1) edge [->] node [auto] {$\scriptstyle{\delta_{\tU_i}}$} (A2_2);
    \path (A2_3) edge [->] node [auto,swap] {$\scriptstyle{\lambda_{ki}^2}$} (A2_2);
    \path (A3_2) edge [->] node [auto] {$\scriptstyle{\lambda_{ki}^1}$} (A2_1);
    \path (A3_2) edge [->] node [auto,swap] {$\scriptstyle{\delta_{\tU_k}}$} (A2_3);
\end{tikzpicture}\]

Hence we have that:

\begin{eqnarray}
\nonumber & m'\circ(\alpha\circ s,\Phi)([\LA{ki},\tx_k,\LA{kj}])=m'\Big([1_{\tV_i^1},\ty_i^1,\delta_{\tU_i}],
   [\lambda_{ki}^2,\ty_k^2,\lambda_{kj}^2]\Big)=& \\
\label{eq-48} &=[\lambda_{ki}^1,\ty_k^1,\lambda_{kj}^2\circ\delta_{\tU_k}].&
\end{eqnarray}

On the other hand, we get that $\alpha\circ t([\LA{ki},\tx_k,\LA{kj}])=\alpha(\tx_j,\tU_j)=[1_{\tV_j^1},\ty_j^1,
\delta_{\tU_j}]$ and $\Psi([\LA{ki},\tx_k,\LA{kj}])=[\lambda_{ki}^1,\ty_k^1,\lambda_{kj}^1]$, so we can compute:

\begin{eqnarray}
\nonumber & m'\circ(\Psi,\alpha\circ t)([\LA{ki},\tx_k,\LA{kj}])=m'\Big([\lambda_{ki}^1,\ty_k^1,\lambda_{kj}^1],
   [1_{\tV_j^1},\ty_j^1,\delta_{\tU_j}]\Big)=& \\
\label{eq-49} & =[\lambda_{ki}^1,\ty_k^1,\delta_{\tU_j}\circ\lambda_{kj}^1].&
\end{eqnarray}

Using again (\ref{eq-6}) we get that $\delta_{\tU_j}\circ\lambda_{kj}^1=\lambda_{kj}^2\circ\delta_{\tU_k}$, hence
(\ref{eq-48}) and (\ref{eq-49}) are equal. Since this holds for every point $[\LA{ki},\tx_k,\LA{kj}]$ of $R$, we
get that (ii) is proved.
\end{proof}

\subsection{The 2-functor \texorpdfstring{$F$}{F}}
Until now we have described how to associate:

\begin{enumerate}[(i)]\parindent=0pt
\item to every orbifold atlas $\mathcal{U}$ a groupoid object $\groupR$, which is an object in $\CAT{Grp}$;

\item to every compatible system $\tf$ a morphism $(\psi,\Psi)$ of groupoid objects, which is in particular a
morphism in $\CAT{Grp}$;

\item to every natural transformation $\delta$ between compatible systems a natural transformation $\alpha$ in
$\CAT{Grp}$.
\end{enumerate}

A straightforward calculation proves that:

\begin{prop}
Whenever we fix a pair of objects $\mathcal{U},\mathcal{V}$ in $\CAT{Pre-Orb}$ with associated groupoid objects 
$\groupR$ and $\groupRR$ respectively, we get a functor:

$$F=F_{\mathcal{U},\mathcal{V}}:\CAT{Pre-Orb}(\mathcal{U},\mathcal{V})\rightarrow \CAT{Grp}\Big((\groupR),
(\groupRR)\Big)$$

defined by (ii) on the level of objects and by (iii) on the level of morphisms.
\end{prop}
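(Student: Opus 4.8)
The plan is to check that $F_{\mathcal{U},\mathcal{V}}$ satisfies the two conditions making a correspondence a functor. Its value on objects and on arrows is already fixed: a compatible system $\tf$ goes to the morphism $(\psi,\Psi)$ produced in Definition-proposition \ref{orb-to-groupoid-2}, and a natural transformation $\delta$ goes to the $2$-morphism $\alpha$ produced in Definition-proposition \ref{orb-to-groupoid-3}. Those two statements already certify that the targets lie in $\CAT{Grp}$ and that source and target are respected (so $F(\delta)$ really is a $2$-morphism between $F(\tf_1)$ and $F(\tf_2)$). Hence the only things left to verify are that $F_{\mathcal{U},\mathcal{V}}$ sends $2$-identities to $2$-identities and preserves the vertical composition $\odot$.

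First I would dispatch the identities. Fix $\tf$ with $F(\tf)=(\psi,\Psi)$; since $(i_{\tf})_{\tU_i}=1_{\tV_i}$, the formula of \ref{orb-to-groupoid-3} gives $F(i_{\tf})(\tx_i,\tU_i)=[1_{\tV_i},\tf_{\tU_i,\tV_i}(\tx_i),1_{\tV_i}]$. On the other side the identity $2$-morphism of $(\psi,\Psi)$ is $e'\circ\psi$, and because $\psi|_{\tU_i}=\tf_{\tU_i,\tV_i}$ while $e'$ sends $(\ty,\tV_i)$ to $[1_{\tV_i},\ty,1_{\tV_i}]$, the two expressions agree pointwise. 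Thus $F(i_{\tf})=i_{F(\tf)}$ by a one-line computation.

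The real content is preservation of $\odot$. Fix $\delta:\tf_1\Rightarrow\tf_2$ and $\sigma:\tf_2\Rightarrow\tf_3$, put $\alpha:=F(\delta)$, $\beta:=F(\sigma)$ and $\ty_i^m:=(\tf_m)_{\tU_i,\tV_i^m}(\tx_i)$; I must show $F(\sigma\odot\delta)=\beta\odot\alpha$, where by Definition-lemma \ref{vertical-composition-groupoid} the right-hand side is $m'\circ(\alpha,\beta)$. Evaluating at $(\tx_i,\tU_i)$ the factors are $\alpha(\tx_i,\tU_i)=[1_{\tV_i^1},\ty_i^1,\delta_{\tU_i}]$ and $\beta(\tx_i,\tU_i)=[1_{\tV_i^2},\ty_i^2,\sigma_{\tU_i}]$. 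Condition (i) of Definition \ref{nat-tran-orb} gives $\ty_i^2=\delta_{\tU_i}(\ty_i^1)$, which is exactly the equality $t'(\alpha(\tx_i,\tU_i))=s'(\beta(\tx_i,\tU_i))$, so the pair lies in $\fibre{R'}{t'}{s'}{R'}$ and $m'$ applies. The key observation is that the target embedding $\delta_{\tU_i}$ of the first factor already realizes the middle chart $\tV_i^2$ of the second, so one may use $\tV_i^1$ itself as the common refinement in the construction of $m'$: taking $\tU_f:=\tV_i^1$ and $\tx_f:=\ty_i^1$ with $\LA{fi}:=1_{\tV_i^1}$ into the middle chart of the first factor and $\LA{fk}:=\delta_{\tU_i}$ into that of the second, the required identity $\delta_{\tU_i}\circ 1_{\tV_i^1}=1_{\tV_i^2}\circ\delta_{\tU_i}$ holds and the marked points match. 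Therefore $m'$ returns $[1_{\tV_i^1},\ty_i^1,\sigma_{\tU_i}\circ\delta_{\tU_i}]$, which is precisely $F(\sigma\odot\delta)(\tx_i,\tU_i)$ because $(\sigma\odot\delta)_{\tU_i}=\sigma_{\tU_i}\circ\delta_{\tU_i}$.

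The only mild obstacle is the bookkeeping hidden in $m'$: in general its value requires a common refinement supplied by Lemma \ref{useful-lemma} and the independence of that choice guaranteed by Lemma \ref{m-well-defined}. Here the identity embedding $1_{\tV_i^2}$ occurring in the source slot of $\beta(\tx_i,\tU_i)$ lets me take the refinement tautologically, so no genuine appeal to \ref{useful-lemma} is needed and the entire check collapses to the displayed line. Putting the two verifications together shows that $F_{\mathcal{U},\mathcal{V}}$ preserves identities and vertical composition, hence is a functor.
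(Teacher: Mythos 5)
Your proof is correct and complete. The paper itself never writes out a proof of this proposition (it is introduced only with ``a straightforward calculation proves that''), so your argument supplies exactly the intended verification: sources and targets of $2$-morphisms are respected by Definition-propositions \ref{orb-to-groupoid-2} and \ref{orb-to-groupoid-3}, the identity check is the correct one-line comparison of $F(i_{\tf})$ with $e'\circ\psi$, and for vertical composition your key observation is the right one --- since $m'$ is independent of the choice of common refinement (Lemma \ref{m-well-defined}), one may compute it with the tautological choice $\tU_f=\tV_i^1$, $\tx_f=\ty_i^1$, $\LA{fi}=1_{\tV_i^1}$, $\LA{fk}=\delta_{\tU_i}$, which immediately gives
$$m'\Big([1_{\tV_i^1},\ty_i^1,\delta_{\tU_i}],[1_{\tV_i^2},\ty_i^2,\sigma_{\tU_i}]\Big)=
[1_{\tV_i^1},\ty_i^1,\sigma_{\tU_i}\circ\delta_{\tU_i}]=F(\sigma\odot\delta)(\tx_i,\tU_i),$$
with composability guaranteed by condition (i) of Definition \ref{nat-tran-orb}.
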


\begin{teo}\label{2-functor-final}
The previous data define a 2-functor $F$ from $\CAT{Pre-Orb}$ to $\CAT{Grp}$.
\end{teo}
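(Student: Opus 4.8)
The plan is to verify the three axioms of Definition \ref{2-func}, since the remaining data of a $2$-functor are already in place: the assignment on objects $\mathcal{U}\mapsto(\groupR)$ is provided by Proposition \ref{orb-to-groupoid-1}, while the hom-category functors $F_{\mathcal{U},\mathcal{V}}$ of point (2)---which in particular preserve the vertical composition $\odot$ and the vertical identities $i_{\tf}$---are exactly the content of the proposition immediately preceding the theorem. Thus it remains to check axiom (a) (compatibility with composition of $1$-morphisms), axiom (c) (compatibility with the $1$- and $2$-identities), and axiom (b) (compatibility with horizontal composition of $2$-morphisms).

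For axiom (a) I would take compatible systems $\tf\colon\mathcal{U}\to\mathcal{V}$ and $\tg\colon\mathcal{V}\to\mathcal{W}$ with $F(\tf)=(\psi,\Psi)$ and $F(\tg)=(\phi,\Phi)$, and compare $F(\tg\circ\tf)$ with $(\phi\circ\psi,\Phi\circ\Psi)$. On the base, $(\phi\circ\psi)|_{\tU_i}=\tg_{\tV_i,\tW_i}\circ\tf_{\tU_i,\tV_i}$, which is the lifting $(\tg\circ\tf)_{\tU_i,\tW_i}$ by Definition \ref{composition-compatible-sys}; on arrows, $\Phi\circ\Psi$ sends $[\LA{ki},\tx_k,\LA{kj}]$ to $[\tg(\tf(\LA{ki})),\tg_{\tV_k,\tW_k}(\tf_{\tU_k,\tV_k}(\tx_k)),\tg(\tf(\LA{kj}))]$, and this equals the $\Psi$-component of $F(\tg\circ\tf)$ because $\tg\circ\tf$ is the composite functor and its liftings factor as above (Definition-proposition \ref{orb-to-groupoid-2} and Definition \ref{composition-compatible-sys}). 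For axiom (c), a direct inspection of Definition-proposition \ref{orb-to-groupoid-2} gives $F(1_{\mathcal{U}})=(1_U,1_R)=1_{F(\mathcal{U})}$, since all liftings of $1_{\mathcal{U}}$ are identities and $\Psi$ then fixes every class $[\LA{ki},\tx_k,\LA{kj}]$; the equality $F(i_{\mathcal{U}})=i_{F(\mathcal{U})}$ then follows formally, because $i_{\mathcal{U}}=i_{1_{\mathcal{U}}}$ and the functor $F_{\mathcal{U},\mathcal{U}}$ preserves the identity $2$-morphism of the object $1_{\mathcal{U}}$.

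The one substantial point is axiom (b). Here I fix $\delta\colon\tf_1\Rightarrow\tf_2$ with $\tf_m\colon\mathcal{U}\to\mathcal{V}$ and $\eta\colon\tg_1\Rightarrow\tg_2$ with $\tg_m\colon\mathcal{V}\to\mathcal{W}$, write $\alpha:=F(\delta)$ and $\beta:=F(\eta)$, and must show $F(\eta\ast\delta)=\beta\ast\alpha$ in $\CAT{Grp}$; I would do this by evaluating both natural transformations at an arbitrary point $(\tx_i,\tU_i)\in U$. Setting $\ty_i^m:=(\tf_m)_{\tU_i,\tV_i^m}(\tx_i)$, writing $(\tW_i^{mn},\dots):=\tg_n\circ\tf_m(\unif{i})$, and $\tz_i^{11}:=(\tg_1)_{\tV_i^1,\tW_i^{11}}(\ty_i^1)$, Definition-proposition \ref{orb-to-groupoid-3} together with the defining formula $(\eta\ast\delta)_{\tU_i}=\eta_{\tV_i^2}\circ\tg_1(\delta_{\tU_i})$ yields at once $F(\eta\ast\delta)(\tx_i,\tU_i)=[1_{\tW_i^{11}},\tz_i^{11},\eta_{\tV_i^2}\circ\tg_1(\delta_{\tU_i})]$. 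For the other side I would expand $\beta\ast\alpha=m''\circ(\Phi_1\circ\alpha,\beta\circ\psi_2)$ (Definition-lemma \ref{horizontal-composition-groupoid}), compute $\Phi_1(\alpha(\tx_i,\tU_i))=[1_{\tW_i^{11}},\tz_i^{11},\tg_1(\delta_{\tU_i})]$ and $\beta(\psi_2(\tx_i,\tU_i))=[1_{\tW_i^{21}},(\tg_1)_{\tV_i^2,\tW_i^{21}}(\ty_i^2),\eta_{\tV_i^2}]$ via Definition-proposition \ref{orb-to-groupoid-2}, and apply the explicit multiplication of Lemma \ref{m-well-defined}. The crux, and the step I expect to require the most care, is checking that these two arrows are composable, i.e. that $\tg_1(\delta_{\tU_i})(\tz_i^{11})=(\tg_1)_{\tV_i^2,\tW_i^{21}}(\ty_i^2)$: this is precisely the compatibility property (\ref{eq-4}) of the compatible system $\tg_1$ applied to the embedding $\delta_{\tU_i}$, combined with property (i) of $\delta$ from Definition \ref{nat-tran-orb} in the form $\ty_i^2=\delta_{\tU_i}(\ty_i^1)$. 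Once this is established, a common refinement in the definition of $m''$ may be chosen with identity transition on one leg, so that $m''$ simply concatenates the two right-hand embeddings into $\eta_{\tV_i^2}\circ\tg_1(\delta_{\tU_i})$; the resulting class coincides termwise with $F(\eta\ast\delta)(\tx_i,\tU_i)$, and since $(\tx_i,\tU_i)$ was arbitrary this proves axiom (b) and hence that $F$ is a $2$-functor.
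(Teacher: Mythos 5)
Your proposal is correct and follows essentially the same route as the paper's proof: a pointwise verification of axioms (a), (b), (c) of Definition \ref{2-func}, with the same key computation for axiom (b) (evaluating $\beta\ast\alpha=m''\circ(\Phi_1\circ\alpha,\beta\circ\psi_2)$ at a point of $U$ and concatenating the two right-hand embeddings into $\eta_{\tV_i^2}\circ\tg_1(\delta_{\tU_i})$, the composability being exactly (\ref{eq-4}) for $\tg_1$ combined with property (i) of $\delta$). The only cosmetic difference is in axiom (a), where the paper invokes remark \ref{groupoid-property-2} to reduce to the equality $\Theta=\Phi\circ\Psi$ alone, while you check both components directly; both are equally valid.
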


\begin{proof}
It suffices to verify axioms (a), (b) and (c) of definition \ref{2-func}.

\begin{enumerate}[(a)]\parindent=0pt
\item Let us fix any pair of compatible systems $\tf:\mathcal{U}\rightarrow\mathcal{V}$ and $\tg:\mathcal{V}
\rightarrow\mathcal{W}$. Then for simplicity, let us call:

\begin{eqnarray*}
& F(\mathcal{U})=:(\groupR),\quad F(\mathcal{V})=:(\groupRR),\quad F(\mathcal{W})=:(\groupRRR), &\\
& F(\tf)=:(\psi,\Psi),\quad F(\tg)=:(\phi,\Phi),\quad\tg\circ\tf=:\th\AND F(\th)=:(\theta,\Theta). & 
\end{eqnarray*}

So we want to prove that $\theta\stackrel{?}{=}\phi\circ\psi$ and $\Theta\stackrel{?}{=}\Phi\circ\Psi$. Using
remark \ref{groupoid-property-2}, it suffices to prove only the second equality. Indeed, if this is proved, we 
get that:

$$\theta=s''\circ\Theta\circ e=(s''\circ\Phi)\circ(\Psi\circ e)=\phi\circ s' \circ e'\circ\psi=\phi\circ 1_{U'}
\circ\psi=\phi\circ\psi.$$

Now in order to prove the second equality, let us fix any point $[\LA{ki},\tx_k,\LA{kj}]\in R$. Then we have:

\begin{eqnarray*}
& \Phi\circ\Psi([\LA{ki},\tx_k,\LA{kj}])=\Phi([\tf(\LA{ki}),\tf_{\tU_k,\tV_k}(\tx_k),\tf(\LA{kj})])= &\\
& =[\tg\circ\tf(\LA{ki}),\tg_{\tV_k,\tW_k}\circ\tf_{\tU_k,\tV_k}(\tx_k),\tg\circ\tf(\LA{kj})]=[\th
   (\LA{ki}),\th_{\tU_k,\tW_k}(\tx_k),\th(\LA{kj})]= &\\
& =\Theta([\LA{ki},\tx_k,\LA{kj}]).&
\end{eqnarray*}

\item Let us fix a diagram of compatible systems and natural transformations in $\CAT{Pre-Orb}$ of the form:

\[\begin{tikzpicture}[scale=0.8]
    \def\x{1.5}
    \def\y{-1.2}
    \node (A0_0) at (0*\x, 0*\y) {$\mathcal{U}$};
    \node (A0_1) at (1*\x, 0*\y) {$\Downarrow\delta$};
    \node (A0_2) at (2*\x, 0*\y) {$\mathcal{V}$};
    \node (A0_3) at (3*\x, 0*\y) {$\Downarrow\eta$};
    \node (A0_4) at (4*\x, 0*\y) {$\mathcal{W}.$};
    \path (A0_2) edge [->,bend left=25] node [auto] {$\scriptstyle{\tg_1}$} (A0_4);
    \path (A0_2) edge [->,bend right=25] node [auto,swap] {$\scriptstyle{\tg_2}$} (A0_4);
    \path (A0_0) edge [->,bend left=25] node [auto] {$\scriptstyle{\tf_1}$} (A0_2);
    \path (A0_0) edge [->,bend right=25] node [auto,swap] {$\scriptstyle{\tf_2}$} (A0_2);
\end{tikzpicture}\]

For simplicity, let us use the notations of (a) on the level of objects and let us call:

\begin{eqnarray*}
& F(\tf_i)=:(\psi_i,\Psi_i),\quad F(\tg_i)=:(\phi_i,\Phi_i)\quad\textrm{for\,\,} i=1,2, &\\
& F(\delta)=:\alpha:U\rightarrow R',\quad F(\eta):=\beta:U'\rightarrow R''\AND F(\eta\ast\delta):=
   \gamma:U\rightarrow R''. &
\end{eqnarray*}

By definition of $\ast$ in $\CAT{Pre-Orb}$, for every $\unif{i}\in\mathcal{U}$ we have:

$$(\eta\ast\delta)_{\tU_i}=\eta_{\tV_i^2}\circ\tg_1(\delta_{\tU_i}):\tW_i^{11}\rightarrow\tW_i^{22}$$

(where we use the notations of \S 1.3); so for every point $(\tx_i,\tU_i)\in U$ we have:

\begin{equation}\label{eq-50}
\gamma(\tx_i,\tU_i)=[1_{\tW_i^{11}},(\tg_1\circ\tf_1)_{\tU_i,\tW_i^{11}}(\tx_i),\eta_{\tV_i^2}\circ\tg_1
(\delta_{\tU_i})].
\end{equation}

On the other hand,

\begin{eqnarray}
\nonumber & \Big(F(\eta)\ast F(\delta)\Big)(\tx_i,\tU_i)=\big(\beta\ast\alpha\big)(\tx_i,\tU_i)=&\\
\nonumber & =m''\Big(\Phi_1\circ\alpha(\tx_i,\tU_i),\beta\circ\psi_2(\tx_i,\tU_i)\Big)=&\\
\nonumber & =m''\Big(\Phi_1\left([1_{\tV_i^1,},(\tf_1)_{\tU_i,\tV_i^1}(\tx_i),\delta_{\tU_i}]\right),
  \beta\left((\tf_2)_{\tU_i,\tV_i^2}(\tx_i),\tV_i^2\right)\Big)=&\\
\nonumber & =m''\Big(\left[1_{\tW_i^{11}},(\tg_1)_{\tV_i^1,\tW_i^{11}}\circ(\tf_1)_{\tU_i,\tV_i^1}(\tx_i),
  \tg_1(\delta_{\tU_i})\right],&\\
\nonumber &\left[1_{\tW_i^{21}},(\tg_1)_{\tV_i^2,\tW_i^{21}}\circ(\tf_2)_{\tU_i,\tV_i^2}(\tx_i),
   \eta_{\tV_i^2}\right]\Big)= &\\
\label{eq-51} & =\left[1_{\tW_i^{11}},(\tg_1)_{\tV_i^1,\tW_i^{11}}\circ(\tf_1)_{\tU_i,\tV_i^1}(\tx_i),
   \eta_{\tV_i^2}\circ\tg_1(\delta_{\tU_i})\right].&
\end{eqnarray}

By comparing (\ref{eq-50}) with (\ref{eq-51}), we get:

$$F(\eta)\ast F(\delta)=F(\eta\ast\delta).$$

\item Let us fix any orbifold atlas $\mathcal{U}$ with associated groupoid object $\groupR$; then a direct 
check proves that $F(1_{\mathcal{U}})=1_{F(\mathcal{U})}$ and $F(i_{\mathcal{U}})=i_{F(\mathcal{U})}$.
\end{enumerate}
\end{proof}

\section{Some properties of the 2-functor \texorpdfstring{$F$}{F}}

\subsection{Morita equivalences}
We recall the following definition:

\begin{defin} (\cite{M}, \S 2.4)\label{morita-equivalence}
A morphism $(\psi,\Psi):(\groupRR)\rightarrow(\groupR)$ between Lie groupoids is called \emph{Morita equivalence}
(or \emph{essential equivalence}) iff the following 2 conditions hold:

\begin{enumerate}[(i)]\parindent=0pt
\item let us consider the following fiber product in $\CAT{Manifolds}$:

\begin{equation}\label{eq-52}
\begin{tikzpicture}
    \def\x{1.5}
    \def\y{-1.2}
    \node (A0_0) at (0*\x, 0*\y) {$\fibra{R}{U}{U'}$};
    \node (A0_2) at (2*\x, 0*\y) {$U'$};
    \node (A2_2) at (2*\x, 2*\y) {$U$};
    \node (A2_0) at (0*\x, 2*\y) {$R$};
    \node (A1_1) at (1*\x, 1*\y) {$\square$};
    \path (A0_0) edge [->] node [auto] {$\scriptstyle{\pi_2}$} (A0_2);
    \path (A0_2) edge [->] node [auto] {$\scriptstyle{\psi}$} (A2_2);
    \path (A2_0) edge [->] node [auto,swap] {$\scriptstyle{s}$} (A2_2);
    \path (A0_0) edge [->] node [auto,swap] {$\scriptstyle{\pi_1}$} (A2_0);
\end{tikzpicture} 
\end{equation}
since $\groupR$ is a Lie groupoid, we get that the map $s$ is a submersion, so we can apply proposition 
\ref{fiber-products-1} and we get that the fiber product has a natural structure of complex manifold and that also
$\pi_2$ is a submersion. \emph{Then we require that the set map}:

$$t\circ\pi_1:\fibra{R}{U}{U'}\rightarrow U$$

\emph{is a surjective holomorphic submersion}. This request makes sense because both source and target of this map are complex
manifolds;

\item \emph{we require also that the square}:
\begin{equation}\label{eq-53}
\begin{tikzpicture}[scale=0.8]
    \def\x{1.5}
    \def\y{-1.2}
    \node (A0_0) at (0*\x, 0*\y) {$R'$};
    \node (A0_2) at (2*\x, 0*\y) {$R$};
    \node (A2_0) at (0*\x, 2*\y) {$U'\times U'$};
    \node (A2_2) at (2*\x, 2*\y) {$U\times U$};
    \path (A2_0) edge [->] node [auto,swap] {$\scriptstyle{(\psi\times\psi)}$} (A2_2);
    \path (A0_0) edge [->] node [auto,swap] {$\scriptstyle{(s',t')}$} (A2_0);
    \path (A0_2) edge [->] node [auto] {$\scriptstyle{(s,t)}$} (A2_2);
    \path (A0_0) edge [->] node [auto] {$\scriptstyle{\Psi}$} (A0_2);
\end{tikzpicture}\end{equation}
\emph{is cartesian} in $\CAT{Manifolds}$. Note that the square is always commutative because of definition
\ref{morphism-groupoid}, so we have only to check the universal property of fiber products.
\end{enumerate}
\end{defin}

\begin{defin}\label{weak-equivalence}
Two groupoid objects $R^i\rightrightarrows U^i$ (for $i=1,2$) in $\CAT{Manifolds}$ are said
to be \emph{Morita equivalent} (or \emph{weak equivalent}) iff there exist a third groupoid object
$R^3\rightrightarrows U^3$ and two Morita equivalences:

$$(R^1\rightrightarrows U^1)\stackrel{(\psi,\Psi)}{\longleftarrow}(R^3\rightrightarrows U^3)
\stackrel{(\phi,\Phi)}{\longrightarrow}(R^2\rightrightarrows U^2).$$

This is actually an equivalence relation, see for example \cite{MM}, \mbox{chapter 5} for the proof.
\end{defin}

Now let us fix any orbifold structure $\mathcal{X}$ on a topological space $X$ and let us denote with 
$\mathcal{U}=\{\unif{i}\}_{i\in I}$ the maximal atlas associated to it by definition \ref{maximal-atlas}. Then let
us fix any other atlas $\mathcal{U}'=\{\unif{i'}\}_{i'\in I'}$ in the class $\mathcal{X}$. By construction
of $\mathcal{U}$, we get that $I'\subseteq I$, so by remark \ref{orb-atlas-2} we get that $\mathcal{U}'$ is a
subcategory of $\mathcal{U}$, hence we can consider a compatible system $\widetilde{id}:\mathcal{U}'\rightarrow
\mathcal{U}$ over the identity of $X$ as follows:

\begin{itemize}
\item as a functor, $\widetilde{id}$ is just the inclusion on the level of objects and morphisms (i.e: uniformizing
systems and embeddings);

\item for every uniformizing system $\unif{i'}\in\mathcal{U}$ we set $\widetilde{id}_{\tU_{i'},\tU_{i'}}:=1_{\tU_{i'}}$.
\end{itemize}

It is easy to see that all the axioms of definition \ref{compatible-system} are satisfied; as an useful notation,
we will write an index as $i'$ if it belongs to the set $I'$ (and so also to $I$) and with $i$ if it belongs to
$I$ and we don't know whether $\unif{i}$ belongs to $\mathcal{U}'$ or not.\\

In the following pages we will use the following objects and morphisms obtained by applying the 2-functor $F$
described in the previous section:

\begin{itemize}
\item $\groupR$ is the groupoid object associated to the orbifold atlas $\mathcal{U}$;

\item $\groupRR$ is the groupoid object associated to the orbifold atlas $\mathcal{U}'$;

\item $(\psi,\Psi):(\groupRR)\rightarrow(\groupR)$ is the morphism
between groupoid objects associated to the compatible system $\widetilde{id}$.
\end{itemize}

A direct calculation shows that $\psi$ is just the inclusion of $U'=\coprod_{i'\in I'}\tU_{i'}$ in
$U=\coprod_{i\in I}\tU_i$ and that $\Psi$ is the \emph{holomorphic} map that associates to every point
$[\LA{k'i'},\tx_{k'},\LA{k'j'}]$ the same point, but considered in $R$ instead of $R'$.

Our aim is to prove the following result:

\begin{lem}
The morphism $(\psi,\Psi)$ is a Morita equivalence.
\end{lem}

\begin{proof}
We have to verify the axioms of the previous definition, so let us first focus our attention on the map $t\circ
\pi_1$ defined on the fiber product. Set-theoretically (and up to natural bijections), we have that:

\begin{eqnarray*}
& \fibra{R}{U}{U'}=\{(r,u')\in R\times U'\textrm{ s.t. }s(r)=\psi(u')\}= &\\
& =\left\{\Big([\LA{ki},\tx_k,\LA{kj}],(\tx_{i'},\tU_{i'})\Big)\textrm{ s.t. } (\LA{ki}(\tx_k),\tU_i)=
   (\tx_{i'},\tU_{i'})\right\}= &\\
& =\left\{\Big([\LA{ki'},\tx_k,\LA{kj}],(\tx_{i'},\tU_{i'})\Big)\textrm{ s.t. } \LA{ki'}(\tx_k)=
   \tx_{i'}\right\}. &
\end{eqnarray*}

Now for every point $\Big([\LA{ki'},\tx_k,\LA{kj}],(\tx_{i'},\tU_{i'})\Big)$ in this set we get that:

$$t\circ\pi_1\Big([\LA{ki'},\tx_k,\LA{kj}],(\tx_{i'},\tU_{i'})\Big)=t\Big([\LA{ki'},\tx_k,\LA{kj}]\Big)
=(\LA{kj}(\tx_k),\tU_j).$$

We want to prove that $t\circ\pi_1$ \emph{is surjective}, so let us fix any point $(\tx_j,\tU_j)\in U$ and let us
consider the point $\pi_j(\tx_j)\in X$; by hypothesis $\mathcal{U}'$ is an orbifold atlas for $X$, so there exists
a uniformizing system $\unif{i'}$ in $\mathcal{U}'$ for an open neighborhood of this point in $X$. Now
$\mathcal{U}$ contains both $\unif{i'}$ and $\unif{j}$, so by remark \ref{useful-remark} there exists a
uniformizing system $\unif{k}$ in $\mathcal{U}$, a point $\tx_k$ in $\tU_k$ and embeddings $\LA{ki'}$ and
$\LA{kj}$ such that $\LA{kj}(\tx_k)=\tx_j$. Then if we call $\tx_{i'}:=\LA{ki'}(\tx_k)$ we get that the point:

$$\Big([\LA{ki'},\tx_k,\LA{kj}],(\tx_{i'},\tU_{i'})\Big)$$

belongs to the fiber product $\fibra{R}{U}{U'}$. Moreover, $t\circ\pi_1$ applied to it is exactly the point
$(\tx_j,\tU_j)$ we have fixed. Hence we have proved that $t\circ\pi_1$ \emph{is surjective}.\\

Now by construction $\psi$ is an embedding between manifolds of the same dimension, so in particular it is \'etale; 
we recall that (\ref{eq-52}) is a cartesian diagram, hence by proposition \ref{fiber-products-1} we get that also
$\pi_1$ is \'etale. Moreover, $t$ is \'etale by lemma \ref{etale-groupoid}, hence $\pi_1\circ t$ is \'etale, hence
in particular it is a submersion. so we have completely proved condition (i).\\

Let us pass to condition (ii) and let us consider the following diagram:

\begin{equation}\label{eq-95}
\begin{tikzpicture}[scale=0.8]
    \def\x{1.5}
    \def\y{-1.2}
    \node (A0_0) at (-0.6, 0.2*\y) {$R'$};
    \node (A1_2) at (3*\x, 1.2*\y) {$\curvearrowright$};
    \node (A2_1) at (1.3*\x, 2.5*\y) {$\curvearrowright$};
    \node (A2_2) at (2*\x, 2*\y) {$\fibre{R}{(s,t)}{(\psi\times\psi)}{U'\times U'}$};
    \node (A2_4) at (5*\x, 2*\y) {$R$};
    \node (A3_3) at (3.5*\x, 3*\y) {$\square$};
    \node (A4_2) at (2*\x, 4*\y) {$U'\times U'$};
    \node (A4_4) at (5*\x, 4*\y) {$U\times U.$};
    \path (A4_2) edge [->] node [auto,swap] {$\scriptstyle{(\psi\times\psi)}$} (A4_4);
    \path (A0_0) edge [->,dashed] node [auto] {$\scriptstyle{\eta}$} (A2_2);
    \path (A2_2) edge [->] node [auto,swap] {$\scriptstyle{pr_1}$} (A2_4);
    \path (A2_4) edge [->] node [auto] {$\scriptstyle{(s,t)}$} (A4_4);
    \path (A0_0) edge [->,bend right=15] node [auto,swap] {$\scriptstyle{(s',t')}$} (A4_2);
    \path (A0_0) edge [->,bend left=15] node [auto] {$\scriptstyle{\Psi}$} (A2_4);
    \path (A2_2) edge [->] node [auto] {$\scriptstyle{pr_2}$} (A4_2);
\end{tikzpicture}
\end{equation}

where the external diagram is commutative because it is just diagram (\ref{eq-53}). Since both $\Psi$ and $(s',t')$
are holomorphic, by the universal property of the fiber product there exists a unique \emph{holomorphic}
map $\eta$ that makes the diagram commute.\\

It is easy to give an explicit description of the fiber product in (\ref{eq-95}) as the set:

\begin{equation}\label{eq-96}
\left\{\Big([\LA{ki'},\tx_k,\LA{kj'}],(\tx_{i'},\tU_{i'}),(\tx_{j'},\tU_{j'}) \Big) \textrm{ s.t }
\LA{ki'}(\tx_k)=\tx_{i'}\AND\LA{kj'}(\tx_k)=\tx_{j'}\right\}
\end{equation}

and of the set map $\eta$ as:

$$[\LA{k'i'},\tx_{k'},\LA{k'j'}]\mapsto
\Big([\LA{k'i'},\tx_{k'},\LA{k'j'}],(\tx_{i'},\tU_{i'}),(\tx_{j'},\tU_{j'}) \Big).$$

Now we want to prove that $\eta$ is surjective, so let us fix any point as in (\ref{eq-96}). Then  
$\pi_{i'}\circ\LA{ki'}(\tx_k)=\pi_k(\tx_k)=\pi_{j'}\circ\LA{kj'}(\tx_k)$; since $\mathcal{U}'$ is an orbifold atlas,
there exists a point $[\LA{l'i'},\tx_{l'},\LA{l'j'}]$ in $R'$ with source and
target coinciding with $[\LA{ki'},\tx_k,\LA{kj'}]$. Hence we can apply lemma \ref{useful-lemma-2} in $R$ 
and we get that there exists a unique $g\in G_{j'}$ such that:

$$[\LA{ki'},\tx_k,\LA{kj'}]=[\LA{l'i'},\tx_{l'},g\circ\LA{l'j'}].$$

Then we have that:

$$\Big([\LA{ki'},\tx_k,\LA{kj'}],(\tx_{i'},\tU_{i'}),(\tx_{j'},\tU_{j'}) \Big)=
\eta([\LA{l'i'},\tx_{l'},g\circ\LA{l'j'}]).$$

So $\eta$ is surjective; let us also prove injectivity, so let us suppose that 2 points of the form:

\begin{equation}\label{eq-54}
[\LA{k'i'},\tx_{k'},\LA{k'j'}]\AND[\LA{l'i'},\tx_{l'},\LA{l'j'}] 
\end{equation}

of $R'$ are identified in the fiber product by $\eta$. By applying lemma \ref{useful-lemma-2}
in $R'$ we get that there exists a unique $g\in G_{j'}$ such that:

$$[\LA{k'i'},\tx_{k'},\LA{k'j'}]=[\LA{l'i'},\tx_{l'},g\circ\LA{l'j'}]$$

in $R'$. Hence, in particular, the same relation holds in $R$; on the other side, we have assumed that in $R$
the points of (\ref{eq-54}) are identified. Hence in $R$ we have:

$$[\LA{l'i'},\tx_{l'},g\circ\LA{l'j'}]=[\LA{l'i'},\tx_{l'},\LA{l'j'}].$$

By the uniqueness part of lemma \ref{useful-lemma-2} in $R$ we get that necessarily $g=1_{\tU_{j'}}$,
so the points of (\ref{eq-54}) were already the same in $R'$. Hence we get that $\eta$ is \emph{invertible}.\\

Since it is holomorphic, we have proved that $\eta$ is a biholomorphism, so by the universal
property of fiber products we have that (\ref{eq-53}) is cartesian.
\end{proof}

\begin{prop}\label{key-point}
Suppose we have fixed two \emph{equivalent} orbifold atlases $\mathcal{U}_1$ and $\mathcal{U}_2$ on a topological
space $X$ and let us call $\group{i}$ \emph{(}for $i=1,2$\emph{)} the groupoid objects associated to them by the
2-functor $F$ described in the previous chapter. Then these two groupoid objects are Morita equivalent.
\end{prop}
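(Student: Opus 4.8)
The plan is to obtain this proposition as an immediate consequence of the Morita-equivalence lemma just established, together with the fact (recalled in Definition \ref{weak-equivalence}) that Morita equivalence of groupoid objects is an equivalence relation. The essential point is that the preceding lemma already compares an arbitrary atlas in a given class with the maximal atlas of that class, so it applies verbatim to both $\mathcal{U}_1$ and $\mathcal{U}_2$; the present statement is then purely formal bookkeeping.

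First I would observe that, since $\mathcal{U}_1$ and $\mathcal{U}_2$ are equivalent, they determine the \emph{same} orbifold structure $\mathcal{X}$ on $X$. Let $\mathcal{U}=\{\unif{i}\}_{i\in I}$ be the maximal atlas associated to $\mathcal{X}$ by Definition \ref{maximal-atlas}, and let $\groupR$ be the associated groupoid object $F(\mathcal{U})$. By construction every atlas in the class $\mathcal{X}$ is a sub-atlas of $\mathcal{U}$, so both $\mathcal{U}_1$ and $\mathcal{U}_2$ play the role of the atlas $\mathcal{U}'$ in the setup preceding the lemma. Crucially, the target atlas $\mathcal{U}$, and hence the target groupoid $\groupR$, is the \emph{same} in both cases, since $\mathcal{U}_1$ and $\mathcal{U}_2$ share the class $\mathcal{X}$.

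Next I would apply the lemma twice. For $i=1,2$ the inclusion compatible system $\widetilde{id}\colon\mathcal{U}_i\to\mathcal{U}$ over the identity of $X$ yields, through the 2-functor $F$, a morphism of groupoid objects $(\psi_i,\Psi_i)\colon(\group{i})\to(\groupR)$, and the lemma guarantees that each $(\psi_i,\Psi_i)$ is a Morita equivalence. Taking the third groupoid in Definition \ref{weak-equivalence} to be $(\group{i})$ itself, with one leg the identity (which is trivially a Morita equivalence) and the other leg $(\psi_i,\Psi_i)$, this exhibits each $(\group{i})$ as Morita equivalent to $(\groupR)$.

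Finally I would invoke transitivity: since Morita equivalence is an equivalence relation (Definition \ref{weak-equivalence}), the chain $(\group{1})\sim(\groupR)\sim(\group{2})$ shows that $(\group{1})$ and $(\group{2})$ are Morita equivalent, as claimed. I expect no genuine obstacle, since all the analytic and diagrammatic content was already absorbed into the verification of the two Morita-equivalence axioms in the preceding lemma. The only subtlety to keep explicit is that one must use the \emph{same} maximal atlas as the common target for both $\mathcal{U}_1$ and $\mathcal{U}_2$; this is automatic because equivalence places them in the single class $\mathcal{X}$, whose maximal atlas is unique.
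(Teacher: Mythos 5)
Your proof is correct and follows essentially the same route as the paper: the paper's proof likewise takes the common maximal atlas of the shared class $\mathcal{X}$ and applies the preceding lemma twice to get Morita equivalences $F(\mathcal{U}_i)\to F(\mathcal{U})$. Your additional care in converting the resulting cospan into the span required by Definition \ref{weak-equivalence} (via identity legs and transitivity) only makes explicit a step the paper leaves implicit.
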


\begin{proof}
It suffices to consider the maximal manifold atlas $\mathcal{U}_3$ (associated to both)
and to apply the previous lemma twice.
\end{proof}

\begin{prop}\label{quasi-injectivity}
Suppose we have fixed two orbifold atlases $\mathcal{U}$ on $X$ and $\mathcal{U}'$ on $X'$ and assume that
they are equivalent with respect to definition \ref{new-equivalence}. Then their images via the 
2-functor $F$ are Morita equivalent. 
\end{prop}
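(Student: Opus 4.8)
The plan is to reduce the statement to Proposition \ref{key-point} by showing that the construction of \S 3.1 is insensitive to post-composing the uniformizing maps with a homeomorphism. First I would observe that the groupoid object $F(\mathcal{U})$ associated to an atlas $\mathcal{U}=\{\unif{i}\}_{i\in I}$ depends only on the data $\tU_i$, on the groups $G_i$ and on the embeddings between the charts, and \emph{not} directly on the maps $\pi_i$. Indeed $U=\coprod_i\tU_i$; the set $\hat{R}$ is a disjoint union of copies of the $\tU_k$ indexed by pairs of embeddings; the relation $\sim$ and all five structure maps $s,t,m,i,e$ are described purely in terms of embeddings and marked points (the groups and the $\pi_i$'s being suppressed in every diagram); and the manifold charts $\phi_k^{ij}([\LA{ki},\tx_k,\LA{kj}])=\tx_k$ of Proposition \ref{manifold-topology} likewise ignore the $\pi_i$'s.

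Next I would analyse the effect of $\varphi_{\ast}$ (Definition \ref{new-equivalence}), which replaces each chart $\unif{i}$ by $(\tU_i,G_i,\varphi\circ\pi_i)$. The only place the $\pi_i$ enter the construction is through the defining condition $\phi\circ\lambda=\pi$ of an embedding (Definition \ref{complex-embedding}). For the atlas $\varphi_{\ast}(\mathcal{U})$ this condition reads $(\varphi\circ\phi_j)\circ\lambda=\varphi\circ\pi_i$, which, since $\varphi$ is a homeomorphism and in particular injective, holds if and only if $\phi_j\circ\lambda=\pi_i$. Hence $\mathcal{U}$ and $\varphi_{\ast}(\mathcal{U})$ carry exactly the same embeddings between the same charts, so $F(\mathcal{U})$ and $F(\varphi_{\ast}(\mathcal{U}))$ are canonically isomorphic objects of $\CAT{Grp}$: the identity maps of the $\tU_i$ induce an isomorphism of the associated $U$'s, $R$'s, structure maps and complex structures. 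In particular they are Morita equivalent, an isomorphism of groupoid objects being a trivial Morita equivalence.

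Finally I would invoke hypothesis (b) of Definition \ref{new-equivalence}: the atlases $\varphi_{\ast}(\mathcal{U})$ and $\mathcal{U}'$ are equivalent (in the sense of Definition \ref{equivalence-CR}) on the \emph{same} space $X'$, so Proposition \ref{key-point} applies and $F(\varphi_{\ast}(\mathcal{U}))$ and $F(\mathcal{U}')$ are Morita equivalent. Chaining this with the isomorphism $F(\mathcal{U})\simeq F(\varphi_{\ast}(\mathcal{U}))$ and using transitivity of Morita equivalence (Definition \ref{weak-equivalence}) yields the claim. The only genuinely substantive point is the verification that $\varphi_{\ast}$ leaves the associated groupoid object unchanged up to canonical isomorphism; everything else is a direct appeal to results already established, and I expect no real obstacle beyond checking carefully that every ingredient of the construction in \S 3.1 is unaffected by post-composing the $\pi_i$ with $\varphi$, the crucial input being the injectivity of $\varphi$.
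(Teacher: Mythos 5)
Your proposal is correct and takes essentially the same route as the paper: the paper's own proof observes that $F$ literally identifies $\mathcal{U}$ and $\varphi_{\ast}(\mathcal{U})$ (your argument via injectivity of $\varphi$ and the embedding condition $\phi\circ\lambda=\pi$ is exactly the verification the paper leaves as ``one can easily see'') and then applies Proposition \ref{key-point} on $X'$. The only cosmetic difference is that you phrase the identification as a canonical isomorphism serving as a trivial Morita equivalence, whereas the paper treats the two images as the same object of $\CAT{Grp}$.
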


\begin{proof}
Indeed using the explicit construction of \S 3.1, one can easily see that $F$ identifies $\mathcal{U}$ and
$\varphi_{\ast}(\mathcal{U})$ for every orbifold atlas $\mathcal{U}$ on $X$ and any homeomorphism $\varphi:
X\rightarrow X'$ (actually, this is the only point where $F$ fails to be injective on objects). Then it
suffices to apply the previous proposition on $X'$.
\end{proof}

\subsection{Surjectivity up to Morita equivalences}

The aim of this subsection is to prove that the 2-functor $F$ is surjective up to Morita equivalences. In order
to do that, we divide the proof in some lemmas as follows.

\begin{lem}
Let us fix any proper \'etale and effective groupoid object $\groupR$ in $\CAT{Manifolds}$ and let us define a
relation of equivalence $\ca{R}$ on $U$ as follows:

$$\tx\,\ca{R}\,\ty\,\stackrel{\textrm{def}}{\Longleftrightarrow}\exists\,g\in R\,\textrm{ s.t. } s(g)=\tx\AND
t(g)=\ty.$$

Let us call $X:=U/\ca{R}$ (with the quotient topology) and $\pi:U\rightarrow X$ the quotient map. Since $(s,t)$
is proper, then $X$ is Hausdorff and paracompact; we claim that we can define an orbifold atlas $\mathcal{U}$
on it.
\end{lem}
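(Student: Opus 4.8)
The plan is to treat the topology of $X$ first and then construct the uniformizing systems, localizing everything at a point $\tx\in U$ via the finite isotropy group. For the topology I would first check that $\pi$ is \emph{open}: for $V\subseteq U$ open one has $\pi^{-1}(\pi(V))=t(s^{-1}(V))$, which is open since $s$ is continuous and $t$ is étale (hence open), so $\pi(V)$ is open by definition of the quotient topology. Next I would identify the graph $\{(\tx,\ty):\tx\,\ca{R}\,\ty\}$ with the image of $(s,t)\colon R\to U\times U$; as $(s,t)$ is proper and $U\times U$ is locally compact Hausdorff, this image is closed. The combination \emph{closed graph plus open projection} yields that $X$ is Hausdorff. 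Since $\pi$ is a continuous open surjection out of the (second countable, locally compact) manifold $U$, the space $X$ is second countable and locally compact, hence paracompact; together with Hausdorffness this settles the first assertion. (That $\ca{R}$ is an equivalence relation is immediate from the axioms of definition \ref{groupoid-object}, using $e$, $i$, $m$ for reflexivity, symmetry, transitivity.)

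To build charts, fix $\tx\in U$ and recall from definition \ref{effective-groupoid} that $R_{\tx}=(s,t)^{-1}\{(\tx,\tx)\}$ is a \emph{finite} group $\{g_1,\dots,g_r\}$. For each $g_k$ choose $W_{g_k}\ni g_k$ on which $s,t$ are biholomorphic and set $\tg_k:=t\circ(s|_{W_{g_k}})^{-1}$, a biholomorphism fixing $\tx$. By lemma \ref{lemma-a} the assignment $g_k\mapsto\tg_k$ respects composition and inversion near $\tx$, and by effectiveness it is injective. Choosing a connected coordinate ball $B\subseteq U$ around $\tx$ (so $B$ is identified with an open subset of $\mathbb{C}^n$) on which the finitely many identities $\tg_l\circ\tg_k=\widetilde{g_lg_k}$ hold and with $B\subseteq\bigcap_k s(W_{g_k})$, the set $\tU:=\{\ty\in B:\tg_k(\ty)\in B\ \forall k\}$ is an $R_{\tx}$-invariant open neighborhood of $\tx$; replacing it by the connected component of $\tx$ gives a connected invariant $\tU$ on which $G:=\{\tg_1,\dots,\tg_r\}$ acts as a finite group of biholomorphisms. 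I claim $(\tU,G,\pi|_{\tU})$ is a uniformizing system for the open set $\pi(\tU)$: the map is $G$-invariant (each $\ty$ is joined to $\tg_k(\ty)$ by the arrow $(s|_{W_{g_k}})^{-1}(\ty)$), and the only point to verify is that $\pi(\ty)=\pi(\ty')$ for $\ty,\ty'\in\tU$ forces $\ty'=\tg_k(\ty)$ for some $k$.

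This is where properness is indispensable and is the \textbf{main obstacle}. I would argue by contradiction: were there no neighborhood with this property, there would exist arrows $h_n\in R$ with $s(h_n),t(h_n)\to\tx$ yet $h_n\notin\bigcup_k W_{g_k}$; since $(s,t)(h_n)\to(\tx,\tx)$ lies in a compact set, properness of $(s,t)$ forces the $h_n$ into a compact subset of $R$, so a subsequence converges to some $h_\infty\in R_{\tx}=\{g_k\}$, putting $h_n\in W_{g_k}$ for large $n$ — a contradiction. Hence, after shrinking $B$ to lie inside such a neighborhood before forming $\tU$, every $h\in R$ with $s(h),t(h)\in\tU$ satisfies $t(h)=\tg_k(s(h))$, giving the orbit condition. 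Because $\pi$ is open, the continuous bijection $\tU/G\to\pi(\tU)$ is a homeomorphism onto the open set $\pi(\tU)$. Taking $\mathcal{U}$ to be the family of all uniformizing systems produced in this way from all $\tx\in U$ and all sufficiently small invariant neighborhoods, condition (i) of definition \ref{orb-atlas} holds since $\pi$ is surjective.

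Finally, for condition (ii) of definition \ref{orb-atlas}, suppose two charts have images meeting at $z=\pi(\tz)=\pi(\tz')$; pick an arrow $h$ with $s(h)=\tz$, $t(h)=\tz'$ and a neighborhood $W_h$ on which $s,t$ are biholomorphic, giving $\th:=t\circ(s|_{W_h})^{-1}$. Constructing a small uniformizing system $(\tW,K,\xi)$ around $\tz$ as above, contained in the first chart and with $\th(\tW)$ inside the second, one gets embeddings $(\tU,G,\pi|_{\tU})\sinistra{\iota}(\tW,K,\xi)\fre{\th}(\tV,H,\phi)$: the inclusion $\iota$ commutes with the projections by construction, and $\th$ does too because for $\ty\in\tW$ the point $\th(\ty)$ is joined to $\ty$ by an arrow, so $\phi(\th(\ty))=\pi(\ty)=\xi(\ty)$. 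Since $\th$ is moreover a biholomorphism onto its image, both maps are genuine embeddings and $z\in\xi(\tW)\subseteq\pi(\tU)\cap\phi(\tV)$. I expect this last paragraph to be routine once the homeomorphism $\tU/G\cong\pi(\tU)$ is established; the entire difficulty of the statement is concentrated in the compactness argument of the previous paragraph, which guarantees that locally all identifications are accounted for by the finite isotropy group.
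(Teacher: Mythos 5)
Your proof is correct and follows essentially the same route as the paper's: both localize at a point $\tx$ via the finite isotropy set $R_{\tx}$, use properness of $(s,t)$ to produce a neighborhood of $\tx$ such that every arrow with source and target in it lies in one of the slices $W_g$, build a connected invariant neighborhood on which the induced finite group $G_{\tx}$ acts so that $\ca{R}$ becomes the orbit relation, and obtain the compatibility axiom from arbitrarily small charts together with the local biholomorphisms $\th$ coming from arrows. The only differences are cosmetic: where the paper exploits that proper maps are closed to choose a box neighborhood $\tB_{\tx}\times\tB_{\tx}$ avoiding $(s,t)\big(R\smallsetminus\bigcup_{g\in R_{\tx}}W_g\big)$, you run a sequential-compactness argument, and you additionally spell out the Hausdorff/paracompactness claim and axiom (ii) of definition \ref{orb-atlas}, which the paper states without proof.
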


\begin{proof}(adapted from \cite{MP}, last part of theorem 4.1)
Let us fix any point $\tx\in U$ and let us consider the set $R_{\tx}:=(s,t)^{-1}\{(\tx,\tx)\}$ in $R$. In definition
\ref{effective-groupoid} we have already proved that this is a finite set and that for every point $g$ in it we can
find a sufficiently small open neighborhoods $W_g$ where $s$ and $t$ are both invertible; moreover, since $R_{\tx}$
is finite, we can restrict them such that $W_g\cap W_h=\varnothing$ for all $g\neq h$ in $R_{\tx}$. Let us define
$\tA_{\tx}:=\cap_{g\in R_{\tx}}s(W_g)$, which is an open neighborhood of $\tx$ in $U$; now $R\smallsetminus
\cup_{g\in R_{\tx}}W_g$ is closed so its image via $(s,t)$ (which is proper, hence closed) is closed in $U\times U$
and does not contain $(\tx,\tx)$. Since a basis for the topology of $U\times U$ is given by products of open sets
of $U$, there exists $\tB_{\tx}$ open neighborhood of $\tx$ contained in $\tA_{\tx}$ and such that:

\begin{equation}\label{eq-200}
(\tB_{\tx}\times\tB_{\tx})\cap(s,t)\Big(R\smallsetminus\bigcup_{g\in R_{\tx}}W_g\Big)=\varnothing. 
\end{equation}

Hence whenever $h\in R$ is such that both $s(h)$ and $t(h)$ belong to $\tB_x$, then $h\in W_g$ for a \emph{unique}
$g\in R_{\tx}$. Now for every $g\in R_{\tx}$ we can define $\tg:=t\circ(s|_{W_g})^{-1}:s(W_g)\rightarrow t(W_g)$,
which is a biholomorphic map between open sets of $U$, both containing the point $\tx$. Since $\tB_{\tx}\subseteq
\tA_{\tx}$, then $\tB_{\tx}\subseteq s(W_g)$ for all $g\in R_{\tx}$, hence it makes sense to consider 
$\tC:=\bigcap_{g\in R_{\tx}}\tg(\tB_{\tx})$
which is an open neighborhood of $\tx$ (since every $\tg$ is biholomorphic and maps $\tx$ to it self). Now
let us fix any $h\in R_{\tx}$; if we apply part (a) of lemma \ref{lemma-a} we get that:

$$\th(\tC)=h\Big(\bigcap_{g\in R_{\tx}}\tg(\tB_{\tx})\Big)=\bigcap_{g\in R_{\tx}}\th\circ\tg(\tB_{\tx})=
\bigcap_{m(g,h)\in R_{\tx}}\widetilde{m(g,h)}(\tB_x)=\tC.$$

Hence if we define:

$$G_{\tx}:=\{\tg:\tC\stackrel{\sim}{\rightarrow}\tC\}_{g\in R_{\tx}},$$

this is a finite group (with multiplication given by composition of functions, or, equivalently, using lemma
\ref{lemma-a}) of holomorphic automorphisms on $\tC$. If we call $\tU_{\tx}$ the connected component of $\tC$ that
contains $\tx$, by continuity we get that $G_{\tx}$ is again a group of holomorphic automorphisms of $\tU_{\tx}$.
Moreover, using (\ref{eq-200}) one can easily prove that on $\tU_{\tx}$ the relation $\ca{R}$ 
coincides with the equivalence relation induced by the action of $G_{\tx}$. 
Hence we get that $(\tU_{\tx},G_{\tx},\pi)$ is a uniformizing system for the open set $\pi(\tU_{\tx})$ 
of the topological space $X$.\\

Now we make the same construction also \emph{for every open neighborhood} $\tB'_{\tx}\subseteq\tB_{\tx}$ \emph{of}
$\tx$ and we get uniformizing systems of the form $(\tU'_{\tx},G_{\tx},\pi)$ with natural embeddings (given by
inclusions) into the previous one. If we apply this construction for every point $\tx\in U$ we get a family
$\mathcal{U}$ of \emph{arbitrarily small} uniformizing systems for $X$ which clearly satisfies axiom (i) of
definition \ref{orb-atlas}. A direct check
proves that this family satisfies also axiom (ii), so $\mathcal{U}$ is an orbifold atlas on $X$.\\
\end{proof}

For simplicity, we rename all the charts of $\mathcal{U}$ in order to make them of the form $\unif{i}$ (since
we will not need to know the index ``$\tx$'' used to construct them).
Let us call $\groupRR$ the groupoid object associated to $\mathcal{U}$ by the 2-functor $F$ and let us fix
any point $P$ in $R'$. Since $\mathcal{U}$ contains arbitrarily small open neighborhoods of every point,
we get that $P$ has the form $[j,\tx_k,\LA{ki}]$ where $\tx_k\in\tU_k\subseteq\tU_i$ and $j:\tU_k
\rightarrow\tU_i$ is the inclusion. Now we have that $\pi_i(\tx_k)=\pi_j(\tx_j)$, so there exists 
a point $h\in R$ such that $s(h)=\tx_k$ and $t(h)=\tx_j$. Eventually by restricting $\tU_k$ we can
assume that $\tU_k\subseteq\tW_g\cap\th^{-1}(\tU_j)$,
so it makes sense to consider $\th$ as an open embedding from $\tU_k$
to $\tU_j$. Moreover, by applying the definition of this map it is easy to see that $\th$ is an embedding from
$\unif{k}$ to $\unif{j}$. Now let us consider the point $Q:=[j,\tx_k,\th]$; if we apply lemma
\ref{useful-lemma-2} (in $R'$ instead of $R$) together with lemma \ref{lemma-a} we get the following result:

\begin{lem}\label{lemma-b}
For every point $P=[\LA{ki},\tx_k,\LA{kj}]$ of $R'$ there exists a unique $g$ in $R$ such that:
\begin{itemize}
 \item $s(g)=\LA{ki}(\tx_k)$,
 \item $t(g)=\LA{kj}(\tx_k)$,
 \item $P=[j,\tx_k,\tg]$.
\end{itemize}
\end{lem}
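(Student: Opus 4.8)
The plan is to use the canonical form of $P$ prepared in the paragraph preceding the statement, together with lemmas \ref{useful-lemma-2} and \ref{lemma-a}, reserving effectiveness for the uniqueness clause. I would start from the representative $P=[j,\tx_k,\LA{kj}]$, where $j\colon\tU_k\hookrightarrow\tU_i$ is the inclusion and $\tx_j:=\LA{kj}(\tx_k)$, so that $s(P)=(\tx_k,\tU_i)$ and $t(P)=(\tx_j,\tU_j)$. Since $\pi_i(\tx_k)=\pi_j(\tx_j)$, the points $\tx_k,\tx_j\in U$ satisfy $\tx_k\,\ca{R}\,\tx_j$, so I pick $h\in R$ with $s(h)=\tx_k$ and $t(h)=\tx_j$; shrinking $\tU_k$ inside $\tW_h\cap\th^{-1}(\tU_j)$ (possible since $\mathcal{U}$ has arbitrarily small charts) turns $\th=t\circ(s|_{\tW_h})^{-1}$ into an embedding $\unif{k}\rightarrow\unif{j}$, and I set $Q:=[j,\tx_k,\th]\in R'$.

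First I would note that $Q$ and $P$ have equal source and target, since both carry the source embedding $j$ and $\th(\tx_k)=t(h)=\tx_j=\LA{kj}(\tx_k)$. Applying lemma \ref{useful-lemma-2} in $R'$ then gives a unique $a\in G_j$, lying in the stabilizer of $\tx_j$, with $[j,\tx_k,\th]=[j,\tx_k,a\circ\LA{kj}]$; by lemma \ref{local-triviality} the two representatives agree, so $\th=a\circ\LA{kj}$ and hence $\LA{kj}=a^{-1}\circ\th$. Because an element of $G_j$ fixing $\tx_j$ agrees near $\tx_j$ with $\widetilde{g_0}$ for a unique $g_0\in R$ in the isotropy at $\tx_j$, I may write $a^{-1}=\widetilde{g_0}$; lemma \ref{lemma-a}(a) then yields $\LA{kj}=\widetilde{g_0}\circ\th=\widetilde{m(h,g_0)}$ on the connected set $\tU_k$. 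Setting $g:=m(h,g_0)$ (defined because $t(h)=\tx_j=s(g_0)$) I obtain $\LA{kj}=\tg$, so $P=[j,\tx_k,\tg]$, while the groupoid axioms give $s(g)=s(h)=\tx_k$ and $t(g)=t(g_0)=\tx_j$, which are the required values $\LA{ki}(\tx_k)$ and $\LA{kj}(\tx_k)$. This settles existence.

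For uniqueness I would take $g_1,g_2\in R$ both satisfying the three conditions. The equalities $[j,\tx_k,\widetilde{g_1}]=P=[j,\tx_k,\widetilde{g_2}]$ share the first two entries, so lemma \ref{local-triviality} forces $\widetilde{g_1}=\widetilde{g_2}$ near $\tx_k$. Put $c:=m(g_1,i(g_2))$, which lies in $R_{\tx_k}$ because $s(g_1)=s(g_2)=\tx_k$ and $t(g_1)=t(g_2)=\tx_j$; lemma \ref{lemma-a} gives $\tc=\widetilde{i(g_2)}\circ\widetilde{g_1}=\widetilde{g_2}^{\,-1}\circ\widetilde{g_1}=id$. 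Since $\groupR$ is effective (definition \ref{effective-groupoid}), the homomorphism $f_{\tx_k}$ is injective, whence $c=e(\tx_k)$ and therefore $g_1=g_2$.

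The step I expect to be most delicate is bookkeeping rather than conceptual: one must check that after the shrinking $\th$ is a genuine embedding of uniformizing systems (not merely a biholomorphism of open sets), that the element furnished by lemma \ref{useful-lemma-2} really lies in the stabilizer of $\tx_j$ so that it corresponds to an isotropy element $g_0$, and---crucially---that uniqueness rests on effectiveness, without which the assignment $g\mapsto\tg$ need not be injective and two distinct elements $g_1,g_2$ could induce the same class in $R'$.
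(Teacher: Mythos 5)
Your proof is correct and follows essentially the same route as the paper: it fills in the paper's one-line sketch, namely build $Q=[j,\tx_k,\th]$ from an arrow $h\in R$, compare $P$ with $Q$ via lemma \ref{useful-lemma-2} applied in $R'$, convert the resulting stabilizer element into an isotropy arrow $g_0\in R_{\tx_j}$ and compose with $h$ via lemma \ref{lemma-a}, with effectiveness supplying uniqueness. The only quibble is that your two appeals to lemma \ref{local-triviality} are really appeals to a slightly different (but equally immediate) fact---equivalent triples sharing the chart, the marked point and the injective first embedding have second embeddings agreeing near the marked point, by definition \ref{equivalence} and the identity theorem---which does not affect the argument.
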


Our aim is to prove that there is a Morita equivalence $(\psi,\Psi)$ from $\groupR$ to
$\groupRR$. First of all, let us define $\psi:U'\rightarrow U$ as $\psi(\tx_i,\tU_i):=\tx$.
Since $U'$ is the disjoint union of open sets of the form $\tU_i$, we get that in the natural coordinates this
map locally coincides with the identity, so $\psi$ is holomorphic. 

\begin{lem}
Lemma \ref{lemma-b} induces a biholomorphic map:
$$\eta:R'\rightarrow \fibre{R}{(s,t)}{(\psi\times\psi)}{U'\times U'}.$$
\end{lem}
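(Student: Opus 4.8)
The plan is to define $\eta$ directly from Lemma \ref{lemma-b} and then verify, in turn, that it is well defined, holomorphic and bijective, finally upgrading the bijection to a biholomorphism. Concretely, given a point $P=[\LA{ki},\tx_k,\LA{kj}]$ of $R'$, let $g\in R$ be the unique element provided by Lemma \ref{lemma-b}, so that $s(g)=\LA{ki}(\tx_k)$, $t(g)=\LA{kj}(\tx_k)$ and $P=[j,\tx_k,\tg]$; I would then set
\[
\eta(P):=\Big(g,\,(\LA{ki}(\tx_k),\tU_i),\,(\LA{kj}(\tx_k),\tU_j)\Big)=\big(g,\,s'(P),\,t'(P)\big).
\]
Since $\psi$ is locally the inclusion $\tU_i\hookrightarrow U$, we have $\psi(s'(P))=\LA{ki}(\tx_k)=s(g)$ and $\psi(t'(P))=\LA{kj}(\tx_k)=t(g)$, so the triple lies in $\fibre{R}{(s,t)}{(\psi\times\psi)}{U'\times U'}$; well-definedness (independence of the representative) is immediate from the uniqueness clause of Lemma \ref{lemma-b}, together with the fact that $s'$ and $t'$ are already well defined on $R'$. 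In other words $\eta=(\Psi,(s',t'))$, where $\Psi\colon R'\to R$ sends $P$ to $g$, so that $pr_1\circ\eta=\Psi$.

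For holomorphy I would work in the charts $(\tW_k^{ij},\phi_k^{ij})$ on $R'$ from Proposition \ref{manifold-topology}. Writing $P=[j,\tx_k,\tg]$ with $j\colon\tU_k\hookrightarrow\tU_i$ the inclusion and $\tg=t\circ(s|_{W_g})^{-1}$, the nearby points are $[j,\ty_k,\tg]$ for $\ty_k\in\tU_k$, and (after shrinking $\tU_k$ so that $\tU_k\subseteq s(W_g)$) the element of $R$ furnished by Lemma \ref{lemma-b} for each of them is exactly $(s|_{W_g})^{-1}(\ty_k)$. Hence in these coordinates $\Psi$ reads $\ty_k\mapsto (s|_{W_g})^{-1}(\ty_k)$, which is holomorphic because $s$ is \'etale. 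Since $(s',t')$ is holomorphic (indeed \'etale, by Lemma \ref{etale-groupoid} applied to $R'$) and, as $\psi\times\psi$ is \'etale hence a submersion, the fiber product is a complex submanifold of $R\times(U'\times U')$ by Proposition \ref{fiber-products-1}, the map $\eta$ is holomorphic.

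For bijectivity I would argue as follows. For surjectivity, start from a triple $\big(g,(\tx_i,\tU_i),(\tx_j,\tU_j)\big)$ in the fiber product, so $s(g)=\tx_i$ and $t(g)=\tx_j$; choose an arbitrarily small chart $\unif{k}$ of $\mathcal{U}$ with $\tU_k\subseteq\tU_i\cap s(W_g)$ and $\tx_k:=\tx_i$, and check that $\eta([j,\tx_k,\tg])$ recovers the triple, using $\tg(\tx_i)=t(g)=\tx_j$. For injectivity, if $\eta(P_1)=\eta(P_2)$ then $P_1$ and $P_2$ share the same $g$ and the same source and target; by Lemma \ref{lemma-b} both equal $[j,\tx_i,\tg]$ read on possibly different small charts around $\tx_i=s(g)$, and passing to a common smaller chart $\tU_m$ of $\mathcal{U}$ contained in both (one exists because $\mathcal{U}$ contains arbitrarily small charts) the defining diagrams of $\sim$ commute, so $P_1=P_2$ in $R'$. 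Alternatively injectivity can be extracted from Lemma \ref{useful-lemma-2} applied inside $R'$, exactly as in the proof that $(\psi,\Psi)$ is a Morita equivalence for the maximal atlas.

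Finally, $\eta$ is a holomorphic bijection between complex manifolds of the same dimension $n$, since $pr_1$ is \'etale (because $\psi\times\psi$ is), whence the fiber product has dimension $n=\dim R'$. The cleanest conclusion is to exhibit the holomorphic inverse: in the charts above $\eta^{-1}$ sends $\big(g,\dots\big)$ to the class whose coordinate is $s(g)$, which is holomorphic, so $\eta$ is a biholomorphism. I expect the main obstacle to be the bookkeeping in the holomorphy and injectivity steps — in particular verifying that the local expression of $\Psi$ is genuinely $(s|_{W_g})^{-1}$ on a whole neighborhood and not merely at the base point, which rests on Lemma \ref{lemma-a} and the local invertibility of $s$ and $t$, and checking that shrinking the charts does not change the relevant classes.
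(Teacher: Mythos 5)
Your proposal is correct and follows essentially the same route as the paper: the same definition $\eta(P)=(g,s'(P),t'(P))$, the same chart computation showing that $pr_1\circ\eta$ is locally an inverse branch of the \'etale map $s$, and invertibility established by the same explicit inverse (your surjectivity construction is precisely the paper's map $\gamma$, and your injectivity argument via lemma \ref{useful-lemma-2} replaces the paper's ``direct check''). One inessential slip: $(s',t')\colon R'\to U'\times U'$ is holomorphic but certainly not \'etale (the dimensions are $n$ and $2n$), although your argument only uses its holomorphy, so nothing breaks.
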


\begin{proof}
Take any point $P$ in $R'$ and define $g\in R$ as in the previous lemma. Then define:

$$\eta([j,\tx_k,\tg]):=\Big(g,s'(P),t'(P)\Big);$$

since 

$$(s,t)(g)=\Big((\tx_k,\tg(\tx_k),(\tg(\tx_k,\tU_j)\Big)=(\psi\times\psi)(s'(P),t'(P)),$$

we get that actually $\eta$ 
has values in the fiber product. By the universal property of the fiber product, in order to prove that it
is holomorphic it is sufficient to prove that $\eta$ is so if composed with the natural projections to
$R'$ and $U'\times U'$. The second composition is just the map $(s',t')$, which is holomorphic, so let us consider 
only the first map, given by lemma \ref{lemma-b}. It is obviuos that for every $P=[j,\tx_k,\tg]$, the point $g$ can
be obtained as $(s_{|W_g})^{-1}(P)$; now for points $P$ and $Q=[j',\ty_l,\th]$ sufficiently near we get that $h$
belongs to $W_g$, so $h$ is equal to $(s_{|W_g})^{-1}(Q)$. Hence locally the map $pr_1\circ\eta$ coincides with the
local inverse of $s$, which is \'etale by hypothesis. So we have proved that $\eta$ is holomorphic. In order to prove
that it is biholomorphic, it suffices to prove that it is invertible. So let us describe explicitly its inverse:

$$\gamma:\fibre{R}{(s,t)}{(\psi\times\psi)}{U'\times U'}\rightarrow R'$$

as follows: let us take any point $\Big(g,(\tx_i,\tU_i)(\tx_j,\tU_j)\Big)$ in the fiber product, so we have
$s(g)=\psi(\tx_i,\tU_i)=\tx_i$ and $t(g)=\tx_j$. Eventually by restricting the open neighborhood $W_g$ of $g$ in $R$,
we can assume that $s(W_g)\subseteq\tU_i$ and $t(W_g)\subseteq\tU_j$. Hence it makes sense to consider $\tg$ as a map
from an open neighboorhood $\tU_k$ of $\tx_i$ in $\tU_i$ to $\tU_j$. Moreover, this will be an embedding between
uniformizing systems, so we can define:

$$\gamma\Big(g,(\tx_i,\tU_i),(\tx_j,\tU_j)\Big):=[j,\tx_i,\tg]$$

(where $j$ is the inclusion of $\tU_k$ in $\tU_i$); a direct check proves that $\gamma$ does not depend on the
choice of $\tU_k$ and that it is the inverse of $\eta$.
\end{proof}

Now we define the holomorphic map $\Psi:=pr_1\circ\eta:R'\rightarrow R$ and a direct computation shows that the pair
$(\psi,\Psi)$ is a morphism of groupoid objects from $\groupRR$ to $\groupR$. 

\begin{lem}
The morphism $(\psi,\Psi)$ is a Morita equivalence.
\end{lem}

\begin{proof}
We have to verify axioms (i) and (ii) of definition \ref{morita-equivalence}.
First of all, let us consider the map $\psi:U'\rightarrow U$; we have already said that up to an holomorphic
change of coordinates $\psi$ locally coincides with the identity, hence $\psi$ is \'etale. Moreover, it is clearly
surjective (because the domains of the charts of $\mathcal{U}$ cover all $U$. Hence $\psi$ is \'etale and
surjective, so also the map $\pi_1$ of diagram (\ref{eq-52}) is \'etale (see proposition \ref{fiber-products-1})
and surjective. Moreover, $t$ is \'etale and surjective by definition of object in $\CAT{Grp}$, hence $t\circ\pi_1$
is surjective and \'etale, so in particular it is a surjective submersion. Hence (i) is proved.\\

Now let us pass to (ii): from the previous constructions we get a commutative diagram of the form:

\[\begin{tikzpicture}[scale=0.8]
    \def\x{1.5}
    \def\y{-1.2}
    \node (A0_0) at (-0.6, 0.2*\y) {$R'$};
    \node (A1_2) at (3*\x, 1.2*\y) {$\curvearrowright$};
    \node (A2_1) at (1.3*\x, 2.5*\y) {$\curvearrowright$};
    \node (A2_2) at (2*\x, 2*\y) {$\fibre{R}{(s,t)}{(\psi\times\psi)}{U'\times U'}$};
    \node (A2_4) at (5*\x, 2*\y) {$R$};
    \node (A3_3) at (3.5*\x, 3*\y) {$\square$};
    \node (A4_2) at (2*\x, 4*\y) {$U'\times U'$};
    \node (A4_4) at (5*\x, 4*\y) {$U\times U.$};
    \path (A4_2) edge [->] node [auto,swap] {$\scriptstyle{(\psi\times\psi)}$} (A4_4);
    \path (A0_0) edge [->] node [auto] {$\scriptstyle{\eta}$} (A2_2);
    \path (A2_2) edge [->] node [auto,swap] {$\scriptstyle{pr_1}$} (A2_4);
    \path (A2_4) edge [->] node [auto] {$\scriptstyle{(s,t)}$} (A4_4);
    \path (A0_0) edge [->,bend right=15] node [auto,swap] {$\scriptstyle{(s',t')}$} (A4_2);
    \path (A0_0) edge [->,bend left=15] node [auto] {$\scriptstyle{\Psi}$} (A2_4);
    \path (A2_2) edge [->] node [auto] {$\scriptstyle{pr_2}$} (A4_2);
\end{tikzpicture}\]

The internal square is cartesian by construction; since $\eta$ is a biholomorphism, we get that also
the external diagram is cartesian, hence (ii) is proved.
\end{proof}

\begin{prop}\label{quasi-ess-surjectivity}
The 2-functor $F$ is surjective up to Morita equivalences.
\end{prop}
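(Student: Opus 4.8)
The plan is to assemble the lemmas of this subsection, which between them already carry out all the substantive work. First I would take an arbitrary object $\groupR$ of $\CAT{Grp}$, that is, a proper, \'etale and effective groupoid object in $\CAT{Manifolds}$. The first lemma of the subsection associates to it an orbifold atlas $\mathcal{U}$ on the quotient space $X=U/\ca{R}$; applying the 2-functor $F$ to this atlas produces the groupoid object $\groupRR$ with $\groupRR=F(\mathcal{U})$, which by construction lies in the image of $F$ on objects.

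Next I would invoke the morphism $(\psi,\Psi)\colon\groupRR\rightarrow\groupR$ defined through the biholomorphism $\eta$ built from lemma \ref{lemma-b}, together with the last lemma above, which checks that $(\psi,\Psi)$ satisfies both conditions of definition \ref{morita-equivalence} and is therefore a Morita equivalence. To deduce that $\groupR$ and $F(\mathcal{U})$ are Morita equivalent in the symmetric sense of definition \ref{weak-equivalence}, I would take the intermediate groupoid there to be $\groupRR$ itself, using the identity morphism (which is trivially a Morita equivalence) on one side and $(\psi,\Psi)$ on the other. This exhibits $\groupR$ as Morita equivalent to the object $F(\mathcal{U})$ lying in the image of $F$.

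Since $\groupR$ was an arbitrary object of $\CAT{Grp}$, this shows that every object of the target is Morita equivalent to an object in the image of $F$, which is exactly the assertion that $F$ is surjective up to Morita equivalences. The genuine difficulties --- constructing the atlas $\mathcal{U}$, defining $\eta$ and verifying that it is a biholomorphism, and checking the two axioms of definition \ref{morita-equivalence} --- have all been dispatched in the preceding lemmas, so no further computation is required here. The main (and indeed only) point worth spelling out is the harmless reduction from a single Morita equivalence to the formally symmetric relation of definition \ref{weak-equivalence}; I expect no real obstacle beyond making this bookkeeping explicit.
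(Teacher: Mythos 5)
Your proposal is correct and follows essentially the same route as the paper: invoke the construction of the atlas $\mathcal{U}$ from $\groupR$, and the lemma showing $(\psi,\Psi)\colon F(\mathcal{U})\rightarrow(\groupR)$ is a Morita equivalence. Your explicit reduction to the symmetric relation of definition \ref{weak-equivalence} (taking the intermediate groupoid to be $F(\mathcal{U})$ itself with the identity on one side) is exactly the bookkeeping the paper leaves implicit, and it goes through since the identity morphism is trivially a Morita equivalence.
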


\begin{proof}
For every $\groupR$ in $\CAT{Grp}$, we have described an orbifold atlas $\mathcal{U}$ and
and we have proved that $F(\mathcal{U})=(\groupRR)$ is Morita equivalent to $\groupR$, so we are done.
\end{proof}

\subsection{A natural bijection on classes of objects in source and target}
Using proposition \ref{quasi-injectivity} one can induce from the 2-functor $F$ a natural set map:

$$\tilde{F}:\begin{array}{c c c} 

\left\{\begin{array}{c}
\textrm{equivalence classes of}\\
\textrm{objects of } \CAT{Pre-Orb}\\
\textrm{ w.r.t. definition \ref{new-equivalence}}
\end{array}\right\}

& \rightarrow &

\left\{\begin{array}{c}
\textrm{equivalence classes of}\\
\textrm{objects of }\CAT{Grp}\textrm{ w.r.t}\\
\textrm{Morita equivalences}
\end{array}\right\}.

\end{array}
$$

Our aim is to prove that the set map $\tilde{F}$ is a bijection. In order to do that, let us
state and prove some preliminary results.

\begin{lem}
Let us fix any pair of orbifold atlases $\mathcal{U}=\{\unif{i}\}_{i\in I}$ on $X$ and $\mathcal{U}'=
\{(\tV_j,H_j,\xi_j)\}_{j\in J}$ on $X'$, let us call $\groupR:=F(\mathcal{U})$ and $\groupRR:=F(\mathcal{U}')$ and
let us suppose we have a Morita equivalence:

$$(\psi,\Psi):(\groupRR)\rightarrow(\groupR).$$

Then $\mathcal{U}$ and $\mathcal{U'}$ are equivalent w.r.t. definition \ref{new-equivalence}.
\end{lem}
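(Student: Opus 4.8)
The plan is to extract from the Morita equivalence $(\psi,\Psi)$ a homeomorphism $\varphi\colon X\to X'$ and then, at every point of $X'$, a uniformizing system embedding simultaneously into a chart of $\varphi_{\ast}(\mathcal{U})$ and a chart of $\mathcal{U}'$; this is exactly what definition \ref{new-equivalence} (via definition \ref{equivalence-CR}) requires. Throughout I use that the orbit space of the groupoid $F(\mathcal{U})$ is canonically homeomorphic to $X$: by remark \ref{useful-remark} and the definition of $R$, two points $(\tx_i,\tU_i)$ and $(\tx_j,\tU_j)$ of $U=\coprod_i\tU_i$ lie in one $R$-orbit iff $\pi_i(\tx_i)=\pi_j(\tx_j)$, so the projections $\pi_i$ induce a homeomorphism $U/{\sim}\cong X$, and similarly $U'/{\sim}\cong X'$.

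The geometric heart of the argument is to show that $\psi$ is \'etale. Condition (i) of definition \ref{morita-equivalence} makes $t\circ\pi_1$ a submersion; since $t$ is \'etale (lemma \ref{etale-groupoid}) and $\pi_1$ is the pullback of $\psi$ along the surjective submersion $s$, this forces $\psi\colon U'\to U$ to be a submersion, so $n':=\dim U'\ge n:=\dim U$. Condition (ii) identifies $R'$ with the fibre product $\fibre{R}{(s,t)}{(\psi\times\psi)}{U'\times U'}$; as $\psi\times\psi$ is a submersion, proposition \ref{fiber-products-1} gives $\dim R'=\dim R+2n'-2n$, i.e. $n'=n+2n'-2n$, whence $n=n'$. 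A submersion between equidimensional complex manifolds is a local biholomorphism, so $\psi$ is \'etale. This step, together with checking that $\psi$ really descends to a homeomorphism of orbit spaces, is the main obstacle; once it is in place the rest is routine.

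Since $\psi$ is \'etale it is open and continuous, and the orbit maps $U\to X$, $U'\to X'$ are open (as $s,t,s',t'$ are open). Condition (i) gives surjectivity of the induced map on orbit spaces, and condition (ii) gives injectivity: if $\psi(\tx'_1)$ and $\psi(\tx'_2)$ are joined by some $r\in R$, the cartesian square produces an $r'\in R'$ with $s'(r')=\tx'_1$, $t'(r')=\tx'_2$. Hence $\psi$ descends to a homeomorphism $\bar\psi\colon X'\to X$, and I set $\varphi:=\bar\psi^{-1}\colon X\to X'$, verifying condition (a) of definition \ref{new-equivalence}.

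Finally I verify condition (b), namely that $\varphi_{\ast}(\mathcal{U})=\{(\tU_i,G_i,\varphi\circ\pi_i)\}$ and $\mathcal{U}'$ are equivalent atlases on $X'$ in the sense of definition \ref{equivalence-CR}. Fix $x'\in X'$, choose a lift $\tx'\in\tV_j\subseteq U'$, and set $\tx:=\psi(\tx')$; since $\tV_j$ is connected it maps into a single chart $\tU_i$ of the disjoint union $U$. By lemma \ref{unif-sys-induced} I shrink to a connected uniformizing system $(\tW,K,\xi)\subseteq(\tV_j,H_j,\xi_j)$ around $\tx'$ small enough that $\psi|_{\tW}$ is injective, hence a holomorphic embedding onto an open subset of $\tU_i$. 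Because $\psi$ induces $\bar\psi=\varphi^{-1}$ on orbit spaces one has $(\varphi\circ\pi_i)\circ\psi|_{\tW}=\xi$, so $\psi|_{\tW}$ is an embedding of uniformizing systems $(\tW,K,\xi)\to(\tU_i,G_i,\varphi\circ\pi_i)$, the injective group homomorphism $K\hookrightarrow G_i$ being supplied automatically by corollary \ref{induced-map}. Together with the inclusion $(\tW,K,\xi)\hookrightarrow(\tV_j,H_j,\xi_j)$ this yields embeddings
$$(\tU_i,G_i,\varphi\circ\pi_i)\xleftarrow{\ \psi|_{\tW}\ }(\tW,K,\xi)\hookrightarrow(\tV_j,H_j,\xi_j),$$
with the left-hand chart in $\varphi_{\ast}(\mathcal{U})$ and the right-hand chart in $\mathcal{U}'$, which is precisely equivalence at $x'$. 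As $x'$ was arbitrary, $\varphi_{\ast}(\mathcal{U})$ and $\mathcal{U}'$ are equivalent, completing the proof.
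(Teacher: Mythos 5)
Your proof is correct, and its skeleton is the same as the paper's: use condition (i) of definition \ref{morita-equivalence} to show that $\psi$ is \'etale, descend $\psi$ to a homeomorphism between the orbit spaces (surjectivity from (i), injectivity and well-definedness of the inverse from the cartesian square (ii)), and finish by shrinking charts until they embed simultaneously into a chart of each atlas. Two local differences are worth recording. First, where the paper merely asserts that ``it is easy to prove that $U$ and $U'$ have the same complex dimension'', you give an actual argument: condition (ii) identifies $R'$ with $\fibre{R}{(s,t)}{(\psi\times\psi)}{U'\times U'}$, which by proposition \ref{fiber-products-1} (applicable once $\psi$ is known to be a submersion) has dimension $n+2n'-2n$, while $\dim R'=n'$ because $s'$ is \'etale; equating the two gives $n=n'$. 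This fills a genuine gap in the paper's exposition. Second, your last step is simpler than the paper's: you embed the shrunk chart $(\tW,K,\xi)$ into $\tU_i$ by $\psi|_{\tW}$ itself, using $\varphi\circ\pi_i\circ\psi|_{\tW}=\pi'|_{\tW}=\xi$, and you verify condition (b) of definition \ref{new-equivalence} on $X'$ exactly as that definition is stated; the paper instead builds its embedding from the composite $s\circ t^{-1}\circ\psi$ (with a local inverse of $t$ taken near a connecting arrow $r\in R$) and checks the equivalence of $\mathcal{U}$ with $\varphi_{\ast}(\mathcal{U}')$ on $X$, which then implicitly uses the symmetry of the relation. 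The only sentence you should expand is the claim that the submersion property of $\pi_1$ ``forces'' $\psi$ to be a submersion: the principle you invoke is true but deserves its one-line proof, namely that surjectivity of $d\pi_1$ at a point $(r,u')$ of the fiber product means $ds_r(T_rR)\subseteq d\psi_{u'}(T_{u'}U')$, and $ds_r$ is surjective since $s$ is a submersion, while every $u'\in U'$ occurs in some such point, e.g. $\big(e(\psi(u')),u'\big)$; equivalently one can argue as the paper does, that $\psi\circ\pi_2=s\circ\pi_1$ is a submersion and $\pi_2$ is \'etale and surjective.
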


\begin{proof}
Let us recall that by definition of $F$, $U:=\coprod_{i\in I}\tU_i$, so we can define a continuous map $\pi:U
\rightarrow X$ that for every $i\in I$ coincides with the continuous map $\pi_i$ on the connected component $\tU_i$.
Since $\pi$ locally coincides with the maps $\pi_i$'s, we get that $\pi$ is not only continuous, but also open.
Analogously, we can define a continuous open map $\pi':U'\rightarrow X'$. Now let us consider the diagram:

\begin{equation}\label{eq-59}
\begin{tikzpicture}[scale=0.8]
    \def\x{1.5}
    \def\y{-1.2}
    \node (A0_0) at (0*\x, 0*\y) {$U'$};
    \node (A0_2) at (2*\x, 0*\y) {$U$};
    \node (A2_0) at (0*\x, 2*\y) {$X'$};
    \node (A2_2) at (2*\x, 2*\y) {$X$;};
    \path (A0_0) edge [->] node [auto] {$\scriptstyle{\psi}$} (A0_2);
    \path (A0_0) edge [->] node [auto,swap] {$\scriptstyle{\pi'}$} (A2_0);
    \path (A0_2) edge [->] node [auto] {$\scriptstyle{\pi}$} (A2_2);
    \path (A2_0) edge [->,dashed] node [auto,swap] {$\scriptstyle{\varphi}$} (A2_2);
\end{tikzpicture}
\end{equation}

let us fix any point $x'$ in $X'$ and let $u',\bar{u}'\in U'$ be any pair of its preimages via $\pi'$.
Then by definition of $R'$, there exists a point $r'$ in $R'$ such that $s'(r')=u'$ and $t'(r')=\bar{u}'$. Then:

$$\psi(u')=\psi(s'(r'))=s(\Psi(r'))\AND \psi(\bar{u}')=\psi(t'(r')=t(\Psi(r'))$$

So $\pi(\psi(u'))=\pi(\psi(\bar{u}'))$, hence we can induce a well defined map $\varphi:X'\rightarrow X$ as
$\varphi(\pi'(u')):=\pi(\psi(u'))$. Since $\psi$ is continuous, so is $\varphi$ by the universal property
of the quotient topology.\\

Now let us consider the fiber product (\ref{eq-52}); since $\groupR$ belongs to
$\CAT{Grp}$, both $s$ and $t$ are \'etale, so using proposition \ref{fiber-products-1} we get that also $\pi_2$ is
\'etale. Moreover, by property (i) of Morita equivalences we get that $t\circ\pi_1$ is a submersion; since $t$ is
\'etale, we get that $\pi_1$ is a submersion. Hence $s\circ\pi_1$ is also a submersion, so by commutativity of
(\ref{eq-52}), we get that $\psi\circ\pi_2$ is a submersion. We have already proved that $\pi_2$ is \'etale, hence
$\psi$ is a submersion. Moreover, it is easy to prove that $U$ and $U'$ have the same complex dimension, hence
$\psi$ is \'etale, so in particular it is open; hence using diagram \ref{eq-59} we get that $\varphi$ is also open.\\

Now we want to find an inverse $\delta$ for $\varphi$. In order to do that, let us fix any point $\pi(u)\in X$ and let 
us consider again diagram (\ref{eq-52}). Since $t\circ\pi_1$ is surjective, there exists a point $(r,u')\in
\fibra{R}{U}{U'}$ such that $t(r)=u$; the point we have chosen belongs to the fiber product, so $\psi(u')=s(r)$.
Hence by definition of $\pi$ we get that:

$$\pi(u)=\pi(t(r))=\pi(s(r))=\pi(\psi(u')).$$

So we have proved that every point in $X$ is of the form $\pi(\psi(u'))$ for some $u'\in U'$; hence we want
to define the set map $\delta$ as $\delta(\pi(\psi(u')))=\pi'(u')$. In order to prove that it is well defined,
let us suppose that $\pi(\psi(u'))=\pi(\psi(\bar{u}'))$; then there exists $r\in R$ such that $s(r)=\psi(u')$
and $t(r)=\psi(\bar{u}')$. Using the fact that diagram (\ref{eq-53}) is cartesian by definition of Morita equivalence,
we get that there exists a point $r'\in R'$ such that $s'(r')=u'$ and $t(r')=\bar{u}'$, so $\pi'(u')=\pi'(\bar{u}')$.
Hence $\delta$ is well defined; moreover, a direct check proves that it is actually the inverse of $\varphi$. In addition,
it is continuous since $\varphi$ is open, so $\varphi$ is an homeomorphism. So we have proved condition (a) of
definition \ref{new-equivalence}.\\

We want to prove also condition (b); in order to do that, it suffices to prove that for every point $x$ of
$X$ there exists a pair of uniformizing systems in $\mathcal{U}$ and $\varphi_{\ast}(\mathcal{U}')$ which
are compatible at this point. So let $x=\pi_i(\tx_i,\tU_i)$ be any such point; by what we said before, there
exists a point $(\tx'_l,\tU'_l)\in U'$ such that if we call $(\tx_j,\tU_j):=\psi(\tx'_l,\tU'_l)$ we get that:

$$\pi(\tx_i,\tU_i)=\pi(\tx_j,\tU_j);$$

hence there exist $r\in R$ such that $s(r)=(\tx_i,\tU_i)$ and $t(r)=(\tx_j,\tU_j)$. We have already proved that
$\psi,s$ and $t$ are \'etale, so there exists an open
neighboorhood $\tA$ of $(\tx'_l,\tU_l)$ in $U'$ such that the map $s\circ t^{-1}\circ\psi$ is an holomorphic embedding from
$\tA$ to $\tU_i$. Then we can apply lemma \ref{unif-sys-induced} for $\tx'_l$ and $\tA$ and we get a uniformizing
system around $\pi_l(\tx'_l)$; it is easy now to prove that this chart has natural embeddings in both $\unif{i}\in\mathcal{U}$
and $(\tV_l,H_l,\varphi\circ\phi_l)\in\varphi_{\ast}(\mathcal{U}')$. So we have proved that for every point of $X'$
the orbifold atlases $\mathcal{U}'$ and $\varphi_{\ast}(\mathcal{U})$ are equivalent at that point, so condition (b)
is satisfied.
\end{proof}

\begin{prop}\label{injectivity-up-to-equivalences}
Let us fix any pair of orbifold atlases $\mathcal{U}$ and $\mathcal{U}''$ on $X$ and $X''$ respectively and let us
suppose that $F(\mathcal{U})$ and $F(\mathcal{U}'')$ are Morita equivalent. Then $\mathcal{U}$ and $\mathcal{U}''$
are equivalent w.r.t. definition \ref{new-equivalence}.
\end{prop}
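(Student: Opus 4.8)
The plan is to reduce the general (weak/span) Morita equivalence to the case of a \emph{single} direct Morita equivalence between the images of two orbifold atlases, which is precisely the situation handled by the preceding lemma. First I would unfold the hypothesis by means of Definition \ref{weak-equivalence}: since $F(\mathcal{U})$ and $F(\mathcal{U}'')$ are Morita equivalent, there exist a groupoid object $\group{3}$ and two direct Morita equivalences (in the sense of Definition \ref{morita-equivalence})
$$F(\mathcal{U})\stackrel{(\psi,\Psi)}{\longleftarrow}(\group{3})\stackrel{(\phi,\Phi)}{\longrightarrow}F(\mathcal{U}'').$$
Here I would first check that $\group{3}$ may be taken to be an object of $\CAT{Grp}$, i.e. proper, \'etale and effective; this rests on the fact that these three properties are invariant under direct Morita equivalence, so that $\group{3}$, being Morita equivalent to the object $F(\mathcal{U})$ of $\CAT{Grp}$, itself lies in $\CAT{Grp}$.

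Next I would apply Proposition \ref{quasi-ess-surjectivity} to $\group{3}$: the construction used there produces an orbifold atlas $\mathcal{W}$ (on the quotient space $U^3/\ca{R}$) together with a direct Morita equivalence $(\chi,\Xi):F(\mathcal{W})\rightarrow(\group{3})$. Composing morphisms of groupoid objects as in \S 2.2, I would then form
$$(\psi,\Psi)\circ(\chi,\Xi):F(\mathcal{W})\rightarrow F(\mathcal{U})\AND(\phi,\Phi)\circ(\chi,\Xi):F(\mathcal{W})\rightarrow F(\mathcal{U}'').$$
Because direct Morita equivalences are closed under composition (this is part of the proof that weak equivalence is an equivalence relation, see \cite{MM}, chapter 5), both composites are again direct Morita equivalences, and each is a Morita equivalence between the images of two orbifold atlases.

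Finally I would invoke the previous lemma twice: applied to $(\psi,\Psi)\circ(\chi,\Xi)$ it yields that $\mathcal{W}$ and $\mathcal{U}$ are equivalent with respect to Definition \ref{new-equivalence}, and applied to $(\phi,\Phi)\circ(\chi,\Xi)$ it yields that $\mathcal{W}$ and $\mathcal{U}''$ are equivalent. Since the relation of Definition \ref{new-equivalence} is an equivalence relation, transitivity through $\mathcal{W}$ gives that $\mathcal{U}$ and $\mathcal{U}''$ are equivalent, which is exactly the assertion.

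I expect the main obstacle to be the two structural facts invoked in the middle step: that the intermediate groupoid $\group{3}$ can be assumed proper, \'etale and effective (Morita invariance of these properties), and that the composition of two direct Morita equivalences is again a direct Morita equivalence. The latter requires verifying conditions (i) and (ii) of Definition \ref{morita-equivalence} for the composite, which reduces to a diagram chase with iterated fiber products of submersions (using Proposition \ref{fiber-products-1} and Remark \ref{fiber-products-2}); once these two facts are secured, the remainder of the argument is purely formal.
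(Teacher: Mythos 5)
Your proposal is correct and follows essentially the same route as the paper's proof: unfold the weak equivalence to obtain an intermediate groupoid, apply the construction of Proposition \ref{quasi-ess-surjectivity} to it to get an atlas $\mathcal{W}$ with a Morita equivalence into that groupoid, use closure of Morita equivalences under composition, and finish by applying the preceding lemma twice together with transitivity of the relation of Definition \ref{new-equivalence}. The only difference is one of care: you explicitly flag that the intermediate groupoid must be checked to lie in $\CAT{Grp}$ and that composition of Morita equivalences needs verification, two points the paper passes over by assertion.
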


\begin{proof}
By definition \ref{weak-equivalence} we get that there exists an object $\groupR$ of $\CAT{Grp}$ and a pair of
Morita equivalences as follows:

$$F(\mathcal{U})\leftarrow(\groupR)\rightarrow F(\mathcal{U}'').$$

Using the proof of proposition \ref{quasi-ess-surjectivity} we get that there exist a space $X$, an orbifold atlas
$\mathcal{U}'$ on it and a Morita equivalence:

$$F(\mathcal{U}')\rightarrow(\groupR).$$

It is easy to prove that Morita equivalences are closed under composition, hence we get Morita equivalences:

$$F(\mathcal{U})\leftarrow F(\mathcal{U}')\rightarrow F(\mathcal{U}'').$$

Then it suffices to apply twice the previous lemma (and the fact that definition \ref{new-equivalence} gives a
relation of equivalence) in order to prove that $\mathcal{U}$ is equivalent to $\mathcal{U}''$.
\end{proof}

\begin{teo}
The set map $\tilde{F}$ is a bijection.
\end{teo}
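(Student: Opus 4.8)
The plan is to assemble the statement directly from the three results already established in this subsection, since each defining property of a bijection corresponds to exactly one of them. First I would recall that $\tilde{F}$ is well defined at all: by proposition \ref{quasi-injectivity}, if two orbifold atlases are equivalent with respect to definition \ref{new-equivalence}, then their images under $F$ are Morita equivalent, so the Morita class of $F(\mathcal{U})$ depends only on the equivalence class of $\mathcal{U}$. Granting this, it only remains to check surjectivity and injectivity of $\tilde{F}$.

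For surjectivity I would invoke proposition \ref{quasi-ess-surjectivity}: given any object $\groupR$ of $\CAT{Grp}$, that proposition produces an orbifold atlas $\mathcal{U}$ together with a Morita equivalence between $F(\mathcal{U})$ and $\groupR$. Hence the Morita class of $\groupR$ coincides with $\tilde{F}([\mathcal{U}])$, so every class in the target is attained and $\tilde{F}$ is surjective. For injectivity I would use proposition \ref{injectivity-up-to-equivalences}: if $\tilde{F}([\mathcal{U}])=\tilde{F}([\mathcal{U}''])$, i.e. $F(\mathcal{U})$ and $F(\mathcal{U}'')$ are Morita equivalent, then that proposition yields that $\mathcal{U}$ and $\mathcal{U}''$ are equivalent with respect to definition \ref{new-equivalence}, so $[\mathcal{U}]=[\mathcal{U}'']$. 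Combining the two halves shows that $\tilde{F}$ is a bijection.

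The point worth stressing is that at this stage there is essentially no obstacle left to overcome: all of the geometric content has already been absorbed into propositions \ref{quasi-injectivity}, \ref{quasi-ess-surjectivity} and \ref{injectivity-up-to-equivalences} (and, through them, into the explicit groupoid construction of \S 3.1 and the analysis of Morita equivalences carried out via lemma \ref{useful-lemma-2}). The theorem is then a purely formal consequence of these three facts, and the proof is short. The only care I would take is bookkeeping: to make sure each of the three propositions is matched to the correct half of the bijection, and in particular that the equivalence relation used to form the source classes is precisely the one of definition \ref{new-equivalence}, so that the well-definedness statement (proposition \ref{quasi-injectivity}) and the injectivity statement (proposition \ref{injectivity-up-to-equivalences}) genuinely refer to the same relation on objects of $\CAT{Pre-Orb}$.
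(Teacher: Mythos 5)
Your proof is correct and follows exactly the paper's argument: the paper likewise defines $\tilde{F}$ via proposition \ref{quasi-injectivity} and then proves the theorem by citing proposition \ref{quasi-ess-surjectivity} for surjectivity and proposition \ref{injectivity-up-to-equivalences} for injectivity. Your explicit remark on well-definedness simply makes visible what the paper absorbed into the construction of $\tilde{F}$ just before the theorem.
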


\begin{proof}
Surjectivity of the map $\tilde{F}$ is just proposition \ref{quasi-ess-surjectivity}; injectivity is proposition
\ref{injectivity-up-to-equivalences}.
\end{proof}

Hence we have proved that the classes of complex reduced orbifold atlases (w.r.t. definition \ref{new-equivalence})
and the classes of proper \'etale effective groupoid objects (w.r.t. to Morita equivalences) are different
description of the same geometric objects. It will be very useful to prove that a similar result holds also on the
level of morphisms and 2-morphisms, but for the moment we have no idea of how this can be made. If this can be
done, we will have a very useful tool to translate results about orbifolds in results about groupoid
objects, and conversely.

\section*{Appendix - Some technical proofs}
Here are the proofs of some technical lemmas cited in the previous sections.\\

\textbf{Lemma \ref{equivalence-CR-proof}.}\emph{ The relation of definition \ref{equivalence-CR} is an equivalence relation.}

\begin{proof}
This relation is clearly symmetric and reflexive, so let us prove only transitivity. So let us suppose that we have 3 atlases
$\mathcal{U}_1,\mathcal{U}_2$ and $\mathcal{U}_3$ on $X$ with the second one equivalent to both the first and the third one.
Let us fix any $x\in X$; by definition there exist:
\begin{itemize}
 \item 2 uniformizing systems $(\tV_1,H_1,\psi_1)$ and $(\tV_3,H_3,\psi_3)$ around $x$;
 \item 2 uniformizing systems $(\tU_i,G_i,\pi_i)\in\mathcal{U}_i$ for $i=1,3$ and 2 uniformizing systems 
 $(\tU_2,G_2,\pi_2),(\tU'_2,G'_2,\pi'_2)\in\mathcal{U}_2$.
 \item 4 embeddings $\lambda_1,\lambda'_1,\lambda_3,\lambda'_3$ as follows:
\end{itemize}

\[\begin{tikzpicture}
    \def\x{1.5}
    \def\y{-1.2}
    \node (A0_0) at (0*\x, 0*\y) {$(\tU_1,G_1,\pi_1)$};
    \node (A0_2) at (2*\x, 0*\y) {$(\tU_2,G_2,\pi_2)$};
    \node (A0_4) at (4*\x, 0*\y) {$(\tU'_2,G'_2,\pi)$};
    \node (A0_6) at (6*\x, 0*\y) {$(\tU_3,G_3,\pi_3).$};
    \node (A1_1) at (1*\x, 1*\y) {$(\tV_1,H_1,\psi_1)$};
    \node (A1_3) at (3*\x, 1*\y) {$(\tV_2,H_2,\psi_2)$};
    \node (A1_5) at (5*\x, 1*\y) {$(\tV_3,H_3,\psi_3)$};
    
    \path (A1_3) edge [->,dashed] node [auto] {$\scriptstyle{\lambda_2}$} (A0_2);
    \path (A1_3) edge [->,dashed] node [auto,swap] {$\scriptstyle{\lambda'_2}$} (A0_4);
    \path (A1_1) edge [->] node [auto,swap] {$\scriptstyle{\lambda'_1}$} (A0_2);
    \path (A1_1) edge [->] node [auto] {$\scriptstyle{\lambda_1}$} (A0_0);
    \path (A1_5) edge [->] node [auto] {$\scriptstyle{\lambda_3}$} (A0_4);
    \path (A1_5) edge [->] node [auto,swap] {$\scriptstyle{\lambda'_3}$} (A0_6);
\end{tikzpicture}\]

Since both $(\tU_2,G_2,\pi_2)$ and $(\tU'_2,G'_2,\pi'_2)$ belong to the atlas $\mathcal{U}_2$, there exists a third
uniformizing system $(\tV_2,H_2,\psi_2)$ around $x$ in $\mathcal{U}_2$, together with embeddings $\lambda_2,\lambda'_2$
as in the previous diagram. If we have fixed any point $\tx_2\in\tV_2$ such that $\psi_2(\tx_2)=x$, by composing with
a suitable automorphism of $\tU_2$ there is no loss of generality in assuming that $\lambda(\tx_2)$ belongs to the image
of $\lambda'_1$; hence the set $(\lambda'_1)^{-1}(\lambda_2(\tV_2))$ is an open neighboorhood of a preimage of $x$ in $\tV_1$.
So by applying lemma \ref{unif-sys-induced} we get a uniformizing system $(\tW_1,K_1,\xi_1)$ around $x$, together with an
embedding $\mu_1$ of it into $\unif{1}$. Then the set map $\mu'_1:=\lambda_2^{-1}\circ\lambda'_1\circ\mu_1$ is a well
defined holomorphic embedding of $(\tW_1,K_1,\xi_1)$ into $(\tV_2,H_2,\psi_2)$. Analogously, we can complete also the
right part of the previous diagram and we obtain something of this form:

\[\begin{tikzpicture}
    \def\x{1.5}
    \def\y{-1.2}
    \node (A0_0) at (0*\x, 0*\y) {$(\tU_1,G_1,\pi_1)$};
    \node (A0_2) at (2*\x, 0*\y) {$(\tU_2,G_2,\pi_2)$};
    \node (A0_4) at (4*\x, 0*\y) {$(\tU'_2,G'_2,\pi)$};
    \node (A0_6) at (6*\x, 0*\y) {$(\tU_3,G_3,\pi_3).$};
    \node (A1_1) at (1*\x, 1*\y) {$(\tV_1,H_1,\psi_1)$};
    \node (A1_3) at (3*\x, 1*\y) {$(\tV_2,H_2,\psi_2)$};
    \node (A1_5) at (5*\x, 1*\y) {$(\tV_3,H_3,\psi_3)$};
    
    \path (A1_3) edge [->] node [auto] {$\scriptstyle{\lambda_2}$} (A0_2);
    \path (A1_3) edge [->] node [auto,swap] {$\scriptstyle{\lambda'_2}$} (A0_4);
    \path (A1_1) edge [->] node [auto,swap] {$\scriptstyle{\lambda'_1}$} (A0_2);
    \path (A1_1) edge [->] node [auto] {$\scriptstyle{\lambda_1}$} (A0_0);
    \path (A1_5) edge [->] node [auto] {$\scriptstyle{\lambda_3}$} (A0_4);
    \path (A1_5) edge [->] node [auto,swap] {$\scriptstyle{\lambda'_3}$} (A0_6);

    \node (A2_2) at (2*\x, 2*\y) {$(\tW_1,K_1,\xi_1)$};
    \node (A2_4) at (4*\x, 2*\y) {$(\tW_3,K_3,\xi_3)$};

    \path (A2_2) edge [->,dashed] node [auto] {$\scriptstyle{\mu_1}$} (A1_1);
    \path (A2_2) edge [->,dashed] node [auto,swap] {$\scriptstyle{\mu'_1}$} (A1_3);
    \path (A2_4) edge [->,dashed] node [auto] {$\scriptstyle{\mu_3}$} (A1_3);
    \path (A2_4) edge [->,dashed] node [auto,swap] {$\scriptstyle{\mu'_3}$} (A1_5);
\end{tikzpicture}\]

By applying the previous construction another time, we get a diagram like this:

\[\begin{tikzpicture}
    \def\x{1.5}
    \def\y{-1.2}
    \node (A0_0) at (0*\x, 0*\y) {$(\tU_1,G_1,\pi_1)$};
    \node (A0_2) at (2*\x, 0*\y) {$(\tU_2,G_2,\pi_2)$};
    \node (A0_4) at (4*\x, 0*\y) {$(\tU'_2,G'_2,\pi)$};
    \node (A0_6) at (6*\x, 0*\y) {$(\tU_3,G_3,\pi_3).$};
    \node (A1_1) at (1*\x, 1*\y) {$(\tV_1,H_1,\psi_1)$};
    \node (A1_3) at (3*\x, 1*\y) {$(\tV_2,H_2,\psi_2)$};
    \node (A1_5) at (5*\x, 1*\y) {$(\tV_3,H_3,\psi_3)$};
    
    \path (A1_3) edge [->] node [auto] {$\scriptstyle{\lambda_2}$} (A0_2);
    \path (A1_3) edge [->] node [auto,swap] {$\scriptstyle{\lambda'_2}$} (A0_4);
    \path (A1_1) edge [->] node [auto,swap] {$\scriptstyle{\lambda'_1}$} (A0_2);
    \path (A1_1) edge [->] node [auto] {$\scriptstyle{\lambda_1}$} (A0_0);
    \path (A1_5) edge [->] node [auto] {$\scriptstyle{\lambda_3}$} (A0_4);
    \path (A1_5) edge [->] node [auto,swap] {$\scriptstyle{\lambda'_3}$} (A0_6);

    \node (A2_2) at (2*\x, 2*\y) {$(\tW_1,K_1,\xi_1)$};
    \node (A2_4) at (4*\x, 2*\y) {$(\tW_3,K_3,\xi_3)$};

    \path (A2_2) edge [->] node [auto] {$\scriptstyle{\mu_1}$} (A1_1);
    \path (A2_2) edge [->] node [auto,swap] {$\scriptstyle{\mu'_1}$} (A1_3);
    \path (A2_4) edge [->] node [auto] {$\scriptstyle{\mu_3}$} (A1_3);
    \path (A2_4) edge [->] node [auto,swap] {$\scriptstyle{\mu'_3}$} (A1_5);

    \node (A3_3) at (3*\x, 3*\y) {$(\tW_2,K_2,\xi_2)$};

    \path (A3_3) edge [->,dashed] node [auto] {$\scriptstyle{\mu_2}$} (A2_2);
    \path (A3_3) edge [->,dashed] node [auto,swap] {$\scriptstyle{\mu'_2}$} (A2_4);
\end{tikzpicture}\]

By considering the embeddings $\lambda_1\circ\mu_1\circ\mu_2$ and $\lambda'_3\circ\mu'_3\circ\mu'_2$ we get that
the atlases $\mathcal{U}_1$ and $\mathcal{U}_3$ are equivalent at $x$. Since this holds for every point of $X$, we are done.
\end{proof}

\textbf{Lemma \ref{vertical-composition-groupoid}. }\emph{$\beta\odot\alpha$ is a natural transformation from $\groupR$ to
$\groupRR$.}

\begin{proof}
Using definition \ref{nat-tran-group} and property (ii) of definition \ref{groupoid-object} we get that
condition (i) of definition \ref{nat-tran-group} is easily satisfied. Now if we use condition (ii) for $\alpha$ and
$\beta$ we have:

\begin{eqnarray}
\label{eq-10} & m'\circ(\alpha\circ s,\Psi_2)=m'\circ(\Psi_1,\alpha\circ t);& \\
\label{eq-11} & m'\circ(\beta\circ s,\Psi_3)=m'\circ(\Psi_2,\beta\circ t).&
\end{eqnarray}

Hence we obtain:

\begin{eqnarray*}
& m'\circ\Big((\beta\odot\alpha)\circ s,\Psi_3\Big)=m'\circ\Big(m'\circ(\alpha,\beta)\circ s,
  \Psi_3\Big)= &\\
& =m'\circ\Big(m'\circ(\alpha\circ s,\beta\circ s),\Psi_3\Big)\stackrel{*}{=}m'\circ\Big(\alpha\circ
   s,m'(\beta\circ s,\Psi_3)\Big)\stackrel{(\ref{eq-11})}{=} &\\
& \stackrel{(\ref{eq-11})}{=}m'\circ\Big(\alpha\circ s,m'\circ(\Psi_2,\beta\circ t)\Big)\stackrel{*}{=}
   m'\circ\Big(m'\circ(\alpha\circ s,\Psi_2),\beta\circ t\Big)\stackrel{(\ref{eq-10})}{=} &\\
& \stackrel{(\ref{eq-10})}{=}m'\circ\Big(m'\circ(\Psi_1,\alpha\circ t),\beta\circ t\Big)\stackrel{*}{=}
  m'\circ\Big(\Psi_1,m'\circ(\alpha\circ t,\beta\circ t)\Big)= &\\
& =m'\circ\Big(\Psi_1,m'\circ(\alpha,\beta)\circ t\Big)=m'\circ\Big(\Psi_1,(\beta\odot\alpha)\circ
  t\Big) &
\end{eqnarray*}

where all the passages denoted with $\stackrel{*}{=}$ are just axiom (iii) of definition \ref{groupoid-object}. So
we have proved that $\beta\odot\alpha$ is a natural transformation as we claimed.
\end{proof}

\textbf{Lemma \ref{horizontal-composition-groupoid}. }\emph{$\beta\ast\alpha$ is a natural transformation
from $(\phi_1\circ\psi_1,\Phi_1\circ\Psi_1)$ to $(\phi_2\circ\psi_2,\Phi_2\circ\Psi_2)$.}

\begin{proof}
Since all the maps involved are morphisms in $\ca{C}$ we get that also $\beta\ast\alpha$ is a morphism in this 
category; moreover we have that:

\begin{eqnarray*}
& s''\circ(\beta\ast\alpha)=s''\circ m''\circ(\Phi_1\circ\alpha,\beta\circ\psi_2)=s''\circ\Phi_1\circ
  \alpha=\phi_1\circ s'\circ\alpha\stackrel{(\ref{eq-14})}{=}\phi_1\circ\psi_1& \\
& \textrm{and\quad}t''\circ(\beta\ast\alpha)=t''\circ m''\circ(\Phi_1\circ\alpha,\beta\circ\psi_2)=
   t''\circ\beta\circ\psi_2\stackrel{(\ref{eq-15})}{=}\phi_2\circ\psi_2;&
\end{eqnarray*}

hence condition (i) of definition \ref{nat-tran-group} is satisfied. Let us prove (ii); in order to do that, we
recall that by definition of natural transformation for $\alpha$ and $\beta$ we have:

\begin{eqnarray}
\label{eq-16} & m'\circ(\alpha\circ s,\Psi_2)=m'\circ(\Psi_1,\alpha\circ t);&\\
\label{eq-17} & m''\circ(\beta\circ s',\Phi_2)=m''\circ(\Phi_1,\beta\circ t').&
\end{eqnarray}

Moreover, let us state the following preliminary results:

\begin{equation}\label{eq-19}
(\beta\ast\alpha)\circ s=m''\circ(\Phi_1\circ\alpha,\beta\circ\psi_2)\circ s=m''\circ(\Phi_1\circ\alpha\circ s,
\beta\circ\psi_2\circ s);
\end{equation}

\begin{eqnarray}
\nonumber & m''\circ(\beta\circ \psi_2\circ s,\Phi_2\circ\Psi_2)=m''\circ(\beta\circ s'\circ \Psi_2,\Phi_2\circ
  \Psi_2)= &\\
\nonumber  & =m''\circ(\beta\circ s',\Phi_2)\circ\Psi_2\stackrel{(\ref{eq-17})}{=}m''\circ(\Phi_1,\beta\circ t')
   \circ\Psi_2= &\\
\label{eq-20} & =m''\circ(\Phi_1\circ\Psi_2,\beta\circ t'\circ\Psi_2)=m''\circ(\Phi_1\circ\Psi_2,\beta\circ\psi_2
  \circ t); &
\end{eqnarray}

\begin{eqnarray}
\nonumber & m''\circ(\Phi_1\circ\alpha\circ s,\Phi_1\circ\Psi_2)=m''\circ(\Phi_1\times\Phi_1)\circ(\alpha\circ s,
  \Psi_2)= &\\
\nonumber & =\Phi_1\circ m'\circ(\alpha\circ s,\Psi_2)\stackrel{(\ref{eq-16})}{=}\Phi_1\circ m'\circ(\Psi_1,\alpha
  \circ t)= &\\
\label{eq-21} & =m''\circ(\Phi_1\times\Phi_1)\circ(\Psi_1,\alpha\circ t)=m''\circ(\Phi_1\circ\Psi_1,\Phi_1\circ
  \alpha\circ t);&
\end{eqnarray}

\begin{eqnarray}\label{eq-22} 
m''\circ(\Phi_1\circ\alpha\circ t,\beta\circ\psi_2\circ t)=m''\circ(\Phi_1\circ\alpha,\beta\circ\psi_2)\circ t=
(\beta\ast\alpha)\circ t.
\end{eqnarray}

So we have that:

\begin{eqnarray*}
& m''\circ\Big((\beta\ast\alpha)\circ s,\Phi_2\circ\Psi_2\Big)\stackrel{(\ref{eq-19})}{=}
    m''\circ\Big(m''\circ(\Phi_1\circ\alpha\circ s,\beta\circ\psi_2\circ s),\Phi_2\circ\Psi_2\Big)=&\\
& =m''\circ\Big(\Phi_1\circ\alpha\circ s,m''\circ(\beta\circ\psi_2\circ s,\Phi_2\circ\Psi_2)\Big)
    \stackrel{(\ref{eq-20})}{=} &\\
& \stackrel{(\ref{eq-20})}{=}m''\circ\Big(\Phi_1\circ\alpha\circ s,m''\circ(\Phi_1\circ\Psi_2,
    \beta\circ\psi_2\circ t)\Big)=&\\
& =m''\circ\Big(m''\circ(\Phi_1\circ\alpha\circ s,\Phi_1\circ\Psi_2),\beta\circ\psi_2\circ t\Big)
    \stackrel{(\ref{eq-21})}{=} &\\
& \stackrel{(\ref{eq-21})}{=}m''\circ\Big(m''\circ(\Phi_1\circ\Psi_1,\Phi_1\circ\alpha\circ t),\beta
    \circ\psi_2\circ t\Big)=&\\
& =m''\circ\Big(\Phi_1\circ\Psi_1,m''\circ(\Phi_1\circ\alpha\circ t,\beta\circ\psi_2\circ t)\Big)
    \stackrel{(\ref{eq-22})}{=}m''\circ\Big(\Phi_1\circ\Psi_1,(\beta\ast\alpha)\circ t\Big)&
\end{eqnarray*}

where all the passages without label are just property (iii) of groupoid objects; so also property (ii) of
definition \ref{nat-tran-group} is satisfied.
\end{proof}

\textbf{Lemma \ref{m-well-defined}.}\emph{The map $m$ is well defined.}

\begin{proof}
In order to prove the statement, we have to solve 2 problems:

\begin{enumerate}[(i)]\parindent=0pt
\item first of all, let us fix representatives $(\LA{ih},\tx_i,\LA{ij})$ and $(\LA{kj},\tx_k,\LA{kl})$ for the 2 
points we have to ``multiply''. Our previous description of the multiplication map requires to \emph{choose} a 
uniformizing system $\unif{f}$, a point $\tx_f\in\tU_f$ and embeddings $\LA{fi},\LA{fk}$ making (\ref{eq-41})
commute. However, this construction uses lemma \ref{useful-lemma}, which gives only the existence of such data,
but not the uniqueness, so we have to verify that our construction does not depend on different completions of
(\ref{eq-40});

\item we have to prove that the multiplication does not depend on the 
representatives chosen for $[\LA{ih},\tx_i,\LA{ij}]$ and for $[\LA{kj},\tx_k,\LA{kl}]$.
\end{enumerate}

Let us solve these problems separately.

\begin{enumerate}[(i)]\parindent=0pt
\item Let us suppose we can ``complete'' a diagram (\ref{eq-40}) in two different ways:

\begin{equation}\label{eq-42}
\begin{tikzpicture}[scale=0.8]
    \def\x{3}
    \def\y{-0.55}
    \def\z{-2}
    \def\w{-2}
    \node (A2_0) at (0*\x, 2*\y) {$\tU_h$};
    \node (A2_1) at (1*\x, 2*\y) {$\tU_i$};
    \node (A2_2) at (2*\x, 2*\y) {$\tU_j$};
    \node (A2_3) at (3*\x, 2*\y) {$\tU_k$};
    \node (A2_4) at (4*\x, 2*\y) {$\tU_l.$};
    \node (B1_1) at (2*\x, 4*\y) {$\curvearrowright$};
    \node (C1_1) at (2*\x, 0) {$\curvearrowright$};
    \node (A3_2) at (2*\x, 2*\y+\z) {$\tU_f$};
    \node[rotate=90] (A4_2) at (2*\x, 3*\y+\z) {$\in$};
    \node (A5_2) at (2*\x, 4*\y+\z) {$\tx_f$};
    \node (B3_2) at (2*\x, 2.9+\w) {$\tU_r$};
    \node[rotate=270] (B4_2) at (2*\x, 3.35+\w) {$\in$};
    \node (B5_2) at (2*\x, 3.8+\w) {$\tx_r$};
    \path (A2_3) edge [->] node [auto] {$\scriptstyle{\LA{kl}}$} (A2_4);
    \path (A2_1) edge [->] node [auto,swap] {$\scriptstyle{\LA{ih}}$} (A2_0);
    \path (A2_1) edge [->] node [auto] {$\scriptstyle{\LA{ij}}$} (A2_2);
    \path (A2_3) edge [->] node [auto,swap] {$\scriptstyle{\LA{kj}}$} (A2_2);
    \path (A3_2) edge [->] node [auto] {$\scriptstyle{\LA{fi}}$} (A2_1);
    \path (A3_2) edge [->] node [auto,swap] {$\scriptstyle{\LA{fk}}$} (A2_3);
    \path (B3_2) edge [->] node [auto,swap] {$\scriptstyle{\LA{ri}}$} (A2_1);
    \path (B3_2) edge [->] node [auto] {$\scriptstyle{\LA{rk}}$} (A2_3);
\end{tikzpicture}
\end{equation}

Now we have that $\LA{ri}(\tx_r)=\LA{fi}(\tx_f)$, so we can apply lemma \ref{useful-lemma} and we get that
there exist a uniformizing system $\unif{s}$, a point $\tx_s\in\tU_s$ and a pair of embeddings $\LA{sr},\LA{sf}$
which make the following diagrams commute:

\begin{equation}\label{eq-43}
\begin{tikzpicture}[scale=0.8]
    \def\x{1.5}
    \def\y{-1.5}
    \node (A0_1) at (1*\x, 0*\y) {$\tU_s$};
    \node (A1_0) at (0*\x, 1*\y) {$\tU_r$};
    \node (A1_1) at (1*\x, 1*\y) {$\curvearrowright$};
    \node (A1_2) at (2*\x, 1*\y) {$\tU_f$};
    \node (A2_1) at (1*\x, 2*\y) {$\tU_i$};
    \path (A1_2) edge [->] node [auto] {$\scriptstyle{\LA{fi}}$} (A2_1);
    \path (A1_0) edge [->] node [auto,swap] {$\scriptstyle{\LA{ri}}$} (A2_1);
    \path (A0_1) edge [->] node [auto] {$\scriptstyle{\LA{sf}}$} (A1_2);
    \path (A0_1) edge [->] node [auto,swap] {$\scriptstyle{\LA{sr}}$} (A1_0);

    \def\z{4.8}
    \node (B0_1) at (1*\x+\z, 0*\y) {$\tx_s$};
    \node (B1_0) at (0*\x+\z, 1*\y) {$\tx_r$};
    \node (B1_1) at (1*\x+\z, 1*\y) {$\curvearrowright$};
    \node (B1_2) at (2*\x+\z, 1*\y) {$\tx_f.$};
    \node (B2_1) at (1*\x+\z, 2*\y) {$\tx_i$};
    \path (B1_2) edge [->] node [auto] {$\scriptstyle{\LA{fi}}$} (B2_1);
    \path (B1_0) edge [->] node [auto,swap] {$\scriptstyle{\LA{ri}}$} (B2_1);
    \path (B0_1) edge [->] node [auto] {$\scriptstyle{\LA{sf}}$} (B1_2);
    \path (B0_1) edge [->] node [auto,swap] {$\scriptstyle{\LA{sr}}$} (B1_0);
\end{tikzpicture}
\end{equation}

Now using (\ref{eq-42}) and (\ref{eq-43}) together we get that 
$\LA{kj}\circ\LA{fk}\circ\LA{sf}=\LA{kj}\circ\LA{rk}\circ\LA{sr}$
and we recall that $\LA{kj}$ is an embedding, hence in particular it is injective, so we have that:

\begin{equation}\label{eq-44}
\LA{fk}\circ\LA{sf}=\LA{rk}\circ\LA{sr}.
\end{equation}

Now if we combine together diagram (\ref{eq-43}) and equation (\ref{eq-44}), we get commutative diagrams:

\begin{equation}\label{eq-45}
\begin{tikzpicture}[scale=0.8]
    \def\x{1.3}
    \def\y{-1.2}
    \node (A2_0) at (2*\x, 0*\y) {$\tU_r$};
    \node (A0_2) at (0*\x, 2*\y) {$\tU_h$};
    \node (A2_2) at (2*\x, 2*\y) {$\tU_s$};
    \node (A4_2) at (4*\x, 2*\y) {$\tU_l$};
    \node (A2_4) at (2*\x, 4*\y) {$\tU_f$};
    \node (A1_2) at (1*\x, 2*\y) {$\curvearrowright$};
    \node (A3_2) at (3*\x, 2*\y) {$\curvearrowright$};
    \path (A2_0) edge [->] node [auto,swap] {$\scriptstyle{\LA{ih}\circ\LA{ri}}$} (A0_2);
    \path (A2_0) edge [->] node [auto] {$\scriptstyle{\LA{kl}\circ\LA{rk}}$} (A4_2);
    \path (A2_4) edge [->] node [auto] {$\scriptstyle{\LA{ih}\circ\LA{fi}}$} (A0_2);
    \path (A2_4) edge [->] node [auto,swap] {$\scriptstyle{\LA{kl}\circ\LA{fk}}$} (A4_2);
    \path (A2_2) edge [->] node [auto,swap] {$\scriptstyle{\LA{sr}}$} (A2_0);
    \path (A2_2) edge [->] node [auto] {$\scriptstyle{\LA{sf}}$} (A2_4);

    \def\z{7}
    \node (B2_0) at (2*\x+\z, 0*\y) {$\tx_r$};
    \node (B0_2) at (0*\x+\z, 2*\y) {$\tx_h:=\LA{hi}(\tx_i)$};
    \node (B2_2) at (2*\x+\z, 2*\y) {$\tx_s$};
    \node (B4_2) at (4*\x+\z, 2*\y) {$\tx_l:=\LA{kl}(\tx_k).$};
    \node (B2_4) at (2*\x+\z, 4*\y) {$\tx_f$};
    \node (B1_2) at (1.2*\x+\z, 2*\y) {$\curvearrowright$};
    \node (B3_2) at (2.8*\x+\z, 2*\y) {$\curvearrowright$};
    \path (B2_0) edge [->] node [auto,swap] {$\scriptstyle{\LA{ih}\circ\LA{ri}}$} (B0_2);
    \path (B2_0) edge [->] node [auto] {$\scriptstyle{\LA{kl}\circ\LA{rk}}$} (B4_2);
    \path (B2_4) edge [->] node [auto] {$\scriptstyle{\LA{ih}\circ\LA{fi}}$} (B0_2);
    \path (B2_4) edge [->] node [auto,swap] {$\scriptstyle{\LA{kl}\circ\LA{fk}}$} (B4_2);
    \path (B2_2) edge [->] node [auto,swap] {$\scriptstyle{\LA{sr}}$} (B2_0);
    \path (B2_2) edge [->] node [auto] {$\scriptstyle{\LA{sf}}$} (B2_4);
\end{tikzpicture}
\end{equation}

This means that $(\LA{ih}\circ\LA{fi},\tx_f,\LA{kl}\circ\LA{fk})\sim
(\LA{ih}\circ\LA{ri},\tx_r,\LA{kl}\circ\LA{rk})$, hence (i) is solved.

\item Let us suppose we have chosen another representative $(\LA{sh},\tx_s,\LA{sj})$ for $[\LA{ih},\tx_i,\LA{ij}]$.
Using remark \ref{good-definitions-on-R} it suffices to consider the case when the two representatives are
related by a diagram of the form (\ref{eq-38}); in other words, we can assume there exists an embedding
$\LA{si}$ such that:

$$\LA{sh}=\LA{ih}\circ\LA{si},\quad\LA{sj}=\LA{ij}\circ\LA{si}\AND\LA{si}(\tx_s)=\tx_i.$$

Now if we want to compute $m([\LA{ih},\tx_i,\LA{ij}],[\LA{kj},\tx_k,\LA{kl}])$ using this new representative for
the first point, we have to use lemma \ref{useful-lemma} in order to choose a uniformizing system $\unif{r}$
together with a point $\tx_r\in\tU_r$ and a pair of embeddings $\LA{rs},\LA{rk}$ such that:

$$\LA{sj}\circ\LA{rs}=\LA{kj}\circ\LA{rk},\quad\LA{rs}(\tx_r)=\tx_s\AND\LA{rk}(\tx_r)=\tx_k.$$ 

Note that there are no problems in choosing all these data, since we have already proved (i). In other words, we
are using commutative diagrams of the form:

\[\begin{tikzpicture}[scale=0.8]
    \def\x{1.5}
    \def\y{-1.2}
    \node (A0_2) at (2*\x, 0*\y) {$\tU_s$};
    \node (A0_4) at (4*\x, 0*\y) {$\tU_r$};
    \node (A1_1) at (1.3*\x, 1.2*\y) {$\curvearrowright$};
    \node (A1_3) at (2.7*\x, 1.2*\y) {$\curvearrowright$};
    \node (A1_5) at (4.1*\x, 1*\y) {$\curvearrowright$};
    \node (A2_0) at (0*\x, 2*\y) {$\tU_h$};
    \node (A2_2) at (2*\x, 2*\y) {$\tU_i$};
    \node (A2_4) at (4*\x, 2*\y) {$\tU_j$};
    \node (A2_6) at (6*\x, 2*\y) {$\tU_k$};
    \node (A2_8) at (8*\x, 2*\y) {$\tU_l$};
    \node (A3_4) at (4*\x, 3*\y) {$\curvearrowright$};
    \node (A4_4) at (4*\x, 4*\y) {$\tU_f$};
    \path (A0_2) edge [->] node [auto,swap] {$\scriptstyle{\LA{sh}}$} (A2_0);
    \path (A0_4) edge [->] node [auto,swap] {$\scriptstyle{\LA{rs}}$} (A0_2);
    \path (A0_2) edge [->] node [auto] {$\scriptstyle{\LA{si}}$} (A2_2);
    \path (A0_2) edge [->] node [auto] {$\scriptstyle{\LA{sj}}$} (A2_4);
    \path (A4_4) edge [->] node [auto] {$\scriptstyle{\LA{fi}}$} (A2_2);
    \path (A2_2) edge [->] node [auto] {$\scriptstyle{\LA{ij}}$} (A2_4);
    \path (A4_4) edge [->] node [auto,swap] {$\scriptstyle{\LA{fk}}$} (A2_6);
    \path (A2_2) edge [->] node [auto,swap] {$\scriptstyle{\LA{ih}}$} (A2_0);
    \path (A0_4) edge [->] node [auto] {$\scriptstyle{\LA{rk}}$} (A2_6);
    \path (A2_6) edge [->] node [auto] {$\scriptstyle{\LA{kl}}$} (A2_8);
    \path (A2_6) edge [->] node [auto,swap] {$\scriptstyle{\LA{kj}}$} (A2_4);
\end{tikzpicture}\]

\[\begin{tikzpicture}[scale=0.8]
    \def\x{1.5}
    \def\y{-1.2}
    \node (A0_2) at (2*\x, 0*\y) {$\tx_s$};
    \node (A0_4) at (4*\x, 0*\y) {$\tx_r$};
    \node (A1_1) at (1.3*\x, 1.2*\y) {$\curvearrowright$};
    \node (A1_3) at (2.7*\x, 1.2*\y) {$\curvearrowright$};
    \node (A1_5) at (4.1*\x, 1*\y) {$\curvearrowright$};
    \node (A2_0) at (0*\x, 2*\y) {$\tx_h$};
    \node (A2_2) at (2*\x, 2*\y) {$\tx_i$};
    \node (A2_4) at (4*\x, 2*\y) {$\tx_j$};
    \node (A2_6) at (6*\x, 2*\y) {$\tx_k$};
    \node (A2_8) at (8*\x, 2*\y) {$\tx_l$};
    \node (A3_4) at (4*\x, 3*\y) {$\curvearrowright$};
    \node (A4_4) at (4*\x, 4*\y) {$\tx_f$};
    \path (A0_2) edge [->] node [auto,swap] {$\scriptstyle{\LA{sh}}$} (A2_0);
    \path (A0_4) edge [->] node [auto,swap] {$\scriptstyle{\LA{rs}}$} (A0_2);
    \path (A0_2) edge [->] node [auto] {$\scriptstyle{\LA{si}}$} (A2_2);
    \path (A0_2) edge [->] node [auto] {$\scriptstyle{\LA{sj}}$} (A2_4);
    \path (A4_4) edge [->] node [auto] {$\scriptstyle{\LA{fi}}$} (A2_2);
    \path (A2_2) edge [->] node [auto] {$\scriptstyle{\LA{ij}}$} (A2_4);
    \path (A4_4) edge [->] node [auto,swap] {$\scriptstyle{\LA{fk}}$} (A2_6);
    \path (A2_2) edge [->] node [auto,swap] {$\scriptstyle{\LA{ih}}$} (A2_0);
    \path (A0_4) edge [->] node [auto] {$\scriptstyle{\LA{rk}}$} (A2_6);
    \path (A2_6) edge [->] node [auto] {$\scriptstyle{\LA{kl}}$} (A2_8);
    \path (A2_6) edge [->] node [auto,swap] {$\scriptstyle{\LA{kj}}$} (A2_4);
\end{tikzpicture}\]

where for simplicity we have used the following notations: $\tx_h:=\LA{sh}(\tx_s)=\LA{ih}\circ\LA{si}(\tx_s)$,
$\tx_l:=\LA{kl}(\tx_k)$ and $\tx_j:=\LA{ij}(\tx_i)$. So we get the diagram:

\[\begin{tikzpicture}[scale=0.8]
    \def\x{3}
    \def\y{-0.55}
    \def\z{-2}    
    \def\w{-2}
     \node (A2_0) at (0*\x, 2*\y) {$\tU_h$};
    \node (A2_1) at (1*\x, 2*\y) {$\tU_i$};
    \node (A2_2) at (2*\x, 2*\y) {$\tU_j$};
    \node (A2_3) at (3*\x, 2*\y) {$\tU_k$};
    \node (A2_4) at (4*\x, 2*\y) {$\tU_l$};
    \node (B1_1) at (2*\x, 4*\y) {$\curvearrowright$};
    \node (C1_1) at (2*\x, 0) {$\curvearrowright$};
    \node (A3_2) at (2*\x, 2*\y+\z) {$\tU_f$};
    \node[rotate=90] (A4_2) at (2*\x, 3*\y+\z) {$\in$};
    \node (A5_2) at (2*\x, 4*\y+\z) {$\tx_f$};
    \node (B3_2) at (2*\x, 2.9+\w) {$\tU_r$};
    \node[rotate=270] (B4_2) at (2*\x, 3.35+\w) {$\in$};
    \node (B5_2) at (2*\x, 3.8+\w) {$\tx_r$};
    \path (A2_3) edge [->] node [auto] {$\scriptstyle{\LA{kl}}$} (A2_4);
    \path (A2_1) edge [->] node [auto,swap] {$\scriptstyle{\LA{ih}}$} (A2_0);
    \path (A2_1) edge [->] node [auto] {$\scriptstyle{\LA{ij}}$} (A2_2);
    \path (A2_3) edge [->] node [auto,swap] {$\scriptstyle{\LA{kj}}$} (A2_2);
    \path (A3_2) edge [->] node [auto] {$\scriptstyle{\LA{fi}}$} (A2_1);
    \path (A3_2) edge [->] node [auto,swap] {$\scriptstyle{\LA{fk}}$} (A2_3);
    \path (B3_2) edge [->] node [auto,swap] {$\scriptstyle{\LA{ri}:=\LA{si}\circ\LA{rs}}$} (A2_1);
    \path (B3_2) edge [->] node [auto] {$\scriptstyle{\LA{rk}}$} (A2_3);
\end{tikzpicture}\]

with $\LA{ri}(\tx_r)=\LA{fi}(\tx_f)$, so we can repeat the same construction of (i) in order to get a diagram of
the form (\ref{eq-45}), so we obtain:

$$(\LA{ih}\circ\LA{ri},\tx_r,\LA{kl}\circ\LA{rk})\sim(\LA{ih}\circ\LA{fi},\tx_f,\LA{kl}\circ\LA{fk}).$$

Now by definition of $\LA{ri}$ we have $\LA{ih}\circ\LA{ri}=\LA{ih}\circ\LA{si}\circ\LA{rs}=\LA{sh}\circ\LA{rs}$, 
hence:

$$(\LA{sh}\circ\LA{rs},\tx_r,\LA{kl}\circ\LA{rk})\sim(\LA{ih}\circ\LA{fi},\tx_f,\LA{kl}\circ\LA{fk}).$$

These two points are the multiplication obtained when we choose representatives $(\LA{ih},\tx_i,\LA{ij})$
and $(\LA{sh},\tx_s,\LA{sj})$ for the same point, so the multiplication does not depend on the representative
chosen for $[\LA{ih},\tx_i,\LA{ij}]$. In the same way one can also prove that the multiplication doesn't
depend on the representative chosen for the point $[\LA{kj},\tx_k,\LA{kl}]$.
\end{enumerate}
\end{proof}

\end{document}